\newcommand{\QQ}{\mathbb Q}
\renewcommand{\AA}{\mathbb A}
\newcommand{\ZZ}{\mathbb Z}
\newcommand{\NN}{\mathbb N}
\newcommand{\RR}{\mathbb R}
\newcommand{\BB}{\mathbb B}
\newcommand{\CC}{\mathbb C}
\newcommand{\DD}{\mathbb D}
\newcommand{\EE}{\mathbb E}
\newcommand{\calC}{{\cal C}}
\newcommand{\calT}{{\cal T}}
\newcommand{\calV}{{\cal V}}
\newcommand{\calB}{{\cal B}}
\newcommand{\calZ}{{\cal Z}}
\newcommand{\mun}{\mu_{p^n}}
\newcommand{\vs}{\vspace{1ex}}
\newcommand{\calA}{{\cal A}}
\newcommand{\calK}{{\cal K}}
\newcommand{\calH}{{\cal H}}
\newcommand{\calG}{{\cal G}}
\newcommand{\Omegahat}{\widehat{\Omega}}
\DeclareMathOperator{\coker}{coker}
\DeclareMathOperator{\Mod}{mod}
\DeclareMathOperator{\Gal}{Gal}
\DeclareMathOperator{\image}{Im}
\DeclareMathOperator{\id}{id}
\DeclareMathOperator{\Tr}{Tr}
\DeclareMathOperator{\Hom}{Hom}
\DeclareMathOperator{\MOD}{Mod}
\DeclareMathOperator{\Rep}{Rep}
\DeclareMathOperator{\dR}{dR}
\DeclareMathOperator{\Fil}{Fil}
\DeclareMathOperator{\GL}{GL}
\DeclareMathOperator{\cd}{cd}
\DeclareMathOperator{\et}{\acute{e}t}
\DeclareMathOperator{\fil}{fil}
\newtheorem{thm}{Theorem}[section]
\newtheorem{prop}[thm]{Proposition}
\newtheorem{lem}[thm]{Lemma}
\newtheorem{cor}[thm]{Corollary}
\newtheorem{remark}[thm]{Remark}
\begin{document}

\title{Bloch-Kato exponential maps for local fields with imperfect residue fields}

\author{Sarah Livia Zerbes}
\maketitle

\begin{center}
 {\it email:} s.zerbes@imperial.ac.uk
\end{center}

\begin{abstract}
 In this paper, we generalise the construction of the Bloch-Kato exponential map to complete discrete valuation fields of mixed characteristic $(0,p)$ whose residue fields have a finite $p$-basis. As an application we prove an explicit reciprocity law, generalising Th\'eor\`eme IV.2.1 in~\cite{cherbonniercolmez99}. This result relies on the calculation of the Galois cohomology of a $p$-adic representation $V$ in terms of its $(\phi,G)$-module. 
\end{abstract}

\tableofcontents 


 \section{Introduction}

  \subsection{Statement of the main results}
  
   Throughout this paper, let $p$ be an odd prime. Let $\BB_{\dR}$ and $\BB_{\max}$ be the period rings of Fontaine~\cite{fontaine90}, and let $V$ be a de Rham representation of $\calG_F$, where $F$ is a finite extension of $\QQ_p$ and $\calG_F$ denotes its absolute Galois group. In~\cite{blochkato90}, Bloch and Kato have constructed an exponential map
   \begin{equation}\label{classicalBK}
    \exp_{F,V}:\DD_{\dR}(V)\rightarrow H^1(F,V),
   \end{equation}
   which is obtained by taking $\calG_F$-cohomology of the fundamental exact sequence 
   \begin{equation*}
    0\rTo V\rTo V\otimes_{\QQ_p}\BB_{\max}^{\phi=1}\rTo V\otimes_{\QQ_p}\BB_{\dR}\slash \BB_{\dR}^+\rTo 0.
   \end{equation*}
   The Bloch-Kato exponential map is the composition of the connecting homomorphism $\DD_{\dR}(V)\slash\DD_{\dR}^+(V)\rightarrow H^1(F,V)$ and the quotient map $\DD_{\dR}(V)\rightarrow \DD_{\dR}(V)\slash\DD_{\dR}^+(V)$. If $V=\QQ_p\otimes_{\ZZ_p}T$, where $T$ is the $p$-adic Tate module of a $p$-divisible group $G$ defined over $F$, then $V$ is a de Rham representation of $\calG_F$, and $\exp_{F,V}$ agrees with the exponential map on the tangent space of $G$: as shown in Section 3.10.1 in~\cite{blochkato90}, there is a commutative diagram
   \begin{diagram}
    \tan(G(F)) & \rTo^{\exp_G} & \QQ\otimes_{\ZZ} G(O_F) \\
    \dTo^{\cong} & & \dTo^{\delta} \\
    \DD_{\dR}(V)\slash \DD_{\dR}^+(V) & \rTo^{\exp_{F,V}} & H^1(F,V)
   \end{diagram}
   where $\delta$ is the Kummer map. The cup product gives a perfect pairing
   \begin{equation*}
    H^1(F,V)\times H^1(F,V^*(1))\rTo^\cup H^2(F,\QQ_p(1))\cong \QQ_p,
   \end{equation*}
   and one can use it to `dualise'~\eqref{classicalBK} to obtain the dual exponential map
   \begin{equation*}
    \exp^*_{F,V}:H^1(F,V^*(1))\rTo \DD_{\dR}(V^*(1)).
   \end{equation*}
   An important application of the dual exponential map is the explicit reciprocity law of Cherbonnier and Colmez (Th\'eor\`eme IV.2.1 in~\cite{cherbonniercolmez99}): An unpublished theorem of Fontaine (c.f. Th\'eor\`eme II.1.3 in~\cite{cherbonniercolmez99}) states that the group $\varprojlim H^1(F_n,V)$, where the inverse limit is taken with respect to the corestriction maps in the cyclotomic tower, is canonically isomorphic to a submodule of the overconvergent $(\phi,\Gamma)$-module of $V$. Th\'eor\`eme IV.2.1 in~\cite{cherbonniercolmez99} shows that if $V$ is a de Rham representation of $\calG_F$, then the inverse of this isomorphism can be described in terms of the dual exponential map.
   \vs
   
   In this paper, we generalise the Bloch-Kato exponential map to discrete valuation fields of mixed characteristic with imperfect residue field. Let $L$ be such a field with residue field $k_L$. Assume that $[k_L:k_L^p]=p^{d-1}$. Kato~\cite{kato99} and Brinon~\cite{brinon06} have defined  analogues of Fontaine's rings of periods for the imperfect residue field case, and in~\cite{brinon06} Brinon shows the existence of a  fundamental exact sequence 
   \begin{equation}\label{fd}
    0  \rTo V\rTo V\otimes_{\QQ_p}\BB_{\max}^{\nabla \phi=1}\rTo V\otimes_{\QQ_p}\BB_{\dR}^\nabla\slash \BB_{\dR}^{\nabla +}\rTo 0,
   \end{equation}
   where $\BB_{\dR}^\nabla\slash \BB_{\dR}^{\nabla+}$ has the exact resolution
   \begin{align*}
    0 & \rTo V\otimes_{\QQ_p}\BB_{\dR}^\nabla\slash\BB_{\dR}^{\nabla +} \rTo V\otimes_{\QQ_p}\BB_{\dR}\slash\Fil^0\BB_{\dR} \rTo^\nabla V\otimes_{\QQ_p}\BB_{\dR}\slash\Fil^{-1}\BB_{\dR}\otimes_K\Omega^1_K \\
    & \rTo^\nabla\dots\rTo^\nabla V\otimes_{\QQ_p}\BB_{\dR}\slash\Fil^{1-d}\BB_{\dR}\otimes_K\Omega^{d-1}_K \rTo 0.    
   \end{align*}

   In Section~\ref{exponentials+duals}, we define de Rham cohomology groups $H^i_{\dR}(V)$ for any $p$-adic representation $V$ of $\calG_L$, and we show that~\eqref{fd} gives rise to exponential maps
   \begin{equation}\label{higherexp}
    \exp_{(i),L,V}: H^{i-1}_{\dR}(V)\oplus H^i_{\dR}(V) \rTo H^{i+1}(L,V)
   \end{equation}
   for $1\leq i\leq d$. Analogous to the classical case, one can dualise~\eqref{higherexp} to obtain dual exponential maps
   \begin{equation*}
    \exp^*_{(i),K,V}: H^{d+1-i}(L,V^*(d)) \rTo H_{\dR}^{d-i}(V^*(d))\oplus H_{\dR}^{d-i-1}(V^*(d)).
   \end{equation*}   
   In the case when $V\cong \QQ_p\otimes_{\ZZ_p}T$, where $T$ is the $p$-adic Tate module of a $p$-divisible group $\frak{G}$ defined over $L$, the Hodge-Tate weights of $V$ are contained in the set $\{0,1\}$, which implies that $\exp_{(i),L,V}$ and $\exp^*_{(i),L,V}$ are the zero maps unless $i=0,1$. Furthermore, one can show that $\exp_{(0),L,V}$ agrees via the Kummer map with the exponential map on the tangent space of $\frak{G}$. We will study this case in detail in a forthcoming paper. 
   \vs
   
   As an application of this construction, we prove the following result, which can be seen as a generalisation of Th\'eor\`eme II.1.3 in~\cite{cherbonniercolmez99}.
   
   \begin{thm}\label{samecohomology*}
    Let $V$ be a de Rham representation of $\calG_L$. Then for $n\gg 0$ and for all $1\leq i\leq d$, we have a commutative diagram
    \begin{diagram}
     H^i(G_L,\DD^{\dagger,n}(V)^{\psi=1}) & \rTo^{\phi^{-n}} & H^i(L,\BB_{\dR}^\nabla\otimes V) \\
     \dTo^{\delta^{(i)}} & & \dTo^{\cong} \\
     H^i(L,V) & \rTo^{\exp^*_{(d-i),V^*(d),K}} &  H^{i-1}_{\dR}(V)\oplus H^{i}_{\dR}(V) 
    \end{diagram}
    The map $\delta^{(i)}$ will be defined in Section~\ref{EL}.
   \end{thm}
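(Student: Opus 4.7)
The plan is to realise both composites in the diagram as maps induced by a single morphism of complexes built from the fundamental exact sequence~\eqref{fd}. The clockwise composite sends a class $c \in H^i(G_L, \DD^{\dagger,n}(V)^{\psi=1})$ to the element of $H^i(L, V\otimes\BB_{\dR}^\nabla)$ obtained by first pushing $c$ through $\phi^{-n}$ into $V\otimes\BB_{\max}^{\nabla\phi=1}$ (which is well-defined for $n$ sufficiently large, by the standard overconvergence estimates), then composing with the natural inclusion into $\BB_{\dR}^\nabla$. The right-hand vertical isomorphism is supplied by the $\nabla$-resolution of $\BB_{\dR}^{\nabla}/\BB_{\dR}^{\nabla+}$ exhibited in the introduction, which after a spectral sequence argument identifies $H^i(L, V\otimes\BB_{\dR}^\nabla)$ with $H^{i-1}_{\dR}(V)\oplus H^i_{\dR}(V)$.

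The counterclockwise composite $\exp^*\circ \delta^{(i)}$ admits a cup-product interpretation. Unwinding the definition of $\exp^*$ as the Tate-dual of the higher exponentials $\exp_{(d-i)}$, its action on a class in $H^i(L,V)$ amounts to lifting along the inclusion $V\hookrightarrow V\otimes\BB_{\dR}^\nabla$ induced by \eqref{fd}, and then applying the right-hand vertical isomorphism. Similarly, the map $\delta^{(i)}$, to be defined in Section~\ref{EL}, realises the $(\phi,G)$-module cohomology class as a Galois cohomology class via the connecting homomorphism attached to the short exact sequence $0\to V \to V\otimes\BB_{\max}^{\nabla\phi=1}\to V\otimes\BB_{\dR}^\nabla/\BB_{\dR}^{\nabla+}\to 0$ applied to an explicit representative coming from the $(\phi,G)$-module structure.

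Putting these pieces together, commutativity reduces to the assertion that $\phi^{-n}$ is compatible with the fundamental exact sequence on the level of cocycles: a representative for $c$ in $\DD^{\dagger,n}(V)^{\psi=1}$ and the cocycle representing $\delta^{(i)}(c)$ in $V$ are related by the connecting map of \eqref{fd} in such a way that their images in $V\otimes\BB_{\dR}^\nabla$ agree. This is verified by explicit cocycle chasing through the $\nabla$-resolution, using that $\phi^{-n}$ intertwines the natural projections and respects the $\psi=1$ condition.

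The main obstacle is the de Rham decomposition underlying the right vertical isomorphism: one must check the degeneration of the spectral sequence coming from the $\nabla$-resolution (including sufficient vanishing of higher cohomology of the $\BB_{\dR}^+$-layers), and then verify that the resulting identification of $H^i(L, V\otimes\BB_{\dR}^\nabla)$ with $H^{i-1}_{\dR}(V)\oplus H^i_{\dR}(V)$ is compatible both with the connecting homomorphism encoding $\exp^*$ and with the image of $\phi^{-n}$. A secondary technical point will be matching signs arising from the various connecting maps and the Koszul differentials in the resolution, so that the diagram commutes on the nose rather than up to sign.
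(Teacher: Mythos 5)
Your description of the right-hand column is essentially correct: the identification of $H^i(L,V\otimes\BB_{\dR}^\nabla)$ with $H^{i-1}_{\dR}(V)\oplus H^i_{\dR}(V)$ does come from the spectral sequence of the $\nabla$-resolution together with Kato's computation of $H^j(L,V\otimes\BB_{\dR}\otimes\Omega^m_K)$, and since $\exp^*_{(d-i)}$ is \emph{defined} as the natural map $H^i(L,V)\to H^i(L,V\otimes\BB_{\dR}^\nabla)$ followed by the inverse of the splitting $\nu^i$, the bottom composite reduces to that natural map. But the genuine content of the theorem lies in the left column and the top arrow, and there your proposal has two real gaps.

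First, your proposed definition of $\delta^{(i)}$ as ``the connecting homomorphism of the fundamental exact sequence applied to an explicit representative'' does not produce a map out of $H^i(G_L,\DD^{\dagger,n}(V)^{\psi=1})$: a $(\phi,G_L)$-module cohomology class is not a class in $H^{i-1}(L,V\otimes\BB_{\dR}^\nabla/\BB_{\dR}^{\nabla+})$, and the fundamental sequence gives no bridge between the two. The paper's $\delta^{(i)}$ instead goes through the isomorphism $H^i(\calT_\psi^\bullet(\DD(V)))\cong H^i(\calT_\phi^\bullet(\DD(V)))\cong H^i(L,V)$ (Andreatta--Iovita plus the $\phi$-versus-$\psi$ comparison of mapping cones), sending $\tilde\beta$ to the class of $(0,\tilde\beta)$. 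The crucial technical input, entirely absent from your sketch, is that $\DD(V)^{\psi=0}$ has trivial $G_L$-cohomology (Proposition~\ref{zerocohom}, proved via the Koszul complex for the $d$-dimensional non-commutative Lie group $G_L$); this is what allows one to solve $d\alpha=(\phi-1)\tilde\beta$ and write down the explicit cocycle $c_{\alpha,\beta}=\beta+(-1)^i d(\sigma(\alpha))$ with values in $V$, against which the top arrow is compared. Second, your claim that $\phi^{-n}$ pushes the class into $V\otimes\BB_{\max}^{\nabla\phi=1}$ is false: $\psi=1$ does not imply $\phi=1$, and the correct target is $V\otimes\BB_{\dR}^{\nabla+}$ via the injection $\tilde\BB^{(0,r]}\hookrightarrow\BB_{\dR}^{\nabla+}$. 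Making this work requires the overconvergence statements $\DD(V)^{\psi=1}\subset\DD^{(0,r]}(V)$ and the fact that $\alpha$ and $\sigma(\alpha)$ can also be taken overconvergent, none of which is automatic. Finally, the image of $\phi^{-n}$ lives a priori in $H^i(G_L,(V\otimes\BB_{\dR}^\nabla)^{\calH_L})$, and identifying this with $H^i(L,V\otimes\BB_{\dR}^\nabla)$ needs the vanishing $H^j(L_\infty,V\otimes\BB_{\dR}^\nabla)=0$ for $j\geq 1$; this descent step is also missing from your argument.
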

   
   The construction of the maps $\delta^{(i)}$ relies on the calculation of the Galois cohomology groups $H^i(L,V)$ in terms of the  $(\phi,G_L)$-module of $V$: if $\calC^\bullet(\DD(V))$ denotes the continuous cochain complex calculating the $G_L$-cohomology of $\DD(V)$, then by Theorem 3.3 in~\cite{andreattaiovita07} the absolute Galois cohomology of $V$ is isomorphic to the cohomology of the mapping cone $\calT_\phi^\bullet(\DD(V))$ of $\calC^\bullet(\DD(V))\rTo^{\phi-1} \calC^\bullet(\DD(V))$. If $L$ is a finite extension of $\QQ_p$, then $G_L$ is isomorphic to an open subgroup of $\ZZ_p$ and hence topologically generated by one element, which makes the mapping cone $\calT_\phi^\bullet(\DD(V))$ comparatively easy to handle (c.f.~\cite{herr98}). In the general case however, the group $G_L$ is a non-commutative $p$-adic Lie group of dimension $\dim L$. In Section~\ref{Koszulcohom} we calculate the $G_L$-cohomoloy of $\DD(V)$ via the Koszul complex, which enables us to prove the crucial result that the submodule $\DD(V)^{\psi=0}$ has trivial $G_L$-cohomology (c.f. Proposition~\ref{zerocohom}). As an application, we show that the Galois cohomology of $V$ is also be calculated via the complex $\calT_\psi^\bullet(\DD(V))$, where $\psi$ is a left inverse of $\phi$.
   \vs
   
   In a forthcoming paper, we shall generalise the results of this paper to the relative situation (c.f.~\cite{andreatta06}).

   \vs
   
  \noindent {\it Acknowledgements.} I am very grateful to Guido Kings for his interest and for his invitation to Regensburg in Spring 2007 when the results of this paper were proven. I also thank James Cranch for answering my questions about the Koszul complex and David L\"offler for many useful discussions and for the careful reading of the manuscript. 
   

   \subsection{Notation}
    
    \begin{itemize}
     \item If $A$ is perfect ring of characteristic $p$, denote by $W(A)$ its ring of Witt vectors. For $a\in A$, denote by $[a]\in W(A)$ its Teichm\"uller representative.
     
     \item For a field $F$, denote by $O_F$ its ring of integers. If $F$ is a local field of characteristic $0$, denote by $k_F$ its residue field. Also, denote by $\bar{F}$ a separable closure of $F$.
   
     \item For a local field $K$ of mixed characteristic $(0,p)$, denote by $\CC_K$ the completion of an algebraic closure of $K$. Also, denote by $\CC_p$ the completion of an algebraic closure of $\QQ_p$. 
   
     \item We denote by $v_p$ the $p$-adic valuation on $\bar{\QQ}_p$ and by $\mid\sim\mid_p$ the induced norm, normalised as $\mid p\mid_p=p^{-1}$.
   
     \item Let $G$ be a profinite group and $M$ a topological abelian group with a continuous $G$-action. By $H^\bullet(G,M)$ we always mean continuous group cohomology. Write $\calC^\bullet(G,M)$ for the continuous cochain complex with coefficients in $M$ (c.f. Section II.3 in~\cite{nsw}).
     
     \item Let $K$ be a field and $\bar{K}$ a separable closure. If $M$ is an abelian group with a continuous action of $\Gal(\bar{K}\slash K)$, denote by $H^i(K,M)$ the $i$th Galois cohomology group $H^i(\Gal(\bar{K}\slash K),M)$. 
     
     \item For a profinite group $G$, denote by $\Lambda(G)$ its Iwasawa algebra, which is defined as the inverse limit $\varprojlim\ZZ_p[G\slash U]$, where $U$ runs over all open normal subgroups of $G$. 
     
    \end{itemize}


  \section{The Iwasawa tower}\label{Iwasawa_tower}

   Let $L$ be a complete discrete valuation field of characteristic $0$ with residue field $k_L$ of characteristic $p$. Suppose that $[k_L^p:k_L]=p^{d-1}$. Let $X_1,\dots,X_{d-1}\in L$ be a lift of a $p$-basis $\overline{X_1},\dots,\overline{X_{d-1}}$ of $k_L$. Choose a subfield $K$ of $L$ with the same residue field $k_L$ in which $p$ is a uniformizer. Then the extension $L\slash K$ is finite of degree $e_L$, where $e_L$ is the absolute ramification index of $L$. Note that if $d>1$, then $K$ is not unique. 
   \vs
   
   For $n\geq 1$ define $K_{n}= K(\mun,X_1^{\frac{1}{p^n}},\dots,X_{d-1}^{\frac{1}{p^n}})$, and let $K_{\infty}=\bigcup_n K_{n}$. Let $L_n=LK_{n}$ and $L_\infty=LK_{\infty}$. Then $L_\infty$ contains the fields $L^{(0)}_{\infty}=L(\mu_{p^{\infty}})$ and $L^{(i)}_{\infty}=L^{(0)}_{\infty}\big(X_1^{\frac{1}{p^\infty}},\dots,X_i^{\frac{1}{p^\infty}}\big)$ for all $1\leq i\leq d-1$. Define the Galois groups $\Gamma_L=\Gal(L^{(0)}_{\infty}\slash L)$, $H^{(i)}_L=\Gal(L^{(i)}_{\infty}\slash L^{(i-1)}_{\infty})$ and $H_L=\Gal(L_\infty\slash L^{(0)}_{\infty})$. Also, let $G_L=\Gal(L_\infty\slash L)$, $\calG_L=\Gal(\bar{K}\slash L)$ and $\calH_L=\Gal(\bar{K}\slash L_\infty)$. We identify $\Gamma_L$ via the quotient map with the subgroup $\Gal(L_\infty\slash L'_\infty)$ of $G_L$, where 
   $L'_\infty=\varinjlim L(X_1^{\frac{1}{p^n}},\dots,X_{d-1}^{\frac{1}{p^n}})$.
   \vs
   
   \noindent {\bf Note.} (1) For all $1\leq i\leq d-1$, $H^{(i)}_L\cong\ZZ_p$ via some character $\eta_i$.
   
   \noindent (2) The cyclotomic character $\chi$ identifies $\Gamma_L$ with an open subgroup of $\ZZ_p^\times$.
   
   \begin{lem}
    We have $G_L\cong\Gamma_L\rtimes H_L$, so $G_L$ is a non-commutative $p$-adic Lie group of dimension $d$. Moreover, for all $1\leq i<d$, we have $H^{(i)}_L\cong \ZZ_p(1)$ as a $\Gamma$-module. 
   \end{lem}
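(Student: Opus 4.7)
The plan is to exhibit an explicit splitting of the canonical surjection $G_L \twoheadrightarrow \Gamma_L$ and then read off the conjugation action, using crucially the auxiliary field $L'_\infty$ introduced just before the lemma.

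First I would check the ramification properties of the two subfields $L^{(0)}_\infty, L'_\infty \subset L_\infty$. Since every $p$-power root of unity reduces to $1$ modulo the maximal ideal, $L^{(0)}_\infty/L$ has trivial residue field extension and is therefore totally ramified. For $L'_\infty/L$, I would argue by induction along the tower $L(X_1^{1/p^n}, \ldots, X_{d-1}^{1/p^n})/L$: since $\overline{X_1}, \ldots, \overline{X_{d-1}}$ form a $p$-basis of $k_L$, they remain $p$-independent after the other $\overline{X_k}$ have been replaced by arbitrary $p$-power roots. Thus at each stage the minimal polynomial $T^{p^n} - X_j$ of the newly adjoined generator has irreducible reduction, the residue field degree matches the field degree, and the extension is unramified. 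These two observations force $L^{(0)}_\infty \cap L'_\infty = L$, and since $L_\infty = L^{(0)}_\infty \cdot L'_\infty$, restriction supplies an isomorphism $\Gal(L_\infty/L'_\infty) \cong \Gamma_L$, which is the required splitting. Normality of $H_L$ in $G_L$ follows because $L^{(0)}_\infty/L$ is Galois, so $G_L \cong \Gamma_L \ltimes H_L$, and the dimension count $\dim G_L = 1 + (d-1) = d$ is immediate.

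For the $\Gamma$-module structure on $H^{(i)}_L$, I would apply Kummer theory. Since $\mu_{p^\infty} \subset L^{(i-1)}_\infty$, each $\sigma \in H^{(i)}_L$ acts on $X_i^{1/p^n}$ by $\sigma(X_i^{1/p^n}) = \zeta_{p^n}^{\eta_i(\sigma)} X_i^{1/p^n}$, which recovers the isomorphism $\eta_i$ of the note preceding the lemma. Lifting $\tau \in \Gamma_L$ to the subgroup $\Gal(L_\infty/L'_\infty)$ via the splitting above, $\tau$ fixes every $X_i^{1/p^n}$ while acting on $\zeta_{p^n}$ via the cyclotomic character $\chi$. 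A direct computation then gives $\tau\sigma\tau^{-1}(X_i^{1/p^n}) = \zeta_{p^n}^{\chi(\tau)\eta_i(\sigma)} X_i^{1/p^n}$, i.e.\ $\eta_i(\tau\sigma\tau^{-1}) = \chi(\tau)\eta_i(\sigma)$. This is precisely the statement $H^{(i)}_L \cong \ZZ_p(1)$ as a $\Gamma$-module.

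The main technical obstacle is the residue-field bookkeeping along $L'_\infty/L$: one has to verify that at each stage of the tower the new generator $\overline{X_j}$ remains outside the $p$-th powers of the preceding residue field. This is a standard but slightly delicate consequence of the $p$-basis property, and is probably best isolated as a short preliminary lemma before the main argument.
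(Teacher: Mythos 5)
The paper states this lemma without proof, so there is nothing to compare against. Your architecture is the intended one --- split $G_L\twoheadrightarrow\Gamma_L$ via the subgroup $\Gal(L_\infty/L'_\infty)$ (which is exactly the identification the paper makes just before the lemma) and compute the conjugation action by Kummer theory --- and the final computation $\eta_i(\tau\sigma\tau^{-1})=\chi(\tau)\eta_i(\sigma)$, giving $H^{(i)}_L\cong\ZZ_p(1)$, is correct.

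The gap is in the step $L^{(0)}_\infty\cap L'_\infty=L$. The inference ``$\zeta_{p^n}$ reduces to $1$, hence $L(\mu_{p^n})/L$ has trivial residue extension, hence is totally ramified'' is a non sequitur (the residue field of $L(\zeta_{p^n})$ is not generated over $k_L$ by $\bar\zeta_{p^n}$), and the conclusion is false in general: already for $L=\QQ_5(\sqrt{5u})$ with $u$ a non-square unit, $L(\mu_5)$ contains $L(\sqrt5)=L(\sqrt u)$, an unramified quadratic extension, so $L(\mu_5)/L$ has residue degree $2$. The same phenomenon can occur here, since the paper allows $L$ to be ramified of degree $e_L$ over the absolutely unramified subfield $K$. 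Note also that $L'_\infty/L$ is not unramified in the classical sense: its residue extension $k_L(\overline{X}_j^{1/p^n})/k_L$ is purely inseparable, so what you actually prove is $e=1$ with inseparable residue extension (``ferocious'' ramification), and the clash ``totally ramified versus unramified'' is not available. A correct route is to first treat the absolutely unramified field $K$, where $v_p(\zeta_{p^n}-1)=1/(p^{n-1}(p-1))$ forces $e(K(\mu_{p^n})/K)=[K(\mu_{p^n}):K]$, i.e.\ genuine total ramification, while $K(X_1^{1/p^n},\dots,X_{d-1}^{1/p^n})/K$ has $e=1$; this gives $[K_n:K(\mu_{p^n})]=p^{n(d-1)}$ and $G_K\cong\Gamma_K\ltimes H_K$. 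For general $L$ one then needs either a Kummer-theoretic independence argument (e.g.\ $X_i\not\equiv\zeta z^{p}$ for any root of unity $\zeta\in L$ and $z\in L^\times$, since reducing such a relation modulo the maximal ideal would place $\overline{X}_i$ in $k_L^p$), or the abstract observation that $H^2(\Gamma_L,H_L)=0$ (as $\Gamma_L$ is an open subgroup of $\ZZ_p^\times$ and $H_L$ is abelian pro-$p$), so the extension $1\to H_L\to G_L\to\Gamma_L\to1$ splits in any case; the first option is preferable because the paper uses the specific splitting through $L'_\infty$. Your residue-field bookkeeping along $L'_\infty/L$ is fine, but the point it needs to deliver --- that $\overline{X}_j$ stays outside the $p$-th powers of the residue field of $L(\mu_{p^n})$, not merely of $k_L$ --- is precisely what the faulty ramification claim was standing in for.
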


   Let $\gamma$ be a topological generator of $\Gamma$, and for all $1\leq i\leq d$ let $h_i$ be a topological generator of $H_i$. 
         
   \begin{cor}
    For all $1\leq i\leq d-1$, we have
    \begin{equation*}
     \gamma h_i=h_i^{\chi(\gamma)}\gamma.
    \end{equation*}
   \end{cor}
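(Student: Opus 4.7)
The corollary is essentially a direct translation of the lemma. The key observation is that the $\Gamma_L$-module structure on $H^{(i)}_L$ referred to in the lemma is the one coming from conjugation inside $G_L$. Indeed, since $L^{(i)}_\infty$ and $L^{(i-1)}_\infty$ are both Galois over $L$ (as they contain $\mup$), the subgroup $H^{(i)}_L$ is normal in $\Gal(L^{(i)}_\infty/L)$, and through the semidirect product decomposition $G_L \cong \Gamma_L \rtimes H_L$ of the lemma, a lift of $h_i$ to $H_L \subset G_L$ admits a well-defined conjugation action by $\Gamma_L$. The statement $H^{(i)}_L \cong \ZZ_p(1)$ as a $\Gamma$-module therefore says precisely that $\gamma h_i \gamma^{-1} = h_i^{\chi(\gamma)}$, which rearranges to $\gamma h_i = h_i^{\chi(\gamma)} \gamma$.

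If one prefers a direct verification, one uses the identification of $\Gamma_L$ with $\Gal(L_\infty / L'_\infty) \subset G_L$ made in the paper, so that the chosen lift of $\gamma$ fixes every $X_j^{1/p^n}$. One then evaluates both sides of the claimed identity on the generators $X_i^{1/p^n}$ and $\zeta_{p^n}$ of $L^{(i)}_\infty$ over $L^{(i-1)}_\infty$. Using that $h_i$ fixes $L^{(0)}_\infty$ and acts on the Kummer tower by $h_i(X_i^{1/p^n}) = \zeta_{p^n} X_i^{1/p^n}$ (for a suitably normalised generator), a short computation shows that both $\gamma h_i \gamma^{-1}$ and $h_i^{\chi(\gamma)}$ act as multiplication by $\zeta_{p^n}^{\chi(\gamma)}$ on $X_i^{1/p^n}$, fix $\zeta_{p^n}$, and fix $X_j^{1/p^n}$ for $j\ne i$; the equality then follows by density.

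There is no real obstacle: the only thing to be careful about is that the $\Gamma_L$-action appearing in the lemma is the same as the conjugation action in $G_L$ that appears in the corollary, and this is automatic from Galois theory together with the semidirect product decomposition. No computation beyond the Kummer-theoretic description of $H_L$ and the definition of the cyclotomic character is required.
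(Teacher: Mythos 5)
Your proposal is correct and is exactly the argument the paper intends: the corollary is stated without proof as an immediate consequence of the lemma, the point being that the $\Gamma_L$-module structure on $H^{(i)}_L\cong\ZZ_p(1)$ is the conjugation action inside $G_L\cong\Gamma_L\rtimes H_L$, so $\gamma h_i\gamma^{-1}=h_i^{\chi(\gamma)}$. Your supplementary Kummer-theoretic verification is a correct (and slightly more explicit) check of the same fact.
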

   
   The following standard result will be important in Section~\ref{Koszulcohom}
   
   \begin{lem}
    The Iwasawa algebra $\Lambda(H_L)$ is isomorphic to the power series ring $\ZZ_p[[Y_1,\dots,Y_{d-1}]]$.
   \end{lem}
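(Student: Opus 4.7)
The plan is to establish that $H_L \cong \ZZ_p^{d-1}$ as a topological group, and then invoke the standard identification of the Iwasawa algebra of $\ZZ_p^{d-1}$ with the formal power series ring in $d-1$ variables over $\ZZ_p$.

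First I would argue that $H_L$ is abelian. Indeed, $L_\infty$ is obtained from $L_\infty^{(0)} = L(\mu_{p^\infty})$ by adjoining the $p^n$-th roots of $X_1, \dots, X_{d-1}$ for all $n \geq 1$, so Kummer theory (which applies because $\mu_{p^\infty} \subset L_\infty^{(0)}$) realises $H_L$ as a closed subgroup of $\Hom_{\ZZ_p}(A, \mu_{p^\infty})$, where $A$ denotes the $\ZZ_p$-submodule of $L_\infty^{(0) \times} \otimes_{\ZZ} \QQ_p/\ZZ_p$ generated by $X_1, \dots, X_{d-1}$. In particular $H_L$ is an abelian pro-$p$ group. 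Combining this with the previous lemma that each layer $H_L^{(i)} \cong \ZZ_p$, I would then conclude $H_L \cong \ZZ_p^{d-1}$ by induction on $i$: the short exact sequence
\begin{equation*}
 1 \to H_L^{(i)} \to \Gal(L_\infty^{(i)}/L_\infty^{(0)}) \to \Gal(L_\infty^{(i-1)}/L_\infty^{(0)}) \to 1
\end{equation*}
is an extension of finitely generated free $\ZZ_p$-modules and therefore splits.

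For the second step I would fix topological generators $\sigma_1, \dots, \sigma_{d-1}$ of the $d-1$ factors of $H_L$ and define a continuous $\ZZ_p$-algebra homomorphism
\begin{equation*}
 \ZZ_p[[Y_1, \dots, Y_{d-1}]] \to \Lambda(H_L), \qquad Y_i \mapsto \sigma_i - 1.
\end{equation*}
It is well-defined because the $\sigma_i - 1$ lie in the maximal ideal of the complete local ring $\Lambda(H_L)$ and commute. Bijectivity follows by reducing modulo $(p^n, Y_1^{p^m}, \dots, Y_{d-1}^{p^m})$ and matching with the classical group-ring identification $(\ZZ/p^n)[H_L/H_L^{p^m}] \cong (\ZZ/p^n)[Y_i]/(Y_i^{p^m})_i$ under the change of variables $\sigma_i \leftrightarrow 1 + Y_i$, and then passing to the inverse limit.

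The main obstacle, I expect, is the first step: the splittings at each stage rely on the $\ZZ_p$-freeness of the preceding layer, which rests on the already-established isomorphisms $H_L^{(i)} \cong \ZZ_p$. Once that structural information is in hand, the power-series identification is a classical computation reflecting the fact that $\Lambda$ turns direct products of pro-$p$ groups into completed tensor products of Iwasawa algebras.
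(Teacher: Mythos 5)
Your proof is correct, and it is the standard argument: the paper itself offers no proof at all, labelling this a ``standard result,'' so there is nothing to compare against. Your two steps --- Kummer theory over $L^{(0)}_\infty \supset \mu_{p^\infty}$ to see that $H_L$ is abelian, the splitting of the successive extensions by projectivity of free $\ZZ_p$-modules to get $H_L\cong\ZZ_p^{d-1}$, and then Serre's identification $\sigma_i\mapsto 1+Y_i$ via finite quotients --- are exactly the intended justification.
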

   
   \noindent {\bf Note.} If $x\in\Lambda(H_L)$, say $x=\sum_{I\in\ZZ^{d-1}} a_I{\bf Y}^I$ where for $I=(i_1,\dots,i_{d-1})$ we write ${\bf Y}^I$ for $Y_1^{i_1}\dots Y_{d-1}^{i_{d-1}}$, then $x$ is invertible in $\Lambda(H_L)$ if and only if $a_0\in\ZZ_p^\times$. 
   \vs
   
   We write $\frak{M}$ for the unique maximal ideal of $\Lambda(H_L)$.



 \section{Theory of $p$-adic representations}
 

  \subsection{Fields of norms}\label{fields_of_norms}
  
   We follow the construction in~\cite{andreatta06} and~\cite{scholl06}. Then the tower $(K_n)_{n\geq 1}$ is strictly deeply ramified, so there exists $n_0\in\NN$ and $0<|\xi|_K<1$ such that for all $n\geq n_0$ the $p$-power map $O_{K_{n+1}}\slash\xi\rightarrow O_{K_n}\slash \xi$ is a surjection.
   \vs
    
   \noindent {\bf Definition.} Let $\EE^+_K=\varprojlim_{n\geq n_0} O_{K_n}\slash\xi$, where the inverse limit is taken with respect to the $p$-power maps.
   \vs
   
   Then $\EE_K^+$ is a complete discrete valuation ring of characteristic $p$ independent of $n_0$ and $\xi$. Let $\bar{\pi}$ be a uniformizer of $\EE_K^+$, and let $\EE_K=\EE_K^+[\bar{\pi}^{-1}]$ be the fraction field of $\EE_K^+$. We call $\EE_K$ the field of norms of the tower $(K_n)$. Note that $\EE_K$ is quipped with a natural action of $G_K$ which commutes with the Frobenius operator $\phi$. Let $v_{\EE}$ be the discrete valuation on $\EE_K$ normalised by $v_{\EE}(\bar{\pi})=1$. 

   One can show (c.f. Section 4.3 in~\cite{andreatta06}) that $\EE_K^+$ is a subring of the ring $\tilde{\EE}^+_K$ which is defined as follows: Let $\widehat{K_\infty}$ be the $p$-adic completion of $K_\infty$, and define $\tilde{\EE}_K^+$ to be the ring
   \begin{equation*}
    \tilde{\EE}_K^+=\big\{(x^{(0)},x^{(1)},\dots,x^{(i)},\dots)\mid x^{(i)}\in\widehat{K_\infty},\big(x^{(i+1)}\big)^p=x^{(i)}\big\},
   \end{equation*}
   where the transition maps are defined by raising to the $p$th power, the multiplicative structure is induced by the one on $\widehat{K_\infty}$ and the additive structure is defined as
   \begin{equation*}
    (\dots,x^{(i)},\dots)+(\dots,y^{(i)},\dots)=\big(\dots,\lim_{in\rightarrow +\infty}(x^{(i+n)}+y^{(i+n)})^{p^{i+n}},\dots\big).
   \end{equation*}
   Let $\tilde{\EE}^+=\varprojlim O_{\CC_K}$, where the inverse is given by the $p$-power map. Then $\tilde{\EE}^+$ is a complete valuation ring, with the ring structure defined as above, of characteristic $p$. Let $\tilde{\EE}$ be its field of fractions. Extend $v_{\EE}$ to a map $v_{\EE}:\tilde{\EE}\rightarrow \RR\cup\{+\infty\}$ by
   \begin{equation*}
    v_{\EE}(x)=\max\big\{n\in\QQ\mid x\in\bar{\pi}^{n}\tilde{\EE}^+\big\}.
   \end{equation*}
   For every finite extension $K'$ of $K$, $\EE_{K'}$ is a finite separable extension of $\EE_K$ of degree $[K'_\infty:K_\infty]$. Let $\EE=\bigcup_{K'}\EE_{K'}\subset\tilde{\EE}$. 
   
   \begin{thm}
    There is an isomorphism of toplogical groups
    \begin{equation*}
     \Gal(\EE\slash\EE_K)\cong \Gal(\bar{K}\slash K_\infty).
    \end{equation*}
   \end{thm}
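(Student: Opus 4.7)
The plan is to apply the Fontaine--Wintenberger norm field correspondence, as adapted to strictly deeply ramified towers with imperfect residue field by Scholl \cite{scholl06} and Andreatta \cite{andreatta06}. The preceding discussion already records the crucial input: for every finite extension $K'/K$ inside $\bar K$, the norm field $\EE_{K'}$ is a finite separable extension of $\EE_K$ of degree $[K'_\infty : K_\infty]$. What remains is to organise this into the stated isomorphism of profinite groups.

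First, I would verify that the natural action of $\Gal(\bar K/K)$ on the tower $(K'_n)_n$ descends through the construction $\EE^+_{K'} = \varprojlim_n O_{K'_n}/\xi$ to a continuous action on $\EE_{K'}$, and hence on $\EE = \bigcup_{K'} \EE_{K'}$. Since $\Gal(\bar K/K_\infty)$ acts trivially on each $O_{K_n}/\xi$, it fixes $\EE_K$ pointwise, yielding a continuous homomorphism $\Gal(\bar K/K_\infty) \to \Gal(\EE/\EE_K)$.

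Second, I would show this homomorphism is bijective by checking it on each finite quotient. For a finite Galois $K'/K$ inside $\bar K$, the degree formula gives that $\Gal(\EE_{K'}/\EE_K)$ and $\Gal(K'_\infty/K_\infty)$ have the same finite order, so the induced map between them is bijective provided it is injective. Injectivity reduces to the statement that a non-trivial element of $\Gal(K'_\infty/K_\infty)$ acts non-trivially on $O_{K'_n}/\xi$ for $n$ sufficiently large, hence non-trivially on the inverse limit $\EE_{K'}^+$. Passing to the inverse limit over all such $K'$ produces the bijection $\Gal(\bar K/K_\infty) \cong \Gal(\EE/\EE_K)$, and this is automatically a homeomorphism because both sides are the inverse limit of the same projective system of finite groups.

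The main obstacle is the injectivity on each finite level: one must know that the reduction $O_{K'_n} \to O_{K'_n}/\xi$ loses no Galois-theoretic information for large $n$. This is precisely what the strictly deeply ramified hypothesis on the tower $(K_n)$, together with its preservation under finite base change by $K'/K$, guarantees, via uniform control of the different ideals of $K'_n/K_n$. Establishing this control in the imperfect-residue-field setting is the technical heart of the argument carried out in \cite{scholl06} and \cite{andreatta06}, and a self-contained proof here would essentially reproduce those arguments; in practice I would simply cite them.
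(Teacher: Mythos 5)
Your proposal is correct and amounts to the same thing the paper does: the paper's entire proof is a citation of Corollary 6.4 in~\cite{andreatta06}, and your sketch simply unpacks how that result (the degree formula $[\EE_{K'}:\EE_K]=[K'_\infty:K_\infty]$ plus faithfulness of the norm-field functor, both established in~\cite{scholl06} and~\cite{andreatta06}) assembles into the stated isomorphism before deferring to the same references for the technical core.
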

   \begin{proof}
    Corollary 6.4 in~\cite{andreatta06}.
   \end{proof}
   
   For $K'$ a finite extension of $K$, let $\EE_{K'}=\EE^{\Gal(\bar{K}\slash K'_\infty)}$. Note that $\EE=\bigcup_{K'} \EE_{K'}$. 
   \vs
   
   For all $n\geq 1$ let $\varepsilon^{(n)}$ be a primitive $p^n$th root of unity such that $\big(\varepsilon^{(n)}\big)^p=\varepsilon^{(n-1)}$. Define  $\varepsilon=(1,\varepsilon^{(1)},\varepsilon^{(2)},\dots)$ and $x_i=(X_i,X_i^{\frac{1}{p}},\dots)$ for $1\leq i\leq d-1$, which are elements in $\EE_K^+$. One can show that $\varepsilon-1$ is a uniformizer of $\EE_K$. From now on, we let $\bar{\pi}=\varepsilon-1$. 
   
   \begin{prop}
    We have $\EE_K\cong k_K((\bar{\pi}))$. The isormorphism is given by identifying $x_i$ with the element $\overline{X_i}$. 
   \end{prop}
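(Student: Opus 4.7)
The plan is to invoke the Cohen structure theorem once we have identified the residue field of $\EE_K^+$ with $k_K$. Since the excerpt already establishes that $\EE_K^+$ is a complete discrete valuation ring of characteristic $p$ with uniformiser $\bar\pi$, Cohen's theorem yields an isomorphism $\EE_K^+\cong\kappa[[T]]$, where $\kappa$ denotes the residue field; inverting $\bar\pi$ then produces $\EE_K\cong\kappa((\bar\pi))$. It therefore suffices to check that $\kappa\cong k_K$ canonically and that a Cohen coefficient-field section can be chosen to send $\overline{X_i}$ to $x_i$.

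For the residue field identification, the key computation exploits the $p$-basis property of $\{\overline{X_1},\dots,\overline{X_{d-1}}\}$. By iterating $k_K=k_K^p(\overline{X_1},\dots,\overline{X_{d-1}})$ one obtains $k_K=k_K^{p^n}(\overline{X_1},\dots,\overline{X_{d-1}})$ for every $n\geq 0$, and raising to the $p^n$-th power in $k_{K_n}=k_K(\overline{X_1}^{1/p^n},\dots,\overline{X_{d-1}}^{1/p^n})$ gives $k_{K_n}^{p^n}=k_K$. Consequently every $\alpha\in k_K$ admits a unique compatible system of $p^n$-th roots in $k_{K_n}$, so the projection $\varprojlim_n k_{K_n}\to k_K$ onto the $0$-th component (with Frobenius transition) is a bijection. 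Composing componentwise reduction $O_{K_n}/\xi\to k_{K_n}$ with this projection yields a ring homomorphism $\EE_K^+\to k_K$, surjective because the $p$-power maps $O_{K_{n+1}}/\xi\to O_{K_n}/\xi$ are surjective for $n\geq n_0$, allowing any lift at level $n_0$ to be extended to a compatible system. This surjection factors through $\kappa=\EE_K^+/\bar\pi\EE_K^+$, giving a surjective ring homomorphism $\kappa\twoheadrightarrow k_K$; as a map between fields it is automatically injective, whence $\kappa\cong k_K$.

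To conclude, $x_i$ maps to $\overline{X_i}$ under this identification since its $n$-th coordinate $X_i^{1/p^n}$ reduces to $\overline{X_i}^{1/p^n}\in k_{K_n}$, whose image under projection to the $0$-th component is $\overline{X_i}$. Choosing the Cohen coefficient-field section $k_K\hookrightarrow\EE_K^+$ so that $\overline{X_i}\mapsto x_i$ (exploiting the flexibility of such sections in the imperfect residue field case to match any prescribed system of lifts of a $p$-basis) yields the isomorphism $k_K[[T]]\xrightarrow{\sim}\EE_K^+$ sending $T\mapsto\bar\pi$, and the claim follows on inverting $\bar\pi$. The main obstacle lies in the residue field identification: specifically the surjectivity argument, which relies on the strict deep ramification property of the tower $(K_n)$ that guarantees the $p$-power maps $O_{K_{n+1}}/\xi\to O_{K_n}/\xi$ to be surjective for $n$ sufficiently large.
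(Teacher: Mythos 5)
The paper does not prove this proposition at all --- it simply cites Section 2.3 of Scholl's paper --- so your argument cannot be compared line-by-line with an in-text proof; it has to stand on its own, and it does. The structure is the right one: Cohen's structure theorem for equicharacteristic complete discrete valuation rings reduces everything to identifying the residue field $\EE_K^+/\bar{\pi}\EE_K^+$ with $k_K$ and to choosing a coefficient field containing the prescribed lifts $x_i$ of the $p$-basis, and the refined form of Cohen's theorem (a coefficient field can be chosen through any given lift of a $p$-basis) is exactly what is needed to get the normalisation $x_i\leftrightarrow\overline{X_i}$ rather than a bare abstract isomorphism. Your residue-field computation is correct: uniqueness of $p$-th roots in characteristic $p$ gives injectivity of the projection $\varprojlim_n k_{K_n}\to k_K$, the identity $k_{K_n}^{p^n}=k_K$ gives surjectivity, and the surjectivity of the transition maps $O_{K_{n+1}}/\xi\to O_{K_n}/\xi$ lets you lift compatible systems, so the induced map $\kappa\to k_K$ is a surjection of fields, hence an isomorphism, with kernel necessarily the maximal ideal $\bar{\pi}\EE_K^+$ since $\EE_K^+$ is local.

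Two small points you assert without justification. First, $k_{K_n}=k_K\bigl(\overline{X_1}^{1/p^n},\dots,\overline{X_{d-1}}^{1/p^n}\bigr)$: this needs the observation that $p$ is a uniformizer of $K$, so $K(\mu_{p^n})/K$ is totally ramified of degree $p^{n-1}(p-1)$, and then the inequality $ef\leq[K_n:K]\leq p^{n-1}(p-1)\cdot p^{n(d-1)}$ combined with the visible lower bounds on $e$ and $f$ forces the residue extension to be exactly the purely inseparable one you claim (in particular there is no defect). Second, the inverse limit in the definition of $\EE_K^+$ runs over $n\geq n_0$, so the ``projection onto the $0$-th component'' should be read as $x\mapsto x_{n_0}^{p^{n_0}}$; this is cosmetic. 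Neither point is a gap in substance, and the proof is acceptable as a self-contained replacement for the citation.
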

   \begin{proof}
    See Section 2.3 in~\cite{scholl06}.
   \end{proof}


  \subsection{$(\phi,G_K)$-modules}\label{eqcat}

   Define $\tilde{\AA}_K=W(\tilde{\EE}_K)$ (resp. $\tilde{\AA}$) to be the ring of Witt vectors of $\tilde{\EE}_K$ (resp. $\tilde{\EE}$). Then $\tilde{\AA}$ is equipped with two topologies - the strong $p$-adic topology, and the weak topology which is defined as follows: consider on $\tilde{\EE}$ the topology having $\{\bar{\pi}^n\tilde{\EE}^+\}$ as a fundamental system of neighbourhoods of $0$. On the truncated Witt vectors $W_m(\tilde{\EE})$ we consider the product topology via the isomorphism $W_m(\tilde{\EE})\cong (\tilde{\EE})^m$. The weak topology is then defined as the projective limit topology $W(\tilde{\EE})=\varprojlim_m W_m(\tilde{\EE})$. 
    
   \begin{prop}\label{base}
    There exists a subring $\AA_K$ of $\tilde{\AA}_K$ which is complete and separated for the weak topology and is stable under the actions of $G_K$ and of Frobenius $\phi$ and such that $\AA_K\slash p\AA_K\cong\EE_K$. 
   \end{prop}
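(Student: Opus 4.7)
The plan is to construct $\AA_K$ explicitly by lifting the uniformizer $\bar\pi$ and the $p$-basis elements $x_i$ from $\EE_K^+$ to $\tilde\AA^+=W(\tilde\EE^+)$ via Teichm\"uller representatives, and then taking an appropriate completion of the subring they generate.

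First I would set $\pi=[\varepsilon]-1$ and $X_i^\flat=[x_i]$ in $\tilde\AA^+$; by construction these reduce mod $p$ to $\bar\pi=\varepsilon-1$ and $x_i\in\EE_K^+$. Noting that the Witt-vector ring $W(k_K)$ embeds canonically in $\tilde\AA$ (via the residue field of $K$, which is perfect or not---if $k_K$ is imperfect we use instead the ring $O_K^{\text{ur}}$ obtained by adjoining all Teichm\"uller lifts, but in the paper's setup the residue field of the ground field embeds naturally), I would introduce the subring $\AA_K^0\subset\tilde\AA$ generated by $W(k_K)$, the elements $X_1^\flat,\dots,X_{d-1}^\flat$, and $\pi^{\pm 1}$. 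Then I would define $\AA_K$ to be the closure of $\AA_K^0$ inside $\tilde\AA$ with respect to the weak topology, which is automatically complete and separated for that topology.

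The stability under $\phi$ is the easy check: $\phi$ acts as Witt-Frobenius on $W(k_K)$, and $\phi([a])=[a^p]$ on Teichm\"uller lifts, so $\phi(X_i^\flat)=(X_i^\flat)^p$ and $\phi(\pi)=(1+\pi)^p-1$, both of which lie in $\AA_K^0$; hence $\phi(\AA_K^0)\subset\AA_K^0$ and therefore $\phi(\AA_K)\subset\AA_K$ by continuity. For stability under $G_K$ I would compute the action on each of the chosen lifts: for $\sigma\in\Gamma_L\cap G_K$, the formula $\sigma(\pi)=(1+\pi)^{\chi(\sigma)}-1$ is a convergent power series in $\pi$; for $h\in H_L^{(i)}$, the action on $x_i=(X_i,X_i^{1/p},\dots)$ multiplies each component by an appropriate $p^n$-th root of unity, giving $h(X_i^\flat)=[\varepsilon]^{\eta_i(h)}X_i^\flat=(1+\pi)^{\eta_i(h)}X_i^\flat$, again in the weak completion; and the other generators fix $X_i^\flat$ or act on $\pi$ only. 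Since the actions of the topological generators all land in $\AA_K$, so does the whole $G_K$-action.

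Finally, for the reduction mod $p$, I would observe that the reduction of $\AA_K^0$ modulo $p$ lands in $\EE_K$ and contains $\bar\pi,\bar\pi^{-1}$ and the $x_i$, which by the proposition identifying $\EE_K\cong k_K((\bar\pi))$ generate a dense subring; taking the weak-topology closure (which on the mod-$p$ level is the $\bar\pi$-adic completion) gives exactly $\EE_K$, and a standard argument using the $p$-adic filtration plus the fact that $\AA_K^0$ is $p$-torsion free inside $\tilde\AA$ promotes this to the isomorphism $\AA_K/p\AA_K\cong\EE_K$. The main obstacle I anticipate is the last of these verifications: one must be careful that the weak-topology completion does not add any extra elements modulo $p$ beyond $\EE_K$, and that the coefficient ring for the Witt lifts is chosen compatibly with the imperfect residue field $k_K$ so that $\AA_K$ is really a Cohen-ring style lift of $\EE_K$ rather than of some larger perfectoid object; this is exactly where the general theory of fields of norms with imperfect residue fields (as in Andreatta--Iovita and Scholl) is needed to control the construction.
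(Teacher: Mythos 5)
Your overall plan---lift $\bar{\pi}=\varepsilon-1$ and the $x_i$ to $\tilde{\AA}$ via $\pi=[\varepsilon]-1$ and $T_i=[x_i]$, generate a subring, complete, and check stability of $\phi$ and $G_K$ on topological generators---is exactly the explicit description the paper records immediately after the statement ($\AA_K\cong O_K((\pi))^\wedge$ via $X_i\mapsto T_i$); the paper itself proves nothing and simply cites Appendix C of Andreatta and Section 2.3 of Scholl. So the shape of your argument is right, and your verifications for $\phi$, for $\Gamma_L$ acting on $\pi$, and for $H^{(i)}_L$ acting on $T_i$ are correct as far as they go.

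The genuine gap is the coefficient ring, and it is not a technicality you can defer: it is the entire content of the cited appendix. For $d>1$ the residue field $k_K=k_L$ is imperfect, and then $W(k_K)$ is not a Cohen ring of $k_K$ (for imperfect $A$ one has $W(A)/pW(A)\neq A$, since $p\neq V(1)$ there), so "generated by $W(k_K)$, the $T_i$ and $\pi^{\pm1}$" does not produce a ring whose reduction mod $p$ is $k_K((\bar{\pi}))$. Your fallback of "adjoining all Teichm\"uller lifts" of $k_K$ is also unavailable: an arbitrary element of $k_K$ has no distinguished compatible system of $p$-power roots in the tower $(K_n)$ (only the $\overline{X_i}$ do, by construction of the tower), so it has no Teichm\"uller representative in $\tilde{\EE}^+_K$, let alone a $G_K$-equivariant one. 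What is actually required is an embedding of the full Cohen ring $O_K$ into $\tilde{\AA}^+_K$ extending $X_i\mapsto T_i$, reducing mod $p$ to $k_K\hookrightarrow\EE_K^+$, and stable under $\phi$ and $G_K$; constructing such an embedding (and checking that the Frobenius lift determined on the $p$-basis is compatible with the Witt--Frobenius of $\tilde{\AA}$) is precisely what Andreatta's Appendix C does. Without it, the mod-$p$ reduction of your $\AA_K^0$ generates only the closure of $\FF_p(\overline{X_1},\dots,\overline{X_{d-1}})((\bar{\pi}))$ inside $k_K((\bar{\pi}))$, which is strictly smaller whenever $k_K$ is not generated over $\FF_p$ by the $p$-basis, and the isomorphism $\AA_K/p\AA_K\cong\EE_K$ fails. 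Once that embedding is granted, the remaining steps you outline (completeness and separatedness of the weak closure, agreement of the weak closure with the $p$-adic completion of $O_K((\pi))$, and the mod-$p$ identification) do go through as you describe.
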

   \begin{proof}
    Appendix C in~\cite{andreatta06}.
   \end{proof}
   
   Explicitly, $\AA_K$ can be constructed as follows: for $1\leq i\leq d-1$, let $T_i=[x_i]$ be the Teichm\"uller representative of $x_i$. Also, let $\pi=[\varepsilon]-1$. Then as shown in Section 2.3 in~\cite{scholl06}, the ring $\AA_K$ is isomorphic to the $p$-adic completion $O_K((\pi))^\wedge$ of $O_K((\pi))$ via identifying $X_i$ with $T_i$. Define $\AA_K^+=O_K[[\pi]]$.
   \vs
    
   \noindent {\bf Definition.} For a finite extension $K'$ of $K$, define $\AA_{K'}$ to be the unique $\AA_K$-algebra lifting the finite \'etale extension $\EE_{K'}$ over $\EE_K$. Let $\AA$ be the closure of $\bigcup_{K'} \AA_{K'}$ in $\tilde{\AA}$ for the $p$-adic topology.
   \vs
   
   \begin{prop}
    For every finite extension $K'$ of $K$, $\AA_{K'}$ is complete and separated for the weak topology.
   \end{prop}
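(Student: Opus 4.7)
The plan is to exploit the finite \'etale structure of $\AA_{K'}/\AA_K$ to realize $\AA_{K'}$ as a finite free $\AA_K$-module whose product weak topology coincides with the subspace weak topology inherited from $\tilde{\AA}$. Completeness and separatedness will then transfer from $\AA_K$ via Proposition~\ref{base}.

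First I would show that $\AA_{K'}$ is free of rank $n := [\EE_{K'}:\EE_K]$ as an $\AA_K$-module. Since $\AA_{K'}$ is by definition a finite \'etale $\AA_K$-algebra and $\AA_K$ is local, any finitely generated projective module over it is free, and the rank is determined by reduction mod $p$ from $\EE_{K'}/\EE_K$. Fixing an $\AA_K$-basis $e_1,\dots,e_n$ of $\AA_{K'}$ gives an $\AA_K$-linear isomorphism $\varphi: \AA_K^n \xrightarrow{\sim} \AA_{K'}$. When the source is equipped with the product weak topology, it is automatically complete and separated, so it suffices to show that $\varphi$ is a homeomorphism onto its image.

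Continuity of $\varphi$ is immediate, since the weak topology on $\tilde{\AA}$ makes addition and multiplication by fixed elements continuous. The hard direction, and the main obstacle of the proof, is continuity of $\varphi^{-1}$: given $x_k = \sum_i a_{i,k} e_i \to 0$ in $\AA_{K'}$, one must recover convergence of the coordinates $a_{i,k} \in \AA_K$. For this I would use the trace pairing $\Tr_{\AA_{K'}/\AA_K}$: \'etaleness implies that this pairing is perfect, so there exists a dual $\AA_K$-basis $(e_i^*)$ of $\AA_{K'}$ satisfying $a_{i,k} = \Tr_{\AA_{K'}/\AA_K}(x_k e_i^*)$. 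The remaining technical point is the continuity of the trace map for the weak topology; I would handle this either by writing $\Tr$ as a sum of Galois conjugates after passing to a Galois closure inside $\tilde{\AA}$ (each conjugation being continuous as it comes from the $\Gal(\bar K/K)$-action on $\tilde{\AA}$), or by expressing $\Tr(x)$ via the multiplication-by-$x$ endomorphism on the free $\AA_K$-module $\AA_{K'}$, whose matrix entries depend polynomially on the coordinates of $x$ and are therefore weakly continuous in $x$.

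Once $\varphi^{-1}$ is continuous, $\varphi$ is a topological $\AA_K$-module isomorphism, and the completeness and separatedness of $\AA_{K'}$ follow directly from the corresponding properties of $\AA_K^n$, which in turn reduce to Proposition~\ref{base}.
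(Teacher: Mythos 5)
Your argument is essentially correct, but it takes a much longer route than the paper, which disposes of the proposition with ``immediate from Proposition~\ref{base}'': the point there is that $K'$ is again a complete discrete valuation field of the same type, so the construction of Appendix C of~\cite{andreatta06} applies verbatim to $K'$ and produces a ring complete and separated for the weak topology, which coincides with $\AA_{K'}$ by the uniqueness of the \'etale lift of $\EE_{K'}$ over $\EE_K$. Your alternative --- writing $\AA_{K'}$ as a finite free $\AA_K$-module (legitimate, since $\AA_K$ is a $p$-adically complete local ring and a finite \'etale algebra over it is finite projective, hence free) and transferring completeness through a bicontinuous coordinate isomorphism --- is a genuinely different and self-contained argument, and the use of the dual basis for the trace pairing to get continuity of $\varphi^{-1}$ is the right idea. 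One caveat: of your two proposed proofs that $\Tr_{\AA_{K'}/\AA_K}$ is weakly continuous, only the first (sum of Galois conjugates, each acting isometrically on $\tilde\AA$ by property (5) of $v_{\EE}^{\leq k}$) is valid. The second is circular: expressing the multiplication-by-$x$ matrix ``polynomially in the coordinates of $x$'' presupposes that the coordinate functions $x\mapsto a_i$ are weakly continuous, which is exactly what the continuity of $\varphi^{-1}$ is supposed to establish. With the Galois-conjugate argument retained and the second option dropped, the proof goes through; what it buys over the paper's one-line reduction is independence from the black-boxed Appendix C applied to $K'$, at the cost of invoking perfectness of the trace form for finite \'etale algebras.
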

   \begin{proof}
    Immediate from Proposition~\ref{base}.
   \end{proof}
   
   \begin{cor}\label{complete}
    As a toplogical group, $\AA_{K'}$ is the projective limit of additive discrete $p$-groups.
   \end{cor}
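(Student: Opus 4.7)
\medskip

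\noindent\textbf{Proof proposal.} The plan is to unwind the definition of the weak topology on $W(\tilde{\EE})$, which is a projective limit topology by construction, and transport this structure to the closed subgroup $\AA_{K'}$ using the completeness and separatedness statement of the preceding Proposition.

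First I would recall that the weak topology on $W(\tilde{\EE})$ was defined so that
\begin{equation*}
 W(\tilde{\EE}) = \varprojlim_{m} W_m(\tilde{\EE})
\end{equation*}
as topological groups, where $W_m(\tilde{\EE})$ carries the product topology coming from the bijection $W_m(\tilde{\EE}) \cong \tilde{\EE}^{\,m}$ and the $\bar{\pi}$-adic topology on $\tilde{\EE}$. Since $\tilde{\EE}$ is perfect, $W_m(\tilde{\EE}) = W(\tilde{\EE})/p^m W(\tilde{\EE})$, so the outer projective limit is just the $p$-adic one with extra topology on each quotient. Because $\AA_{K'}$ is complete and separated for the induced weak topology, restricting this description gives a topological isomorphism
\begin{equation*}
 \AA_{K'} \;\cong\; \varprojlim_{m} \AA_{K'}/p^m\AA_{K'},
\end{equation*}
where each quotient inherits the product topology via Witt coordinates in $\AA_{K'}/p\AA_{K'} \cong \EE_{K'}$ with its $\bar{\pi}$-adic topology.

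Next I would iterate the projective limit inside each factor. For each pair $(m,n)$, let $U_{m,n} \subset \AA_{K'}$ denote the preimage of the open subgroup of $W_m(\tilde{\EE}_{K'})$ whose Witt coordinates all lie in $\bar{\pi}^n\tilde{\EE}^+_{K'}$; these form a fundamental system of neighbourhoods of $0$ in the weak topology on $\AA_{K'}$. Since $\AA_{K'}$ is complete and separated, we obtain
\begin{equation*}
 \AA_{K'} \;\cong\; \varprojlim_{m,n}\, \AA_{K'}/U_{m,n},
\end{equation*}
and each quotient on the right is discrete (by construction) and annihilated by $p^m$, hence a discrete $p$-primary abelian group. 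This exhibits $\AA_{K'}$ as a projective limit of discrete $p$-groups, as required.

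The only nontrivial step is checking that the $U_{m,n}$ really do form a neighbourhood basis of $0$ in $\AA_{K'}$ and that $\AA_{K'}/U_{m,n}$ is genuinely discrete; this is where I expect the main (though still mild) obstacle to lie, since one must confirm that the subspace topology on $\AA_{K'}\subset\tilde{\AA}_{K'}$ induced from the weak topology on $\tilde{\AA}_{K'}$ coincides with its intrinsic projective limit description. This is essentially the content of the construction recalled from Appendix~C of~\cite{andreatta06}, so the corollary reduces to a formal unwinding of definitions.
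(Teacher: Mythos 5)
Your proposal is correct and follows essentially the same route as the paper: exhibit a fundamental system $U_{m,n}$ of open subgroups of $0$ for the weak topology, observe that each quotient $\AA_{K'}/U_{m,n}$ is a discrete group killed by a power of $p$, and invoke the completeness and separatedness from the preceding Proposition to conclude $\AA_{K'}\cong\varprojlim\AA_{K'}/U_{m,n}$. The only cosmetic difference is that the paper writes the neighbourhoods explicitly as $U_{m,n}=p^n\AA_{K'}+\pi^m\AA_{K'}^+$ rather than via Witt coordinates, which gives a cofinal system equivalent to yours.
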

   \begin{proof}
    A basis of neighbourhoods of $0$ in the weak topology is given by $U_{m,n}=p^n\AA_{K'}+\pi^m\AA_{K'}^+$ for $m,n\geq 0$. Then $\AA_{K'}\Mod U_{m,n}$ has the discrete topology, and $\AA_{K'}=\varprojlim \AA_{K'}\slash U_{m,n}$.
   \end{proof}

   By construction, $\AA$ is equipped with an action of $\calH_K$ which commutes with $\phi$, and one can show (c.f. Proposition 7.9 in~\cite{andreatta06}) that for any finite extension $K'$ of $K$ one has $\AA^{\calH_{K'}}=\AA_{K'}$. Let $\BB=\AA[p^{-1}]$ and $\BB_{K'}=\AA_{K'}[p^{-1}]$.
   \vs
   
   \noindent {\bf Definition.} Let $\Rep(\calG_L)$ (resp. $\Rep_{\ZZ_p}(\calG_L)$) be the abelian tensor category of finite dimensional $\QQ_p$-vector spaces (resp. finitely generated $\ZZ_p$-modules) with a continuous action of $\calG_L$. Let $(\phi,G_L)-\MOD_{\BB_L}$ (resp. $(\phi,G_K)-\MOD_{\AA_L}$) be the abelian tensor category of finite dimensional vector spaces $D$ over $\BB_L$ (resp. finitely generated $\AA_L$-modules) equipped with 
   
   (i) a semi-linear action of $G_L$;
   
   (ii) a semi-linear action of a homomorphism $\phi$ commuting with $G_L$. 
   
   For a $p$-adic representation $V\in\Rep(\calG_L)$ (resp. $T\in\Rep_{\ZZ_p}(\calG_L)$), define 
   \begin{equation*}
    \DD(V)=\big(V\otimes_{\ZZ_p}\AA\big)^{\calH_L}  \text{\hspace{4ex} (resp. $\DD(T)=\big(T\otimes_{\ZZ_p}\AA\big)^{\calH_L}$)}.
   \end{equation*}
   Then $\DD(V)$ is a finite dimensional vector space over $\BB_L$ (resp. an \'etale finitely generated $\AA_K$-module) endowed with a semi-linear action of $G_L=\calG_L\slash\calH_L$. The homomorphism $\phi$ on $\AA$ defines a semi-linear action of $\phi$ on $\DD(V)$ commuting with the action of $G_L$. 
   
   For an object $D\in(\phi,G_L)-\MOD_{\BB_L}$, define
   \begin{equation*}
    \calV(D)=\big(\BB\otimes_{\BB_L}D\big)^{\phi=1} \text{\hspace{4ex} (resp. $\calV(D)=\big(\AA\otimes_{\AA_L}D\big)^{\phi=1}$)},
   \end{equation*}
   which is a finite-dimensional $\QQ_p$-vector space (resp. finitely generated $\ZZ_p$-modules) with a continuous action of $\calG_L$ induced from the action of $\calG_L$ on $\AA$ and the action of $G_L$ on $D$. 
   \vs
   
   \noindent {\bf Definition.} An element $D\in (\phi,G_L)-\MOD_{\AA_L}$ (resp. $D\in (\phi,G_L)-\MOD_{\BB_L}$) is \'etale if $D$ is generated by $\phi(D)$ as an $\AA_L$-module (resp. if $D$ contains an \'etale $\AA_L$-sublattice). Denote by $(\phi,G_L)-\MOD^{\et}_{\AA_L}$ (resp. $(\phi,G_L)-\MOD^{\et}_{\BB_L}$) the category of \'etale $(\phi,G_L)$-modules over $\AA_L$ (resp. $\BB_L$).
   
   \begin{thm}
    The functors $\DD$ and $\calV$ are inverse to each other and define an equivalence of abelian tensor categories between $\Rep(\calG_L)$ and $(\phi,G_L)-\MOD^{\et}_{\BB_L}$ (resp. between $\Rep_{\ZZ_p}(\calG_L)$ and $(\phi,G_L)-\MOD^{\et}_{\AA_L}$).
   \end{thm}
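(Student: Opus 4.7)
The statement is a generalisation of Fontaine's classical equivalence between $\ZZ_p$-representations of $\calG_K$ and étale $(\phi,\Gamma_K)$-modules to the imperfect residue field setting, and it is established (in essentially the form required here) in Andreatta~\cite{andreatta06} and Andreatta--Iovita. My plan is to follow the standard Fontaine-style descent argument, adapted to the higher-dimensional Galois group $\calH_L$.

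The first move is to reduce the rational statement to the integral one. Given $V\in\Rep(\calG_L)$, choose a $\calG_L$-stable $\ZZ_p$-lattice $T\subset V$; then $\DD(V)=\DD(T)[p^{-1}]$, and the étaleness condition for $\BB_L$-modules is defined precisely so that it amounts to the existence of an étale $\AA_L$-lattice inside $D$. Thus it suffices to prove the equivalence between $\Rep_{\ZZ_p}(\calG_L)$ and étale $(\phi,G_L)$-modules over $\AA_L$, and then verify compatibility with tensor products and inversion of $p$.

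The heart of the argument is the following descent statement: for every $T\in\Rep_{\ZZ_p}(\calG_L)$ the natural map
\begin{equation*}
 \alpha_T\colon \AA\otimes_{\AA_L}\DD(T)\rTo T\otimes_{\ZZ_p}\AA
\end{equation*}
is an isomorphism of $\AA$-modules carrying commuting semi-linear actions of $\calG_L$ and $\phi$. I would first establish $\alpha_T$ for $T$ killed by $p$, i.e.\ for finitely generated $\EE$-modules with semi-linear $\calH_L$-action; the key input is that $\EE_L=\EE^{\calH_L}$ and that $\calH_L$ acts discretely on the ring $\EE/\bpi^m$, so a standard continuity and Hilbert~90 argument (using Proposition~\ref{base} together with the fields-of-norms identification $\calH_L\cong\Gal(\EE\slash\EE_L)$) trivialises the $\calH_L$-action after base change to $\EE$. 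One then lifts this modulo $p^n$ by induction, using the five-lemma applied to the $p$-adic filtration on $T$, and finally passes to the limit using that $\AA_L$ is $p$-adically complete and separated (Proposition~\ref{base} and Corollary~\ref{complete}). Once $\alpha_T$ is known to be an isomorphism, taking $\calH_L$-invariants gives that $\DD(T)$ is finitely generated over $\AA_L$ of $\AA_L$-rank equal to $\mathrm{rk}_{\ZZ_p}T$, while taking $\phi$-invariants and using $\AA^{\phi=1}=\ZZ_p$ yields $\calV(\DD(T))=T$.

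For the reverse direction, given an étale $D\in(\phi,G_L)\text{-}\MOD^{\et}_{\AA_L}$, I would show that $\AA\otimes_{\AA_L}D$ admits a basis of $\phi$-invariant elements. Étaleness implies that multiplication by $\phi$ is an isomorphism of $\AA\otimes_{\AA_L}D$, and a classical $\phi$-module descent argument (solving $\phi(x)=ux$ successively modulo powers of $p$ and powers of $\bpi$, using that $\phi$ is bijective on $\EE$ and topologically nilpotent suitable operators) produces a $\ZZ_p$-lattice $\calV(D)$ of the correct rank such that the natural map $\AA\otimes_{\ZZ_p}\calV(D)\to\AA\otimes_{\AA_L}D$ is an isomorphism. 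Taking $\calH_L$-invariants then yields $\DD(\calV(D))=D$. The functoriality, exactness and tensor-compatibility of $\DD$ and $\calV$ are formal consequences of the fact that both functors are defined by taking invariants after tensoring with $\AA$, which is a flat $\AA_L$- and $\ZZ_p$-algebra.

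The hardest step is the descent isomorphism $\alpha_T$: in the classical perfect-residue-field setting $\calH_K$ is procyclic and Hilbert~90 is essentially a one-variable assertion, whereas here $\calH_L$ is a non-commutative $p$-adic Lie group of dimension $d-1$. The continuous cohomology vanishing needed to trivialise the $\calH_L$-action on $T\otimes\AA$ therefore has to be extracted from the fields-of-norms equivalence $\Gal(\EE\slash\EE_L)\cong\calH_L$ together with the almost-étaleness of the tower $K_n/K$ established in Section~\ref{fields_of_norms}. Once that cohomological input is in hand, the rest of the proof is the usual formal scaffolding.
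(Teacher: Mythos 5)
The paper does not prove this theorem itself: the proof is a one-line citation of Theorem 7.11 in~\cite{andreatta06}, and your outline is a faithful reconstruction of the standard Fontaine-style descent argument used there (reduction to $\ZZ_p$-coefficients, mod-$p$ descent via Hilbert~90 over the field of norms followed by d\'evissage and passage to the limit, and the \'etale $\phi$-module argument producing the quasi-inverse), so there is no divergence of approach to report. One correction to your framing of ``the hardest step'': $\calH_L=\Gal(\bar{K}\slash L_\infty)$ is the full absolute Galois group of $L_\infty$, not a $p$-adic Lie group of dimension $d-1$ (that is $H_L$), and likewise in the classical setting $\calH_K$ is not procyclic --- but this does not damage your argument, since the point you correctly identify is that the fields-of-norms isomorphism $\Gal(\EE\slash\EE_L)\cong\calH_L$ converts the required cohomological vanishing into ordinary Hilbert~90 for the separable extension $\EE\slash\EE_L$, exactly as in the perfect-residue-field case.
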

   \begin{proof}
    Theorem 7.11 in~\cite{andreatta06}.
   \end{proof}
   
   The following result will be important in Section~\ref{Koszulcohom}.
   
   \begin{prop}\label{projlim}
    Let $V\in\Rep_{\ZZ_p}(\calG_L)$. As a topological group, $\DD(V)$ is the projective limit of additive discrete $p$-groups.
   \end{prop}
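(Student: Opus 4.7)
The plan is to exhibit $\DD(V)$ as a quotient of a finite free $\AA_L$-module (with its product weak topology) by a closed submodule, and to transfer the projective-limit description from $\AA_L$ using Corollary~\ref{complete}. By the equivalence of categories just stated, $\DD(V)$ is a finitely generated $\AA_L$-module; choose generators $e_1,\ldots,e_r$ and consider the corresponding surjection
\[
 \sigma : \AA_L^r \twoheadrightarrow \DD(V), \qquad (a_1,\ldots,a_r) \mapsto \sum_i a_i e_i .
\]
Endowing $\AA_L^r$ with the product of the weak topologies, Corollary~\ref{complete} identifies $\AA_L^r = \varprojlim_{m,n}(\AA_L/U_{m,n})^r$ as a projective limit of discrete $p$-groups, which in particular is complete and Hausdorff. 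Equip $\DD(V)$ with the quotient topology under $\sigma$, and set $D_{m,n} := \sigma(U_{m,n}^r)$; these form a fundamental system of open neighbourhoods of $0$, and each quotient $\DD(V)/D_{m,n}$ is a quotient of the discrete $p$-group $(\AA_L/U_{m,n})^r$, hence itself a discrete $p$-group.

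The heart of the argument is then to identify
\[
 \DD(V) \;\xrightarrow{\ \sim\ }\; \varprojlim_{m,n} \DD(V)/D_{m,n} .
\]
Hausdorffness (injectivity) is equivalent to $\bigcap_{m,n} D_{m,n} = 0$, which follows from $\bigcap_{m,n} U_{m,n}^r = 0$ in $\AA_L^r$ together with surjectivity of $\sigma$. Surjectivity amounts to completeness of $\DD(V)$ for the filtration $\{D_{m,n}\}$; this in turn follows once $K := \ker\sigma$ is known to be closed in $\AA_L^r$, since the quotient of a complete topological group by a closed subgroup is again complete for the quotient topology. To see that $K$ is closed, interpret $\sigma$ as the restriction of a continuous map into $V \otimes_{\ZZ_p}\AA$ (equipped with the weak topology inherited from $\tilde{\AA}$); then $K$ is the preimage of $\{0\}$.

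The main technical obstacle will be the completeness step, best handled by a Mittag--Leffler argument applied to the short exact sequence $0 \to K \to \AA_L^r \to \DD(V) \to 0$ together with the induced filtrations $\{K \cap U_{m,n}^r\}$, $\{U_{m,n}^r\}$ and $\{D_{m,n}\}$; the relevant $\varprojlim^1$ vanishes because $K$ is closed in a complete, metrisable topological group. Independence of the resulting topology on $\DD(V)$ from the choice of generators is standard for finitely generated modules over the complete topological ring $\AA_L$.
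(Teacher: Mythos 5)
Your proposal is correct and follows exactly the route the paper takes: the paper's entire proof is the one-line observation that $\DD(V)$ is a finitely generated $\AA_L$-module, so the statement follows from Corollary~\ref{complete}; you have simply supplied the details (presentation $\AA_L^r\twoheadrightarrow\DD(V)$, closedness of the kernel, completeness and Hausdorffness of the quotient) that the paper leaves implicit. The only small slip is that $\bigcap_{m,n}\sigma(U_{m,n}^r)=0$ does not follow from $\bigcap_{m,n}U_{m,n}^r=0$ alone — it equals $\overline{K}/K$ — but your later argument that $K$ is closed repairs this.
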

   \begin{proof}
    The result follows from Corollary~\ref{complete} since $\DD(V)$ is a finitely generated $\AA_L$-module. 
   \end{proof}

 
  \subsection{Rings of periods}
  
  \subsubsection{The ring $\BB_{\dR}$}
  
   Most of the results of this section are quoted from~\cite{kato99} and~\cite{brinon06}. Define $\theta:\tilde{\BB}^+\rightarrow \CC_K$ by
   \begin{equation*}
    \theta:\sum_{k\gg -\infty}p^k[z_k]\rightarrow \sum_{k\gg-\infty}p^kz_k^{(0)}.
   \end{equation*}
   Note that $\theta$ is surjective.
   
   \begin{lem}\label{principal}
    $\ker(\theta)$ is a principal ideal, generated by $\omega=\frac{[\varepsilon]-1}{[\varepsilon^{\frac{1}{p}}]-1}$.
   \end{lem}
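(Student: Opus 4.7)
The proof will follow Fontaine's original strategy, which carries over unchanged to our setting because the ring $\tilde{\BB}^+$ and the map $\theta$ depend only on $\CC_K$, not on the specific structure of the residue field. The plan has three steps: verify that $\omega$ lies in $\ker(\theta)$; reduce modulo $p$ and show principality in characteristic $p$; and lift the result back by $p$-adic approximation.

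First I would show $\omega \in \tilde{\BB}^+$ and $\theta(\omega) = 0$. Since $[\varepsilon] = [\varepsilon^{1/p}]^p$, Teichm\"uller multiplicativity gives
\begin{equation*}
 \omega = \frac{[\varepsilon^{1/p}]^p - 1}{[\varepsilon^{1/p}] - 1} = \sum_{j=0}^{p-1}[\varepsilon^{j/p}],
\end{equation*}
which is manifestly an element of $\tilde{\AA}^+$. Applying $\theta$ yields $\sum_{j=0}^{p-1} (\varepsilon^{(1)})^j = 0$, since $\varepsilon^{(1)}$ is a primitive $p$th root of unity.

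Next I would analyse the reduction $\bar{\theta}\colon \tilde{\EE}^+ \to O_{\CC_K}/p$ induced by $\theta$. Its kernel consists of those $x = (x^{(n)})$ with $v_p(x^{(0)}) \geq 1$, and one checks using the extension of $v_{\EE}$ to $\tilde{\EE}$ that this kernel is exactly $\{x \in \tilde{\EE}^+ : v_{\EE}(x) \geq (p-1)/p\}$. Reducing $\omega$ modulo $p$ gives $\bar{\omega} = (\varepsilon-1)/(\varepsilon^{1/p}-1) = (\varepsilon^{1/p}-1)^{p-1}$ in $\tilde{\EE}^+$, so $v_{\EE}(\bar{\omega}) = (p-1)/p$. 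Because $\tilde{\EE}^+$ is a valuation ring with divisible value group and $\bar{\omega}$ realises the minimal valuation in $\ker(\bar{\theta})$, every $\bar{x} \in \ker(\bar{\theta})$ satisfies $\bar{x}/\bar{\omega} \in \tilde{\EE}^+$; hence $\ker(\bar{\theta}) = \bar{\omega}\tilde{\EE}^+$.

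Finally, I would lift by successive approximation, using that $\tilde{\AA}^+ = W(\tilde{\EE}^+)$ is $p$-adically complete. Given $x \in \ker(\theta)$, write (after clearing denominators) $x \in \tilde{\AA}^+$. Reducing mod $p$ and applying the previous step, there exists $y_0 \in \tilde{\AA}^+$ with $x - \omega y_0 \in p\tilde{\AA}^+$, say $x - \omega y_0 = p x_1$. Applying $\theta$ gives $p\theta(x_1) = 0$, so $x_1 \in \ker(\theta)$ again; iterating produces sequences $(y_n), (x_n)$ in $\tilde{\AA}^+$ with $x_n = \omega y_n + p x_{n+1}$. The sum $y := \sum_{n \geq 0} p^n y_n$ converges $p$-adically in $\tilde{\AA}^+$ and satisfies $x = \omega y$, proving $\ker(\theta) = (\omega)$.

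The only genuinely non-formal step is the second one: identifying $\ker(\bar{\theta})$ with the principal ideal $(\bar{\omega})$ in $\tilde{\EE}^+$. This rests on the precise valuation computation $v_{\EE}(\bar{\omega}) = (p-1)/p$ and on the fact that $\ker(\bar{\theta})$ is an ideal of the form $\{v_{\EE} \geq c\}$ realised by an actual element; everything else is either Teichm\"uller bookkeeping or $p$-adic completeness.
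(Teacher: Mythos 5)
Your proof is correct and is exactly the standard Fontaine argument; the paper itself states Lemma~\ref{principal} without proof (it is quoted from the classical theory, cf.\ Fontaine and Proposition 2.1.7ff.\ in Brinon, and is unaffected by the imperfect-residue-field setting since $\theta$ only sees $\CC_K$). All three steps check out under the paper's normalisation $v_{\EE}(\bar\pi)=1$: the identity $\omega=\sum_{j=0}^{p-1}[\varepsilon^{j/p}]$ gives $\theta(\omega)=0$, the reduction $\bar\omega=(\varepsilon^{1/p}-1)^{p-1}$ has valuation $(p-1)/p$, which is precisely the cut-off defining $\ker(\bar\theta)$, and the $p$-adic successive approximation in $\tilde{\AA}^+=W(\tilde{\EE}^+)$ lifts principality to $\ker(\theta)$.
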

   
   \noindent {\bf Definition.} Define $\BB_{\dR}^{\nabla +}$ to be the separated completion of $\tilde{\BB}^+$ for the $\ker(\theta)$-adic topology.
   \vs
   
   By Lemma~\ref{principal}, the ring $\BB_{\dR}^{\nabla +}$ is a discrete valuation ring with uniformizing element
   \begin{equation*}
    t=\log([\varepsilon])=\sum_{k=1}^{+\infty}(-1)^{k+1}\frac{\big([\varepsilon]-1\big)^k}{k}.
   \end{equation*}
   Define $\BB_{\dR}^\nabla=\BB_{\dR}^{\nabla +}[t^{-1}]$ to be the fraction field of $\BB_{\dR}^{\nabla +}$. Note that $\BB_{\dR}^\nabla$ is equipped with an action of $\calG_K$ and a separated and exhaustive decreasing filtration $\fil^\bullet$ defined by $\fil^r\BB_{\dR}^\nabla= t^r\BB_{\dR}^{\nabla +}$ for $r\in\ZZ$. 
   \vs
   
   The ring homomorphism $\theta:\tilde{\AA}^+\rightarrow O_{\CC_K}$ extends to a homomorphism $\theta_L:O_L\otimes_{\ZZ}\tilde{\AA}^+\rightarrow O_{\CC_K}$. 
   \vs
   
   \noindent {\bf Definition.} Let $\AA(O_{\CC_L}\slash O_K)$ be the separated completion of $O_L\otimes\tilde{\AA}^+$ for the topology defined by the ideal generated by $p$ and $\ker(\theta_L)$. Let $\theta_L:\AA(O_{\CC_K}\slash O_L)[p^{-1}]\rightarrow \CC_K$ denote the induced homomorphism. Define $\BB_{\dR}^+$ to be the separated completion of $\AA(O_{\CC_K}\slash O_L)[p^{-1}]$ for the $\ker(\theta_L)$-adic topology. Define $\BB_{\dR}=\BB_{\dR}^+[t^{-1}]$.
   \vs
   
   \noindent {\bf Notation.} For $1\leq i\leq d-1$, denote by $u_i$ the image of $X_i\otimes 1-1\otimes [x_i]\in O_L\otimes_{\ZZ} \tilde{\AA}^+$ in $\BB_{\dR}^+$.
   
   \begin{prop}\label{easyBdR}
    The homomorphism $\BB_{\dR}^{\nabla +}[[u_1,\dots,u_{d-1}]]\rightarrow \BB_{\dR}^+$ is an isomorphism of $\tilde{\AA}^+$-algebras.
   \end{prop}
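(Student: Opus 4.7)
The map is well-defined because $\theta_L(u_i) = \theta_L(X_i\otimes 1) - \theta_L(1\otimes[x_i]) = X_i - x_i^{(0)} = 0$, so each $u_i$ lies in $\fil^1\BB_{\dR}^+ = \ker(\theta_L)$; since $\BB_{\dR}^+$ is separated and complete for the $\ker(\theta_L)$-adic topology by construction, formal power series in the $u_i$ with coefficients in $\BB_{\dR}^{\nabla +}$ converge in $\BB_{\dR}^+$, so the algebra homomorphism in the statement makes sense.

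To show the map is an isomorphism, I would compare associated gradeds for compatible filtrations. Equip $\BB_{\dR}^{\nabla+}[[u_1,\dots,u_{d-1}]]$ with the filtration by powers of the ideal $J := (t, u_1, \dots, u_{d-1})$, and $\BB_{\dR}^+$ with $\fil^\bullet = \ker(\theta_L)^\bullet$. The map sends $J^n$ into $\fil^n$, both sides are separated and complete for these topologies, and the source's associated graded is the polynomial ring $\CC_K[\bar t, \bar u_1, \dots, \bar u_{d-1}]$ (using that $\BB_{\dR}^{\nabla+}$ is a complete DVR with uniformizer $t$ and residue field $\CC_K$ via $\theta$). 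It therefore suffices to show that the induced map on graded rings is an isomorphism.

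On the target, $\mathrm{gr}^\bullet \BB_{\dR}^+$ is generated in degree one by $\fil^1/\fil^2 = \ker(\theta_L)/\ker(\theta_L)^2$. The key computation is that this $\CC_K$-module is free of rank $d$ with basis the classes of $t, u_1, \dots, u_{d-1}$, which reflects that $\{dX_1, \dots, dX_{d-1}\}$ forms part of a basis of $\Omegahat_{O_L/\ZZ}\otimes_{O_L}\CC_K$ (this is where the $p$-basis hypothesis on $k_L$ enters essentially), together with the cyclotomic direction contributed by $t$. Granting this, the graded map is a degree-preserving algebra homomorphism from a polynomial ring to a symmetric algebra that is an isomorphism in degree one, hence an isomorphism in every degree. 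By completeness of both sides, the map at the level of filtered rings is then an isomorphism.

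The main obstacle is precisely this cotangent-space computation: establishing $\ker(\theta_L)/\ker(\theta_L)^2\cong\CC_K^d$ with basis $\bar t,\bar u_1,\dots,\bar u_{d-1}$, and checking that $\mathrm{gr}^\bullet \BB_{\dR}^+$ is freely generated as a polynomial algebra by this cotangent space (equivalently, that $\BB_{\dR}^+$ is a regular local ring of dimension $d$ with parameter system $(t, u_1, \dots, u_{d-1})$). All of the content of the proposition concentrates here; everything else is formal bookkeeping with completions. The argument is in the spirit of Brinon's computations in~\cite{brinon06}, to which one can appeal for the regularity statement.
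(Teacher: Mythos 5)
The paper gives no argument of its own for this proposition --- its ``proof'' is a bare citation of Proposition 2.1.7 in~\cite{brinon06} --- so there is nothing internal to compare your attempt against. Your reduction is sound and is essentially how Brinon's proof goes: the well-definedness step is correct ($\theta_L(u_i)=X_i-x_i^{(0)}=0$ since $x_i^{(0)}=X_i$, and $\BB_{\dR}^+$ is by definition separated and complete for the $\ker(\theta_L)$-adic topology, while the source is complete for the $(t,u_1,\dots,u_{d-1})$-adic topology as a power series ring over a complete discrete valuation ring), and the passage from an isomorphism of associated graded rings to an isomorphism of the complete filtered rings is standard. You have also correctly located the entire mathematical content in the single claim that $\mathrm{gr}^\bullet\BB_{\dR}^+$ is the polynomial ring $\CC_K[\bar t,\bar u_1,\dots,\bar u_{d-1}]$ --- equivalently, that $t,u_1,\dots,u_{d-1}$ generate $\ker(\theta_L)$ and form a regular sequence --- which is where the hypothesis that $\overline{X_1},\dots,\overline{X_{d-1}}$ is a $p$-basis of $k_L$ enters, and which you defer to~\cite{brinon06} exactly as the paper does. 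The only caveat is that, as written, your argument is a proof \emph{modulo} that deferred computation rather than a self-contained one; since the paper itself outsources the whole statement to the same source, this is an acceptable division of labour, but one should be aware that the surjectivity of the degree-one map (i.e.\ that the classes of $t,u_1,\dots,u_{d-1}$ actually span $\ker(\theta_L)/\ker(\theta_L)^2$) is not formal and genuinely requires the explicit description of $\AA(O_{\CC_L}\slash O_K)$.
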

   \begin{proof}
    See Proposition 2.1.7 in~\cite{brinon06}.
   \end{proof}

   \noindent {\it Action of $\calG_L$.} The continuous action of $\calG_L$ on $\BB_{\dR}^\nabla$ extends to a continuous action on $\BB_{\dR}$, and one can show the following results:
   
   \begin{lem}
    We have $H^0(L,\BB_{\dR})=L$ and $H^0(L,\BB^\nabla_{\dR})=F$, where $F$ is the maximal algebraic extension of $\QQ_p$ contained in $L$.
   \end{lem}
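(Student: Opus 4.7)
The plan is to use the $\ker\theta$-adic filtration on $\BB_{\dR}^{\nabla+}$ and $\BB_{\dR}^+$, together with Tate's theorem (in the Hyodo--Brinon form for imperfect residue field): $\CC_L^{\calG_L} = L$ and $H^0(\calG_L, \CC_L(r)) = 0$ for $r \neq 0$. In both parts, one inclusion is given by an explicit $\calG_L$-equivariant embedding, and the reverse is proved by d\'evissage through the graded pieces.

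For part (b), the inclusion $F \hookrightarrow H^0(L, \BB_{\dR}^\nabla)$ comes from Hensel's lemma: since $F/\QQ_p$ is a finite extension and $\BB_{\dR}^{\nabla+}$ is a henselian DVR with residue field $\CC_L$, every element of $F$ has a unique lift to $\BB_{\dR}^{\nabla+}$, forced to be $\calG_L$-invariant by uniqueness. For the reverse, the filtration $\Fil^r \BB_{\dR}^\nabla = t^r \BB_{\dR}^{\nabla+}$ has graded pieces $\mathrm{gr}^r \cong \CC_L(r)$; iterating the cohomology sequences of $0 \to \Fil^{r+1} \to \Fil^r \to \CC_L(r) \to 0$ together with Tate and the separatedness $\bigcap_n \Fil^n = 0$ yields $H^0(L, \Fil^r \BB_{\dR}^\nabla) = 0$ for $r \geq 1$, and $H^0(L, \BB_{\dR}^\nabla) = H^0(L, \BB_{\dR}^{\nabla+}) \hookrightarrow L$ via $\theta$. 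To see the image is exactly $F$, use $\BB_{\dR}^\nabla = \ker \nabla$: any invariant $x \in \BB_{\dR}^{\nabla+}$ has $\nabla x = 0$, and the compatibility $\theta \circ \nabla = d \circ \theta$ then yields $d(\theta x) = 0$ in $\Omega^1_{L/\QQ_p}$, which forces $\theta(x) \in F$.

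For part (a), $L \hookrightarrow H^0(L, \BB_{\dR})$ is immediate from the construction of $\BB_{\dR}^+$ as a completion of $O_L \otimes_{\ZZ} \tilde{\AA}^+[p^{-1}]$. For the converse, Proposition~\ref{easyBdR} gives $\BB_{\dR}^+ \cong \BB_{\dR}^{\nabla+}[[u_1, \dots, u_{d-1}]]$, so $\mathrm{gr}^\bullet \BB_{\dR}^+ \cong \CC_L[t, u_1, \dots, u_{d-1}]$. The $\calG_L$-action is a unipotent perturbation of a Tate twist: $g(t) = \chi(g) t$ and, modulo $\fm^2$, $g(u_i) \equiv u_i - c_i(g) X_i t$, where $c_i$ is the Kummer cocycle attached to $x_i$. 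The main obstacle is establishing $H^0(L, \mathrm{gr}^r \BB_{\dR}^+) = 0$ for $r \geq 1$. For $r = 1$, the short exact sequence $0 \to \CC_L(1) \to \mathrm{gr}^1 \BB_{\dR}^+ \to \CC_L^{d-1} \to 0$ reduces this to showing that the connecting map $L^{d-1} \to H^1(\calG_L, \CC_L(1))$, $(b_i) \mapsto \sum_i b_i X_i [c_i]$, is injective; this follows because the Hyodo--Brinon isomorphism $H^1(\calG_L, \CC_L(1)) \cong \widehat{\Omega}^1_L$ identifies $[c_i]$ with $dX_i / X_i$, and $dX_1, \dots, dX_{d-1}$ are $L$-linearly independent in $\widehat{\Omega}^1_L$. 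Higher $r$ follow by induction on the $t$-weight. Iterating the sequences $0 \to \mathrm{gr}^r \BB_{\dR}^+ \to \BB_{\dR}^+/\fm^{r+1} \to \BB_{\dR}^+/\fm^r \to 0$ and using the section $L \hookrightarrow \BB_{\dR}^+$ to kill the connecting maps gives $H^0(L, \BB_{\dR}^+/\fm^r) = L$ for all $r$, hence $H^0(L, \BB_{\dR}^+) = L$ in the limit; the extension to $\BB_{\dR} = \BB_{\dR}^+[t^{-1}]$ is handled by a parallel d\'evissage of the graded pieces in negative degrees.
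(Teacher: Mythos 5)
The paper itself gives no argument here --- the proof is a citation to Proposition 2.1.13 of Brinon --- so your reconstruction is doing real work, and its skeleton is the standard one: the embeddings $F\hookrightarrow\BB_{\dR}^{\nabla+}$ by Hensel and $L\hookrightarrow\BB_{\dR}^+$ by construction, d\'evissage through the $\ker(\theta)$-adic graded pieces, Hyodo's computation $H^0(L,\CC_K(r))=0$ for $r\neq 0$ and $H^1(L,\CC_K(1))\cong\widehat{\Omega}^1_L$, and the injectivity of the connecting map $L^{d-1}\to H^1(L,\CC_K(1))$, $(b_i)\mapsto\sum b_i\,dX_i$, coming from the linear independence of $dX_1,\dots,dX_{d-1}$. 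All of that is correct, and the induction on the $t$-weight in $\mathrm{gr}^r\BB_{\dR}^+$ does go through (the relevant connecting maps stay injective because each monomial $u^b$ with $|b|=r\geq 1$ is detected by some partial derivative $b\mapsto b-e_i$). The extension from $\BB_{\dR}^+$ to $\BB_{\dR}=\BB_{\dR}^+[t^{-1}]$ is only gestured at, but the same method works since the new graded pieces carry nontrivial negative Tate twists.

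There is, however, a genuine gap at the very last step of part (b). The identity ``$\theta\circ\nabla=d\circ\theta$'' is false: $\nabla$ does not descend along $\theta$, since $\nabla(\ker\theta_L)\not\subset\ker(\theta_L)\otimes\Omega^1_K$ (indeed $(\theta\otimes 1)(\nabla u_i)=d\log X_i\neq 0$ while $\theta(u_i)=0$). Concretely, your inference ``$\nabla x=0\Rightarrow d(\theta x)=0$'' fails for $x=[x_1]\in\BB_{\dR}^{\nabla+}$: here $\nabla x=0$ but $\theta(x)=X_1$ and $dX_1\neq 0$ in $\widehat{\Omega}^1_L$. The point is that this step must use the Galois invariance of $x$ again, not just $\nabla x=0$. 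The repair is to run part (a) first, at least to the extent of proving $H^0(L,\fil^1\BB_{\dR}^+)=0$; then a $\calG_L$-invariant $x\in\BB_{\dR}^{\nabla+}$ satisfies $x-\iota(\theta(x))\in(\fil^1\BB_{\dR}^+)^{\calG_L}=0$, where $\iota:L\to\BB_{\dR}^+$ is the canonical embedding, so $x=\iota(\theta(x))$. Since $\nabla\circ\iota$ \emph{is} the derivation $d$ on the image of $L$ (up to the $d\log$ normalisation), $\nabla x=0$ then gives $d(\theta(x))=0$ in $\widehat{\Omega}^1_L$, and $\ker\bigl(d:L\to\widehat{\Omega}^1_L\bigr)=F$ finishes the argument. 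In particular the two parts cannot be proved in the order you present them.
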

   \begin{proof}
    Proposition 2.1.13 in~\cite{brinon06}.
   \end{proof}
   
   A more general version of this result was proven by Kato~\cite{kato99} (c.f. Lemma~\ref{katoBdR}).
   \vs
   
   \noindent {\it Filtration on $\BB_{\dR}$.} Observe that $\BB_{\dR}^+$ is equipped with the filtration $\fil^\bullet$ defined by $\fil^r\BB_{\dR}^+=\ker(\theta_L)^r$ for $r\in\NN$. The ideal $\ker(\theta_L)$ is generated by $t,u_1,\dots,u_{d-1}$. Define a filtration $\Fil^{\bullet}$ on $\BB_{\dR}$ be putting
   \begin{align*}
    \Fil^0\BB_{\dR} & =\sum_{n=0}^\infty t^{-n}\fil^n\BB_{\dR}^+=\BB^+_{\dR}[t^{-1}u_1,\dots,t^{-1}u_{d-1}],\\
    \Fil^r\BB_{\dR} & =t^r\Fil^0\BB_{\dR} \text{\hspace{3ex} for $r\in\ZZ$}.
   \end{align*}
   One can show (c.f. Proposition 2.2.1 in~\cite{brinon06}) that the filtration $(\Fil^r\BB_{\dR})$ is decreasing, separated and exhaustive and stable under the action of $\calG_L$. 
   \vs
   
   \noindent {\it Connection on $\BB_{\dR}$.} By Proposition~\ref{easyBdR}, we know that $\BB_{\dR}^+=\BB_{\dR}^{\nabla +}[[u_1,\dots,u_{d-1}]]$. For $i\in\NN$ define 
   \begin{equation*}
    \Omega^i_{O_K}=\varprojlim \Omega^i_{O_K\slash\ZZ}\slash\ p^n\Omega^i_{O_K\slash\ZZ},
   \end{equation*}
   and let $\Omega^i_K=\QQ_p\otimes_{\ZZ_p}\Omega^i_{O_K}$. Note that the $K$-vector space $\Omega^1_K$ has the basis $\{d\log X_i\}_{1\leq i<d}$, so $\Omega^j_K=0$ for all $j\geq d$. For $1\leq i\leq d-1$, let $N_i$ be the unique continuous $\BB_{\dR}^{\nabla +}$-derivation on $\BB_{\dR}^+$ such that $N_i(u_j)=\delta_{ij}$. Then $N_i(t)=0$, so $N_i$ extends to a continuous $\BB_{\dR}^\nabla$-derivation on $\BB_{\dR}$. Define the connection $\nabla$ on $\BB_{\dR}$ by
   \begin{align*}
    \nabla:\BB_{\dR} & \rTo \BB_{\dR}\otimes_{K}\Omega_K^1 \\
    x & \rTo \sum_{i=1}^{d-1}N_i(x)\otimes d\log(X_i).
   \end{align*}
   
   \begin{lem}\label{kernelconnection}
    We have $\big(\BB_{\dR}^+\big)^{\nabla=0}=\BB_{\dR}^{\nabla +}$ and $\big(\BB_{\dR}\big)^{\nabla=0}=\BB_{\dR}^{\nabla}$.
   \end{lem}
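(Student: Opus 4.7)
The plan is to exploit the explicit power series description $\BB_{\dR}^+\cong\BB_{\dR}^{\nabla+}[[u_1,\dots,u_{d-1}]]$ provided by Proposition~\ref{easyBdR} and then compute the action of the derivations $N_i$ term by term. Since $\{d\log X_i\}_{1\leq i<d}$ is a $K$-basis of $\Omega^1_K$, the condition $\nabla(x)=0$ is equivalent to $N_i(x)=0$ for every $i$, so it suffices to identify the common kernel of the $N_i$.

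First I would handle the $+$ case. Any $x\in\BB_{\dR}^+$ has a unique expansion $x=\sum_{I\in\NN^{d-1}} a_I u^I$ with $a_I\in\BB_{\dR}^{\nabla+}$, where $u^I=u_1^{i_1}\cdots u_{d-1}^{i_{d-1}}$. Because each $N_j$ is a continuous $\BB_{\dR}^{\nabla+}$-derivation with $N_j(u_k)=\delta_{jk}$, applying $N_j$ term by term (using continuity for the power series) gives
\begin{equation*}
N_j(x)=\sum_I i_j\, a_I\, u_1^{i_1}\cdots u_j^{i_j-1}\cdots u_{d-1}^{i_{d-1}}.
\end{equation*}
If all $N_j(x)$ vanish, then by the uniqueness of the expansion we get $i_j a_I=0$ for every $I$ with $i_j\geq 1$. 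Since $\BB_{\dR}^{\nabla+}$ is a $\QQ_p$-algebra (in particular torsion-free), this forces $a_I=0$ whenever $I\neq 0$, so $x=a_0\in\BB_{\dR}^{\nabla+}$. The reverse inclusion is immediate since the $N_j$ are $\BB_{\dR}^{\nabla+}$-linear.

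For the generic fibre, I would use $\BB_{\dR}=\bigcup_{N\geq 0} t^{-N}\BB_{\dR}^+$ together with $N_j(t)=0$, which was noted just before the statement. Writing $x=t^{-N}y$ with $y\in\BB_{\dR}^+$, the Leibniz rule yields $N_j(x)=t^{-N}N_j(y)$, so $\nabla(x)=0$ iff $\nabla(y)=0$ iff $y\in\BB_{\dR}^{\nabla+}$, which is exactly $x\in\BB_{\dR}^{\nabla}$.

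The only subtle point is the termwise application of $N_j$ to an infinite $u$-adic power series, which requires that $N_j$ be continuous for the topology making $\BB_{\dR}^+$ the completion of $\BB_{\dR}^{\nabla+}[u_1,\dots,u_{d-1}]$ along the ideal $(t,u_1,\dots,u_{d-1})$. This continuity is precisely the content of the definition of $N_j$ as \emph{the unique continuous} $\BB_{\dR}^{\nabla+}$-derivation with $N_j(u_k)=\delta_{jk}$, so no further work is needed; the argument is then purely formal.
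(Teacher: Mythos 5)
Your argument is correct. The paper itself gives no proof here — it simply cites Proposition 2.2.8 of Brinon — and your computation (termwise differentiation of the unique expansion in $\BB_{\dR}^{\nabla +}[[u_1,\dots,u_{d-1}]]$ from Proposition~\ref{easyBdR}, using that nonzero integers are invertible in a $\QQ_p$-algebra, then inverting $t$ via $N_j(t)=0$) is exactly the standard argument underlying that citation.
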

   \begin{proof}
    Proposition 2.2.8 in~\cite{brinon06}.
   \end{proof}
   
   \begin{prop}\label{filteredconnection}
    For all $r\in\ZZ$, the connection $\nabla$ gives an exact sequence
    \begin{equation*}
     0\rightarrow \fil^r\BB_{\dR}^\nabla\rightarrow \Fil^r\BB_{\dR}\rightarrow \Fil^{r-1}\BB_{\dR}\otimes_K\Omega^1_K\rightarrow \dots\rightarrow \Fil^{r+1-d}\BB_{\dR}\otimes_K\Omega^{d-1}_K\rightarrow 0.
    \end{equation*}
   \end{prop}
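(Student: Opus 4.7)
The idea is to realise the sequence as a filtered version of the de Rham complex of a power series ring and to apply an explicit Poincar\'e homotopy. By Proposition~\ref{easyBdR} we have $\BB_{\dR}^+\cong\BB_{\dR}^{\nabla +}[[u_1,\dots,u_{d-1}]]$, and after identifying $du_i$ with $d\log X_i$ the derivation $N_i$ coincides with $\partial/\partial u_i$, with $N_i|_{\BB_{\dR}^{\nabla +}}=0$. Since $N_i(t)=0$, the connection $\nabla$ commutes with multiplication by $t^r$ for any $r\in\ZZ$, and it lowers $\Fil^\bullet$ by one, as one sees from $u_j\in\Fil^1\BB_{\dR}$ versus $N_i(u_j)=\delta_{ij}\in\Fil^0\BB_{\dR}$ (equivalently, in the coordinates $w_j=u_j/t$ one has $N_i=t^{-1}\partial/\partial w_i$). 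Thus the sequence in the statement is a well-defined subcomplex of the de Rham complex of $\BB_{\dR}$.

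The main tool is the algebraic Poincar\'e lemma. Let $E=\sum_{i=1}^{d-1}u_i(\partial/\partial u_i)$ be the Euler derivation and let $\iota_E$ be the corresponding interior product on forms. For a homogeneous element $u^I\alpha\otimes du^J$ with $\alpha\in\BB_{\dR}^\nabla$, $|J|=p$, and $|I|+p\geq 1$, set
\[
 K\bigl(u^I\alpha\otimes du^J\bigr)=\frac{1}{|I|+p}\,\alpha\cdot\iota_E\bigl(u^I\,du^J\bigr),
\]
and $K=0$ on the summand with $|I|=p=0$. The scalars $|I|+p\in\ZZ_{\geq 1}$ are units in $\BB_{\dR}^{\nabla +}$ since this DVR has characteristic-zero residue field, so $K$ extends by linearity and continuity to an operator $\BB_{\dR}\otimes_K\Omega^p_K\to\BB_{\dR}\otimes_K\Omega^{p-1}_K$. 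Cartan's identity $\nabla\iota_E+\iota_E\nabla=\calL_E$, together with the fact that $\calL_E$ acts as multiplication by $|I|+p$ on the bigraded $(|I|,p)$-component, yields the homotopy formula
\[
 \nabla K+K\nabla=\id-\proj_0,
\]
where $\proj_0$ is the projection onto the $u$-constant, form-degree-zero summand, i.e.\ the augmentation $\BB_{\dR}\to\BB_{\dR}^\nabla$ on 0-forms and $0$ on $\Omega^{\geq 1}$.

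The final step is to track filtrations. Each term in $K(u^I\alpha\otimes du^J)$ is of the shape $\pm\tfrac{1}{|I|+p}\alpha u^{I+e_k}du^{J\setminus\{j_k\}}$, so $K$ multiplies by some $u_k\in\Fil^1\BB_{\dR}$ and thereby raises $\Fil^\bullet$ by exactly one, matching the lowering by $\nabla$. Consequently both sides of the homotopy identity preserve the filtered subcomplex $\Fil^{r-\bullet}\BB_{\dR}\otimes\Omega^\bullet_K$. For $\omega\in\Fil^{r-p}\BB_{\dR}\otimes\Omega^p_K$ with $\nabla\omega=0$ and $1\leq p\leq d-1$, we get $\omega=\nabla(K\omega)$ with $K\omega\in\Fil^{r-(p-1)}\BB_{\dR}\otimes\Omega^{p-1}_K$; top-degree forms ($p=d-1$) are $\nabla$-closed automatically since $\Omega^d_K=0$, so the same computation yields surjectivity of the last map; and for $p=0$, any $f\in\Fil^r\BB_{\dR}$ with $\nabla f=0$ equals $\proj_0(f)\in\BB_{\dR}^\nabla\cap\Fil^r\BB_{\dR}=\fil^r\BB_{\dR}^\nabla$, the last equality being immediate from $\Fil^0\BB_{\dR}=\BB_{\dR}^+[t^{-1}u_1,\dots,t^{-1}u_{d-1}]$ together with $\BB_{\dR}^\nabla=\BB_{\dR}^{\nabla +}[t^{-1}]$.

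The main obstacle is the bookkeeping: one must check that $K$ extends continuously from the $\BB_{\dR}^+$-polynomials in $u_1,\dots,u_{d-1}$ to the $\ker(\theta_L)$-adic completion and hence to all of $\BB_{\dR}$, and that the filtration-shift property survives this extension on arbitrary (not merely polynomial) elements of $\Fil^r\BB_{\dR}\otimes\Omega^p_K$. Once this is in place, exactness of the filtered complex reduces formally to the Poincar\'e lemma for the power series ring $\BB_{\dR}^{\nabla +}[[u_1,\dots,u_{d-1}]]$ applied termwise.
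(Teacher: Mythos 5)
Your argument is sound, but note that the paper does not actually prove this statement: it simply cites Proposition (2.1.10) of Kato's paper, so your contraction-operator argument is a genuine proof where the text offers only a reference. The strategy is the right one: once Proposition~\ref{easyBdR} identifies $\BB_{\dR}^+$ with $\BB_{\dR}^{\nabla+}[[u_1,\dots,u_{d-1}]]$ and $N_i$ with $\partial/\partial u_i$, the complex is the (filtered, $t$-inverted) de Rham complex of a formal power series ring over the $\QQ_p$-algebra $\BB_{\dR}^{\nabla+}$, and the Euler homotopy $\nabla K+K\nabla=\id-\proj_0$ does everything, including the identification of the kernel in degree $0$ with $\fil^r\BB_{\dR}^\nabla$ via $\proj_0$. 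Two remarks on the point you flag as the ``main obstacle.'' First, the convergence worry about the denominators $|I|+p$ is vacuous: since Proposition~\ref{easyBdR} gives the \emph{full} formal power series ring, an element of $\BB_{\dR}^+\otimes_K\Omega^p_K$ is just an arbitrary family of coefficients $\alpha_{I,J}\in\BB_{\dR}^{\nabla+}$ indexed by $(I,J)$, and $K$ produces another such family (each new coefficient is a finite signed sum of $\tfrac{1}{|I|+p}\alpha_{I,J}$, which lies in $\BB_{\dR}^{\nabla+}$ because that ring contains $\QQ_p$); no boundedness of $1/(|I|+p)$ is needed, and the homotopy identity holds because it holds on each total-degree component, which $\nabla$, $\iota_E$ and $K$ all preserve. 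Second, the filtration bookkeeping is the one step that genuinely requires an argument rather than a term-by-term remark: you should record that $f=\sum_I\alpha_Iu^I$ lies in $\fil^m\BB_{\dR}^+=\ker(\theta_L)^m=(t,u_1,\dots,u_{d-1})^m$ if and only if $\alpha_I\in t^{\max(m-|I|,0)}\BB_{\dR}^{\nabla+}$ for all $I$ (this uses that $\BB_{\dR}^+$ is a complete regular local ring with maximal ideal $(t,u_1,\dots,u_{d-1})$, so powers of that ideal are closed and are detected coefficientwise). With that description, $K$ visibly sends $\fil^m\otimes\Omega^p$ into $\fil^{m+1}\otimes\Omega^{p-1}$, and twisting by powers of $t$ gives $K(\Fil^{r-p}\BB_{\dR}\otimes\Omega^p_K)\subset\Fil^{r-p+1}\BB_{\dR}\otimes\Omega^{p-1}_K$ on all of $\BB_{\dR}$, not just on polynomials. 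Your identification $\BB_{\dR}^\nabla\cap\Fil^r\BB_{\dR}=\fil^r\BB_{\dR}^\nabla$ is correct and is cleanest via applying the ring homomorphism $\proj_0$ (which is the identity on $\BB_{\dR}^\nabla$) to a finite expansion $\sum_{|I|\le N}t^{r-|I|}u^Ib_I$. With these two points made explicit the proof is complete.
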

   \begin{proof}
    Proposition (2.1.10) in~\cite{kato99}.
   \end{proof}
   
   \begin{cor}\label{directlimit}
    The connection $\nabla$ gives exact sequences
    \begin{align*}
     0& \rightarrow \BB_{\dR}^\nabla\rTo \BB_{\dR}\rTo^\nabla \BB_{\dR}\otimes_K\Omega^1_K\rTo^\nabla \dots\rTo^\nabla \BB_{\dR}\otimes_K\Omega^{d-1}_K\rightarrow 0.
    \end{align*}
   \end{cor}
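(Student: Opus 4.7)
The plan is to obtain the sequence in Corollary~\ref{directlimit} by passing to the direct limit as $r \to -\infty$ in the sequences provided by Proposition~\ref{filteredconnection}. This is natural because the filtration $\Fil^{\bullet}\BB_{\dR}$ is exhaustive: by construction $\BB_{\dR} = \BB_{\dR}^+[t^{-1}]$ and $\Fil^r\BB_{\dR} = t^r \Fil^0\BB_{\dR}$, so every element of $\BB_{\dR}$ lies in some $\Fil^r\BB_{\dR}$ for sufficiently negative $r$. Analogously, $\BB_{\dR}^\nabla = \BB_{\dR}^{\nabla +}[t^{-1}] = \bigcup_{r\in\ZZ} \fil^r \BB_{\dR}^\nabla$.

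First I would note that for each $r$, the inclusion $\Fil^{r+1}\BB_{\dR} \hookrightarrow \Fil^r \BB_{\dR}$ is strictly compatible with the connection $\nabla$, since $\nabla$ is defined globally on $\BB_{\dR}$ via the derivations $N_i$ of Lemma~\ref{kernelconnection} and does not depend on the filtration step. The same is true for the induced maps $\Fil^{r-j}\BB_{\dR}\otimes_K \Omega^j_K \hookrightarrow \Fil^{r-1-j}\BB_{\dR}\otimes_K \Omega^j_K$ and for $\fil^r\BB_{\dR}^\nabla \hookrightarrow \fil^{r-1}\BB_{\dR}^\nabla$. Hence the sequences of Proposition~\ref{filteredconnection}, as $r$ varies, form a directed system of exact sequences of abelian groups, with transition maps given by the filtration inclusions.

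Next I would invoke exactness of filtered colimits in the category of abelian groups: the colimit of a directed system of exact sequences is exact. Combined with the exhaustiveness observations above, the termwise colimits compute to
\begin{equation*}
 \varinjlim_{r\to -\infty} \fil^r\BB_{\dR}^\nabla = \BB_{\dR}^\nabla, \qquad \varinjlim_{r\to -\infty} \Fil^{r-j}\BB_{\dR}\otimes_K \Omega^j_K = \BB_{\dR}\otimes_K \Omega^j_K
\end{equation*}
for $0\leq j\leq d-1$, yielding the claimed exact sequence.

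There is no substantive obstacle: the whole argument is a formal direct limit of the already-established filtered statement, and the only thing that requires checking is the compatibility of the transition maps with $\nabla$, which is automatic from the definition of $\nabla$ as a derivation on all of $\BB_{\dR}$.
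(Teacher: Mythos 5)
Your proposal is correct and is exactly the argument the paper gives: the corollary is obtained by passing to the direct limit over $r$ in the exact sequences of Proposition~\ref{filteredconnection}, using exhaustiveness of the filtration and exactness of filtered colimits. The details you supply (compatibility of the inclusions with $\nabla$, identification of the termwise colimits) are the routine verifications the paper leaves implicit.
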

   \begin{proof}
    Immediate from Proposition~\ref{filteredconnection} by passing to the direct limit over $r$.
   \end{proof}
   
   \noindent {\it de Rham representations.} Let $V$ be a $p$-adic representation of $\calG_L$. Define $\DD_{\dR}(V)=\big(V\otimes\BB_{\dR}\big)^{\calG_K}$ and $\DD_{\dR}^\nabla=\big(V\otimes\BB^\nabla_{\dR}\big)^{\calG_K}$. Then $\DD_{\dR}(V)$ (resp. $\DD_{\dR}^\nabla(V)$) is a finite dimensional vector space over $L$ (resp. over $F$) of dimension $\leq\dim_{\QQ_p}V$. 
   \vs
   
   \noindent {\bf Definition.} A $p$-adic representation $V$ of $\calG_L$ is de Rham if $\dim_L\DD_{\dR}(V)=\dim_{\QQ_p}V$.
   \vs
   
   The vector space $\DD_{\dR}(V)$ is equipped with a filtration $\Fil^\bullet$ and a connection $\nabla:\DD_{\dR}(V)\rightarrow \DD_{\dR}(V)\otimes_K\Omega^1_K$. Moreover, it follows from Lemma~\ref{kernelconnection} that $\DD_{\dR}^\nabla(V)=\big(\DD_{\dR}(V)\big)^{\nabla=0}$. 
   \vs
   
   \noindent {\bf Notation.} To simplify the notation, we will write $\DD_{\dR}^r(V)$ for $\Fil^r\DD_{\dR}(V)$. 
   
  \subsubsection{The ring $\BB_{\max}^\nabla$}
  
  Let $A_{\max}^\nabla$ be the separated completion in the $p$-adic topology of the sub-$\tilde{\AA}^+$-algebra of $\tilde{\AA}^+$ generated by $p^{-1}\ker(\theta)$. Note that $t\in A_{\max}^\nabla$.
  \vs
  
  \noindent {\bf Definition.}  Define $\BB^{\nabla +}_{\max}=A^\nabla_{\max}[p^{-1}]$ and $\BB_{\max}^\nabla=\BB_{\max}^{\nabla +}[t^{-1}]$. 
  \vs
   
  It is clear from the definition that $A_{\max}$ inherits from $\tilde{\AA}^+$ a continuous action of $\calG_K$ and of the Frobenius operator $\phi$, and it is easy to see that $\phi(t)=pt$ and $g(t)=\chi(g)t$. These actions therefore extend to actions of $\calG_K$ and $\phi$ on $\BB_{\max}^{\nabla}$.
  
  \begin{lem}
   We have $H^0(L,\BB_{\max}^\nabla)=F_0$, where $F_0$ is the maximal unramified extension of $\QQ_p$ contained in $F$.
  \end{lem}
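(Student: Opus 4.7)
The plan is to bracket $H^0(L, \BB_{\max}^\nabla)$ between $F_0$ and $F$ and then collapse the bracket using the Frobenius structure. First, I would construct a $\calG_L$-equivariant inclusion $\BB_{\max}^\nabla \hookrightarrow \BB_{\dR}^\nabla$: by construction, $A_{\max}^\nabla$ is obtained from $\tilde{\AA}^+$ by adjoining $p^{-1}\omega$ for $\omega$ a generator of $\ker(\theta)$, then completing $p$-adically. Since $\omega \in \BB_{\dR}^{\nabla +}$ and the latter is complete and separated for the $\ker(\theta)$-adic topology, the tautological map $\tilde{\AA}^+[\omega/p] \to \BB_{\dR}^{\nabla +}$ extends to an embedding $A_{\max}^\nabla \hookrightarrow \BB_{\dR}^{\nabla +}$; inverting $p$ and $t$ then yields $\BB_{\max}^\nabla \hookrightarrow \BB_{\dR}^\nabla$. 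Taking $\calG_L$-invariants and invoking the preceding lemma gives $H^0(L, \BB_{\max}^\nabla) \subseteq F$.

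For the easy inclusion $F_0 \subseteq H^0(L, \BB_{\max}^\nabla)$, the residue field $k_F$ of $F$ lies in $k_L$, and the Teichm\"uller section $k_L \hookrightarrow \tilde{\EE}^+$ induces a $\calG_L$-equivariant inclusion $W(k_F) \hookrightarrow W(\tilde{\EE}^+) = \tilde{\AA}^+ \subseteq A_{\max}^\nabla$. Inverting $p$ identifies $F_0 = W(k_F)[p^{-1}]$ with a subring of $\BB_{\max}^\nabla$ which is fixed by $\calG_L$, since $F_0 \subseteq L$.

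The main obstacle is the reverse inclusion $H^0(L,\BB_{\max}^\nabla) \cap F \subseteq F_0$: one must show that no element of $F \setminus F_0$ lies in $\BB_{\max}^\nabla$. The Frobenius $\phi$ on $\BB_{\max}^\nabla$ commutes with $\calG_L$ and hence preserves $H^0(L,\BB_{\max}^\nabla) \subseteq F$. On the Witt-vector copy of $F_0$ inside $\BB_{\max}^\nabla$, $\phi$ acts as the standard Witt Frobenius with fixed field $\QQ_p$; by contrast, the embedding $F \hookrightarrow \BB_{\dR}^{\nabla +}$ coming from Step 1 agrees with the Witt-vector embedding only on $F_0$. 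Accordingly, if a lift $\tilde{\pi}_F$ of a uniformizer of $F/F_0$ lay in $\BB_{\max}^\nabla$, then $\phi(\tilde{\pi}_F)$ would admit two reconciling descriptions: one as a $\calG_L$-invariant element of $F$ produced by the Frobenius on $\BB_{\max}^\nabla$, the other dictated by the Eisenstein relation that $\pi_F$ satisfies over $F_0$. Comparing these using the $p$-adic growth conditions that define $A_{\max}^\nabla$ is expected to yield the desired contradiction. In spirit, this step is the analog of Fontaine's classical identity $H^0(K,\BB_{\max}) = K_0$ for $K/\QQ_p$ finite with perfect residue field, and the argument transfers because $\BB_{\max}^\nabla$ is built entirely from $\tilde{\AA}^+ = W(\tilde{\EE}^+)$ and $\omega/p$—ingredients that are insensitive to the imperfection of $k_L$.
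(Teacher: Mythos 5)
The paper does not prove this lemma at all: it simply cites Corollary 2.4.11 of Brinon's earlier work, so you are attempting something more ambitious than the text itself. Your first two steps are fine in outline (the embedding $\BB_{\max}^\nabla\hookrightarrow\BB_{\dR}^\nabla$ plus $H^0(L,\BB_{\dR}^\nabla)=F$ gives the upper bound $F$, and the Teichm\"uller/Witt-vector copy of $W(k_F)[p^{-1}]$ gives the lower bound $F_0$), although even there the injectivity of the $p$-adic completion $A_{\max}^\nabla$ into the non-$p$-adically-complete ring $\BB_{\dR}^{\nabla+}$ is not ``tautological'' and deserves a word.

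The genuine gap is your third step. Showing $F\cap\BB_{\max}^\nabla=F_0$ is exactly the hard content of the lemma, and your treatment of it ends with ``is expected to yield the desired contradiction,'' which is an acknowledgement that the argument has not been carried out. Moreover, the route you sketch --- pure bookkeeping with the Frobenius acting on the invariants --- is unlikely to close: $\phi$ restricted to a hypothetical intermediate field $F_0\subsetneq F'\subseteq F$ inside $\BB_{\max}^\nabla$ is just some $\QQ_p$-algebra embedding $F'\to F'$ extending the Witt Frobenius on $F_0$, and nothing in the Eisenstein relation for a uniformizer by itself rules this out. In the classical perfect-residue-field case the statement $K\cap\BB_{\max}=K_0$ is a theorem of Fontaine whose standard proof goes through the injectivity of the multiplication map $K\otimes_{K_0}\BB_{\max}\to\BB_{\dR}$ (given that, an element $x\in K\cap\BB_{\max}\setminus K_0$ produces the nonzero tensor $1\otimes x-x\otimes 1$ in the kernel, a contradiction); this injectivity is itself a substantive several-page argument, not a consequence of the growth conditions defining $A_{\max}^\nabla$. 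To make your proof complete you would need to import or re-prove that injectivity statement (or Brinon's Corollary 2.4.11 directly) in the imperfect-residue-field setting for $\BB_{\max}^\nabla$, which is precisely what the paper outsources to the reference.
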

  \begin{proof}
   Corollary 2.4.11 in~\cite{brinon03}.
  \end{proof}
  
  \noindent {\bf Definition.} For a $p$-adic representation $V$ of $\calG_L$, define $\DD_{\max}^\nabla(V)=\big(V\otimes_{\QQ_p}\BB_{\max}^\nabla\big)^{\calG_L}$, which is a finite dimensional $F_0$-vector space of dimension $\leq \dim_{\QQ_p}V$. 
  \vs
   
   The following result is crucial in the construction of the exponential maps. 
   
   \begin{prop}\label{SesBK}
    The natural inclusion $\BB_{\max}^{\nabla \phi=1}\rightarrow \BB_{\dR}^\nabla$ induces a short exact sequence of $\calG_L$-modules
    \begin{equation}\label{BKsequence}
      0\rTo \QQ_p\rTo \BB_{\max}^{\nabla \phi=1}\rTo \BB_{\dR}^\nabla\slash\BB_{\dR}^{\nabla +}\rTo 0.
    \end{equation}
   \end{prop}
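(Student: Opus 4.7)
The plan is to reduce this to Fontaine's classical Bloch-Kato fundamental exact sequence for absolutely unramified local fields with perfect residue field. The key observation is that the rings $\BB_{\max}^\nabla$ and $\BB_{\dR}^\nabla$ as defined depend only on $\tilde{\AA}^+$, the surjection $\theta:\tilde{\BB}^+\to\CC_K$, and the element $t\in\tilde{\BB}^+$; none of these involve the lifts $X_1,\dots,X_{d-1}$ of the $p$-basis. They therefore agree with Fontaine's classical rings $\BB_{\max}$ and $\BB_{\dR}$ attached to $W(\tilde{\EE}^+)[p^{-1}]$, and~\eqref{BKsequence} is then the classical fundamental exact sequence of~\cite{blochkato90}, whose proof applies verbatim.

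Explicitly, injectivity of $\QQ_p\hookrightarrow\BB_{\max}^{\nabla\phi=1}$ is immediate. For exactness in the middle I would show $\BB_{\max}^{\nabla +,\phi=1}\cap\ker\theta=0$: given such an $x$, one first notes $\theta(x)\in\CC_K^{\phi=1}=\QQ_p$, so subtracting a constant reduces to $\theta(x)=0$. Then, using that $\phi(\omega)=p\cdot\mathrm{unit}$ in $A_{\max}^\nabla$ (where $\omega$ generates $\ker\theta$), one iterates $x=\phi^n(x)$ to conclude $x\in\bigcap_n p^n\BB_{\max}^{\nabla +}=0$ by $p$-adic completeness. For surjectivity, given $y=t^{-n}y'\in\BB_{\dR}^\nabla$ with $y'\in\BB_{\dR}^{\nabla +}$, I would approximate $y'$ by an element $a\in\tilde{\BB}^+$ modulo $t^n\BB_{\dR}^{\nabla +}$ and construct a $\phi$-fixed lift as a Dwork-style series $-\sum_{k\geq 0}\phi^k(a/t^n)$ converging in $\BB_{\max}^\nabla[t^{-1}]$, relying on $\phi(t)=pt$.

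The technical heart is the convergence of this series, but the estimates carry over unchanged from Fontaine's classical proof because $A_{\max}^\nabla$ is constructed exactly as in the perfect residue field setting, namely as the $p$-adic completion of the sub-$\tilde{\AA}^+$-algebra of $\tilde{\BB}^+$ generated by $p^{-1}\ker\theta$. The imperfectness of the residue field plays no role here: it is absorbed entirely into the comparison between the $\nabla$-decorated rings and their non-$\nabla$ counterparts (Proposition~\ref{easyBdR}), which is external to the sequence~\eqref{BKsequence} we are establishing.
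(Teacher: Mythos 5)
The paper itself gives no argument here---it simply cites Proposition 2.4.16 of Brinon---and your high-level reduction is the right one: $\BB_{\max}^\nabla$ and $\BB_{\dR}^{\nabla}$ are built from $\tilde{\AA}^+=W(\tilde{\EE}^+)$, $\theta$ and $t$ alone, so~\eqref{BKsequence} is the classical fundamental exact sequence attached to the complete algebraically closed field $\CC_K$, and the imperfection of the residue field is invisible. That observation is correct and is precisely why the cited result holds. However, two of the three steps you sketch have genuine gaps. For exactness in the middle you must show $\BB_{\max}^{\nabla \phi=1}\cap\BB_{\dR}^{\nabla +}=\QQ_p$; a general element of $\BB_{\max}^{\nabla\phi=1}$ has the form $t^{-n}y$ with $y\in(\BB_{\max}^{\nabla +})^{\phi=p^n}$, so what is needed is $(\BB_{\max}^{\nabla +})^{\phi=p^n}\cap\fil^n\BB_{\dR}^{\nabla +}=\QQ_p t^n$ for every $n$, not only the case $n=0$ that you treat; the general case is an induction that itself uses the surjectivity statement. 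Moreover, even for $n=0$ the opening move ``$\theta(x)\in\CC_K^{\phi=1}=\QQ_p$'' is not meaningful: $\phi$ does not act on $\CC_K$ and $\theta$ is not $\phi$-equivariant, so it is not at all immediate that $\theta$ of a $\phi$-invariant element lies in $\QQ_p$---this is one of the delicate points of the classical proof, which Fontaine and Colmez handle by noting $x=\phi^k(x)\in\bigcap_k\phi^k(\BB_{\max}^{\nabla +})$ and analysing that intersection, not by applying $\theta$ first.

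Second, the Dwork-style series $-\sum_{k\geq 0}\phi^k(a/t^n)=-t^{-n}\sum_{k\geq 0}p^{-nk}\phi^k(a)$ does not converge: $\phi^k(a)$ gains no $p$-divisibility as $k$ grows, so the terms do not tend to $0$ in $\BB_{\max}^\nabla$. The classical surjectivity argument is genuinely different: one first shows that $\theta:(\BB_{\max}^{\nabla +})^{\phi=p}\rightarrow\CC_K$ is surjective by exhibiting explicit elements (bi-infinite Teichm\"uller sums $\sum_{m\in\ZZ}p^m[a^{p^{-m}}]$, or logarithms $\log[a]$, which are eigenvectors for $\phi$ by construction), and then deduces surjectivity of $(\BB_{\max}^{\nabla +})^{\phi=p^n}\rightarrow\BB_{\dR}^{\nabla +}\slash\fil^n\BB_{\dR}^{\nabla +}$ by induction on $n$ before passing to the limit over $n$. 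So while your reduction to the perfect-residue-field computation is sound and is exactly what the reference relied on by the paper does, the sketch of that classical computation would need to be replaced by the actual Fontaine--Colmez argument before this could stand as a proof.
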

   \begin{proof}
    Proposition 2.4.16 in~\cite{brinon06}.
   \end{proof}


 \section{Overconvergent $(\phi,G_K)$-modules}
  
  
  \subsection{Construction}\label{construction_ov}
  
   The rings of overconvergent series were defined in~\cite{andreattabrinon06}. For $r\in\QQ_{>0}$, let $\tilde{\AA}^{(0,r]}$ be the set of elements $z=\sum_{k=0}^\infty p^k[z_k]$ of $\tilde{\AA}$ such that 
   \begin{equation*}
    \lim_{k\rightarrow +\infty} rv_{\EE}(z_k)+k=+\infty.
   \end{equation*}
   
   Note that if $r<r'$, then $\tilde{\AA}^{(0,r']}\subset \tilde{\AA}^{(0,r]}$. Let $\tilde{\BB}^{(0,r]}=\tilde{\AA}^{(0,r]}[p^{-1}]$, $\tilde{\AA}^\dagger=\bigcup_r\tilde{\AA}^{(0,r]}$ and $\tilde{\BB}^\dagger=\tilde{\AA}^\dagger[p^{-1}]$. Define $\AA^{(0,r]}=\tilde{\AA}^{(0,r]}\cap\AA$, and $\BB^{(0,r]}=\tilde{\BB}^{(0,r]}\cap\BB$, where the intersection is taken in $\tilde{\BB}$. Note that $\phi$ induces maps $\AA^{(0,r]}\rightarrow \AA^{(0,pr]}$ and $\BB^{(0,r]}\rightarrow \BB^{(0,pr]}$. Put $\AA^\dagger=\bigcup_r \AA^{(0,r]}=\tilde{\AA}^\dagger\cap\AA$ and $\BB^\dagger=\bigcup_r \BB^{(0,r]}=\tilde{\BB}^\dagger\cap\BB$. Finally, for a finite extension $L$ of $K$, put $\AA_L^{(0,r]}=\big(\AA^{(0,r]}\big)^{\calH_L}$, $\BB_L^{(0,r]}=\big(\BB^{(0,r]}\big)^{\calH_L}$, $\AA_L^\dagger=\big(\AA^\dagger\big)^{\calH_L}$ and $\BB_L^\dagger=\big(\BB^\dagger\big)^{\calH_L}$.
   
   \vs
   
   Using the isomorphism $\AA_K\cong O_K((\pi))^\wedge$, we can give an explicit (geometric) description of $\AA_K^{(0,r]}$. For $r>1$, let $\calA_K^{(0,r]}$ be the algebra of power series $F(Z)=\sum_{k\in\ZZ}a_kZ^k$ with $a_k\in O_K$ which converges on the annulus $\big\{Z\in\CC_K:0<v_p(Z)\leq r\big\}$ and is bounded.
   
   \begin{prop}\label{convergence}
    Suppose that $r>1$. Then the map which to $F\in\calA_K^{(0,r]}$ associates $F(\pi)$ gives an isomorphism between $\calA_K^{(0,r]}$ and $\AA_K^{(0,r]}$. 
   \end{prop}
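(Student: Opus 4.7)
I would work with the description of $\AA_K$ as the $p$-adic completion $O_K((\pi))^\wedge$ of Proposition~\ref{base}, which allows every element of $\AA_K$ to be written uniquely as a formal series $\sum_{k\in\ZZ}a_k\pi^k$ with $a_k\in O_K$ and $v_p(a_k)\to+\infty$ as $k\to-\infty$. On the other side, an element $F=\sum a_kZ^k$ of $\calA_K^{(0,r]}$ satisfies $a_k\in O_K$ and, because the annulus condition at the outer boundary $v_p(Z)=r$ combined with $a_k\in O_K$ is the binding constraint, the extra decay $v_p(a_{-j})-jr\to+\infty$ as $j\to+\infty$. The principal tool is the Gauss-type valuation $w^{(r)}$ on $\tilde{\BB}^{(0,r]}$ defined on Witt vectors by $w^{(r)}(\sum_k p^k[z_k])=\inf_k(k+rv_{\EE}(z_k))$, under which $\tilde{\BB}^{(0,r]}$ is separated and complete. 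The crucial numerical estimate I would establish first is $w^{(r)}(\pi)=r$: writing $\pi=[\bar\pi]+p\cdot u$ with $u\in\tilde{\AA}^+$, the Teichm\"uller piece has $w^{(r)}([\bar\pi])=r\cdot v_{\EE}(\bar\pi)=r$, while the correction satisfies $w^{(r)}(pu)\ge 1$; since $r>1$, the minimum is achieved by the Teichm\"uller part.

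\textbf{Well-definedness and injectivity.} Given $F\in\calA_K^{(0,r]}$, each term satisfies $w^{(r)}(a_k\pi^k)\ge v_p(a_k)+kr$. For $k\to+\infty$ this tends to $+\infty$ because $a_k\in O_K$ and $r>0$; for $k\to-\infty$ it tends to $+\infty$ by the overconvergence condition on $F$. Hence $\sum a_k\pi^k$ converges in $\tilde{\AA}^{(0,r]}$, and since the same coefficient decay ensures convergence in $\AA_K$, we obtain $F(\pi)\in\tilde{\AA}^{(0,r]}\cap\AA_K=\AA_K^{(0,r]}$. For injectivity, the overconvergence forces $v_p(a_{-j})\to+\infty$, so reducing $F(\pi)=0$ modulo $p$ gives an identity $\sum\bar a_k\bar\pi^k=0$ in $\EE_K\cong k_L((\bar\pi))$ with only finitely many non-zero negative indices; uniqueness of Laurent series then yields $\bar a_k=0$, i.e. $a_k\in pO_K$, and the conclusion $F=0$ follows by iterating and using the $p$-adic completeness of $O_K$.

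\textbf{Surjectivity and the main obstacle.} Given $z\in\AA_K^{(0,r]}$, the identification with $O_K((\pi))^\wedge$ provides a unique expansion $z=\sum a_k\pi^k$ with $a_k\in O_K$ and $v_p(a_k)\to+\infty$ as $k\to-\infty$, and we must upgrade this to the stronger decay $v_p(a_{-j})-jr\to+\infty$ extracted from $z\in\tilde{\AA}^{(0,r]}$. The core issue, and the main obstacle, is to prove the orthogonality formula
\begin{equation*}
 w^{(r)}\Bigl(\sum_k a_k\pi^k\Bigr)=\inf_k\bigl(v_p(a_k)+kr\bigr),
\end{equation*}
so that no cancellation between distinct monomials $a_k\pi^k$ can occur at the level of $w^{(r)}$; once this is established, the limit condition built into $z\in\tilde{\AA}^{(0,r]}$ translates directly into $v_p(a_{-j})+(-j)r\to+\infty$, which is exactly the membership criterion for $\calA_K^{(0,r]}$. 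To prove the orthogonality I would combine two ingredients: the equality $w^{(r)}(\pi)=r$ from the first paragraph, and an inductive reduction modulo powers of $p$ that exploits the fact that reduction sends $\sum a_k\pi^k$ to a genuine Laurent series in $\bar\pi$ over $k_L$, where the $\bar\pi$-adic valuation reads off the smallest index $k$ with $\bar a_k\ne 0$ unambiguously; the $p$-adic lift of this uniqueness statement, together with completeness of $\tilde{\AA}^{(0,r]}$ for $w^{(r)}$, propagates the estimate to all of $z$. I expect this orthogonality (which is automatic in the classical perfect-residue setting but needs real care here because the Witt vector and $\pi$-adic structures interact non-trivially) to be the technically demanding step of the proof.
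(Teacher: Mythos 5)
Your overall strategy is the same as the paper's: both arguments reduce the proposition to a ``no cancellation'' statement for $\sum_k a_k\pi^k$, i.e.\ to the claim that the overconvergence valuation of such a sum can be read off term by term from the $a_k$ (the paper formulates this as Lemma~\ref{equivalentcond} and proves it with the truncated valuations $v_{\EE}^{\leq k}$, grouping the terms $a_i\pi^i$ according to $v_K(a_i)$ into the elements $y_j$ and using the factorisation $y_j=[\bar{\pi}]^{i_j}\cdot(\text{unit in }\tilde{\AA})$). However, there are two genuine gaps. First, your pivotal claim $w^{(r)}(\pi)=r$ is justified backwards: from $w^{(r)}([\bar{\pi}])=r$ and $w^{(r)}(pu)\geq 1$ with $r>1$, the \emph{infimum} defining $w^{(r)}(\pi)$ could perfectly well be attained by the correction term $pu$ and equal $1<r$; to rule this out you must actually compute the valuations of the higher Teichm\"uller digits of $[\varepsilon]-1$. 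Doing so (the first digit $z_1$ satisfies $v_{\EE}(z_1)=1/p$) shows that in the normalisation you adopt, $w^{(r)}(\pi)=\min(r,1+r/p,\dots)$, which is strictly less than $r$ once $r>p/(p-1)$ --- so the equality you need is simply false for most $r>1$, and with it your identification of the target coefficient ring. What saves the proposition is that the condition the paper actually works with (inherited from Lemme~II.2.3 of Cherbonnier--Colmez) puts the weight $r$ on the $p$-adic index rather than on $v_{\EE}$, and in \emph{that} normalisation $r>1$ does force the Teichm\"uller digit to dominate; the paper's two conventions are admittedly not consistent with each other, but your argument must commit to the one under which the key inequality holds, and check it.

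Second, the orthogonality formula, which you rightly single out as the crux, is only sketched, and the sketch does not yet contain the idea that makes it work. Reduction modulo $p$ sees only the $0$-th Teichm\"uller digit, i.e.\ only those $a_i$ that are units; the terms $a_i\pi^i$ with $p\mid a_i$ contribute nothing mod $p$ but contribute to all higher digits $z_1,z_2,\dots$, and it is precisely the interaction between these layers that has to be controlled. The paper does this by writing $z'=\sum_j p^j y_j$ with $y_j=p^{-j}\sum_{v_K(a_i)=j}a_i\pi^i$, computing $v_{\EE}^{0}(y_j)=i_j$ from the unit factorisation above, and then invoking the superadditivity (with its equality criterion) of $v_{\EE}^{\leq k}$ to pass from the $y_j$ to $z'$. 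Some such device --- not mere completeness of $\tilde{\AA}^{(0,r]}$ for $w^{(r)}$ --- is needed before the infimum formula, and hence the limit condition characterising $\calA_K^{(0,r]}$ (which is strictly stronger than boundedness of $w^{(r)}$), can be transferred to the coefficients.
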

      
   To prove Proposition~\ref{convergence}, we follow the strategy in Section II in~\cite{cherbonniercolmez98} and start by proving the following result: For $z=\sum_{k=0}^\infty p^k[z_k]\in\tilde{\AA}$ and $k\in\ZZ$, define $v_{\EE}^{\leq k}(z)=\inf_{i\leq k}v_{\EE}(z_i)$.
   
   \begin{lem}
    Let $z=\sum_{k=0}^\infty p^k[z_k]\in\tilde{\AA}$. Then
   \begin{equation*}
    \lim_{k\rightarrow +\infty} rv_{\EE}(z_k)+k=+\infty \text{ if and only if } \lim_{k\rightarrow +\infty} rv^{\leq k}_{\EE}(z)+k=+\infty.
   \end{equation*}
   \end{lem}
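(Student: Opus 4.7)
The plan is to prove the two implications separately, noting that one direction is essentially free. For the direction $(\Leftarrow)$, the tautological inequality $v_{\EE}^{\leq k}(z) \leq v_{\EE}(z_k)$ gives $rv_{\EE}^{\leq k}(z) + k \leq rv_{\EE}(z_k) + k$, so the right-hand side tends to $+\infty$ whenever the left-hand side does; no real content is needed.

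For the harder direction $(\Rightarrow)$, my plan is to fix $N \in \NN$ and use the hypothesis to choose $K_0 \in \NN$ such that $rv_{\EE}(z_k) + k \geq N$ for all $k \geq K_0$. I then set $A = \inf_{0 \leq i \leq K_0} v_{\EE}(z_i) \in \RR \cup \{+\infty\}$, which is finite unless every $z_i$ with $i \leq K_0$ vanishes (in which case the claim is immediate, since then $v_{\EE}^{\leq k}(z) = \inf_{K_0 < i \leq k} v_{\EE}(z_i)$ is already controlled by the hypothesis alone). For $k \geq K_0$ I split the running infimum as
\begin{equation*}
 v_{\EE}^{\leq k}(z) = \min\bigl(A,\ \inf_{K_0 < i \leq k} v_{\EE}(z_i)\bigr).
\end{equation*}
Each index $i$ appearing in the second infimum satisfies $v_{\EE}(z_i) \geq (N-i)/r \geq (N-k)/r$, since $i \leq k$ and $r > 0$. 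Combining, $rv_{\EE}^{\leq k}(z) + k \geq \min(rA + k,\ N)$, and taking $k \geq \max(K_0,\ N - rA)$ forces this lower bound to be $\geq N$. This proves $rv_{\EE}^{\leq k}(z) + k \to +\infty$, as required.

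There is no serious obstacle to this argument: the only point requiring attention is that the ``initial segment'' of indices $i \leq K_0$ is not directly controlled by the hypothesis, but since $K_0$ is fixed the resulting constant $rA$ is eventually absorbed by the linear term $k$. Conceptually the lemma simply records that passing from the individual valuations $v_{\EE}(z_k)$ to the running infimum over $i \leq k$ does not destroy the convergence condition defining $\tilde{\AA}^{(0,r]}$; this reformulation is what will be needed to feed into the subsequent proof of Proposition~\ref{convergence}, where the geometric description of $\AA_K^{(0,r]}$ naturally uses cumulative rather than pointwise control of Witt coordinates.
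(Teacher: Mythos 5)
Your proof is correct and follows essentially the same route as the paper: the backward implication via the tautological inequality $v_{\EE}^{\leq k}(z)\leq v_{\EE}(z_k)$, and the forward implication by comparing the running infimum with the pointwise valuations. If anything, your treatment of the forward direction is the more careful one: the paper picks a minimizing index $i_k\leq k$ and asserts ``certainly'' that $rv_{\EE}(z_{i_k})+i_k\to+\infty$ (which is not immediate when the $i_k$ stay bounded), whereas your explicit split into the fixed initial segment, whose contribution $rA+k$ is absorbed by the linear term, and the tail controlled by the hypothesis closes exactly that gap.
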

   \begin{proof}
    Suppose that $\lim_{k\rightarrow +\infty} rv^{\leq k}_{\EE}(z)+k=+\infty$. It is clear from the definition that $rv_{\EE}^{\leq k}(z)+k\leq rv_{\EE}(z_k)+k$ for all $k$, so certainly $\lim_{k\rightarrow +\infty} rv_{\EE}(z_k)+k=+\infty$. 
    
    Conversely, suppose that $\lim_{k\rightarrow +\infty} rv_{\EE}(z_k)+k=+\infty$. For all $k$ let $i_k\leq k$ such that $v_{\EE}(z_{i_k})=v_{\EE}^{\leq k}(z)$. Then certainly $\lim_{k\rightarrow +\infty} rv_{\EE}(z_{i_k})+i_k=+\infty$. But this implies the result since $rv_{\EE}(z_{i_k})+i_k\leq rv_{\EE}(z_{i_k})+k$.
   \end{proof}
   
   \noindent As shown in the proof of Proposition 4.3 in~\cite{andreattabrinon06}, the map $v_{\EE}^{\leq k}$ has the following properties:
   
   (1) $v_{\EE}^{\leq k}(z)=\infty \Longleftrightarrow z\in p^{k+1}\tilde{\AA}$;
   
   (2)$v_{\EE}^{\leq k}(y+z)\geq \inf\big(v_{\EE}^{\leq k}(y),v_{\EE}^{\leq k}(z)\big)$ with equality if and only if $v_{\EE}^{\leq k}(y)\neq v_{\EE}^{\leq k}(z)$;
   
   (3) $v_{\EE}^{\leq k}(yz)\geq \inf_{i+j\leq k}\big(v_{\EE}^{\leq i}(y)+v_{\EE}^{\leq j}(z)\big)$;
   
   (4) $v_{\EE}^{\leq k}(\phi(z))=pv_{\EE}^{\leq k}(z)$;
   
   (5) $v_{\EE}^{\leq k}(g(z))=v_{\EE}^{\leq k}(z)$ for all $g\in\calG_K$. 
   \vs
   
   \noindent Note that Proposition~\ref{convergence} can be reformulated as follows.
   
   \begin{lem}\label{equivalentcond}
    Let $z=\sum_{i\in\ZZ}a_i\pi^i\in\AA_K$. Then the following two conditions are equivalent:
    
    (i) $z\in\AA^{(0,r]}_K$;
    
    (ii) $\lim_{k\rightarrow -\infty} rv_K(a_k)+k =+\infty$.
   \end{lem}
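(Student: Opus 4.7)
The strategy is to combine the reformulation of condition~(i) established in the preceding lemma, namely
\[
z\in\tilde{\AA}^{(0,r]}\iff \lim_{k\to+\infty}rv_{\EE}^{\leq k}(z)+k=+\infty,
\]
with a precise analysis of the Witt-vector coordinates of $z=\sum_{i\in\ZZ}a_i\pi^i\in\AA_K$. First I would establish monomial estimates. For $a\in O_K$ with $v_K(a)=j$, write $a=p^ju$ with $u\in O_K^\times$; using property~(1) one sees that $v_{\EE}^{\leq k}(a)=0$ when $k\geq j$ and $+\infty$ when $k<j$. For $i\geq 0$, the element $\pi^i$ lies in $\AA_K^+\subset\tilde{\AA}^+$, so its only nonzero reduction mod~$p$ is $\bar{\pi}^i$ and $v_{\EE}^{\leq k}(\pi^i)=i$ for every $k\geq 0$.

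The essential computation is for negative exponents. Setting $\eta=\pi-[\bar{\pi}]\in p\tilde{\AA}^+$, one expands
\[
\pi^{-1}=[\bar{\pi}]^{-1}\sum_{n\geq 0}(-\eta)^n[\bar{\pi}]^{-n},
\]
so the $n$-th term lies in $p^n\tilde{\AA}$ with Teichm\"uller piece of $v_{\EE}$-valuation $-(n+1)$. Applying the submultiplicative property~(3) together with the ultrametric property~(2) of $v_{\EE}^{\leq k}$, this yields a controlled estimate for $v_{\EE}^{\leq k}(\pi^{-1})$ as a linear function of $k$, and the argument extends to $\pi^i$ for arbitrary $i<0$. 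Submultiplicativity then provides a lower bound for $v_{\EE}^{\leq k}(a_i\pi^i)$ in terms of $i$ and $v_K(a_i)$, and property~(2) delivers $v_{\EE}^{\leq k}(z)$ as the resulting infimum over~$i$.

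Both implications are then a matter of book-keeping. For (ii)$\Rightarrow$(i), the hypothesis forces the contributions of the large-$|i|$ monomials to satisfy a uniform lower bound on $rv_{\EE}^{\leq k}(a_i\pi^i)+k$, hence $rv_{\EE}^{\leq k}(z)+k\to+\infty$ as $k\to+\infty$. For the converse (i)$\Rightarrow$(ii), one uses that at Witt level~$k$ only those monomials $a_i\pi^i$ with $v_K(a_i)\leq k$ contribute to $z_k$, so the growth of $v_{\EE}^{\leq k}(z)$ to $-\infty$ at the prescribed linear rate in $k$ translates directly into the growth condition $rv_K(a_i)+i\to+\infty$ as $i\to-\infty$.

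The main obstacle is the sharpness of the estimate for $v_{\EE}^{\leq k}(\pi^{-1})$: its Witt coordinates have $v_{\EE}$-valuations tending to $-\infty$ in $k$, and one must control the arithmetic of Witt addition precisely enough to produce the correct linear rate, since it is exactly this rate that determines the factor $r$ on both sides of the equivalence.
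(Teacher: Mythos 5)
Your overall strategy --- rewrite condition (i) via the preceding lemma as $\lim_{k\to+\infty} rv_{\EE}^{\leq k}(z)+k=+\infty$ and then analyse the Witt coordinates of the monomials $a_i\pi^i$ using $\pi\equiv[\bar{\pi}]\bmod p$ --- is the same one the paper follows, and your lower bounds are enough for the implication (ii)$\Rightarrow$(i). The gap is in (i)$\Rightarrow$(ii), and it is precisely the ``sharpness'' issue you flag in your last sentence without resolving. Properties (2) and (3) of $v_{\EE}^{\leq k}$ only ever produce lower bounds, and property (2) upgrades to an equality only when the competing values are \emph{distinct}. Working monomial by monomial, two terms $a_i\pi^i$ and $a_j\pi^j$ with $v_K(a_i)=v_K(a_j)$ and $i\neq j$ both negative come with lower bounds only, so you cannot certify distinctness, cannot invoke the equality clause, and therefore cannot exclude cancellation; without an upper bound on $v_{\EE}^{\leq k}(z)$ you cannot conclude that a sequence of coefficients violating (ii) actually forces $z\notin\AA_K^{(0,r]}$.

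The paper closes this gap by changing the unit of analysis: instead of single monomials it groups all terms sharing a coefficient valuation into $y_k=p^{-k}\sum_{i<0,\,v_K(a_i)=k}a_i\pi^i$ and observes that this whole packet factors as $[\bar{\pi}]^{i_k}u$ with $u$ an explicit unit of $\tilde{\AA}$, where $i_k$ is the most negative exponent occurring at level $k$. This yields the exact value $v_{\EE}^{0}(y_k)=i_k$, not merely an estimate, and the distinctness needed for the equality case of property (2) is then arranged across the different levels when reassembling $z'=\sum_j p^j y_j$; the resulting criterion for $z'\in\AA_K^{(0,r]}$ in terms of the $y_j$ is quoted from Lemma II.2.3 of Cherbonnier--Colmez. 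To complete your argument you would need to add this grouping step (or some other device producing an exact leading Witt coordinate at each level); the geometric-series expansion of $\pi^{-1}$ together with subadditivity alone does not deliver it.
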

   \begin{proof}
    Note first that $z=\sum_{i\in\ZZ}a_i\pi^i\in\AA_K^{(0,r]}$ if and only if $z'=\sum_{i<0}a_i\pi^k\in\AA_K^{(0,r]}$.     For $k\in\NN$, define
    $y_k=p^{-k}\sum_{i<0,v_K(a_i)=k}a_i\pi^i$ and $i_k=\inf\{i<0\mid v_K(a_i)=k\}$. By Lemma II.2.3 in~\cite{cherbonniercolmez98}, $z'\in \AA_K^{(0,r]}$ if and only if $\lim_{j\rightarrow -\infty}v_{\EE}(y_j)+rj=+\infty$. It follows from the definition that $y_k=[\bar{\pi}]^{i_k}u$, where
    \begin{equation*}
     u= \frac{a_{i_k}}{p^k}\Big(\frac{\pi}{\bar{\pi}}\Big)^{i_k}\Big(1+\sum_{i>i_k,v_K(a_i)=v_K(a_{i_k})}\frac{a_i}{a_{i_k}}\pi^{i-i_k}\Big)
    \end{equation*}
    is a unit in $\tilde{\AA}$. We therefore deduce that $v_{\EE}^0(y_k)=i_k$ and $v_{\EE}^{\leq j}(y_k)\geq v_{\EE}(y_k)-j$ for all $j>1$. On the other hand, since $z'=\sum_{j=0}^{+\infty}p^jy_j$, we have $v_{\EE}^{\leq k}(z')\geq\inf_{0\leq j\leq k}v_{\EE}^{\leq k}(p^jy_j)=\inf_{0\leq j\leq k}v_{\EE}^{\leq k-j}(y_j)$ with equality if $v_{\EE}^{\leq k}(y_k)<v_{\EE}^{\leq k-j}(y_j)$. Now the formula $v_{\EE}^0(y_k)=i_k$ implies that the following two conditions are equivalent:
    
    (1) $\lim_{k\rightarrow +\infty}v_{\EE}^{\leq k}(y_k)+rk=+\infty$;
    
    (2) $\inf_{v_K(a_j)=k} rv_K(a_j)+j \rightarrow +\infty$ as $k\rightarrow +\infty$.
    
    \noindent Since there is only a finite number of $j<0$ such that $v_K(a_j)=k$ for every $k$, condition (2) is equivalent to the condition that $\lim_{k\rightarrow +\infty}rv_K(a_k)+k=+\infty$, which finishes the proof.
   \end{proof}

   Define $A=\AA_K^+\big[\big[\frac{p}{\pi^{p-1}}\big]\big]$. Let $x\in\AA_K$. For $n\in\NN$, let $\omega_n(x)$ be the smallest integer $k$ such that $x\in \pi^{-k}A+p^{n+1}\AA_K$. The following result will be useful in Section~\ref{psioperator}:

   \begin{lem}\label{convergence2}
    Let $x=\sum_{k\in\ZZ}a_k\pi^k\in\AA_K$ for some $a_k\in O_K$. Then $x\in\AA_K^{(0,r]}$ if and only if  $\omega_n(x)-n\big(\frac{r}{p-1}-1\big)\rightarrow -\infty$ as $n\rightarrow +\infty$. 
   \end{lem}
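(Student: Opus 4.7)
The strategy is first to give a closed-form description of $\omega_n(x)$ in terms of the Laurent coefficients $a_\ell$ of $x$, and then to match it against the coefficient-level characterisation of $\AA_K^{(0,r]}$ provided by Lemma~\ref{equivalentcond}.

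For the first step, I would analyse $A = \AA_K^+[[p/\pi^{p-1}]]$ modulo $p^{n+1}$. Any element is a $p$-adically convergent sum $\sum_{j\geq 0}b_j(p/\pi^{p-1})^j$ with $b_j\in O_K[[\pi]]$, and modulo $p^{n+1}$ only the terms $j\leq n$ survive. Examining the $\pi^\ell$-coefficient for $\ell<0$ shows it must be divisible by $p^{\lceil -\ell/(p-1)\rceil}$. Shifting by $\pi^{-k}$ and allowing an error in $p^{n+1}\AA_K$, one obtains the criterion that $x=\sum_\ell a_\ell\pi^\ell\in\pi^{-k}A+p^{n+1}\AA_K$ if and only if $(p-1)v_K(a_\ell)\geq -\ell-k$ for every $\ell<-k$ with $v_K(a_\ell)\leq n$. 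Minimising over $k$ yields the explicit formula
\begin{equation*}
 \omega_n(x)=\max\bigl\{-\ell-(p-1)v_K(a_\ell):\ell\in\ZZ,\ v_K(a_\ell)\leq n\bigr\},
\end{equation*}
which is finite because $x\in\AA_K$ forces $v_K(a_\ell)\to+\infty$ as $\ell\to -\infty$.

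For the second step, I would invoke Lemma~\ref{equivalentcond} to rewrite $x\in\AA_K^{(0,r]}$ as $rv_K(a_\ell)+\ell\to+\infty$ as $\ell\to-\infty$. The equivalence then becomes a linear-programming exercise in the $(\ell,v_K(a_\ell))$-plane: the constraints $v_K(a_\ell)\leq n$ and $rv_K(a_\ell)+\ell\geq M$ confine the feasible set to a triangle of size controlled by $n$ and $M$, on which the linear objective $-\ell-(p-1)v_K(a_\ell)$ can be optimised explicitly. For the forward direction, picking $M$ large in the hypothesis, any feasible $\ell$ satisfies $\ell\geq M-rn$, and a careful optimisation combining both constraints yields $\omega_n(x)\leq n\bigl(r/(p-1)-1\bigr)-c(M)+O(1)$ with $c(M)\to+\infty$, giving the required limit. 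For the reverse direction I would argue by contraposition: if $rv_K(a_{\ell_j})+\ell_j$ stayed bounded above along some $\ell_j\to-\infty$, then setting $n_j=v_K(a_{\ell_j})$ would make $(\ell_j,n_j)$ a feasible point for the max defining $\omega_{n_j}(x)$, forcing $\omega_{n_j}(x)-n_j\bigl(r/(p-1)-1\bigr)$ to stay bounded below.

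The main technical obstacle is the forward extremal calculation. The naive substitution $v_K(a_\ell)\geq(M-\ell)/r$ into $-\ell-(p-1)v_K(a_\ell)$ only yields a bound of shape $\omega_n(x)\leq n(r-(p-1))-M+O(1)$, which is too weak (by a factor of $p-1$) to conclude. One must simultaneously exploit the upper bound $v_K(a_\ell)\leq n$ together with the integrality of $v_K$ on the discrete Newton polygon of $x$ in order to sharpen the estimate to the correct coefficient $r/(p-1)-1$.
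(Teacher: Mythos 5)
Your reduction is sound as far as it goes: the closed form
\begin{equation*}
\omega_n(x)=\max\bigl\{-\ell-(p-1)v_K(a_\ell)\;:\;\ell\in\ZZ,\ v_K(a_\ell)\leq n\bigr\}
\end{equation*}
is correct and in fact exact --- the coefficient--matching argument modulo $p^{n+1}$ already accounts for all the integrality there is --- and combining it with Lemma~\ref{equivalentcond} is the right way to attack the statement; your contrapositive argument for the reverse implication also works. The problem is precisely the step you flag as ``the main technical obstacle'': it is not an obstacle that can be overcome, because the bound $\omega_n(x)\leq n(r-(p-1))-M+O(1)$ that you dismiss as naive is sharp. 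Take $x=\sum_{w\geq 1}p^{w}\pi^{-\lceil rw\rceil+M_w}$ with $M_w\to+\infty$ and $M_w=o(w)$. Then $rv_K(a_{\ell})+\ell=rw-\lceil rw\rceil+M_w\to+\infty$, so $x\in\AA_K^{(0,r]}$ by Lemma~\ref{equivalentcond}; but your own formula gives $\omega_n(x)\geq\lceil rn\rceil-M_n-(p-1)n$, hence $\omega_n(x)-n\bigl(\tfrac{r}{p-1}-1\bigr)\geq\tfrac{p-2}{p-1}\,(r-(p-1))\,n-M_n\to+\infty$ for $p\geq 3$ and $r>p-1$. No appeal to integrality or to the Newton polygon can recover the coefficient $\tfrac{r}{p-1}-1$.

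What your computation actually establishes is the equivalence $x\in\AA_K^{(0,r]}$ if and only if $\omega_n(x)-n(r-(p-1))\to-\infty$; the correct slope is $(p-1)\bigl(\tfrac{r}{p-1}-1\bigr)$. In other words there is a normalisation discrepancy of a factor $p-1$ between the statement of Lemma~\ref{convergence2} and Lemma~\ref{equivalentcond}: the $\omega_n$-criterion is transplanted from Cherbonnier--Colmez, whose parameter $r$ is normalised differently from the one entering the definition of $\tilde{\AA}^{(0,r]}$ here, and the paper's own one-line proof (``consequence of Proposition~\ref{convergence} and the definition of $\omega_n$'') does not resolve this. Two further points you should record: the statement silently requires $r>p-1$, since for $r\leq p-1$ it already fails for $x=1$ (one has $\omega_n(1)=0$ for all $n$ while $n(\tfrac{r}{p-1}-1)\not\to+\infty$); and your forward estimate needs the lower bound $v_K(a_\ell)\geq(M+|\ell|)/r$ to dispose of the finitely many $\ell$ near $0$, which is routine but should be said. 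So the honest conclusion of your (essentially correct) analysis is that the constant in the lemma must be corrected, or one of the two normalisations changed --- not that a finer extremal computation is missing.
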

   \begin{proof}
    Consequence of Proposition~\ref{convergence} and the definition of $\omega_n$.
   \end{proof}
   
   Let $V\in\Rep(\calG_L)$ (resp. $T\in\Rep_{\ZZ_p}(\calG_L)$), and define
   \begin{equation*}
    \DD^\dagger(V)=\big(V\otimes_{\ZZ_p}\AA^\dagger\big)^{\calH_L} \text{\hspace{4ex} (resp. $\DD^\dagger(T)=
    \big(T\otimes_{\ZZ_p}\AA^\dagger\big)^{\calH_L}$)},
   \end{equation*}
   so $\DD^\dagger(V)$ (resp. $\DD^\dagger(T)$) is a finitely generated module over $\BB^\dagger_L$ (resp $\AA_L^\dagger$) endowed with commuting semi-linear actions of $\phi$ and $G_L$. Also, define $\DD^{(0,r]}(V)=\big(V\otimes_{\ZZ_p}\AA^{(0,r]}\big)^{\calH_L}$. 
   
   \begin{thm}
    For any $V\in\Rep(\calG_L)$ (resp. $T\in\Rep_{\ZZ_p}(\calG_L)$), we have 
    \begin{equation*}
     \DD(V)=\BB_L\otimes_{\BB_L^\dagger}\DD^\dagger(V) \text{\hspace{4ex} (resp. $\DD(T)=\AA_L\otimes_{\AA_L^\dagger}\DD^\dagger(T)$)},
    \end{equation*}
    so $\DD(V)$ (resp. $\DD(T)$) has a basis of overconvergent elements.
   \end{thm}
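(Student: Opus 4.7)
My approach is to follow the Tate--Sen descent strategy of Cherbonnier--Colmez~\cite{cherbonniercolmez98}, adapted to the imperfect residue field setting via the overconvergent rings $\AA^{(0,r]}$ developed in Section~\ref{construction_ov} and in~\cite{andreattabrinon06}. It suffices to prove the integral statement: choose a $\calG_L$-stable $\ZZ_p$-lattice $T \subset V$, and observe that $\DD(V) = \DD(T)[p^{-1}]$ and $\DD^\dagger(V) = \DD^\dagger(T)[p^{-1}]$, so the rational statement follows from the integral one by inverting $p$.

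\textbf{Overconvergent basis by successive approximation.} Fix any $\AA_L$-basis $e_1,\dots,e_n$ of $\DD(T)$, and let $P \in \GL_n(\AA_L)$ be the matrix of $\phi$ and $G_g \in \GL_n(\AA_L)$ the matrices of $g \in G_L$ in this basis; these form an \'etale $(\phi,G_L)$-cocycle by the equivalence of categories. Since $\AA_L^\dagger$ is dense in $\AA_L$ for the weak topology, approximate the entries of $P$ and $G_g$ by elements of $\AA_L^{(0,r]}$ for some $r > 0$. The goal is to find a base change $U \in 1 + \pi^N M_n(\AA_L^+)$, with $N$ sufficiently large, such that the conjugated matrices $U P \phi(U)^{-1}$ and $U G_g \, g(U)^{-1}$ lie simultaneously in $M_n(\AA_L^{(0,r']})$ for some $r'>0$ and all $g$. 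Existence of $U$ is obtained iteratively: the contraction property $\phi:\AA^{(0,r]}\to\AA^{(0,pr]}$ (Proposition~\ref{convergence} and Lemma~\ref{equivalentcond}) lets one divide the approximation error at each step, while the Tate--Sen axioms verified for $L_\infty/L$ in~\cite{andreattabrinon06} — in particular the existence of generalised trace maps $\tilde{\AA}^{(0,r]}\to\AA_L^{(0,r]}$ with controlled norms — absorb the $G_L$-cocycle contributions. The resulting basis $(Ue_i)$ lies in $\DD^{(0,r']}(T)$, which gives $\DD(T) = \AA_L \otimes_{\AA_L^\dagger} \DD^\dagger(T)$.

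\textbf{Main obstacle.} The principal novelty over the classical Cherbonnier--Colmez setting is that $G_L = \Gamma_L \ltimes H_L$ is non-abelian of dimension $d > 1$, so the descent must be carried out simultaneously for the $\phi$-action and for each of the topological generators $\gamma, h_1,\dots,h_{d-1}$. The commutation relation $\gamma h_i = h_i^{\chi(\gamma)}\gamma$ established in Section~\ref{Iwasawa_tower} recasts the simultaneous descent as a Tate--Sen problem for a genuine $p$-adic Lie group rather than for the cyclic group $\ZZ_p$. The hard technical input is therefore the construction of generalised trace maps on $\tilde{\AA}^{(0,r]}$ satisfying the Tate--Sen axioms with sufficiently uniform bounds along each $h_i$ as well as along $\gamma$; this is precisely what the framework of~\cite{andreattabrinon06} supplies, after which the iterative construction of $U$ proceeds in close analogy with the perfect residue field case.
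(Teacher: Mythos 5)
The paper offers no argument of its own for this theorem: its ``proof'' is a bare citation of Theorem 4.42 in~\cite{andreattabrinon06}, and your sketch is precisely an outline of the Tate--Sen/Cherbonnier--Colmez descent strategy underlying that cited theorem, deferring the same hard technical input (the generalised trace maps and uniform Tate--Sen estimates for the non-abelian group $G_L$) to the same reference. So your proposal is consistent with, and in fact more informative than, what the paper provides, and I count it as essentially the same approach.
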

   \begin{proof}
    Theorem 4.42 in~\cite{andreattabrinon06}.
   \end{proof}
   
   \noindent {\bf Note.} Since $V$ is finite-dimensional, there exists $r_V> 0$ such that $\DD(V)=\BB_L\otimes_{\BB_L^{(0,r_V]}}\DD^{(0,r_V]}(V)$. \vs
   
   The rings $\BB^{(0,r]}$ are important since they are the bridge between the world of $(\phi,G_L)$-modules and the world of de Rham representations.
   
   \begin{lem}\label{convergenceBdR}
    An element $\sum_{k=0}^\infty p^k[z_k]$ converges in $\BB_{\dR}^{\nabla +}$ if and only if $\sum_{k=0}^\infty p^kz_k^{(0)}$ converges in $\CC_K$.
   \end{lem}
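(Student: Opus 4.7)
The plan is to handle the two implications separately, using the structure of $\BB_{\dR}^{\nabla+}$ as the $\ker(\theta)$-adic completion of $\tilde{\BB}^+$.

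\emph{Forward direction.} The map $\theta:\BB_{\dR}^{\nabla+}\to\CC_K$ is continuous with respect to the $\ker(\theta)$-adic topology on $\BB_{\dR}^{\nabla+}$ and the quotient topology on $\CC_K\cong\BB_{\dR}^{\nabla+}/\ker(\theta)$, which is the $p$-adic topology since $\theta(p)=p$. Applying $\theta$ term by term to a convergent series in $\BB_{\dR}^{\nabla+}$ therefore yields convergence of $\sum p^k z_k^{(0)}$ in $\CC_K$.

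\emph{Reverse direction.} Assume $\sum p^k z_k^{(0)}$ converges in $\CC_K$, equivalently $k+v_{\EE}(z_k)\to+\infty$. To show convergence of $\sum p^k[z_k]$ in $\BB_{\dR}^{\nabla+}$, I would verify Cauchy-ness of the partial sums $S_n=\sum_{k=0}^n p^k[z_k]$ modulo $\omega^N$ for every $N\geq 1$, where $\omega$ generates $\ker(\theta)$ by Lemma~\ref{principal}. The case $N=1$ is exactly the hypothesis, since $\BB_{\dR}^{\nabla+}/\omega\cong\CC_K$ identifies the image of $S_n$ with $\sum_{k=0}^n p^k z_k^{(0)}$. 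For $N\geq 2$, one expands the image of $[z_k]$ in $\BB_{\dR}^{\nabla+}/\omega^N$ as a polynomial in $\omega$ of degree less than $N$, using the explicit formulae $\omega=([\varepsilon]-1)/([\varepsilon^{1/p}]-1)$ and $t=\log[\varepsilon]$. The coefficients turn out to be algebraic expressions in $z_k^{(0)},\ldots,z_k^{(N-1)}$, each of $p$-adic absolute value controlled by $|z_k^{(0)}|_p$ (up to a finite $p^{N-1}$th root). An induction on $N$, organised around the short exact sequence
\begin{equation*}
0\to\omega^{N-1}\BB_{\dR}^{\nabla+}/\omega^N\BB_{\dR}^{\nabla+}\to\BB_{\dR}^{\nabla+}/\omega^N\to\BB_{\dR}^{\nabla+}/\omega^{N-1}\to 0
\end{equation*}
whose kernel is $\cong\CC_K$, then propagates the convergence from one quotient to the next.

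The main technical obstacle is writing down the explicit expansion of $[z_k]\bmod\omega^N$ and verifying the stated bounds on its coefficients. This should follow from the identity $\theta([z_k])=z_k^{(0)}$ together with the compatibility of Witt-vector arithmetic with the $\ker(\theta)$-adic filtration, via the approximation $[z_k]=\lim_{m\to\infty}(\hat z_k^{(m)})^{p^m}$ in $A_{\inf}$ applied to lifts $\hat z_k^{(m)}$ of the components $z_k^{(m)}$.
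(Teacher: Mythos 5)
Your forward direction is the same as the paper's (apply $\theta$), and your reduction of the converse to Cauchyness of the partial sums in each quotient $\BB_{\dR}^{\nabla +}\slash\omega^N$ (with its $p$-adic topology) is the right framework. But the step you yourself flag as ``the main technical obstacle'' --- that the image of $[z_k]$ in $\BB_{\dR}^{\nabla +}\slash\omega^N$ expands with coefficients whose $p$-adic size is controlled by $|z_k^{(0)}|_p$ up to a denominator bounded by $p^{N-1}$ --- is the entire content of the lemma, and as written it is asserted rather than proved. The route you suggest, via $[z_k]=\lim_m(\hat z_k^{(m)})^{p^m}$ and ``compatibility of Witt-vector arithmetic with the $\ker(\theta)$-adic filtration,'' does not obviously yield the stated bound: Witt-vector addition and multiplication mix ghost components in a way that makes a direct estimate on the higher coefficients genuinely painful, and nothing in the sketch explains where the uniform $p^{N-1}$ denominator comes from. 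So there is a gap exactly at the quantitative heart of the argument.

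The paper closes this gap with a single factorization that you should adopt. Set $a_k=\lfloor v_{\EE}(z_k)\rfloor$; convergence of $\sum p^kz_k^{(0)}$ gives $k+a_k\to+\infty$. Writing $z_k=\tilde{p}^{\,a_k}\cdot(\text{unit-denominator element of }\tilde{\EE}^+)$ with $\tilde{p}\in\tilde{\EE}^+$, $\tilde{p}^{(0)}=p$, one gets
\begin{equation*}
 p^k[z_k]=p^{k+a_k}\,u_k\,\big(p^{-1}[\tilde{p}]\big)^{a_k},\qquad u_k\in\tilde{\AA}^+ .
\end{equation*}
Since $\theta([\tilde{p}])=p$, the element $p^{-1}[\tilde{p}]$ is a unit of $\BB_{\dR}^{\nabla +}$ lying in $1+\ker(\theta)$, so modulo $\omega^N$ its $a_k$-th power is the truncated binomial series $\sum_{j<N}\binom{a_k}{j}\big(p^{-1}[\tilde p]-1\big)^j$, whose denominators are at most $p^{N-1}$ \emph{uniformly in $k$}. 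Hence each term $p^k[z_k]$ lies in $p^{k+a_k-(N-1)}\big(\tilde{\AA}^++\omega\tilde{\AA}^++\dots+\omega^{N-1}\tilde{\AA}^+\big)$ modulo $\omega^N$, and $k+a_k\to\infty$ forces the partial sums to be Cauchy in every quotient, i.e.\ the series converges for the $(p,\ker(\theta))$-adic topology in which $\BB_{\dR}^{\nabla +}$ is complete. This is precisely the coefficient estimate your induction needs, obtained without any induction on $N$.
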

   \begin{proof}
    If $\sum_{k=0}^\infty p^k[z_k]$ converges in $\BB_{\dR}^{\nabla +}$, then $\sum_{k=0}^\infty p^kz_k^{(0)}=\theta\Big(\sum_{k=0}^\infty p^k[z_k]\Big)$ is well-defined. Conversely, suppose that $\sum_{k=0}^\infty p^kz_k^{(0)}$ converges in $\CC_K$. If $a_k$ denotes the integer part of $v_{\EE}(z_k)$, then $a_k+k\rightarrow +\infty$ as $k\rightarrow +\infty$. We can therefore write $p^k[z_k]$ as $p^{a_k+k}u_k(p^{-1}[\tilde{p}])^{a_k}$, where $u_k\in\tilde{\AA}^+$ and $\tilde{p}\in\tilde{\EE}^+$ satisfies $\tilde{p}^{(0)}=p$. Note that $p^{-1}[\tilde{p}]\in 1+\ker(\theta)$. Since $\BB_{\dR}^{\nabla +}$ is complete in the $(p,\ker(\theta))$-adic topology, this finishes the proof.
   \end{proof}
   
   \begin{prop}
    Let $n_r\in\NN$ be such that $p^{-n_r}<r$. Then for all $n\geq n_r$ the map $\phi^{-n}$ defines an injection
    \begin{equation*}
     \phi^{-n}: \tilde{\BB}^{(0,r]}\rTo \BB_{\dR}^{\nabla +}.
    \end{equation*}
   \end{prop}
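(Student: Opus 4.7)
My plan is to unfold $\phi^{-n}$ on Teichm\"uller coordinates, reduce convergence in $\BB_{\dR}^{\nabla+}$ to convergence in $\CC_K$ via Lemma~\ref{convergenceBdR}, and then verify the resulting linear inequality on valuations. The first step is to exploit perfectness of $\tilde{\EE}$: for $z = \sum_{k \geq 0} p^k[z_k] \in \tilde{\AA}^{(0,r]}$ I set $\phi^{-n}(z) := \sum_{k \geq 0} p^k[z_k^{1/p^n}]$, extended to $\tilde{\BB}^{(0,r]}$ by inverting $p$. Using $v_{\EE}(\phi(y)_k) = pv_{\EE}(y_k)$, one sees that $\phi^{-n}$ is a bijection $\tilde{\BB}^{(0,r]} \to \tilde{\BB}^{(0,rp^n]}$, so the problem reduces to showing that the natural map $\tilde{\BB}^{(0,s]} \to \BB_{\dR}^{\nabla+}$ is well-defined and injective whenever $s = rp^n > 1$.

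For well-definedness I would apply Lemma~\ref{convergenceBdR}: since $(z_k^{1/p^n})^{(0)} = z_k^{(n)}$, convergence of $\phi^{-n}(z)$ in $\BB_{\dR}^{\nabla+}$ reduces to convergence of $\sum_k p^k z_k^{(n)}$ in $\CC_K$, i.e.\ to $k + v_p(z_k^{(n)}) \to +\infty$. The compatibility $v_p(z_k^{(n)}) = p^{-n} v_p(z_k^{(0)})$, together with the proportionality of $v_p(\cdot^{(0)})$ and $v_{\EE}$ under the normalization $v_{\EE}(\bar\pi) = 1$, reduces the problem to an estimate of the shape
\begin{equation*}
k + p^{-n}v_{\EE}(z_k) = k\bigl(1 - p^{-n}/r\bigr) + p^{-n}\sigma_k/r, \qquad \sigma_k := rv_{\EE}(z_k)+k.
\end{equation*}
By hypothesis $\sigma_k \to +\infty$, and the coefficient $1 - p^{-n}/r$ is strictly positive precisely when $p^{-n} < r$, so the right-hand side tends to $+\infty$ under the standing assumption $n \geq n_r$.

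For injectivity I would combine the fact that $\phi^{-n}$ is a ring automorphism of $\tilde{\BB}$ (again by perfectness of $\tilde{\EE}$) with the standard fact that $\tilde{\BB}^+$ embeds into its $\ker\theta$-adic completion $\BB_{\dR}^{\nabla+}$, which forces the natural map $\tilde{\BB}^{(0,s]} \hookrightarrow \BB_{\dR}^{\nabla+}$ to be injective wherever it is defined. The main technical obstacle I anticipate is not a single conceptual step but the valuation bookkeeping above: one has to track the proportionality constant between $v_{\EE}$ and $v_p(\cdot^{(0)})$ (which in the convention $v_{\EE}(\bar\pi)=1$ equals $p/(p-1)$) in order to confirm that the threshold $p^{-n}<r$ is exactly what the argument demands, or else to absorb any residual constant into the definition of $n_r$.
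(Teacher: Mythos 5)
Your argument follows essentially the same route as the paper's proof: expand $\phi^{-n}$ in Teichm\"uller coordinates, reduce convergence in $\BB_{\dR}^{\nabla +}$ to convergence of $\sum_k p^k z_k^{(n)}$ in $\CC_K$ via Lemma~\ref{convergenceBdR}, and check the resulting linear estimate on valuations using $v_{\EE}(\phi^{-n}(z_k))=p^{-n}v_{\EE}(z_k)$. If anything you are more scrupulous than the paper, which neither tracks the proportionality constant between $v_{\EE}$ and $v_p(\cdot^{(0)})$ in the final estimate nor comments on injectivity.
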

   \begin{proof}
    Since $\tilde{\BB}^{(0,r]}=\tilde{\AA}^{(0,r]}[p^{-1}]$, it is sufficient to show that $\phi^{-n}$ defines an injection $\tilde{\AA}^{(0,r]}\rightarrow \BB_{\dR}^{\nabla +}$. Let $x=\sum_{k=0}^{\infty}p^k[z_k]\in\tilde{\AA}^{(0,r]}$, so by definition we have
    $\lim_{k\rightarrow +\infty} rv_{\EE}(z_k)+k=+\infty$, which implies that 
    \begin{equation}\label{convergence*}
     \lim_{k\rightarrow +\infty} rp^{-n}v_{\EE}(z_k)+k=+\infty.
    \end{equation}
    Now $\phi^{-n}(x)=\sum_{k=0}^\infty p^k[\phi^{-n}(z_k)]$, and $v_{\EE}(\phi^{-n}(z_k))=p^{-n}v_{\EE}(z_K)$, so $\sum_{k=0}^{\infty}p^kz_K^{(n)}$ converges in $\CC_K$ by~\eqref{convergence*}, which finishes the proof by Lemma~\ref{convergenceBdR}.
   \end{proof}
   
   \begin{cor}
    Let $V\in\Rep(\calG_L)$, and let $n\in\NN$ such that $p^{-n}<r_V$. Then $\phi^{-n}$ defines an injection
    \begin{equation*}
     \phi^{-n}:\DD^{(0,r]}(V)\rTo \big(V\otimes\BB_{\dR}^\nabla\big)^{\calH_L}.
    \end{equation*}
   \end{cor}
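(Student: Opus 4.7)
The plan is to deduce the corollary directly from the preceding proposition by applying $V\otimes_{\QQ_p}-$ and then passing to $\calH_L$-invariants. The preceding proposition gives an injection
$$\phi^{-n}: \tilde{\BB}^{(0,r_V]}\hookrightarrow \BB_{\dR}^{\nabla +}$$
whenever $p^{-n}<r_V$, and since $\BB^{(0,r_V]}\subset\tilde{\BB}^{(0,r_V]}$, the same holds with $\BB^{(0,r_V]}$ in place of $\tilde{\BB}^{(0,r_V]}$ (landing inside $\BB_{\dR}^{\nabla +}\subset \BB_{\dR}^\nabla$).

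Next, since $V$ is a finite-dimensional (in particular flat) $\QQ_p$-vector space, tensoring with $V$ preserves injectivity, yielding an injection
$$\id_V\otimes\phi^{-n}: V\otimes_{\QQ_p}\BB^{(0,r_V]}\hookrightarrow V\otimes_{\QQ_p}\BB_{\dR}^\nabla.$$
The operator $\phi$ on $\AA$ commutes with the $\calG_L$-action, and the same holds for the action of $\phi$ on the period rings, so $\phi^{-n}$ is $\calG_L$-equivariant for the diagonal actions on both sides. Applying the left exact functor of $\calH_L$-fixed points therefore preserves injectivity, giving the desired map
$$\phi^{-n}: \DD^{(0,r_V]}(V)=\bigl(V\otimes_{\QQ_p}\BB^{(0,r_V]}\bigr)^{\calH_L}\hookrightarrow \bigl(V\otimes_{\QQ_p}\BB_{\dR}^\nabla\bigr)^{\calH_L}.$$
Here I have implicitly identified $V\otimes_{\ZZ_p}\AA^{(0,r_V]}$ with $V\otimes_{\QQ_p}\BB^{(0,r_V]}$, which is legitimate since $V$ is already a $\QQ_p$-vector space and $\BB^{(0,r_V]}=\AA^{(0,r_V]}[p^{-1}]$.

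There is no real obstacle here; the substantive content lies entirely in the preceding proposition, and the corollary is a formal consequence via flatness of $V$ and Galois-equivariance of Frobenius. If any subtlety deserves a remark, it is only the identification of the integral and rational tensor products, which I would mention in a single line rather than elaborate on.
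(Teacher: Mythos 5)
Your argument is correct and is exactly the formal deduction the paper intends (it states the corollary without proof as an immediate consequence of the preceding proposition): restrict the injection $\phi^{-n}:\tilde{\BB}^{(0,r_V]}\hookrightarrow\BB_{\dR}^{\nabla+}$ to $\BB^{(0,r_V]}$, tensor with the flat $\QQ_p$-vector space $V$, and pass to $\calH_L$-invariants using Galois-equivariance of $\phi$ and left-exactness of fixed points. The one-line remark identifying $V\otimes_{\ZZ_p}\AA^{(0,r_V]}$ with $V\otimes_{\QQ_p}\BB^{(0,r_V]}$ is appropriate and suffices.
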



  \subsection{The $\psi$-operator}\label{psioperator}
  
   We have seen in Section~\ref{eqcat} that $\phi$ defines a map $\BB\rightarrow \BB$. In fact, $\BB$ is a finitely generated vector space over $\phi(\BB)$ of dimension $p^d$. Explicitly, a basis of $\BB$ over $\phi(\BB)$ is given by $\Big\{(1+\pi)^{i_0}T_1^{i_1}\dots T_{d-1}^{i_{d-1}}\Big\}$,
   where $0\leq i_0,\dots,i_{d-1}\leq p-1$. Define
   \begin{align*}
    \psi: \BB&\rTo\BB \\
    x&\mapsto\frac{1}{p^d}\phi^{-1}\big(\Tr_{\BB\slash\phi(\BB)}(x)\big).
   \end{align*}
   The following properties of $\psi$ are immediate:
   
   (i) $\psi\circ\phi=\id$;
   
   (ii) $\psi$ commutes with the action of $\calG_L$;
   
   (iii) $\psi\big(\BB^\dagger\big)\subset\BB^{\dagger}$; more precisely, $\psi\big(\BB^{(0,r]}\big)\subset \BB^{(0,pr]}$ for all $r>0$. 
   
   Since $\psi$ commutes with the action of $\calG_L$, for any $V\in\Rep(\calG_L)$ the modules $\DD(V)$ and $\DD^\dagger(V)$ inherit an action of $\psi$ which commutes with the action of $G_L$. 
   \vs
   
   The main result of this section is Proposition~\ref{subset} below. In order prove it, we closely follow the strategy of the proof of Proposition III.3.2 in~\cite{cherbonniercolmez99}. We need a couple of preliminary results, whose proofs are identical to the $1$-dimensional case (c.f.~\cite{cherbonniercolmez99}). As in Section~\ref{construction}, define $A=\AA_K^+\big[\big[\frac{p}{\pi^{p-1}}\big]\big]$. Let $x\in\AA_K$. For $n\in\NN$, let $\omega_n(x)$ be the smallest integer $k$ such that $x\in \pi^{-k}A+p^{n+1}\AA_K$. It is easy to check that $\omega_n$ has the following properties:
   
   (1) $\omega_n(x+y)\leq \sup\{\omega_n(x),\omega_n(y)\}$;
   
   (2) $\omega_n(xy)\leq \sup_{i+j=n}\{\omega_i(x)+\omega_j(y)\}\leq \omega_n(x)+\omega_n(y)$;
   
   (3) $\omega_n(\phi(x))\leq p\omega_n(x)$.
   
   \begin{lem}
    (i) If $k\in\NN$, then $\phi(\pi^k)\in\AA_K^+$ and $\psi(\pi^{-k})\in\pi^{-k}\AA_K^+$.
    
    (ii) $\psi(A)\subset A$.
   \end{lem}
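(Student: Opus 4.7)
The plan is to reduce both statements to two underlying facts: the projection formula $\psi(\phi(x)y)=x\psi(y)$, which is immediate from the $\phi(\BB)$-linearity of $\Tr_{\BB/\phi(\BB)}$, together with the stability property $\psi(\AA_K^+)\subset\AA_K^+$.

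First I would establish $\psi(\AA_K^+)\subset\AA_K^+$. The key observation is that the basis $\{(1+\pi)^{i_0}T_1^{i_1}\cdots T_{d-1}^{i_{d-1}}:0\leq i_j\leq p-1\}$ which realises $\BB$ as free of rank $p^d$ over $\phi(\BB)$ already exhibits $\AA_K^+=O_K[[\pi]]$ as free of rank $p^d$ over $\phi(\AA_K^+)$. Computing the trace in a Galois closure via the Kummer-type conjugates $(1+\pi)\mapsto \zeta(1+\pi)$ together with analogous $p$-th root-of-unity twists on each $T_j$ shows that $\Tr_{\BB/\phi(\BB)}$ annihilates every non-trivial basis element and sends $1$ to $p^d$. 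Consequently $\psi=p^{-d}\phi^{-1}\circ\Tr$ acts as the coefficient-of-$1$ projector in this basis expansion, so it maps $\AA_K^+$ into itself.

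For part (i), $\phi(\pi^k)=((1+\pi)^p-1)^k$ visibly lies in $\AA_K^+$ when $k\geq 0$. I would then write $\phi(\pi)=\pi\cdot v$ with $v=p+\binom{p}{2}\pi+\cdots+\pi^{p-1}\in\AA_K^+$, so that $\pi^{-k}=v^k\phi(\pi^{-k})$; the projection formula gives $\psi(\pi^{-k})=\pi^{-k}\psi(v^k)$, which lies in $\pi^{-k}\AA_K^+$ by the previous step. For part (ii), the same identity $\phi(\pi)=\pi v$ yields $p/\pi^{p-1}=v^{p-1}\phi(p/\pi^{p-1})$, so writing $x\in A$ as $\sum_{n\geq 0}a_n(p/\pi^{p-1})^n$ with $a_n\in\AA_K^+$ and applying the projection formula term-by-term gives
$$\psi(x)=\sum_{n\geq 0}(p/\pi^{p-1})^n\cdot\psi\bigl(a_n v^{n(p-1)}\bigr),$$
whose coefficients all lie in $\AA_K^+$ by the first step, placing $\psi(x)$ in $A$.

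The main technical obstacle is justifying the interchange of $\psi$ with the infinite sum in part (ii): one needs $\psi$ to be continuous in the topology in which the expansion of an element of $A$ converges. This should follow from the definition of the weak topology on $\AA_K$ together with the fact that $\Tr_{\BB/\phi(\BB)}$ and $\phi^{-1}$ are continuous, but care is needed because of the factor $p^{-d}$. The cleanest way to handle this is to argue that the stability $\psi(\AA_K^+)\subset\AA_K^+$ established in the first step already implies that $\psi$ is continuous for the weak topology, which then allows the term-by-term application above to go through and completes the proof.
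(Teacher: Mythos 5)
Your argument is correct and is essentially the standard one: the paper itself gives no proof but defers to Lemme I.6.1 of Cherbonnier--Colmez, whose proof is exactly this combination of the projection formula $\psi(\phi(x)y)=x\psi(y)$, the identity $\phi(\pi)=\pi v$ with $v=p+\binom{p}{2}\pi+\dots+\pi^{p-1}\in\AA_K^+$, and the description of $\psi$ as the coefficient-of-$1$ projector for the basis of $\AA_K^+$ over $\phi(\AA_K^+)$. One remark on the point you flag as the main obstacle: the interchange of $\psi$ with the sum in (ii) does not require weak-topology continuity (and stability of $\AA_K^+$ alone would not give that directly); since the projector description shows $\psi(\AA_K)\subset\AA_K$, the additive map $\psi$ is automatically continuous for the $p$-adic topology on $\AA_K$, and the series $\sum_n a_n(p/\pi^{p-1})^n$ already converges $p$-adically in $\AA_K$ because $\pi$ is a unit there, so the term-by-term computation is justified.
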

   \begin{proof}
    See Lemma I.6.1 in~\cite{cherbonniercolmez98}.
   \end{proof}
   
   \begin{cor}
    If $x\in\AA_K$ and $n\in\NN$, then $\omega_n(\psi(x))\leq 1+ \big[\frac{\omega_n(x)}{p}\big]\leq 1+\frac{\omega_n(x)}{p}$.
   \end{cor}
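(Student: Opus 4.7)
The plan is to reduce everything to bounding $\psi(\pi^{-k}a)$ for $a\in A$, to absorb most of the pole of $\pi^{-k}$ into $\phi$ using the identity $\phi(\pi)=\pi^p u$ with $u\in A^{\times}$, and then to apply the projection formula together with the preceding lemma.

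Let $k=\omega_n(x)$ and write $x = \pi^{-k}a + p^{n+1}b$ with $a\in A$ and $b\in\AA_K$. Since $p^{n+1}\psi(b)\in p^{n+1}\AA_K$ contributes nothing to $\omega_n$, it suffices to bound $\omega_n\bigl(\psi(\pi^{-k}a)\bigr)$. Perform Euclidean division $k=pq+r$ with $0\le r\le p-1$, so $q=[k/p]$. A direct expansion gives $\phi(\pi)=(1+\pi)^p-1 = \pi^p + p\cdot\pi h(\pi)$ for some $h\in\ZZ[\pi]$, so $\phi(\pi)/\pi^p = 1+(p/\pi^{p-1})h(\pi)$ lies in $A$ and is a unit there (it is congruent to $1$ modulo the maximal ideal of $A$). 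Calling this unit $u$, we obtain $\pi^{-pq}=u^q\phi(\pi^{-q})$ and hence
\[
 \pi^{-k}a \;=\; \phi(\pi^{-q})\cdot\bigl(u^q\pi^{-r}a\bigr), \qquad u^q\pi^{-r}a\in\pi^{-(p-1)}A.
\]
The projection formula $\psi(\phi(y)z)=y\,\psi(z)$, which follows from $\psi\circ\phi=\id$ and the definition of $\psi$ as $\frac{1}{p^d}\phi^{-1}\circ\Tr_{\BB\slash\phi(\BB)}$, then yields $\psi(\pi^{-k}a) = \pi^{-q}\,\psi(u^q\pi^{-r}a)$.

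The remaining point is the inclusion $\psi\bigl(\pi^{-(p-1)}A\bigr)\subset\pi^{-1}A$. To establish this, I would expand a general element of $A$ in the $\phi(\AA_K)$-basis $\{(1+\pi)^{j_0}T_1^{j_1}\cdots T_{d-1}^{j_{d-1}}:0\le j_i\le p-1\}$ of $\AA_K$, peel off the $\phi$-factors via the projection formula, and apply parts (i) and (ii) of the preceding lemma to the residual products $\pi^{-(p-1)}(1+\pi)^{j_0}T_1^{j_1}\cdots T_{d-1}^{j_{d-1}}$. Combining this with the previous displayed identity gives $\psi(\pi^{-k}a)\in\pi^{-q-1}A$, whence $\omega_n\bigl(\psi(x)\bigr)\le q+1 = 1+[\omega_n(x)/p]$; the second inequality $[k/p]\le k/p$ is immediate.

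The main obstacle is the inclusion $\psi(\pi^{-(p-1)}A)\subset\pi^{-1}A$: the preceding lemma provides only separate information about $\psi$ on $\pi^{-k}$ and on $A$, so assembling these into a bound on $\psi$ of the product requires a careful expansion in the $\phi$-basis together with a check that the resulting series still converges in the $(p,\pi)$-adic topology defining $A$. Once this is in hand, the numerical bookkeeping reduces to the single division $k=pq+r$ already performed above.
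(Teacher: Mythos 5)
Your strategy is exactly the Cherbonnier--Colmez argument (Corollaire I.6.3 of \cite{cherbonniercolmez98}) that the paper simply cites rather than reproves, so in substance you and the paper take the same route: write $x=\pi^{-k}a+p^{n+1}b$ with $k=\omega_n(x)$, use $\phi(\pi)/\pi^p=u\in A^\times$ and the projection formula $\psi(\phi(y)z)=y\psi(z)$ to pull out $\pi^{-q}$ with $q=[k/p]$, and invoke $\psi(A)\subset A$. The one place where you stop short --- the inclusion $\psi\bigl(\pi^{-(p-1)}A\bigr)\subset\pi^{-1}A$, which you call the main obstacle and propose to attack by expanding elements of $A$ in the $\phi(\AA_K)$-basis --- is not actually an obstacle, and the basis-expansion detour (with its attendant convergence issues) is unnecessary: apply the same factorization one more time. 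Since $\pi^{-(p-1)}=\pi\cdot\pi^{-p}=\phi(\pi^{-1})\,\pi u$, any $a'\in A$ gives $\pi^{-(p-1)}a'=\phi(\pi^{-1})\cdot(\pi u a')$ with $\pi u a'\in A$, whence $\psi(\pi^{-(p-1)}a')=\pi^{-1}\psi(\pi u a')\in\pi^{-1}A$ directly by part (ii) of the preceding lemma. (Equivalently, you could skip the intermediate reduction entirely: $\pi^{-k}a=\phi(\pi^{-q-1})\cdot\phi(\pi)u^{q}\pi^{p-1-r}a$ with $\phi(\pi)u^{q}\pi^{p-1-r}a\in A$, giving $\psi(\pi^{-k}a)\in\pi^{-q-1}A$ in one step.) With that substitution your proof is complete and correct; the numerical conclusion $\omega_n(\psi(x))\le 1+[\omega_n(x)/p]\le 1+\omega_n(x)/p$ follows as you say.
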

   \begin{proof}
    See Corollary I.6.3 in~\cite{cherbonniercolmez98}.
   \end{proof}
  
   \begin{prop}\label{subset}
    For any $V\in\Rep(\calG_L)$ or $\Rep_{\ZZ_p}(\calG_L)$ there exists $r_V'>0$ such that 
    \begin{equation*}
     \DD(V)^{\psi=1}\subset\DD^{(0,r_V']}(V).
    \end{equation*}
   \end{prop}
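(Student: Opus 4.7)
The plan is to adapt the proof of Proposition III.3.2 in \cite{cherbonniercolmez99} to the higher-dimensional setting, exploiting the function $\omega_n$ on $\AA_K$ together with its compatibility with $\psi$ to show that $\psi$-fixed vectors have uniformly bounded $\omega_n$, which by Lemma~\ref{convergence2} forces overconvergence.

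First, I would use the theorem stating $\DD(V) = \BB_L \otimes_{\BB_L^{(0,r_V]}} \DD^{(0,r_V]}(V)$ to fix an overconvergent basis $e_1,\ldots,e_h$ of $\DD(V)$ coming from $\DD^{(0,r_V]}(V)$. I then extend $\omega_n$ to $\DD(V)$ by setting $\omega_n\bigl(\sum_i x_i e_i\bigr) := \sup_i \omega_n(x_i)$. A scalar-extension version of Lemma~\ref{convergence2} then says that an element $y \in \DD(V)$ lies in $\DD^{(0,r]}(V)$, for $r \leq r_V$ and $r$ large enough, iff $\omega_n(y) - n\bigl(\tfrac{r}{p-1}-1\bigr) \to -\infty$ as $n \to +\infty$. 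The $\ZZ_p$-case is treated identically (replacing $\BB$ by $\AA$), so I will not separate the two.

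Second, I would establish the module analogue of the contraction estimate $\omega_n(\psi(x)) \le 1 + \omega_n(x)/p$. Writing $\phi(e_j) = \sum_i m_{ij} e_i$ with $m_{ij} \in \AA_L^{(0,r_V]}$, the matrix $M = (m_{ij})$ is invertible over $\BB_L^{(0,r_V]}$ (because $\DD^{(0,r_V]}(V)$ is \'etale). Using the formal identity $\psi(a \phi(b)) = \psi(a)\, b$ and inverting $M$, one expresses $\psi$ on coordinates in the basis $(e_i)$ as a composition of the scalar $\psi$ on $\AA_L$ with multiplication by entries of matrices lying in $\AA_L^{(0,r_V]}$. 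Since elements of $\AA_L^{(0,r_V]}$ have bounded $\omega_n$ uniformly in $n$ (indeed they lie in $\pi^{-k_0} A + p^{n+1}\AA_L$ for a fixed $k_0$ depending only on the matrix), the multiplicativity property of $\omega_n$ together with the scalar estimate yields
\begin{equation*}
\omega_n(\psi(x)) \leq C + \frac{\omega_n(x)}{p}
\end{equation*}
for some constant $C = C(M)$ independent of $x$ and $n$.

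Third, I would exploit $\psi(x) = x$ to iterate: applying the previous estimate $m$ times yields
\begin{equation*}
\omega_n(x) = \omega_n(\psi^m(x)) \leq C\Bigl(1 + \tfrac{1}{p} + \cdots + \tfrac{1}{p^{m-1}}\Bigr) + \frac{\omega_n(x)}{p^m},
\end{equation*}
and letting $m \to \infty$ gives $\omega_n(x) \leq \tfrac{Cp}{p-1}$ uniformly in $n$. Choosing any $r_V' > 0$ with $r_V' \leq r_V$ and $\tfrac{r_V'}{p-1} - 1 > 0$ (i.e.\ $r_V' > p-1$; if necessary, one first replaces $r_V$ by a larger value using $\phi$-stability of overconvergence) yields $\omega_n(x) - n\bigl(\tfrac{r_V'}{p-1}-1\bigr) \to -\infty$, so $x \in \DD^{(0,r_V']}(V)$ by Lemma~\ref{convergence2}.

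The main obstacle will be the second step: in the scalar case of \cite{cherbonniercolmez99}, the contraction estimate is an almost-immediate consequence of $\psi(A) \subset A$, but in the module case one must carefully track the matrix of $\phi$ (and its inverse) and verify that its entries contribute only a bounded additive constant to $\omega_n$. In particular, inverting $M$ is delicate because the inverse need not have entries in $\AA_L^+$, only in $\BB_L^{(0,r_V]}$ possibly after shrinking $r_V$; so one may need to enlarge $r_V$ at the outset to ensure $M^{-1} \in M_h(\AA_L^{(0,r_V]}[\tfrac{1}{\pi}])$ with entries of bounded $\pi$-denominator, which is what ultimately yields the uniform constant $C$.
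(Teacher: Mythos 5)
Your overall strategy is the one the paper actually uses (adapting Proposition III.3.2 of Cherbonnier--Colmez via the functions $\omega_n$, the contraction property of $\psi$, and the fixed-point relation $\psi(y)=y$), but your second step contains a concrete error that then propagates into step three. It is not true that elements of $\AA_L^{(0,r_V]}$ have $\omega_n$ bounded uniformly in $n$, nor that they lie in $\pi^{-k_0}A+p^{n+1}\AA_L$ for a fixed $k_0$: by Lemma~\ref{convergence2}, membership in $\AA_K^{(0,r]}$ is equivalent only to $\omega_n(x)-n\bigl(\tfrac{r}{p-1}-1\bigr)\to-\infty$, which permits $\omega_n(x)$ to grow linearly in $n$ (uniform boundedness of $\omega_n$ characterises the much smaller ring $\bigcup_k\pi^{-k}A$). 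Consequently your claimed estimate $\omega_n(\psi(x))\le C+\omega_n(x)/p$ with $C$ independent of $n$ is unavailable, and the conclusion $\omega_n(x)\le \tfrac{Cp}{p-1}$ uniformly in $n$ is false in general; a $\psi$-fixed vector need not be overconvergent of every radius $>p-1$, only of some radius depending on $V$.

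The repair is exactly what the paper does, and it stays entirely within your framework. Writing $e_j=\sum_i a_{ij}\phi(e_i)$ (\'etaleness gives this directly, so you never need to invert $M$ explicitly) and expanding $y=\sum_i y_i\phi(e_i)$, the relation $y=\psi(y)$ gives $y_i=\sum_j a_{ij}\psi(y_j)$, whence $\omega_n(y)\le\omega_n(\Phi)+\tfrac{\omega_n(y)}{p}+1$ with $\omega_n(\Phi)=\sup_{i,j}\omega_n(a_{ij})$ a quantity that \emph{does} depend on $n$. Solving (no iteration needed, since $\omega_n(y)$ is a finite integer) yields $\omega_n(y)\le\tfrac{p}{p-1}\bigl(\omega_n(\Phi)+1\bigr)$, and one concludes not from boundedness but from the decay $\omega_n(\Phi)-n\bigl(\tfrac{r}{p-1}-1\bigr)\to-\infty$ supplied by Lemma~\ref{convergence2} applied to the matrix entries; this gives $\omega_n(y)-n\bigl(\tfrac{r_V'}{p-1}-1\bigr)\to-\infty$ for a suitable $r_V'$ (the paper takes $r_V'=\tfrac{p}{p-1}r-1$), not for every $r_V'>p-1$. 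Your worry about controlling $M^{-1}$ is legitimate but is absorbed into choosing $r$ small enough at the outset so that the $a_{ij}$ lie in $\AA_K^{(0,r]}$; no uniform $\pi$-denominator bound is needed, only overconvergence of the entries.
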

   \begin{proof}
    Let $T$ be a $\calG_L$-stable sublattice of $V$ (which is equal to $V$ if $V\in\Rep_{\ZZ_p}(V)$). Since $V$ is overconvergent, there exists a basis $e_1,\dots,e_m$ of $\DD(T)$ over $\AA_K$ such that $e_i\in V\otimes\AA^{(0,r]}$ for some $r> 0$. If $\Phi=(a_{i,j})$ denotes the matrix defined by $e_j=\sum_{i=1}^ma_{ij}\phi(e_j)$, then the entries of $\Phi$ are elements of $\AA_K^{(0,r]}$. If we define $\omega_k(\phi)$ to be the supremum of $\omega_k(a_{i,j})$, then 
    \begin{equation}\label{conditionoverconvergence}
     \omega_k(\Phi)-k\Big(\frac{r}{p-1}-1\Big)\rTo -\infty \text{\hspace{5ex} as $k\rightarrow +\infty$}
    \end{equation}
    by Lemma~\ref{convergence}. Let $y\in \DD^\dagger(V)^{\psi=1}$, and write $y=\sum_{i=1}^my_i\phi(e_i)$. To prove the proposition it is sufficient to show that $y_i\in\AA_L^{(0,r]}$ for all $i$. The relation $y=\psi(y)$ then translates as $y_i=\sum_{j=1}^ma_{ij}\psi(y_j)$,
    which implies that 
    \begin{align*}
     \omega_n(y_i) & \leq \sup_{1\leq j\leq m}\{\omega_n(a_{ij})+\omega_n(\psi(y_j))\}\\
                   & \leq \omega_n(\Phi)+\frac{\omega_n(y)}{p}+1
    \end{align*}
    for $1\leq i\leq m$. We deduce that $\omega_n(y)\leq \omega_n(\Phi)+\frac{\omega_n(y)}{p}+1$ and hence $\omega_n(y)\leq \frac{p}{p-1}(\omega_n(\Phi)+1)$. It follows therefore from~\eqref{conditionoverconvergence} that 
    \begin{equation*}
     \omega_k(y)-k\Big(\frac{r_V'}{p-1}-1\Big)\rTo -\infty \text{\hspace{5ex} as $k\rightarrow +\infty$},
    \end{equation*}
    where $r_V'=\frac{p}{p-1}r-1$.
   \end{proof}


 \section{The exponential maps and their duals}\label{exponentials+duals}
 
  
  \subsection{Construction}\label{construction}
  
   To simplify the notation, let $\BB^r_{\dR}=\Fil^r\BB_{\dR}$. The construction of the higher exponential maps relies on the following result of Kato.

   \begin{prop}\label{katoBdR}
    Let $V$ be a de Rham representation of $\calG_L$, and let $k\leq l$. Then 
    \begin{equation*}
     H^i(L,V\otimes_{\QQ_p}\BB^k_{\dR}\slash \BB^l_{\dR})=
     \begin{cases}
      \DD^k_{\dR}(V)\slash\DD^l_{\dR}(V) & \text{if $i=0,1$} \\
      0            & \text{otherwise}
     \end{cases}
    \end{equation*}
    Moreover, the isomorphism $\DD^k_{\dR}(V)\slash\DD^l_{\dR}(V)\cong H^1(L,V\otimes\BB^k_{\dR}\slash\BB^l_{\dR})$ is given 
    by taking cup product with $\log \chi$.
   \end{prop}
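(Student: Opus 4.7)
The plan is to proceed by devissage along the filtration, reducing first to the case of a single graded piece and then invoking generalised Hodge-Tate theory.

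First, I would induct on $l - k$ by means of the short exact sequence
$$0 \to V \otimes_{\QQ_p} \BB^{k+1}_{\dR}/\BB^l_{\dR} \to V \otimes_{\QQ_p} \BB^k_{\dR}/\BB^l_{\dR} \to V \otimes_{\QQ_p} \BB^k_{\dR}/\BB^{k+1}_{\dR} \to 0$$
and the associated long exact sequence in $\calG_L$-cohomology. Provided one checks that the connecting maps are compatible with the evident transition maps between the filtered quotients of $\DD_{\dR}(V)$, and that $H^2$ of the outer terms vanishes (which is built into the induction hypothesis), this reduces the statement to the single-graded-piece case $l = k+1$.

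Second, I would apply Proposition~\ref{filteredconnection} in degrees $k$ and $k+1$ and take the quotient complex, producing a finite exact complex whose cohomology recovers $\BB^k_{\dR}/\BB^{k+1}_{\dR}$ and whose terms have the form $\fil^{k-j}\BB^\nabla_{\dR}/\fil^{k-j+1}\BB^\nabla_{\dR} \otimes_K \Omega^j_K \cong \CC_K(k-j) \otimes_K \Omega^j_K$ for $0 \leq j \leq d-1$. Tensoring by $V$ and taking $\calG_L$-cohomology then feeds into a hypercohomology spectral sequence whose $E_1$-page is the Galois cohomology of the Hodge-Tate twists $V \otimes_{\QQ_p} \CC_K(k-j) \otimes_K \Omega^j_K$.

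Third, to compute this $E_1$-page I would invoke the Hodge-Tate theorem of Hyodo and Brinon in the imperfect residue field setting. Because $V$ is de Rham, the isotypic decomposition of $V \otimes \CC_K$ into Tate twists is controlled by the filtration on $\DD_{\dR}(V)$, and the Hyodo calculation singles out which summands survive in each $H^i$. A careful analysis of the differentials in the spectral sequence should show that it degenerates and collapses to $\DD^k_{\dR}(V)/\DD^{k+1}_{\dR}(V)$ in degrees $0$ and $1$ and to zero in all other degrees. The identification with cup product by $\log \chi$ then follows by recognising the class $\log \chi \in H^1(\Gamma_L, \QQ_p)$ as generating the $H^1$-direction in the Hochschild-Serre decomposition used in the Hyodo computation. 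The main obstacle is this third step: the Hodge-Tate computation in the imperfect residue field setting is intricate, and verifying that the several nested spectral sequences collapse cleanly to the stated answer while simultaneously tracking the cup-product description of the $H^1$-isomorphism is substantial bookkeeping.
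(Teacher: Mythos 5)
First, a point of reference: the paper does not prove this proposition at all --- its ``proof'' is the single line ``Proposition (2.3.3) in~\cite{kato99}'' --- so you are attempting to reconstruct an argument the author delegates entirely to Kato. Your step 1 (d\'evissage along the filtration to reduce to a single graded piece) is standard and sound. The genuine gap is in step 2. Proposition~\ref{filteredconnection} resolves the \emph{small} ring $\fil^r\BB^{\nabla}_{\dR}$ by the terms $\Fil^{r-j}\BB_{\dR}\otimes_K\Omega^j_K$; hence the quotient of the sequences for $r=k$ and $r=k+1$ is
\begin{equation*}
 0\to \CC_K(k)\to \BB^k_{\dR}/\BB^{k+1}_{\dR}\to \bigl(\BB^{k-1}_{\dR}/\BB^{k}_{\dR}\bigr)\otimes_K\Omega^1_K\to\cdots\to \bigl(\BB^{k+1-d}_{\dR}/\BB^{k+2-d}_{\dR}\bigr)\otimes_K\Omega^{d-1}_K\to 0,
\end{equation*}
whose terms beyond the first are graded pieces of $\BB_{\dR}$ for $\Fil^\bullet$, not graded pieces of $\BB^\nabla_{\dR}$ for $\fil^\bullet$: you have conflated the two filtrations. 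Each $\Fil^j\BB_{\dR}/\Fil^{j+1}\BB_{\dR}$ is an \emph{infinite-dimensional} $\CC_K$-vector space --- a Tate twist of the polynomial algebra $\CC_K[\overline{u_1/t},\dots,\overline{u_{d-1}/t}]$ --- so no finite exact complex with terms $\CC_K(k-j)\otimes_K\Omega^j_K$ can compute it; the dimensions already forbid it. Worse, the sequence above expresses the one module whose cohomology is known, namely $\CC_K(k)$, in terms of $d$ modules whose cohomology is unknown; read through the associated spectral sequence it yields one relation among many unknowns and does not determine $H^i(L,V\otimes\BB^k_{\dR}/\BB^{k+1}_{\dR})$.

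The missing idea --- which is precisely the content of Kato's (2.3.3) --- is a direct computation of $H^i(L,V\otimes\Fil^k\BB_{\dR}/\Fil^{k+1}\BB_{\dR})$: one filters this graded piece by polynomial degree in the $\overline{u_i/t}$, identifies the subquotients as symmetric powers of an explicit non-split extension of $\CC_K(-1)^{d-1}$ by $\CC_K$ (the classes $\overline{u_i/t}$ transform by the cocycles describing the Galois action on $[x_i]$), and then applies Hyodo's theorem, which computes $H^q(L,\CC_K(r))$ as $\Omegahat^q_K$ for $r=q$, as $\Omegahat^{q-1}_K$ for $r=q-1$, and as $0$ otherwise; the point of the calculation is that all the $\Omegahat$-contributions from the higher symmetric powers cancel in the connecting maps, leaving exactly $\DD^k_{\dR}(V)/\DD^{k+1}_{\dR}(V)$ in degrees $0$ and $1$, with the degree-$1$ copy generated by cup product with $\log\chi$. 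Your step 3 correctly anticipates that Hyodo's theorem and heavy bookkeeping must enter, but because step 2 does not produce the complex you claim, the reduction to Hyodo is never actually set up, and the hard part of the proof is not addressed.
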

   \begin{proof}
    Proposition (2.3.3) in~\cite{kato99}.
   \end{proof}
      
   \begin{cor}
    Let $V$ be a de Rham representation of $\calG_L$. Then 
    \begin{equation*}
     H^i(L,V\otimes_{\QQ_p}\BB_{\dR})=
     \begin{cases}
      \DD_{\dR}(V) & \text{if $i=0,1$} \\
      0            & \text{otherwise}
     \end{cases}
    \end{equation*}
    Moreover, the isomorphism $\DD_{\dR}(V)\cong H^1(L,V\otimes\BB_{\dR})$ is given by taking cup product with $\log \chi$.
   \end{cor}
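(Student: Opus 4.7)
The plan is to deduce this corollary from Proposition~\ref{katoBdR} by passing to limits along the filtration $\Fil^\bullet \BB_{\dR}$. Since $V$ is finite-dimensional over $\QQ_p$, the filtration on $\DD_{\dR}(V)$ is finite: there exist integers $a\leq b$ such that $\DD_{\dR}^k(V)=\DD_{\dR}(V)$ for all $k\leq a$ and $\DD_{\dR}^k(V)=0$ for all $k\geq b$. The strategy is to use this finiteness to reduce to the truncated version from the proposition.

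First, I would show that $H^i(L,V\otimes\BB_{\dR}^l)=0$ for all $i\geq 0$ and all $l\geq b$. To this end, write $V\otimes\BB_{\dR}^l = \varprojlim_{m} V\otimes\BB_{\dR}^l/\BB_{\dR}^m$; by Proposition~\ref{katoBdR}, each term in this inverse system has trivial $\calG_L$-cohomology, since $\DD_{\dR}^l(V)/\DD_{\dR}^m(V)=0$ for $l\geq b$. The transition maps being zero, the Mittag-Leffler condition holds trivially, and continuous cohomology commutes with the inverse limit, yielding the claimed vanishing.

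Next, I would compute $H^i(L,V\otimes\BB_{\dR}^k)$ for any $k\leq a$. Using the short exact sequence
\begin{equation*}
0\to V\otimes \BB_{\dR}^l\to V\otimes\BB_{\dR}^k\to V\otimes\BB_{\dR}^k/\BB_{\dR}^l\to 0
\end{equation*}
for some $l\geq b$, the associated long exact sequence combined with the vanishing just proved gives $H^i(L,V\otimes\BB_{\dR}^k)\cong H^i(L,V\otimes\BB_{\dR}^k/\BB_{\dR}^l)$, which by Proposition~\ref{katoBdR} equals $\DD_{\dR}^k(V)/\DD_{\dR}^l(V)=\DD_{\dR}(V)$ for $i=0,1$ and vanishes otherwise. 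Finally, since $\BB_{\dR}=\bigcup_{n} \BB_{\dR}^{-n}$, we have $V\otimes\BB_{\dR}=\varinjlim_n V\otimes\BB_{\dR}^{-n}$, and for $n\geq -a$ the inclusion $V\otimes\BB_{\dR}^{-n}\hookrightarrow V\otimes\BB_{\dR}^{-n-1}$ induces the identity on cohomology (both groups being canonically $\DD_{\dR}(V)$ in degrees $0,1$). The direct system being eventually constant, the cohomology of the limit is $\DD_{\dR}(V)$ in degrees $0,1$ and zero elsewhere. The cup product description is inherited through these limits from the corresponding description in Proposition~\ref{katoBdR}, since at each finite stage the isomorphism is given by cup product with $\log\chi$ and these maps are compatible with the inclusions.

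The main obstacle is ensuring that continuous $\calG_L$-cohomology commutes with the limits involved; for the inverse limit, the trivial Mittag-Leffler condition (all transition maps are zero) suffices, while for the direct limit the key observation is that the system becomes stationary once the filtration on $\DD_{\dR}(V)$ stabilises, so no genuine continuity argument is needed. The finiteness of the Hodge-Tate filtration on $V$ is therefore doing all the real work.
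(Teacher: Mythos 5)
Your proof is correct and follows essentially the same route as the paper's: write $\BB_{\dR}$ as a direct limit of the $\Fil^{-n}\BB_{\dR}$, each of which is an inverse limit of the truncations handled by Proposition~\ref{katoBdR}, with the finiteness of the Hodge filtration making both limits harmless. (One small slip in wording: the transition maps of your inverse system are the natural surjections, not zero; it is the cohomology groups of the terms that vanish, which is what makes the $\varprojlim^1$/Mittag-Leffler argument go through.)
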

   \begin{proof}
    Since $\BB_{\dR}=\varinjlim t^{-k}\BB^0_{\dR}$, it is sufficient to show that the result holds when $\BB_{\dR}$ is replaced by $\BB_{\dR}^0$. But $\BB^0_{\dR}=\varprojlim \BB^0_{\dR}\slash\BB^l_{\dR}$, so the result follows from Lemma~\ref{katoBdR}, observing that the topology on $\BB_{\dR}^0$ is weaker than the $t$-adic topology.
   \end{proof}
   
   \noindent {\bf Definition.} By Corollary~\ref{directlimit} the connection $\nabla$ on $\BB_{\dR}$ induces an exact sequence of $\calG_L$-modules
   \begin{equation}\label{dualBK}
    0\rightarrow \BB_{\dR}^{\nabla}\rightarrow \BB_{\dR} \rTo^{\nabla} \BB_{\dR}\otimes_K\Omega^1_K
     \rTo^{\nabla} \dots\rTo^{\nabla} \BB_{\dR}\otimes_K\Omega^{d-1}_K\rightarrow 0,
   \end{equation}
   which gives rise to a spectral sequence $E_1^{m,n}\Rightarrow H^{m+n}(L,\BB_{\dR}^\nabla\otimes V)$, where $E_1^{m,n}=H^n(L,V\otimes_{\QQ_p}\BB_{\dR}\otimes_K\Omega^m_K)$.

   \begin{cor}\label{spectralseq}
    The spectral sequence $E_1^{m,n}\Rightarrow H^{m+n}(L,\BB_{\dR}^\nabla\otimes V)$ degenerates at $E_2$. 
   \end{cor}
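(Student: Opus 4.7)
The plan is to exploit the two-row structure of the $E_1$ page together with the module structure of the spectral sequence over $H^{\bullet}(L, \QQ_p)$.

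\textbf{Step 1: automatic vanishing for $r \geq 3$.} For each $m$, $\Omega^m_K$ is a finite-dimensional $K$-vector space on which $\calG_L$ acts trivially, so $V \otimes_{\QQ_p} \BB_{\dR} \otimes_K \Omega^m_K$ is a direct sum of copies of $V \otimes_{\QQ_p} \BB_{\dR}$ as a $\calG_L$-module, and the preceding corollary gives
$$
 E_1^{m,n} = H^n(L, V \otimes_{\QQ_p} \BB_{\dR} \otimes_K \Omega^m_K) =
 \begin{cases} \DD_{\dR}(V)\otimes_K \Omega^m_K & n = 0, 1, \\ 0 & \text{otherwise.} \end{cases}
$$
For $r \geq 3$ the differential $d_r \colon E_r^{m,n} \to E_r^{m+r, n-r+1}$ lands in a row of index $n - r + 1 \leq -1$ and is therefore automatically zero. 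The differential $d_2 \colon E_2^{m, 0} \to E_2^{m+2, -1} = 0$ is likewise trivial. Hence it suffices to show that $d_2 \colon E_2^{m,1} \to E_2^{m+2, 0}$ vanishes for every $m$.

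\textbf{Step 2: identification of the two rows.} Since $d_1$ is induced by $\nabla$, row $0$ of $E_1$ is the de Rham complex $\DD_{\dR}(V) \otimes_K \Omega^{\bullet}_K$. By the preceding corollary, cup product with $\log \chi$ supplies an isomorphism $\DD_{\dR}(V) \otimes_K \Omega^m_K \xrightarrow{\sim} E_1^{m, 1}$ for every $m$. Naturality of cup product shows that this isomorphism commutes with the differential $d_1$ induced by $\nabla$, so it passes to an isomorphism
$$
 \log \chi \cup (-) \colon E_2^{m, 0} \xrightarrow{\sim} E_2^{m, 1}
$$
for every $m$.

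\textbf{Step 3: vanishing of $d_2$.} The graded ring $H^{\bullet}(L, \QQ_p)$ acts on the hypercohomology spectral sequence by cup product, respecting the filtration, and in particular commutes with every $d_r$. Writing $c = \log \chi \in H^1(L, \QQ_p)$, for any $x \in E_2^{m, 0}$ we therefore have
$$
 d_2(c \cup x) = c \cup d_2(x) = 0,
$$
since $d_2(x) \in E_2^{m+2, -1} = 0$ by Step 1. Because $c \cup (-)$ is surjective onto $E_2^{m, 1}$ by Step 2, this forces $d_2 \equiv 0$ on $E_2^{m, 1}$. Combined with Step 1, this shows that the spectral sequence degenerates at $E_2$.

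\textbf{Main obstacle.} The substantive input is the compatibility of cup product with the filtration on the total complex used in Step 3, which ensures that cup product with a class in $H^{\bullet}(L, \QQ_p)$ commutes with the spectral sequence differentials. This is a standard feature of the hypercohomology spectral sequence of a filtered complex, but it must be invoked explicitly. Beyond that, the argument is essentially formal and driven by the shape of $E_1$.
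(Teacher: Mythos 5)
Your proof is correct, but it genuinely does more than the paper's, which simply declares the degeneration ``immediate'' from the vanishing $H^i(L,V\otimes_{\QQ_p}\BB_{\dR}\otimes_K\Omega^m_K)=0$ for $i\geq 2$. That vanishing is exactly your Step 1: it kills every $d_r$ with $r\geq 3$ and the $d_2$ emanating from row $0$, but it says nothing about $d_2\colon E_2^{m,1}\to E_2^{m+2,0}$, which is the one differential that can obstruct $E_2$-degeneration of a two-row spectral sequence. Your Steps 2 and 3 --- identifying row $1$ with row $0$ via cup product with $\log\chi$ (using the stated form of the isomorphism in Proposition~\ref{katoBdR}) and then invoking multiplicativity of the spectral sequence, so that $d_2(\log\chi\cup x)=\pm\log\chi\cup d_2(x)=0$ --- supply precisely the missing argument; this is the standard device for degenerating a two-row spectral sequence whose rows are identified by cup product with a fixed class. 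It is worth noting that the paper effectively recovers the same information later by other means: the separately proved injectivity of $f^i_V$ and the section $h^i_V$ built from the cocycle $\sigma\mapsto x\log\chi(\sigma)$ together force $E_2^{i,0}=E_\infty^{i,0}$ and $E_2^{i-1,1}=E_\infty^{i-1,1}$, so the $\log\chi$ mechanism you use explicitly is implicitly the paper's as well. The one input you should spell out if writing this up is the multiplicative structure (Leibniz rule) on the continuous-cochain hypercohomology spectral sequence for the pairing $\QQ_p\otimes(V\otimes_{\QQ_p}\BB_{\dR}\otimes_K\Omega^\bullet_K)\to V\otimes_{\QQ_p}\BB_{\dR}\otimes_K\Omega^\bullet_K$, with $\log\chi$ a permanent cocycle of bidegree $(0,1)$; you correctly flag this as the substantive point.
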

   \begin{proof}
    Immediate since $H^i(L,V\otimes_{\QQ_p}\BB_{\dR}\otimes_K\Omega^m_K)=0$ for all $i\geq 2$ and $m\geq 0$ by Lemma~\ref{katoBdR}. 
   \end{proof}
    
   \noindent {\bf Definition.} For a $p$-adic representation $V$, define the de Rham cohomology $H^\bullet_{\dR}(V)$ of $V$ to be the cohomology of the complex
   \begin{equation*}
    0\rTo \DD_{\dR}(V)\rTo^\nabla\DD_{\dR}(V)\otimes_K\Omega^1_K
    \rTo^\nabla\dots\rTo^\nabla \DD_{\dR}(V)\otimes_K\Omega^{d-1}_K\rTo 0.
   \end{equation*}
   Note that for all $i\geq 0$, $H^i_{\dR}(V)$ is a filtered $L$-vector space.
   \vs
   
   \noindent {\bf Notation.} For all $1\leq i< d-1$, define
   \begin{align*}
    X^i_V &= \image\big(\nabla: V\otimes_{\QQ_p}\BB_{\dR}\otimes_K\Omega_K^{i-1}\rightarrow V\otimes_{\QQ_p}\BB_{\dR}\otimes_K\Omega_K^{i}\big)\\
          &= \ker\big(\nabla: V\otimes_{\QQ_p}\BB_{\dR}\otimes_K\Omega_K^i\rightarrow V\otimes_{\QQ_p}\BB_{\dR}\otimes_K\Omega_K^{i+1}\big).
   \end{align*}
   
   \noindent {\bf Remark.} For all $1\leq i\leq d-1$ we have a short exact sequence of $\calG_L$-modules
   \begin{equation}\label{Xi}
    0\rTo X^{i-1}_V\rTo V\otimes_{\QQ_p}\BB_{\dR}\otimes_K\Omega^{i-1}_K\rTo^\nabla X^i_V\rTo 0.
   \end{equation}
   
   \begin{lem}\label{quotient}
    For all $2\leq i\leq d$, we have an isomorphism
    \begin{equation*}
     \coker\big(\nabla: H^1(L,V\otimes_{\QQ_p}\BB_{\dR}\otimes_K\Omega_K^{i-2})\rightarrow H^1(L,X^{i-1}_V)\big)\cong H^i(L,V\otimes_{\QQ_p}\BB_{\dR}^\nabla).
    \end{equation*}
   \end{lem}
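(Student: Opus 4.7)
The plan is to prove the isomorphism by iterated use of the long exact sequence in Galois cohomology attached to the short exact sequences~\eqref{Xi}, exploiting the vanishing of higher Galois cohomology of $V\otimes_{\QQ_p}\BB_{\dR}$. Specifically, the Corollary to Proposition~\ref{katoBdR} gives $H^j(L,V\otimes_{\QQ_p}\BB_{\dR})=0$ for $j\geq 2$; since $\Omega^m_K$ is a finite-dimensional $K$-vector space, tensoring preserves this vanishing, so $H^j(L,V\otimes_{\QQ_p}\BB_{\dR}\otimes_K\Omega^m_K)=0$ for all $j\geq 2$ and all $m\geq 0$. This is exactly the input used in the proof of Corollary~\ref{spectralseq}.

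First, I apply the long exact sequence to the instance of~\eqref{Xi} with index $i-1$, namely
\begin{equation*}
 0\rTo X^{i-2}_V\rTo V\otimes_{\QQ_p}\BB_{\dR}\otimes_K\Omega^{i-2}_K\rTo^{\nabla} X^{i-1}_V\rTo 0.
\end{equation*}
Since the $H^2$ of the middle term vanishes, the connecting map induces a canonical isomorphism
\begin{equation*}
 \coker\bigl(\nabla:H^1(L,V\otimes_{\QQ_p}\BB_{\dR}\otimes_K\Omega^{i-2}_K)\rTo H^1(L,X^{i-1}_V)\bigr)\cong H^2(L,X^{i-2}_V).
\end{equation*}

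Next, for each $1\leq k\leq i-2$ I apply the long exact sequence to~\eqref{Xi} with index $k$. Because both $H^j$ and $H^{j+1}$ of the middle term vanish for $j\geq 2$, the connecting homomorphism supplies isomorphisms $H^j(L,X^k_V)\cong H^{j+1}(L,X^{k-1}_V)$ for every $j\geq 2$. Chaining these together from $(j,k)=(2,i-2)$ down to $(i,0)$ gives
\begin{equation*}
 H^2(L,X^{i-2}_V)\cong H^3(L,X^{i-3}_V)\cong\cdots\cong H^i(L,X^0_V).
\end{equation*}
By Lemma~\ref{kernelconnection} we have $X^0_V=V\otimes_{\QQ_p}\BB_{\dR}^\nabla$, so $H^i(L,X^0_V)=H^i(L,V\otimes_{\QQ_p}\BB_{\dR}^\nabla)$, and composing this with the identification from the first step yields the claimed isomorphism.

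There is no substantive obstacle; the whole argument is a piece of homological bookkeeping driven by Kato's vanishing of $H^j$ for $j\geq 2$. The only minor degenerate case is $i=2$, where the iteration is empty and the first step alone delivers $H^2(L,X^0_V)=H^2(L,V\otimes_{\QQ_p}\BB_{\dR}^\nabla)$ directly.
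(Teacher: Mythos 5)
Your proof is correct and is essentially the paper's own argument made explicit: the paper dismisses the lemma as ``clear from Corollary~\ref{spectralseq}'', and your dimension-shifting through the sequences~\eqref{Xi}, driven by the vanishing of $H^j(L,V\otimes_{\QQ_p}\BB_{\dR}\otimes_K\Omega^m_K)$ for $j\geq 2$, is precisely the long-exact-sequence unwinding of that spectral sequence degeneration. No gaps.
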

   \begin{proof}
    Clear from Corollary~\ref{spectralseq}.
   \end{proof}
      
   \begin{lem}\label{maps}
    For all $1\leq i\leq d$, the spectral sequence gives rise to maps
    \begin{align*}
     f^i_V&:H^i_{\dR}(V)\rightarrow H^i(L,V\otimes_{\QQ_p}\BB_{\dR}^\nabla),\\
     g^i_V&:H^i(L,V\otimes_{\QQ_p}\BB_{\dR}^\nabla)\rightarrow H^{i-1}_{\dR}(V). 
    \end{align*}
   \end{lem}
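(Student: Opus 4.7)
The plan is to obtain $f^i_V$ and $g^i_V$ as the two edge homomorphisms of the spectral sequence $E_1^{m,n} \Rightarrow H^{m+n}(L, V \otimes \BB_{\dR}^\nabla)$ arising from the resolution \eqref{dualBK}.

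First I would make the $E_1$ page explicit. Applying the corollary of Proposition \ref{katoBdR} to the $\calG_L$-module $V \otimes_K \Omega^m_K$ (with $\Omega^m_K$ a finite-dimensional $K$-vector space with trivial $\calG_L$-action), we obtain
\begin{equation*}
E_1^{m,n} = H^n(L, V \otimes_{\QQ_p} \BB_{\dR} \otimes_K \Omega^m_K) = \begin{cases} \DD_{\dR}(V) \otimes_K \Omega^m_K & \text{if } n = 0, 1, \\ 0 & \text{otherwise.} \end{cases}
\end{equation*}
Only the rows $n = 0$ and $n = 1$ survive, and as graded $K$-vector spaces both coincide with the de Rham complex $\DD_{\dR}(V) \otimes_K \Omega^{\bullet}_K$.

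Next I would identify $d_1$ in each row with the de Rham differential. For $n = 0$ this is immediate, since the $\calG_L$-invariants of $\nabla$ on $V \otimes \BB_{\dR} \otimes \Omega^m_K$ recover the de Rham differential on $\DD_{\dR}(V) \otimes \Omega^m_K$. For $n = 1$ the identification $H^1(L, V \otimes \BB_{\dR} \otimes \Omega^m_K) \cong \DD_{\dR}(V) \otimes \Omega^m_K$ is, by Proposition \ref{katoBdR}, given by cup product with $\log\chi \in H^1(L, \QQ_p)$; since the cup product is natural in the coefficient module and $\log\chi$ is a fixed class, the square relating two consecutive columns commutes, so $d_1$ on this row is also the de Rham differential. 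Consequently $E_2^{m,n} = H^m_{\dR}(V)$ for $n = 0, 1$, and zero otherwise.

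By Corollary \ref{spectralseq} the spectral sequence degenerates at $E_2$, so $E_{\infty} = E_2$. Since $E_{\infty}$ is concentrated in the two rows $n = 0, 1$, the associated decreasing filtration on $H^i(L, V \otimes \BB_{\dR}^\nabla)$ has only two potentially non-zero graded pieces, namely $E_\infty^{i, 0} = H^i_{\dR}(V)$ and $E_\infty^{i-1, 1} = H^{i-1}_{\dR}(V)$, yielding a short exact sequence
\begin{equation*}
0 \rightarrow H^i_{\dR}(V) \rightarrow H^i(L, V \otimes_{\QQ_p} \BB_{\dR}^{\nabla}) \rightarrow H^{i-1}_{\dR}(V) \rightarrow 0.
\end{equation*}
I define $f^i_V$ to be the injection and $g^i_V$ the surjection.

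The main subtlety is the naturality check for the $\cup\,\log\chi$ identification in the $n = 1$ row; everything else is standard machinery for two-row spectral sequences. A more hands-on alternative, avoiding an explicit appeal to the spectral sequence formalism, would be to peel off one $\Omega^m_K$ at a time from the resolution \eqref{dualBK} using the short exact sequences \eqref{Xi} together with Lemma \ref{quotient}.
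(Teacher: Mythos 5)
Your construction is correct, and it rests on the same spectral sequence and the same key inputs as the paper (Proposition~\ref{katoBdR} for the $E_1$-page, and the naturality of $-\cup\log\chi$ to identify $d_1$ on the $n=1$ row with the de Rham differential), but it is packaged differently. The paper does not take the formal edge homomorphisms of the degenerate two-row spectral sequence; it carries out the d\'evissage explicitly: via Lemma~\ref{quotient} it realises $H^i(L,V\otimes_{\QQ_p}\BB_{\dR}^\nabla)$ as a quotient of $H^1(L,X^{i-1}_V)$, defines $\tilde{f}^i_V$ as the connecting homomorphism of~\eqref{Xi} and $\tilde{g}^i_V$ via the map induced by the inclusion $X^{i-1}_V\hookrightarrow V\otimes_{\QQ_p}\BB_{\dR}\otimes_K\Omega^{i-1}_K$, and then checks that these descend to $H^i_{\dR}(V)$, respectively factor through the image of $H^1(L,V\otimes_{\QQ_p}\BB_{\dR}\otimes_K\Omega^{i-2}_K)$. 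This is precisely the ``hands-on alternative'' you relegate to your final sentence. What the paper's version buys is explicit cocycle representatives (classes of the form $u\log\chi$, connecting maps of concrete short exact sequences), and these are not a luxury: the splitting $h^i_V$, the injectivity of $f^i_V$, and above all the cup-product computations in Proposition~\ref{spectralduality} all manipulate exactly these representatives, so your formal edge-map definition would have to be unwound to the same cocycles before the rest of the section could be run. One further remark in your favour: for the bare existence of $f^i_V$ and $g^i_V$, which is all the Lemma asserts, degeneration at $E_2$ is not needed — since the rows $n\geq 2$ vanish one always has $E_2^{i,0}\twoheadrightarrow E_\infty^{i,0}\hookrightarrow H^i$ and $H^i\twoheadrightarrow E_\infty^{i-1,1}\hookrightarrow E_2^{i-1,1}$; degeneration only enters for the exactness statements proved afterwards.
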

   \begin{proof}
    By Lemma~\ref{quotient}, it is sufficient to construct maps
    \begin{align*}
     \tilde{f}_V^i:& H^i_{\dR}(V)\rightarrow H^1(L,X^{i-1}_V),\\
     \tilde{g}^i_V:& H^1(L,X^{i-1}_V)\rightarrow H^{i-1}_{\dR}(V),
    \end{align*}
    such that $\tilde{g}^i_V$ factors through the image of $H^1(K,V\otimes_{\QQ_p}\BB_{\dR}\otimes_K\Omega_K^{i-2})$ (under $\nabla$). 
    \vs
     
    To construct the maps for $i=1$, observe that we have a short exact sequence
    \begin{equation*}
     0\rightarrow V\otimes_{\QQ_p}\BB_{\dR}^\nabla\rightarrow V\otimes_{\QQ_p}\BB_{\dR}\rightarrow X^1_V\rightarrow 0.
    \end{equation*}
    Taking $\calG_L$-cohomology defines maps $f^1_V:H^1_{\dR}(V)\rightarrow H^1(L,V\otimes_{\QQ_p}\BB_{\dR}^\nabla)$ and  $g^1_V:H^1(L,V\otimes_{\QQ_p}\BB_{\dR}^\nabla)\rightarrow H^1(L,V\otimes_{\QQ_p}\BB_{\dR})\cong\DD_{\dR}(V)$. Since $\nabla(g(x))=0$, it follows that in fact $g^1(x)\in \DD_{\dR}^\nabla(V)$ for all $x\in H^1(L,V\otimes_{\QQ_p}\BB_{\dR}^\nabla)$.
    \vs    
    
    In the general case, define $\tilde{f}^i_V$ to be the connection map $H^i_{\dR}(V)\rightarrow H^1(L,X^{i-1}_V)$ obtained by taking $\calG_L$-cohomology of~\eqref{Xi}. To construct $\tilde{g}^i_V$, note that the inclusion $X^{i-1}_V\rightarrow V\otimes_{\QQ_p}\BB_{\dR}\otimes_K\Omega_K^{i-1}$ induces a map $\frak{g}^i_V:H^1(L,X^{i-1}_V)\rightarrow H^1(L,V\otimes_{\QQ_p}\BB_{\dR}\otimes_K\Omega_K^{i-1})$. By Lemma~\ref{katoBdR}, $\frak{g}^i_V$ can be interpreted as a map $H^1(L,X^{i-1}_V)\rightarrow \DD_{\dR}(V)\otimes_K\Omega_K^{i-1}$. More precisely, we have a commutative diagram
    \begin{diagram}
     H^1(L, V\otimes_{\QQ_p}\BB_{\dR}\otimes_K\Omega_K^{i-1}) & \rTo^{\nabla} & H^1(L, X^i_V) & \rTo & H^1(L,V\otimes_{\QQ_p}\BB_{\dR}\otimes_K\Omega_K^{i}) \\
     \dTo^{\cong} & & & & \dTo^{\cong} \\
     \DD_{\dR}(V)\otimes_K\Omega_K^{i-1} & & \rTo^{\nabla} & & \DD_{\dR}(V)\otimes_K\Omega_K^i
    \end{diagram}
    The exactness of~\eqref{Xi} implies that in fact $\frak{g}^i_V(x)\in H^0(L,X^{i-1}_V)\subset \DD_{\dR}(V)\otimes_K\Omega_K^{i-1}$. Define $\tilde{g}^i_V(x)$ to be the image of $\frak{g}^i_V(x)$ under the natural quotient map $H^0(L,X^{i-1}_V)\rightarrow H^{i-1}_{\dR}(V)$. To see that $\tilde{g}^i_V$ factors though the image of $H^1(L,V\otimes_{\QQ_p}\BB_{\dR}\otimes_K \Omega_K^{i-2})$, it is now sufficient to observe that the image of $H^1(L,V\otimes_{\QQ_p}\BB_{\dR}\otimes_K \Omega_K^{i-2})$ in $\DD_{\dR}(V)\otimes_K\Omega_K^{i-1}$ is equal to $\image\big(\nabla: \DD_{\dR}(V)\otimes_K\Omega_K^{i-2}\rightarrow \DD_{\dR}(V)\otimes_K\Omega_K^{i-1}\big)$.
   \end{proof}
   
   \begin{lem}
    The map $f^i_V:H^i_{\dR}(V)\rightarrow H^i(L,V\otimes_{\QQ_p}\BB_{\dR}^\nabla)$ gives rise to an injection
    \begin{equation*}
     f^i_V:H^i_{\dR}(V)\rTo H^i(L,V\otimes_{\QQ_p}\BB_{\dR}^\nabla).
    \end{equation*}
   \end{lem}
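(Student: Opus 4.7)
The plan is to factor $f^i_V$ as
\begin{equation*}
 H^i_{\dR}(V) \xrightarrow{\tilde f^i_V} H^1(L, X^{i-1}_V) \xrightarrow{\pi} H^i(L, V \otimes_{\QQ_p} \BB_{\dR}^\nabla),
\end{equation*}
where the first arrow is the map from Lemma~\ref{maps} and the second is the quotient supplied by Lemma~\ref{quotient}. The first factor is already injective: its kernel, obtained from the long exact cohomology sequence attached to $0 \to X^{i-1}_V \to V \otimes_{\QQ_p} \BB_{\dR} \otimes_K \Omega^{i-1}_K \to X^i_V \to 0$, equals $\image(\nabla : \DD_{\dR}(V) \otimes_K \Omega^{i-1}_K \to H^0(L, X^i_V))$, which is precisely what one quotients out to form $H^i_{\dR}(V)$. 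Consequently it suffices to show that $\image \tilde f^i_V \cap \ker \pi = 0$ in $H^1(L, X^{i-1}_V)$, where $\ker \pi = \image(\nabla^* : H^1(L, V \otimes_{\QQ_p} \BB_{\dR} \otimes_K \Omega^{i-2}_K) \to H^1(L, X^{i-1}_V))$.

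The main input will be Kato's theorem (Proposition~\ref{katoBdR}), which identifies $H^1(L, V \otimes_{\QQ_p} \BB_{\dR} \otimes_K \Omega^m_K) \cong \DD_{\dR}(V) \otimes_K \Omega^m_K$ by sending $w$ to the class of the cocycle $g \mapsto \log \chi(g) \cdot w$, with the cocycle being a coboundary if and only if $w = 0$. Given $x \in H^i_{\dR}(V)$ with $f^i_V(x) = 0$, I would lift $x$ to $\tilde x \in H^0(L, X^i_V)$ and choose $\alpha \in V \otimes_{\QQ_p} \BB_{\dR} \otimes_K \Omega^{i-1}_K$ with $\nabla \alpha = \tilde x$, so that $\tilde f^i_V(x)$ is represented by the cocycle $g \mapsto (g-1)\alpha$ with values in $X^{i-1}_V$. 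The hypothesis then produces $w \in \DD_{\dR}(V) \otimes_K \Omega^{i-2}_K$ and $\beta \in X^{i-1}_V$ satisfying
\begin{equation*}
 (g-1)\alpha - \log \chi(g) \cdot \nabla w = (g-1) \beta \quad \text{for all } g \in \calG_L.
\end{equation*}
Setting $\alpha'' := \alpha - \beta$, one obtains simultaneously $\nabla \alpha'' = \tilde x$ (because $\beta \in X^{i-1}_V \subset \ker \nabla$) and $(g-1)\alpha'' = \log \chi(g) \cdot \nabla w$.

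The second identity exhibits the cocycle $g \mapsto \log \chi(g) \cdot \nabla w$ as a coboundary in $V \otimes_{\QQ_p} \BB_{\dR} \otimes_K \Omega^{i-1}_K$; applying Kato's isomorphism at level $i-1$ forces $\nabla w = 0$ in $\DD_{\dR}(V) \otimes_K \Omega^{i-1}_K$. Hence $\alpha''$ is $\calG_L$-invariant, i.e., it lies in $\DD_{\dR}(V) \otimes_K \Omega^{i-1}_K$, and $\nabla \alpha'' = \tilde x$ realises $\tilde x$ as an element of $\image(\nabla : \DD_{\dR}(V) \otimes_K \Omega^{i-1}_K \to \DD_{\dR}(V) \otimes_K \Omega^i_K)$, so $x = 0$ in $H^i_{\dR}(V)$. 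The step requiring the most care is this last one: one must ensure that Kato's cup-product-with-$\log \chi$ description is natural enough with respect to $\nabla$ that a cohomological triviality statement on the de Rham module $\DD_{\dR}(V) \otimes_K \Omega^{i-1}_K$ forces the on-the-nose equality $\nabla w = 0$, not merely a vanishing modulo coboundaries.
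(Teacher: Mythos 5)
Your proposal is correct and follows essentially the same route as the paper: both reduce to showing $\image\tilde f^i_V\cap\ker\pi=0$ using the built-in injectivity of the connecting map modulo $\image\nabla$, and both conclude by pushing the class into $H^1(L,V\otimes_{\QQ_p}\BB_{\dR}\otimes_K\Omega^{i-1}_K)$ and invoking Kato's cup-product-with-$\log\chi$ description to force $\nabla w=0$. The naturality point you flag at the end is exactly what the paper's commutative diagram in the proof of Lemma~\ref{maps} supplies, so there is no gap.
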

   \begin{proof}
    By Lemma~\ref{quotient}, it is sufficient to show that if $y=f^i_V(x)$ is in the image of $H^1(L,V\otimes_{\QQ_p}\BB_{\dR}\otimes_K\Omega_K^{i-2})$ under $\nabla$, then $x=0$. Taking $\calG_L$-cohomology of~\eqref{Xi} shows that the image of $f^i_V(x)$ in $H^1(L,V\otimes_{\QQ_p}\BB_{\dR}\otimes_K\Omega^{i-1}_K)$ is zero. Now $H^1(L,V\otimes_{\QQ_p}\BB_{\dR}\otimes_K\Omega_K^{i-2})\cong \DD_{\dR}(V)\otimes_K\Omega^{i-2}_K$, so $y$ can be represented by a cocycle of the form $g\rightarrow \log\chi(g)\nabla(z)$ for some $z\in \DD_{\dR}(V)\otimes_K\Omega_K^{i-2}$. We have a commutative diagram
    \begin{diagram}
     H^1(L, V\otimes_{\QQ_p}\BB_{\dR}\otimes_K\Omega_K^{i-2}) & \rTo^{\nabla} & H^1(L, X^{i-1}_V) & \rTo & H^1(L,V\otimes_{\QQ_p}\BB_{\dR}\otimes_K\Omega_K^{i-1}) \\
     \dTo^{\cong} & & & & \dTo^{\cong} \\
     \DD_{\dR}(V)\otimes_K\Omega_K^{i-2} & & \rTo^{\nabla} & & \DD_{\dR}(V)\otimes_K\Omega_K^{i-1}
    \end{diagram}
    which implies that the image of $y$ in $H^1(L,V\otimes_{\QQ_p}\BB_{\dR}\otimes_K\Omega_K^{i-1})$ is trivial if and only if $\nabla(z)=0$ and hence $y=0$. Since $f^i_V$ gives an injection $H^i_{\dR}(V)\rightarrow H^1(L,X^{i-1}_V)$ by construction, this implies that $x=0$, which finishes the proof.
   \end{proof}

   \begin{prop}
    For all $1\leq i\leq d$, we have a split short exact sequence
    \begin{equation*}
     0\rightarrow H^i_{\dR}(V) \rTo^{f^i_V} H^i(L,V\otimes_{\QQ_p}\BB_{\dR}^\nabla)\rTo^{g^i_V} H^{i-1}_{\dR}(V)\rightarrow 0.
    \end{equation*}
   \end{prop}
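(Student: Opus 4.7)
The plan is to first establish exactness of the sequence via the hypercohomology spectral sequence of Corollary~\ref{spectralseq}, and then exhibit an explicit splitting by a cup-product-with-$\log\chi$ construction.

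For exactness, I unwind the spectral sequence $E_1^{m,n} = H^n(L, V \otimes_{\QQ_p} \BB_{\dR} \otimes_K \Omega_K^m) \Rightarrow H^{m+n}(L, V \otimes_{\QQ_p} \BB_{\dR}^\nabla)$ attached to the resolution of Corollary~\ref{directlimit}. By Lemma~\ref{katoBdR} this page vanishes unless $n \in \{0,1\}$, with $E_1^{m,0} = E_1^{m,1} = \DD_{\dR}(V) \otimes_K \Omega_K^m$ (the $n=1$ identification being cup product with $\log \chi$). Since $\log \chi$ is $\calG_L$-equivariant and $\nabla$ is $\calG_L$-equivariant, the $d_1$ differential in both rows is $\nabla$, so $E_2^{m,0} = E_2^{m,1} = H^m_{\dR}(V)$. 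Corollary~\ref{spectralseq} gives degeneration at $E_2$, hence $H^i(L, V \otimes_{\QQ_p} \BB_{\dR}^\nabla)$ carries a two-step filtration with graded pieces $H^i_{\dR}(V)$ and $H^{i-1}_{\dR}(V)$. Tracing through the construction in Lemma~\ref{maps} shows that the inclusion of the bottom piece coincides with $f^i_V$ and the projection onto the top piece coincides with $g^i_V$, yielding the claimed short exact sequence (injectivity of $f^i_V$ has already been proved, and surjectivity of $g^i_V$ is immediate from the spectral sequence).

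To produce a splitting, I define $s^i_V: H^{i-1}_{\dR}(V) \to H^i(L, V \otimes_{\QQ_p} \BB_{\dR}^\nabla)$ by the explicit formula: given a class represented by $\omega \in \DD_{\dR}(V) \otimes_K \Omega_K^{i-1}$ with $\nabla \omega = 0$, so that $\omega \in (X^{i-1}_V)^{\calG_L}$, assign to $[\omega]$ the class of the $1$-cocycle $g \mapsto \log\chi(g) \cdot \omega$ in $H^1(L, X^{i-1}_V)$, and then pass to $H^i(L, V \otimes_{\QQ_p} \BB_{\dR}^\nabla)$ via the quotient of Lemma~\ref{quotient}. (For $i=1$ the same formula lands the cocycle directly in $H^1(L, V \otimes_{\QQ_p} \BB_{\dR}^\nabla)$.) Well-definedness is checked as follows: if $\omega = \nabla \omega'$ with $\omega' \in \DD_{\dR}(V) \otimes_K \Omega_K^{i-2}$, then the cocycle $g \mapsto \log\chi(g)\,\omega$ equals $\nabla(g \mapsto \log\chi(g)\,\omega')$ and therefore lies in the image of $H^1(L, V \otimes_{\QQ_p} \BB_{\dR} \otimes_K \Omega_K^{i-2})$ under $\nabla$, which is precisely the subspace killed in Lemma~\ref{quotient}. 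The identity $g^i_V \circ s^i_V = \id$ is then a direct chase: $\mathfrak{g}^i_V$ carries $[c_\omega]$ to its image in $H^1(L, V \otimes_{\QQ_p} \BB_{\dR} \otimes_K \Omega_K^{i-1}) \cong \DD_{\dR}(V) \otimes_K \Omega_K^{i-1}$, which under cup-with-$\log\chi$ is exactly $\omega$, and the final projection to $H^{i-1}_{\dR}(V)$ recovers $[\omega]$.

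The main obstacle is verifying that the $d_1$-differential on the $n=1$ row of the spectral sequence agrees with $\nabla$ under the isomorphism of Lemma~\ref{katoBdR}; this reduces to the naturality of cup product with $\log\chi$ with respect to $\calG_L$-equivariant morphisms, which in turn rests on the fact that $\nabla$ commutes with the Galois action. The remainder is bookkeeping with the two descriptions of $g^i_V$ given in Lemma~\ref{maps}, with the small caveat that $i=1$ requires separate notation since the domain of the corresponding short exact sequence is $V \otimes_{\QQ_p} \BB_{\dR}^\nabla$ rather than $X^{i-2}_V$.
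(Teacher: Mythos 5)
Your proposal is correct and follows essentially the same route as the paper: the splitting map $s^i_V$ you construct (the cocycle $g\mapsto\log\chi(g)\,\omega$ pushed through the surjection of Lemma~\ref{quotient}) is precisely the paper's $h^i_V$, and the well-definedness and $g^i_V\circ s^i_V=\id$ checks match the paper's argument. Your more explicit unwinding of the $E_2$-degeneration to get exactness in the middle is detail the paper leaves implicit, but it is not a different method.
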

   \begin{proof}
    We have already seen that $\image(g^i_V)\subset H^{i-1}_{\dR}(V)$. In order to prove the proposition it is therefore sufficient to construct a map $h^i_V:H^{i-1}_{\dR}(V)\rightarrow H^i(L,V\otimes_{\QQ_p}\BB_{\dR}^\nabla)$ such that $g^i_V\circ h^i_V=\id$. 
    \vs
    
    Define the map $\frak{h}^i_V:H^0(L,X^{i-1}_V)\rightarrow H^i(L,V\otimes_{\QQ_p}\BB_{\dR}^\nabla)$ by follows: for an element $x\in H^0(L,X^{i-1}_V)$, let $\frak{h}^i_V(x)$ be the image in $H^i(L,V\otimes_{\QQ_p}\BB_{\dR}^\nabla)$ of the cohomology class $[\sigma\rightarrow x\log\chi(\sigma)]$ under the surjective map $H^1(L,X^{i-1}_V)\cong H^i(L,V\otimes_{\QQ_p}\BB_{\dR}^\nabla)$. It is clear from Lemma~\ref{quotient} that $\frak{h}^i_V$ factors through
    \begin{equation*}
     h^i_V:H^{i-1}_{\dR}(V)\rightarrow H^i(L,V\otimes_{\QQ_p}\BB_{\dR}^\nabla)
    \end{equation*}
    and that $g^i_V\circ h^i_V=\id$, which finishes the proof.
   \end{proof}
      
   \begin{cor}
    For all $1\leq i\leq d$, $\nu^i_V=f^i_V\oplus h^i_V$ gives a decomposition of $H^i(L,V\otimes_{\QQ_p}\BB_{\dR}^\nabla)$ as
    \begin{equation*}
      H^i_{\dR}(V)\oplus H^{i-1}_{\dR}(V)\cong H^i(L,V\otimes_{\QQ_p}\BB_{\dR}^\nabla). 
    \end{equation*}
   \end{cor}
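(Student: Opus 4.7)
The plan is to read off the decomposition directly from the split short exact sequence
\[
0\to H^i_{\dR}(V) \xrightarrow{f^i_V} H^i(L,V\otimes_{\QQ_p}\BB_{\dR}^\nabla)\xrightarrow{g^i_V} H^{i-1}_{\dR}(V)\to 0
\]
established in the preceding proposition, together with the section $h^i_V$ of $g^i_V$. This is a purely formal splitting-lemma argument: once we know that $f^i_V$ is injective, that $\ker(g^i_V)=\image(f^i_V)$, and that $g^i_V\circ h^i_V=\id$, the claim reduces to the standard fact that a split short exact sequence of $L$-vector spaces is isomorphic to the direct sum of its outer terms.

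First I would define $\nu^i_V\colon H^i_{\dR}(V)\oplus H^{i-1}_{\dR}(V)\to H^i(L,V\otimes_{\QQ_p}\BB_{\dR}^\nabla)$ by $(x,y)\mapsto f^i_V(x)+h^i_V(y)$. For injectivity, suppose $f^i_V(x)+h^i_V(y)=0$ and apply $g^i_V$. The exactness of the sequence at the middle term yields $g^i_V\circ f^i_V=0$, and $g^i_V\circ h^i_V=\id$ by construction of $h^i_V$, so we obtain $y=0$. Then $f^i_V(x)=0$, hence $x=0$ by injectivity of $f^i_V$.

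For surjectivity, given $z\in H^i(L,V\otimes_{\QQ_p}\BB_{\dR}^\nabla)$, set $y=g^i_V(z)\in H^{i-1}_{\dR}(V)$. A direct computation gives
\[
g^i_V\bigl(z-h^i_V(y)\bigr)=g^i_V(z)-(g^i_V\circ h^i_V)(y)=y-y=0,
\]
so $z-h^i_V(y)\in\ker(g^i_V)=\image(f^i_V)$. Pick $x\in H^i_{\dR}(V)$ with $f^i_V(x)=z-h^i_V(y)$; then $\nu^i_V(x,y)=z$, as required.

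Since the argument is entirely formal once the previous proposition is in hand, I do not expect any real obstacle. The only subtlety worth flagging is that one is implicitly invoking exactness of the sequence at the middle term (to get $g^i_V\circ f^i_V=0$ and $\ker(g^i_V)=\image(f^i_V)$), which is genuinely part of the preceding proposition rather than a triviality, since $f^i_V$ and $g^i_V$ were defined from different pieces of the spectral sequence in Lemma~\ref{maps}.
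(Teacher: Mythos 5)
Your argument is correct and is exactly what the paper intends: the corollary is stated without proof as the formal consequence of the split short exact sequence from the preceding proposition, and your splitting-lemma verification (injectivity via $g^i_V\circ h^i_V=\id$ and injectivity of $f^i_V$; surjectivity via $z-h^i_V(g^i_V(z))\in\ker(g^i_V)=\image(f^i_V)$) fills in precisely the routine details the author omits.
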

   
   \noindent {\bf Note.} We can consider this decomposition as a {\it Hodge decomposition}. By construction, $H^i_{\dR}(V)$ is a subquotient of $H^0(L,V\otimes_{\QQ_p}\BB_{\dR}\otimes_K\Omega_K^i)$, and $H^{i-1}_{\dR}(V)$ arises (under the isomorphism of Lemma~\ref{katoBdR}) as a subquotient of $H^1(L,V\otimes_{\QQ_p}\BB_{\dR}\otimes_K\Omega_K^{i-1})$.
   \vs
   
   Tensoring~\eqref{BKsequence} with $V$ and taking $\calG_L$-cohomology gives rise to connection homomorphisms $H^i\big(L,V\otimes_{\QQ_p} \BB_{\dR}^\nabla\slash  \BB_{\dR}^{\nabla +}\big)\rightarrow H^{i+1}(L,V)$ for all $i\geq 0$. Composing them with the natural maps $H^i(L,V\otimes_{\QQ_p}\BB_{\dR}^\nabla)\rightarrow H^i(L,V\otimes_{\QQ_p}\BB_{\dR}^\nabla\slash \BB_{\dR}^{\nabla +})$ gives maps $\delta_i:H^i(L,V\otimes_{\QQ_p}\BB_{\dR}^\nabla)\rTo H^{i+1}(L,V)$.
   \vs
      
   \noindent {\bf Definition.} For $1\leq i\leq d$, define the higher exponential map
   \begin{equation*}
    \exp_{(i),L,V}:H^i_{\dR}(V)\oplus H^{i-1}_{\dR}(V)\rightarrow H^{i+1}(L,V)
   \end{equation*} 
   to be the composition $\delta_i\circ \nu^i_V$. Also, define the higher dual exponential map 
   \begin{equation*}
    \exp^*_{(i),L,V}:H^{d-i}(L,V^*(d))\rightarrow H^{d-i}_{\dR}(V^*(d))\oplus H^{d-i-1}_{\dR}(V^*(d)) 
   \end{equation*}
   as the composition of $H^{d-i}(L,V^*(d))\rightarrow H^{d-i}(L,V^*(d)\otimes_{\QQ_p} \BB_{\dR}^\nabla)$ induced by the natural map $V\rightarrow V\otimes_{\QQ_p}\BB_{\dR}^\nabla$ and the inverse of $\nu^{d-i}_{V^*(d)}$. 
   \vs
   
   \noindent {\bf Remark about the filtration.} By construction, it is clear that $\exp_{(i),L,V}$ factors through $H^i\big(L,V\otimes\BB_{\dR}^\nabla\slash \BB_{\dR}^{\nabla +}\big)$. In~\cite{kato99}, Proposition (2.1.10) Kato shows that~\eqref{dualBK} can be refined to
   \begin{equation}\label{filtereddualBK}
    0\rightarrow \BB_{\dR}^{\nabla}\slash\BB_{\dR}^{\nabla +}\rightarrow \BB_{\dR}\slash\BB_{\dR}^0 \rTo^{\nabla} \BB_{\dR}\slash\BB_{\dR}^{-1}\otimes_K\Omegahat^1_K\rTo^{\nabla} \dots\rTo^{\nabla} \BB_{\dR}\slash\BB_{\dR}^{1-d}\otimes_K\Omegahat^{d-1}_K\rightarrow 0.
   \end{equation}
   Define the {\it filtered} de Rham cohomology $H^\bullet_{\dR,\fil}(V)$ of $V$ to be the cohomology of the complex
   \begin{align*}
    0&\rTo \big( V\otimes\BB_{\dR}^\nabla\slash\BB_{\dR}^{\nabla +}\big)^{\calG_K}\rTo \DD_{\dR}(V)\slash\DD_{\dR}^0(V) \rTo^\nabla\DD_{\dR}(V)\slash\DD_{\dR}^{-1}(V)\otimes\Omega^1_K\rTo^\nabla\dots\\
    &\rTo^\nabla \DD_{\dR}(V)\slash \DD_{\dR}^{1-d}(V)\otimes\Omega^{d-1}_K\rTo 0.
   \end{align*}
   A close analysis of the above construction then shows that $\exp_{(i),L,V}$ factors through 
   \begin{equation*}
    \exp_{(i),L,V}: H^i_{\dR,\fil}(V)\oplus H^{i-1}_{\dR,\fil}(V)\rightarrow H^{i+1}(L,V).
   \end{equation*}


  


  \subsection{Duality}
   
   \noindent {\bf Notation.} We keep the notation of Section~\ref{construction}. Let $e^i_V:H^1(L,V\otimes_{\QQ_p}\BB_{\dR}^\nabla)\rightarrow H^i_{\dR}(V)$ be the left inverse of $f^i_V$. As in the proof of Lemma~\ref{maps}, $e^i_V$ is induced from a map $\tilde{e}^i_V: H^1(L,X^{i-1}_V)\rightarrow H^i_{\dR}(V)$. Let $\mu^i_V=e^i_V\oplus g^i_V$, so $\mu^i_V\circ\nu^i_V=\nu^i_V\circ\mu^i_V=\id$. 
   
   \begin{lem}
    For all $1\leq i\leq d$, the wedge product gives a pairing
    \begin{equation*}
     H^i_{\dR}(V)\times H^{d-i-1}_{\dR}(V^*(d))\rightarrow H^{d-1}_{\dR}(\QQ_p(d)).
    \end{equation*}
   \end{lem}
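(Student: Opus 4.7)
The plan is to construct the pairing at the level of the de Rham complexes computing $H^\bullet_{\dR}(V)$ and $H^\bullet_{\dR}(V^*(d))$, and then check that it descends to cohomology via the graded Leibniz rule for $\nabla$. The first ingredient is a $\calG_L$-equivariant pairing of the terms of the complexes, obtained by combining three natural operations: the evaluation map $V\otimes_{\QQ_p}V^*(d)\to \QQ_p(d)$, multiplication in $\BB_{\dR}$, and the wedge product $\Omega^i_K\otimes_K\Omega^{d-1-i}_K\to\Omega^{d-1}_K$. Composing these gives, for each $i$,
$$\bigl(V\otimes_{\QQ_p}\BB_{\dR}\otimes_K\Omega^i_K\bigr)\otimes \bigl(V^*(d)\otimes_{\QQ_p}\BB_{\dR}\otimes_K\Omega^{d-1-i}_K\bigr)\longrightarrow \QQ_p(d)\otimes_{\QQ_p}\BB_{\dR}\otimes_K\Omega^{d-1}_K,$$
and passing to $\calG_L$-invariants produces an $L$-bilinear pairing on the terms of the de Rham complexes of $V$ and $V^*(d)$ landing in the de Rham complex of $\QQ_p(d)$.

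The second step is to verify the Leibniz identity
$$\nabla(\alpha\wedge \beta)=\nabla(\alpha)\wedge \beta+(-1)^i\alpha\wedge\nabla(\beta)$$
for $\alpha$ of degree $i$ and $\beta$ of degree $d-1-i$. Since $\nabla$ on $\BB_{\dR}$ is defined as $\sum_j N_j\otimes d\log X_j$ with each $N_j$ a $\BB_{\dR}^\nabla$-derivation, and the wedge product of forms is a graded derivation for the exterior derivative, this identity is a direct computation: it expresses the standard fact that $\DD_{\dR}(V)\otimes_K\Omega^\bullet_K$ carries the structure of a DG-module over the exterior algebra $\Omega^\bullet_K$, and the pairing above is a morphism of DG-modules.

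The third step is immediate from the Leibniz rule: if $\alpha$ and $\beta$ are $\nabla$-closed, so is $\alpha\wedge\beta$; and if $\alpha=\nabla(\alpha')$ with $\nabla(\beta)=0$, then $\alpha\wedge\beta=\nabla(\alpha'\wedge\beta)$ is a coboundary. Consequently the pairing descends to a well-defined bilinear map
$$H^i_{\dR}(V)\times H^{d-1-i}_{\dR}(V^*(d))\longrightarrow H^{d-1}_{\dR}(\QQ_p(d))$$
as required. The only point requiring any genuine care is the Leibniz rule in the second step; everything else is a formal consequence of functoriality of $\DD_{\dR}$ and the DGA structure on $\BB_{\dR}\otimes_K\Omega^\bullet_K$.
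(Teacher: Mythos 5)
Your proof is correct and follows essentially the same route as the paper's: the paper also defines the pairing on $\calG_L$-invariant $\nabla$-closed elements (its modules $H^0(L,X^i_V)$) and checks descent to cohomology via the identity $\nabla(z)\wedge y=\nabla(z\wedge y)$ for $y$ closed, which is exactly your Leibniz-rule argument. Your write-up merely makes the DG-module structure and the construction of the term-wise pairing (evaluation, multiplication in $\BB_{\dR}$, wedge of forms) more explicit than the paper does.
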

   \begin{proof}
    It is clear that the wedge product gives a pairing 
    \begin{equation*}
     H^0(L,X^i_V)\times H^0(L,X^{d-i-1}_{V^*(d)})\rightarrow H^0(L,X^{d-1}_{\QQ_p(d)}).
    \end{equation*}
    If $x\in H^0(L,X^i_V)$ is of the form $x=\nabla(z)$ for some $z\in H^0(L,X^{i-1}_V)$, then for any $y\in H^0(L,X^{d-i-1}_{V^*(d)})$ we have $x\wedge y=\nabla(z\wedge y)$.
   \end{proof}
   
   \begin{lem}
    The spectral sequence gives an isomorphism $\iota_{L,V}:H^{d-1}_{\dR}(V)\cong H^d(L,V\otimes_{\QQ_p}\BB_{\dR}^\nabla)$.
   \end{lem}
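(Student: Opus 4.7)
The plan is to deduce this isomorphism as a direct consequence of the decomposition already established in the preceding corollary, namely
\[
H^i(L, V\otimes_{\QQ_p}\BB_{\dR}^\nabla) \;\cong\; H^i_{\dR}(V) \oplus H^{i-1}_{\dR}(V)
\]
for all $1 \leq i \leq d$, specialised to $i = d$. The key observation is that the de Rham complex computing $H^\bullet_{\dR}(V)$ terminates at $\DD_{\dR}(V) \otimes_K \Omega^{d-1}_K$, because $\Omega^j_K = 0$ for $j \geq d$ (the module $\Omega^1_K$ has basis $\{d\log X_i\}_{1 \leq i < d}$ of cardinality $d-1$). Consequently $H^d_{\dR}(V) = 0$, and the decomposition at $i = d$ collapses to a single summand.

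Concretely, I would first apply the corollary to write $\nu^d_V = f^d_V \oplus h^d_V$, giving the direct sum decomposition at $i = d$. Since $H^d_{\dR}(V) = 0$, the map $f^d_V$ is zero and the other summand $h^d_V \colon H^{d-1}_{\dR}(V) \to H^d(L, V\otimes_{\QQ_p}\BB_{\dR}^\nabla)$ becomes an isomorphism. I would then define $\iota_{L,V}$ as this map $h^d_V$. Tracing through the construction in the proof of the preceding proposition, $\iota_{L,V}$ sends an element of $H^{d-1}_{\dR}(V)$, represented by some $x \in H^0(L, X^{d-1}_V)$ lying in $\DD_{\dR}(V)\otimes_K \Omega^{d-1}_K$, to the class of the cocycle $\sigma \mapsto x\log\chi(\sigma)$, in accordance with the cup-product description of Proposition \ref{katoBdR}.

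Alternatively, and equivalently, one can read the result directly off the spectral sequence $E_1^{m,n} = H^n(L, V\otimes_{\QQ_p}\BB_{\dR}\otimes_K \Omega^m_K) \Rightarrow H^{m+n}(L, \BB_{\dR}^\nabla \otimes V)$. By Proposition \ref{katoBdR} only the rows $n=0$ and $n=1$ are non-zero, both isomorphic (via cup product with $\log\chi$ on the $n=1$ row) to the de Rham complex, and by Corollary \ref{spectralseq} the sequence degenerates at $E_2$. Since $E_1^{d,n} = 0$ for all $n$ (again because $\Omega^d_K = 0$), the only non-vanishing contribution to $H^d(L, V\otimes_{\QQ_p}\BB_{\dR}^\nabla)$ comes from $E_2^{d-1,1} = H^{d-1}_{\dR}(V)$, yielding the desired isomorphism. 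No significant obstacle arises: the only subtlety is bookkeeping about which row of the spectral sequence carries the cup-product-with-$\log\chi$ identification, which is already handled by Proposition \ref{katoBdR}.
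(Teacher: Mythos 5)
Your argument is correct and is exactly what the paper intends (the lemma is stated without proof, but the phrase ``the spectral sequence gives'' points to your second reading): since $\Omega^d_K=0$ the column $E_1^{d,\bullet}$ vanishes and the degree-$d$ part of the abutment reduces to $E_2^{d-1,1}\cong H^{d-1}_{\dR}(V)$, equivalently $H^d_{\dR}(V)=0$ collapses the decomposition $\nu^d_V=f^d_V\oplus h^d_V$ to the single summand $h^d_V$. Your explicit description of $\iota_{L,V}$ via the cocycle $\sigma\mapsto x\log\chi(\sigma)$ also matches the construction of $\frak{h}^d_V$ and the way $\iota_{\QQ_p(d)}$ is used later in Proposition~\ref{spectralduality}.
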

   
   The main result of this section is Proposition~\ref{spectralduality} below. We first need a couple of preliminary lemmas. 
    
   \begin{lem}\label{messy}
    We have short exact sequences
    \begin{align}
     &0\rightarrow X^i_V\rightarrow V\otimes_{\QQ_p}\BB_{\dR}\otimes_K\Omega_K^i\rightarrow X^{i+1}_V\rightarrow 0,\label{eq1}\\
     &0\rightarrow X^{d-i-2}_{V^*(d)}\rightarrow V^*(d)\otimes_{\QQ_p}\BB_{\dR}\otimes_K\Omega_K^{d-i-2}\rightarrow X^{d-i-1}_{V^*(d)}\rightarrow 0,\label{eq2}\\
     &0\rightarrow X^{d-2}_{\QQ_p(d)}\rightarrow \BB_{\dR}(d)\otimes_K\Omega_K^{d-2}\rightarrow \BB_{\dR}(d)\otimes_K\Omega_K^{d-1}\rightarrow 0\label{eq3}
    \end{align}
   \end{lem}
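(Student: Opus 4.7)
All three sequences are short segments of the tensored de Rham complex from Corollary~\ref{directlimit}, once one uses both characterisations of $X^i_V$ (as a kernel and as an image). The plan is to tensor the exact complex in Corollary~\ref{directlimit} over $\QQ_p$ with the relevant $\QQ_p$-vector space ($V$, $V^*(d)$, or $\QQ_p(d)$); since tensoring with a $\QQ_p$-vector space is exact, this yields an exact complex of $\calG_L$-modules in each case.

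For~\eqref{eq1}, I would take the right-hand map to be $\nabla: V\otimes_{\QQ_p}\BB_{\dR}\otimes_K\Omega^i_K \to V\otimes_{\QQ_p}\BB_{\dR}\otimes_K\Omega^{i+1}_K$, which surjects onto $X^{i+1}_V$ directly from the definition of $X^{i+1}_V$ as the image of $\nabla$ at this level. The kernel of this $\nabla$ is precisely $X^i_V$ via its second (kernel) characterisation, yielding the inclusion on the left and exactness in the middle. The identical argument applied to $V^*(d)$, with index $i$ shifted to $d-i-2$, gives~\eqref{eq2}.

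For~\eqref{eq3} the situation is the same except that one is at the top of the complex: because $\Omega^d_K = 0$, the arrow $\nabla: \BB_{\dR}(d)\otimes_K\Omega^{d-2}_K \to \BB_{\dR}(d)\otimes_K\Omega^{d-1}_K$ is the last nonzero differential in the de Rham complex, hence surjective onto all of $\BB_{\dR}(d)\otimes_K\Omega^{d-1}_K$ (equivalently, onto $X^{d-1}_{\QQ_p(d)}$, which coincides with the whole term since the next differential lands in zero); its kernel is $X^{d-2}_{\QQ_p(d)}$ exactly as before. I anticipate no substantive obstacle: the lemma is a bookkeeping consequence of Corollary~\ref{directlimit} together with the two equivalent definitions of $X^i_V$.
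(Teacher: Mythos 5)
Your proposal is correct and matches the paper's reasoning: the paper disposes of this lemma in one line as an immediate consequence of the exactness of the de Rham complex (Corollary~\ref{directlimit}) together with the two characterisations of $X^i_V$ as image and kernel of $\nabla$, which is exactly what you spell out. Your observation that surjectivity in~\eqref{eq3} comes from $\Omega^d_K=0$ (so the last differential of the complex is onto) is the right way to handle the top-degree case.
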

   \begin{proof}
    Clear from Proposition~\ref{spectralseq}.
   \end{proof}
   
   \begin{cor}\label{newmaps}
    We have an isomorphism 
    \begin{align*}
     \iota_{\QQ_p(d)}:&\coker\big(\nabla:\DD_{\dR}(\QQ_p(d))\otimes_K\Omega_K^{d-2}\rightarrow \DD_{\dR}(\QQ_p(d))\otimes_K\Omega_K^{d-1}\big)\\
              &\cong H^2(L,X^{d-2}_{\QQ_p(d)}).
    \end{align*}
   \end{cor}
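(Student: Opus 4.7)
The plan is to obtain $\iota_{\QQ_p(d)}$ directly as a connecting isomorphism in the long exact cohomology sequence of~\eqref{eq3}, using Kato's computation of the $\calG_L$-cohomology of $\BB_{\dR}$-coefficients.

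First, I would apply $\calG_L$-cohomology to the short exact sequence
\begin{equation*}
 0 \rTo X^{d-2}_{\QQ_p(d)} \rTo \BB_{\dR}(d)\otimes_K\Omega^{d-2}_K \rTo \BB_{\dR}(d)\otimes_K\Omega^{d-1}_K \rTo 0
\end{equation*}
from Lemma~\ref{messy}. By the Corollary to Proposition~\ref{katoBdR}, the groups $H^i(L,\BB_{\dR}(d)\otimes_K\Omega^j_K)$ vanish for $i\geq 2$ and any $j$, since $\Omega^j_K$ is a finite-dimensional $K$-vector space. Hence the long exact sequence collapses to
\begin{equation*}
 H^1(L,\BB_{\dR}(d)\otimes_K\Omega^{d-2}_K) \rTo^{\partial} H^1(L,\BB_{\dR}(d)\otimes_K\Omega^{d-1}_K) \rTo H^2(L,X^{d-2}_{\QQ_p(d)}) \rTo 0,
\end{equation*}
so $H^2(L,X^{d-2}_{\QQ_p(d)})$ is identified with $\coker(\partial)$.

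Next, I would identify the connecting map $\partial$ with $\nabla$. By the same Corollary to Proposition~\ref{katoBdR}, cup product with $\log\chi$ produces canonical isomorphisms
\begin{equation*}
 H^1(L,\BB_{\dR}(d)\otimes_K\Omega^{j}_K) \cong \DD_{\dR}(\QQ_p(d))\otimes_K\Omega^{j}_K
\end{equation*}
for $j = d-2, d-1$. Under these isomorphisms $\partial$ corresponds to $\nabla$: this is exactly the commutative square produced in the proof of Lemma~\ref{maps} (applied with $V=\QQ_p(d)$ and $i = d-1$), which shows that the map induced on $H^1$ by $\nabla$ in the coefficient sheaf agrees with the map $\nabla$ on the de Rham side after taking cup product with $\log\chi$. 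Combining this identification with the surjection above yields the desired isomorphism
\begin{equation*}
 \iota_{\QQ_p(d)}:\coker\bigl(\nabla:\DD_{\dR}(\QQ_p(d))\otimes_K\Omega^{d-2}_K\to\DD_{\dR}(\QQ_p(d))\otimes_K\Omega^{d-1}_K\bigr) \cong H^2(L,X^{d-2}_{\QQ_p(d)}).
\end{equation*}

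The only non-formal point is verifying that the connecting map in the long exact sequence is indeed identified with $\nabla$ under the $\log\chi$-trivialisation; this is where the compatibility square from Lemma~\ref{maps} is essential, but it is already in hand, so no further work is needed. Everything else is an immediate consequence of the cohomological vanishing in Proposition~\ref{katoBdR}.
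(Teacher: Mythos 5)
Your argument is exactly the paper's proof, just written out in more detail: take $\calG_L$-cohomology of~\eqref{eq3}, use the vanishing of $H^2(L,\BB_{\dR}(d)\otimes_K\Omega^{d-2}_K)$ from Proposition~\ref{katoBdR} to identify $H^2(L,X^{d-2}_{\QQ_p(d)})$ with the cokernel on $H^1$, and then transport via the $\log\chi$-trivialisations. One small terminological slip: the map $H^1(L,\BB_{\dR}(d)\otimes_K\Omega^{d-2}_K)\to H^1(L,\BB_{\dR}(d)\otimes_K\Omega^{d-1}_K)$ is not the connecting homomorphism but the functorial map induced by $\nabla$ on coefficients, which is why its identification with $\nabla$ on $\DD_{\dR}$ is immediate from the commutative square in the proof of Lemma~\ref{maps}.
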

   \begin{proof}
    The exact sequence~\eqref{eq3} in Lemma~\ref{messy} induces an isomorphism 
    \begin{align*}
     &\coker\big(\nabla:H^1(L,\BB_{\dR}(d)\otimes_K\Omega_K^{d-2})\rightarrow H^1(L,\BB_{\dR}(d)\otimes_K\Omega_K^{d-1})\big)\\
              &\cong H^2(L,X^{d-2}_{\QQ_p(d)})
    \end{align*}
    The above isomorphism is therefore an immediate consequence of Lemma~\ref{katoBdR}.
   \end{proof}
   
   \begin{lem}\label{remainsexact}
    Taking the wedge product of~\eqref{eq1} with $X^{d-i-2}_{V^*(d)}$ gives an exact sequence
    \begin{equation*}
     0\rightarrow X^i_V\wedge X^{d-i-2}_{V^*(d)}\rightarrow V\otimes_{\QQ_p}\BB_{\dR}\otimes_K\Omega_K^i\wedge X^{d-i-2}_{V^*(d)}\rightarrow X^{i+1}_V\wedge X^{d-i-2}_{V^*(d)}\rightarrow 0.
    \end{equation*}
   \end{lem}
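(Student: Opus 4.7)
The plan is to verify the three pieces of exactness directly, reducing the middle exactness to a flatness argument over the field $\BB_{\dR}^\nabla$.

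First, I would observe that $X^i_V$, $X^{i+1}_V$, and $X^{d-i-2}_{V^*(d)}$ are naturally $\BB_{\dR}^\nabla$-modules, being kernels or images of the $\BB_{\dR}^\nabla$-linear derivation $\nabla$, and the middle term of \eqref{eq1} becomes a $\BB_{\dR}^\nabla$-module by restriction of scalars from $\BB_{\dR}$. The wedge pairing combining the evaluation $V\otimes V^*(d)\to\QQ_p(d)$, multiplication in $\BB_{\dR}$, and the exterior product on $\Omega^\bullet_K$ is then $\BB_{\dR}^\nabla$-bilinear. Injectivity of the first map of the lemma is immediate, since $X^i_V\wedge X^{d-i-2}_{V^*(d)}$ sits tautologically inside $(V\otimes_{\QQ_p}\BB_{\dR}\otimes_K\Omega^i_K)\wedge X^{d-i-2}_{V^*(d)}$ as subsets of the ambient space $\QQ_p(d)\otimes_{\QQ_p}\BB_{\dR}\otimes_K\Omega^{d-2}_K$. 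The Leibniz rule together with $X^{d-i-2}_{V^*(d)}\subset\ker\nabla$ yields $\nabla(v\wedge w)=\nabla(v)\wedge w$ for $w\in X^{d-i-2}_{V^*(d)}$, which identifies the rightward map as being induced by $\nabla$ on the first factor; surjectivity onto $X^{i+1}_V\wedge X^{d-i-2}_{V^*(d)}$ then follows from the characterisation $X^{i+1}_V=\nabla\bigl(V\otimes\BB_{\dR}\otimes\Omega^i_K\bigr)$.

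For the middle exactness, I would first establish the analogous exact sequence at the tensor product level,
$$
0\to X^i_V\otimes_{\BB_{\dR}^\nabla} X^{d-i-2}_{V^*(d)}\to (V\otimes_{\QQ_p}\BB_{\dR}\otimes_K\Omega^i_K)\otimes_{\BB_{\dR}^\nabla} X^{d-i-2}_{V^*(d)}\to X^{i+1}_V\otimes_{\BB_{\dR}^\nabla} X^{d-i-2}_{V^*(d)}\to 0,
$$
which holds because $\BB_{\dR}^\nabla$, being the fraction field of the DVR $\BB_{\dR}^{\nabla +}$, is a field, and every module over a field is flat. I would then pass to the wedge sequence via the natural surjections $A\otimes_{\BB_{\dR}^\nabla} X^{d-i-2}_{V^*(d)}\twoheadrightarrow A\wedge X^{d-i-2}_{V^*(d)}$, where $A$ ranges over the three terms of \eqref{eq1}, and transfer exactness by a diagram chase. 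The key point is that the kernels of these surjections are generated by antisymmetry relations that depend only on the $\Omega^\bullet_K$-factor, which is common to all three terms, so the kernels pull back consistently.

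The main obstacle is precisely this last descent step: showing that if $\xi=\sum_j v_j\wedge w_j$ with $v_j\in V\otimes_{\QQ_p}\BB_{\dR}\otimes_K\Omega^i_K$ and $w_j\in X^{d-i-2}_{V^*(d)}$ satisfies $\sum_j\nabla(v_j)\wedge w_j=0$ in $\QQ_p(d)\otimes_{\QQ_p}\BB_{\dR}\otimes_K\Omega^{d-1}_K$, then $\xi$ already lies in $X^i_V\wedge X^{d-i-2}_{V^*(d)}$. This is the only nontrivial step of the proof, and it follows by combining the tensor-level exactness with the compatibility of the wedge-antisymmetry kernels across the three terms, so that a preimage of $\xi$ in the middle tensor product can be modified, without changing its wedge image, to lie in $X^i_V\otimes_{\BB_{\dR}^\nabla} X^{d-i-2}_{V^*(d)}$.
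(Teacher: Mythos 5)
Your treatment of injectivity and of surjectivity matches the paper's proof: injectivity is the tautological containment of both terms in $\QQ_p(d)\otimes_{\QQ_p}\BB_{\dR}\otimes_K\Omega^{d-2}_K$, and surjectivity is exactly the paper's lifting argument --- pick $\tilde{x}$ with $\nabla(\tilde{x})=x$ and use $\nabla(\tilde{x}\wedge y)=\nabla(\tilde{x})\wedge y$ since $\nabla(y)=0$. (Incidentally, you correctly take $\tilde{x}\in V\otimes_{\QQ_p}\BB_{\dR}\otimes_K\Omega^i_K$; the paper has a typo there.) The paper disposes of left and middle exactness in one line by embedding the sequence term by term into the exact sequence~\eqref{eq3}; it is only for middle exactness that you take a genuinely different route, and that is where the gap lies.

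Your descent from the $\BB_{\dR}^\nabla$-tensor-product sequence to the wedge sequence requires (by the snake lemma applied to the three surjections $A\otimes_{\BB_{\dR}^\nabla}X^{d-i-2}_{V^*(d)}\twoheadrightarrow A\wedge X^{d-i-2}_{V^*(d)}$) that the kernel over the middle term surject onto the kernel over $X^{i+1}_V$. This is precisely the step you label the ``main obstacle'' and then assert rather than prove, and the justification offered --- that the kernels are generated by antisymmetry relations depending only on the common $\Omega^\bullet_K$-factor --- is not correct. The wedge map also contracts $V$ against $V^*(d)$ and multiplies the $\BB_{\dR}$-coefficients over the subfield $\BB_{\dR}^\nabla$, so the kernels contain relations of the shape $bv\otimes w-v\otimes bw$ with $b\in\BB_{\dR}$; since $X^i_V$ and $X^{i+1}_V$ are stable under $\BB_{\dR}^\nabla$ but not under $\BB_{\dR}$, the available relations genuinely differ among the three terms, and the ``consistent pullback'' is exactly what must be proved. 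Equivalently, splitting~\eqref{eq1} over the field $\BB_{\dR}^\nabla$ as $V\otimes_{\QQ_p}\BB_{\dR}\otimes_K\Omega^i_K\cong X^i_V\oplus S$ reduces middle exactness to the injectivity of $\nabla$ on $S\wedge X^{d-i-2}_{V^*(d)}$, which does not follow formally from the injectivity of $\nabla|_S$: a relation $\sum_j s_j\wedge w_j=0$ in the ambient space need not be preserved by $\nabla|_S\wedge\id$. So your argument for exactness at the middle term is incomplete; the intended (and much shorter) argument is the paper's, namely that the composite vanishes because $X^i_V\wedge X^{d-i-2}_{V^*(d)}\subset X^{d-2}_{\QQ_p(d)}=\ker\nabla$ and the whole sequence sits inside~\eqref{eq3}.
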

   \begin{proof}
    Since $X^i_V\wedge X^{d-i-2}_{V^*(d)}\subset X^{d-2}_{\QQ_p(d)}$ and $V\otimes_{\QQ_p}\BB_{\dR}\otimes_K\Omega_K^i \wedge X^{d-i-2}_{V^*(d)}\subset \QQ_p(d)\otimes\BB_{\dR}\otimes_K\Omega_K^{d-2}$, by~\eqref{eq3} we only have to show surjectivity. Let $x\wedge y\in X^{i+1}_V\wedge X^{d-i-2}_{V^*(d)}$, and choose $\tilde{x}\in V^*(d)\otimes_{\QQ_p}\BB_{\dR}\otimes_K\Omega_K^{d-i-2}$ such that $\nabla(\tilde{x})=x$. Then $\tilde{x}\wedge y\in V\otimes_{\QQ_p}\BB_{\dR}\otimes_K\Omega_K^i\wedge X^{d-i-2}_{V^*(d)}$, and $\nabla(\tilde{x}\wedge y)=x\wedge y$ since $y\in X^{d-i-2}_{V^*(d)}$ and hence $\nabla(y)=0$. 
   \end{proof}
     
   \begin{prop}\label{spectralduality}
    For all $1\leq i\leq d$, the maps $\nu^i_V$ and $\mu^{d-i}_{V^*(d)}$ are dual to each other. More precisely, for all $x\in H^i_{\dR}(V)\oplus H^{i-1}_{\dR}(V)$ and $y\in H^{d-i}(L,V^*(d)\otimes_{\QQ_p}\BB_{\dR}^\nabla)$ we have $\nu^i_V(x)\cup y=\iota_{\QQ_p(d)}(x\wedge \mu^{d-i}_{V^*(d)}(y))$. 
   \end{prop}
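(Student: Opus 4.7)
The plan is to verify the identity by decomposing both sides according to the splittings $\nu^i_V = f^i_V \oplus h^i_V$ and $\mu^{d-i}_{V^*(d)} = e^{d-i}_{V^*(d)} \oplus g^{d-i}_{V^*(d)}$, and matching the four resulting bilinear pieces individually. Using the fact that $\nu$ and $\mu$ are mutually inverse, I would write $y = f^{d-i}_{V^*(d)}(y_1) + h^{d-i}_{V^*(d)}(y_2)$ with $(y_1,y_2) = \mu^{d-i}_{V^*(d)}(y) \in H^{d-i}_{\dR}(V^*(d)) \oplus H^{d-i-1}_{\dR}(V^*(d))$, and similarly $x = x_1 + x_2$. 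Expanding the left-hand side produces four cup products, while the right-hand side simplifies to $\iota_{\QQ_p(d)}(x_1 \wedge y_2) + \iota_{\QQ_p(d)}(x_2 \wedge y_1)$, since the remaining wedge combinations land in $H^d_{\dR}(\QQ_p(d)) = 0$ (because $\Omega^d_K = 0$) or in $H^{d-2}_{\dR}(\QQ_p(d))$, which is outside the domain of $\iota_{\QQ_p(d)}$.

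I would first dispose of the two ``diagonal'' cup products. The class $f^i_V(x_1) \cup f^{d-i}_{V^*(d)}(y_1)$ factors through the top filtration step of the spectral sequence of Corollary~\ref{spectralseq}, i.e.\ through $E_\infty^{d,0} = H^d_{\dR}(\QQ_p(d)) = 0$. For the complementary term $h^i_V(x_2) \cup h^{d-i}_{V^*(d)}(y_2)$, I would use the explicit cocycle description: $h^i_V(x_2)$ is the image in $H^i(L, V \otimes \BB_{\dR}^\nabla)$ of the 1-cocycle $\sigma \mapsto x_2 \log\chi(\sigma)$ in $H^1(L, X^{i-1}_V)$, and similarly for $h^{d-i}_{V^*(d)}(y_2)$. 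Their cup product is represented by the 2-cocycle $(\sigma,\tau) \mapsto (x_2 \wedge y_2) \log\chi(\sigma) \log\chi(\tau)$ in $H^2(L, X^{d-2}_{\QQ_p(d)})$, which is $(x_2 \wedge y_2)$ times the class $\log\chi \cup \log\chi \in H^2(L, \QQ_p)$. Graded commutativity of the cup product on classes of odd degree forces $\log\chi \cup \log\chi = 0$, so this term vanishes.

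The central computation is to match the two cross terms with the two wedge terms. For $f^i_V(x_1) \cup h^{d-i}_{V^*(d)}(y_2)$, I would choose a lift $\tilde x_1 \in V \otimes \BB_{\dR} \otimes \Omega^{i-1}_K$ with $\nabla(\tilde x_1) = x_1$, so that $\tilde f^i_V(x_1)$ is represented by the 1-cocycle $\sigma \mapsto (\sigma - 1)(\tilde x_1)$ in $H^1(L, X^{i-1}_V)$. Cupping with the cocycle $\sigma \mapsto y_2 \log\chi(\sigma)$ yields the 2-cocycle $(\sigma,\tau) \mapsto \bigl((\sigma - 1)(\tilde x_1)\bigr) \wedge y_2 \cdot \log\chi(\tau)$ in $H^2(L, X^{d-2}_{\QQ_p(d)})$. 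By Lemma~\ref{remainsexact}, $\tilde x_1 \wedge y_2$ is a valid lift of $x_1 \wedge y_2$ under $\nabla$, so the inner 1-cocycle $\sigma \mapsto (\sigma - 1)(\tilde x_1 \wedge y_2)$ represents the image of $x_1 \wedge y_2$ under the connecting homomorphism $H^0(L, X^{d-1}_{\QQ_p(d)}) \to H^1(L, X^{d-2}_{\QQ_p(d)})$. Cupping with $\log\chi$ then realises precisely $h^d_{\QQ_p(d)}(x_1 \wedge y_2) = \iota_{\QQ_p(d)}(x_1 \wedge y_2)$ in $H^d(L, \BB_{\dR}^\nabla(d))$. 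The symmetric term $h^i_V(x_2) \cup f^{d-i}_{V^*(d)}(y_1)$ is handled analogously, contributing $\iota_{\QQ_p(d)}(x_2 \wedge y_1)$.

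The principal obstacle is this cross-term calculation: it demands careful bookkeeping of a chain of lifts and connecting maps through the short exact sequences of Lemma~\ref{messy} and Lemma~\ref{remainsexact}, together with correct handling of the signs dictated by graded commutativity of the cup product, to confirm that the wedge-product lifts of $x_1 \wedge y_2$ and $x_2 \wedge y_1$ are compatible with the chosen cocycle representatives. Once the four contributions are assembled, one obtains $\nu^i_V(x) \cup y = \iota_{\QQ_p(d)}(x_1 \wedge y_2) + \iota_{\QQ_p(d)}(x_2 \wedge y_1) = \iota_{\QQ_p(d)}(x \wedge \mu^{d-i}_{V^*(d)}(y))$, as claimed.
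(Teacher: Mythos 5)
Your proposal is correct and follows essentially the same route as the paper: decompose via the splittings $\nu^i_V=f^i_V\oplus h^i_V$ and $\mu^{d-i}_{V^*(d)}=e^{d-i}_{V^*(d)}\oplus g^{d-i}_{V^*(d)}$, kill the two diagonal cup products, and identify each cross term with $\iota_{\QQ_p(d)}$ applied to the corresponding wedge product by comparing the connecting-homomorphism cocycle $(\sigma,\tau)\mapsto\bigl((\sigma-1)\tilde x_1\bigr)\wedge y_2\log\chi(\tau)$ with the defining cocycle of $\iota_{\QQ_p(d)}$. The only difference is one of detail: you make explicit the reasons the diagonal terms vanish ($H^d_{\dR}(\QQ_p(d))=0$ and $\log\chi\cup\log\chi=0$), which the paper dismisses as ``easy to see''.
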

   \begin{proof}
    In view of the construction of the maps $\nu^i_V$ and $\mu^{d-i}_{V^*(d)}$ it is sufficient to show that the following two statements are true.
    
    (i) for all $x\in Y^i_{V}$ and $y\in H^1(L,X^{d-i-1}_{V^*(d)})$ the image of $\tilde{f}^i_V(x)\cup y$ in $H^2(L,X^{d-2}_{\QQ_p(d)})$ is $\iota_{\QQ_p(d)}(x_1\wedge \tilde{g}_{d-i,V^*(d)}(y))$;
    
    (ii) for all $z\in Y^{i-1}_V$ and $y\in H^1(L,X^{d-i-1}_{V^*(d)})$ the image of $\tilde{h}^i_V(z)\cup y$ in $H^2(L,X^{d-2}_{\QQ_p(d)})$ is $\iota_{\QQ_p(d)}(z\wedge \tilde{e}_{d-i,V^*(d)}(y))$;
    \vs
    
    Write $\mu^{d-i}_{V^*(d)}(y)=(u_1,u_2)$, so $y=f^{d-i}_{V^*(d)}(u_1)+h^{d-i}_{V^*(d)}(u_2)$. It is then easy to see that $f^i_V(x)\cup f^{d-i}_{V^*(d)}(u_1)=\tilde{h}^i_V(z)\cup h^{d-i}_{V^*(d)}(u_2)=0$.
    \vs 
    
    \noindent {\it Proof of (i).} By the preceding observation, $\tilde{f}^i_V(x)\cup y=\tilde{f}^i_V(x)\cup \tilde{h}^{d-i}_{V^*(d)}(u_2)$. Recall that $\tilde{h}^{d-i}_{V^*(d)}(u_2)$ is given by the cocycle $ u_2\log\chi$. Now $\tilde{f}^i_V$ arises as a connection homomorphism of Galois cohomology, so we deduce from the properties of the cup product that $\tilde{f}^i_V(x)\cup \tilde{h}^{d-i}_{V^*(d)}(u_2)=\delta\big((x\wedge u_2)\log\chi\big)$, where $\delta$ is the connection map $H^1(L,X^{i}_V\wedge X^{d-i-1}_{V^*(d)})\rightarrow H^2(L,X^{i+1}_V\wedge X^{d-i-1}_{V^*(d)})$. We therefore have to show that the image of $\delta\big((x\wedge u_2)\log\chi\big)$ in $H^2(L,X^{d-2}_{\QQ_p(d)})$ is equal to $\iota_{\QQ_p(d)}(x\wedge u_2)$. But this is immediate from the construction of $\iota_{\QQ_p(d)}$. 
    \vs
    
    \noindent {\it Proof of (ii).} Arguing as above, we have $\tilde{h}^i_V(z)\cup y= \tilde{h}^i_V(x)\cup \tilde{f}^{d-i}_{V^*(d)}(u_1)$. The result therefore follows by similar arguments as in (i).
   \end{proof}
   
   Proposition~\ref{spectralduality} has the following important consequence, which establishes a duality between $\exp_{(i),L,V}$ and $\exp_{(i),L,V}^*$.
   
   \begin{cor}
    Let $1\leq i\leq d$. Then for all $x\in Y_i(V)$ and for all $y\in H^{d-i}(L,V^*(d))$ we have
    \begin{equation*}
      \exp_{(i),L,V}(x)\cup y= \delta_d(x\wedge\exp^*_{(i),L,V}(y)).
     \end{equation*}
   \end{cor}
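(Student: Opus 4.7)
The plan is to derive the corollary directly from Proposition~\ref{spectralduality} by combining it with the standard compatibility of the cup product with connecting homomorphisms in Galois cohomology. First, unwind the definitions: $\exp_{(i),L,V}(x) = \delta_i(\nu^i_V(x))$, where $\delta_i$ is induced by the connecting homomorphism attached to the fundamental exact sequence~\eqref{BKsequence}. Similarly, $\exp^*_{(i),L,V}(y) = \mu^{d-i}_{V^*(d)}(\tilde y)$, where $\tilde y \in H^{d-i}(L, V^*(d) \otimes_{\QQ_p} \BB_{\dR}^\nabla)$ denotes the image of $y$ under the natural map induced by $V^*(d) \hookrightarrow V^*(d) \otimes_{\QQ_p} \BB_{\dR}^\nabla$.

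Next, I would move the connecting homomorphism through the cup product. Tensor the sequence~\eqref{BKsequence} with $V$ and pair with $V^*(d)$ via the evaluation map $V \otimes V^*(d) \to \QQ_p(d)$; this yields the short exact sequence
\begin{equation*}
 0 \rTo \QQ_p(d) \rTo \QQ_p(d) \otimes \BB_{\max}^{\nabla\phi=1} \rTo \QQ_p(d) \otimes \BB_{\dR}^\nabla / \BB_{\dR}^{\nabla +} \rTo 0,
\end{equation*}
whose associated connecting map is $\delta_d$ at degree $d$. The standard naturality property of the cup product then gives
\begin{equation*}
 \delta_i(\nu^i_V(x)) \cup y = \delta_d\big(\nu^i_V(x) \cup \tilde y\big),
\end{equation*}
where on the right we have used the multiplication $\BB_{\dR}^\nabla \otimes \BB_{\dR}^\nabla \to \BB_{\dR}^\nabla$ together with the evaluation pairing to interpret the cup product in $H^d(L, \QQ_p(d) \otimes_{\QQ_p} \BB_{\dR}^\nabla)$, and the identity that cupping with $y$ agrees with cupping with its lift $\tilde y$.

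Now apply Proposition~\ref{spectralduality}, which yields
\begin{equation*}
 \nu^i_V(x) \cup \tilde y = \iota_{\QQ_p(d)}\big(x \wedge \mu^{d-i}_{V^*(d)}(\tilde y)\big) = \iota_{\QQ_p(d)}\big(x \wedge \exp^*_{(i),L,V}(y)\big).
\end{equation*}
Substituting back produces $\exp_{(i),L,V}(x) \cup y = \delta_d\big(\iota_{\QQ_p(d)}(x \wedge \exp^*_{(i),L,V}(y))\big)$, which, under the identification $H^{d-1}_{\dR}(\QQ_p(d)) \cong H^d(L, \QQ_p(d) \otimes \BB_{\dR}^\nabla)$ via $\iota_{\QQ_p(d)}$ implicit in the statement, is the claimed equality.

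The main obstacle I expect is the second step: verifying precisely that the naturality of the cup product with respect to the connecting homomorphism applies here, taking into account that we pair a class in degree $i$ coming from the tensored sequence~\eqref{BKsequence} with a class in degree $d-i$ in $H^{d-i}(L, V^*(d))$, and ensuring the resulting sign convention is compatible with the one tacitly used in Proposition~\ref{spectralduality}. This is a routine but slightly delicate verification at the cochain level using the explicit formulas for the cup product and the Bockstein, but it poses no conceptual difficulty.
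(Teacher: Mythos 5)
Your proposal is correct and follows exactly the route the paper intends: the corollary is stated as an immediate consequence of Proposition~\ref{spectralduality}, and the only content to supply is the compatibility of the cup product with the connecting homomorphisms $\delta_i$, $\delta_d$ attached to~\eqref{BKsequence} tensored with $V$ and paired against $V^*(d)$, which is precisely what you do. The sign/degree bookkeeping you flag is the only delicate point, and the paper itself passes over it silently.
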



 \section{The explicit reciprocity law}\label{explicitreciprocity}

  \subsection{Galois cohomology of $\BB^\nabla_{\dR}$}\label{GaloisCohom}
  
   The aim of this section is the proof of Proposition~\ref{vanishingBdR}, which allows us to descend from 
   $V\otimes_{\QQ_p}\BB^\nabla_{\dR}$ to $(V\otimes_{\QQ_p}\BB^\nabla_{\dR})^{\calH_L}$. To prove the propoisition, we closely follow the strategy in Section IV in~\cite{colmez98}, using results from~\cite{brinon03} and~\cite{andreattaiovita07}. The starting point are the following two results.
   
   \begin{prop}\label{citetate}
    For all $i\geq 1$, we have $H^i(L_\infty,\CC_K)=\{1\}$.
   \end{prop}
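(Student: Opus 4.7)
The plan is to deduce this from the Tate-Sen style vanishing theorem, adapted to the deeply ramified tower $L_\infty/L$. This is essentially a citation to known results (the name of the proposition points to Tate), but the underlying argument proceeds as follows. First, I would observe that the extension $L_\infty/L$, obtained by adjoining the full cyclotomic tower $L^{(0)}_\infty$ together with the Kummer towers $L^{(i)}_\infty$ for the lifts $X_1,\dots,X_{d-1}$ of the $p$-basis, is deeply ramified in the sense of Coates-Greenberg. Consequently, the normalized trace maps on the $p$-adic completions converge and yield a topological projector from $\widehat{\bar{K}} = \CC_K$ onto $\widehat{L}_\infty = \CC_K^{\calH_L}$.

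Second, with this convergence input in hand, I would run the classical Tate argument on continuous cocycles $c\colon \calH_L^i \to \CC_K$. One splits $\CC_K$ as $\widehat{L}_\infty$ plus its complement under the normalized trace, notes that the $\widehat{L}_\infty$-component contributes trivially in positive degree (since $\calH_L$ acts trivially there), and constructs the primitive for the complementary component by a convergent geometric series, exploiting precisely the deeply ramified condition. For the non-commutative $p$-adic Lie group $\calH_L$ the argument should be organized via the Hochschild-Serre spectral sequence that filters $\calH_L$ by its normal procyclic subgroups coming from the Kummer tower, reducing successively to the one-dimensional case where Tate's original computation applies.

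The hard part is verifying the Tate-Sen axioms (in particular the normalized trace convergence) in the presence of an imperfect residue field and multiple Kummer directions; however this has already been carried out in \cite{brinon03} and in \cite{andreattaiovita07}, and the author's text indicates that the proof will cite these sources rather than reprove the vanishing from scratch. Thus in practice the proof is one line, invoking the appropriate statement from \cite{brinon03}, with the verification that the hypotheses of the cited theorem are met for our tower $L_\infty/L$.
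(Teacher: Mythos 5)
Your bottom line agrees with the paper: the proof given there is a bare citation (to Hyodo~\cite{hyodo86} rather than to~\cite{brinon03}, but that is immaterial), so invoking the known vanishing theorem for deeply ramified extensions is exactly what the author does. However, the mechanism you sketch for the underlying argument is not the right one, and it is worth flagging because it would fail if carried out. Note first that $H^i(L_\infty,\CC_K)$ means $H^i(\calH_L,\CC_K)$ with $\calH_L=\Gal(\bar{K}\slash L_\infty)$ the \emph{absolute} Galois group of $L_\infty$. This is a huge profinite group, not the $d$-dimensional $p$-adic Lie group $G_L$ (nor $H_L$), and it is not filtered by the procyclic subgroups coming from the Kummer tower: those generate $H_L=\Gal(L_\infty\slash L_\infty^{(0)})$, which lives in the quotient $G_L=\calG_L\slash\calH_L$, not inside $\calH_L$. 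So the proposed Hochschild--Serre reduction to one-dimensional procyclic pieces does not apply to the group whose cohomology is being computed.

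Second, the step ``the $\widehat{L_\infty}$-component contributes trivially in positive degree since $\calH_L$ acts trivially there'' is false: continuous cohomology with trivial coefficients in a $\QQ_p$-Banach space need not vanish in positive degree (already $H^1(\calH_L,\QQ_p)=\Hom_{\mathrm{cont}}(\calH_L,\QQ_p)$ is nonzero). The whole point of the proposition is that the cohomology of $\CC_K$ vanishes even though $\CC_K$ contains $\widehat{L_\infty}$ with trivial action. The correct mechanism is the almost-\'etale property: because $L_\infty\slash L$ is deeply ramified, every finite extension $M\slash L_\infty$ has almost surjective trace on rings of integers, which makes $H^i(\calH_L,O_{\CC_K}\slash p)$ almost zero and hence kills $H^i(\calH_L,\CC_K)$ after inverting $p$. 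The normalized-trace (Tate--Sen) apparatus you describe belongs to the \emph{other} half of the theory, namely decompletion from $\widehat{L_\infty}$ down to $L$, i.e.\ the computation of $H^i(G_L,\widehat{L_\infty})$, which is not what this proposition asserts. Your final move (cite the literature) matches the paper, but the sketch conflates these two distinct steps and misidentifies the group involved.
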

   \begin{proof}
    See~\cite{hyodo86}.
   \end{proof}
   
   \begin{prop}\label{vanishingGLn}
    For all $i,n\geq 1$, we have $H^i(L_\infty,\GL_n(\CC_K))=\{1\}$.
   \end{prop}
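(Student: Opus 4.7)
The plan is to follow the successive-approximation strategy of Section IV of~\cite{colmez98}, bootstrapping from the additive vanishing in Proposition~\ref{citetate} (applied componentwise to matrices). The proof is essentially content only for $i=1$; for $i\geq 2$ a devissage through the congruence filtration on $\GL_n(\CC_K)$ reduces back to the additive situation.

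For $i=1$, start with a continuous $1$-cocycle $c:\calG_{L_\infty}\to\GL_n(\CC_K)$. By continuity, pick an open normal subgroup $U\subset\calG_{L_\infty}$ such that $c(U)\subset 1+pM_n(O_{\CC_K})$. The plan is to trivialise $c|_U$ by iterated approximation. Write $c|_U=1+p\,\epsilon_0$ with $\epsilon_0:U\to M_n(O_{\CC_K})$ continuous; expanding the multiplicative cocycle identity modulo $p^2$ shows that $\epsilon_0$ reduces to an \emph{additive} continuous $1$-cocycle on $U$ with values in $M_n(O_{\CC_K}/p)$. Proposition~\ref{citetate}, applied entrywise (and to the open subgroup $U$, which corresponds to a finite extension of $L_\infty$ and hence also has vanishing higher $\CC_K$-cohomology), produces $A_0\in M_n(O_{\CC_K})$ with $\epsilon_0(u)\equiv A_0-u(A_0)\pmod p$. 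Setting $M_0=1+pA_0$, the modified cocycle $M_0^{-1}c(u)\,u(M_0)$ lies in $1+p^2M_n(O_{\CC_K})$. Iterating yields matrices $M_0,M_1,\ldots$ with $M_k\equiv 1\pmod{p^{k+1}}$, and the infinite product $M=M_0M_1M_2\cdots$ converges in $\GL_n(\CC_K)$ and trivialises $c|_U$.

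For the descent from $U$ back to $\calG_{L_\infty}$, replace $c$ by the cohomologous cocycle $M^{-1}c(g)\,g(M)$, so that $c|_U=1$; then $c$ factors through the finite quotient $F=\calG_{L_\infty}/U$ and takes values in $\GL_n(\CC_K^U)$. By Ax--Sen--Tate, $\CC_K^U$ is the $p$-adic completion of the fixed field of $U$, a finite Galois extension of $\widehat{L_\infty}$ with Galois group $F$. The residual cocycle on the finite group $F$ is then trivialised by the classical Hilbert~90 theorem for $\GL_n$ over a finite Galois extension of complete fields, producing $M'\in\GL_n(\CC_K^U)$ that finishes the trivialisation. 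Hence $H^1(L_\infty,\GL_n(\CC_K))=\{1\}$.

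For $i\geq 2$, filter $\GL_n(\CC_K)$ by the congruence subgroups $U_m=1+p^mM_n(O_{\CC_K})$; the successive quotients $U_m/U_{m+1}$ are isomorphic, as $\calG_{L_\infty}$-modules, to the additive group $M_n(O_{\CC_K}/p)$, whose higher cohomology vanishes by Proposition~\ref{citetate}. Assembling the associated long exact sequences and passing to the limit in $m$ gives the vanishing for $i\geq 2$. The main obstacle, and the step most needing care, is the successive approximation in the second paragraph: one must check that the cocycle identity really does reduce modulo $p^{k+1}$ at each stage to an additive cocycle (so that Proposition~\ref{citetate} is applicable) and that the resulting infinite product of corrections converges to an element of $\GL_n(\CC_K)$; the rest is a formal combination of Hilbert~90 and the additive vanishing theorem.
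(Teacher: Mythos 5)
The paper offers no argument for this proposition at all --- it simply points to the remark after Lemma 4 of~\cite{brinon03} --- and your outline is the standard Tate--Sen--Colmez successive-approximation strategy that the cited reference implements. However, your proof has a genuine gap at precisely the step you flag as ``needing care'', and it is not a formality. Proposition~\ref{citetate} asserts the vanishing of $H^i(L_\infty,\CC_K)$ with coefficients in the $\QQ_p$-Banach space $\CC_K$. What your induction actually needs is an integral, \emph{quantitative} statement: every continuous additive $1$-cocycle on $U$ with values in $M_n(O_{\CC_K}/p)$ (equivalently, in $p^kM_n(O_{\CC_K})$ up to smaller error) is, up to an error of strictly larger valuation, the coboundary of a matrix whose entries have valuation bounded below by a constant \emph{independent of $k$}. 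Without such a uniform constant the corrections $A_k$ may acquire unbounded denominators, the congruences $M_k\equiv 1\pmod{p^{k+1}}$ are not available, and the infinite product need not converge. The qualitative vanishing of $H^1(L_\infty,\CC_K)$ does not yield this by itself: in Tate's, Sen's and Colmez's arguments the uniform constant comes from normalized trace maps (the Tate--Sen axioms), and constructing these for the tower $L_\infty/L$ in the imperfect-residue-field setting is exactly the nontrivial content of~\cite{brinon03}. A related slip: $\epsilon_k$ satisfies the additive cocycle identity only modulo $p^{k+2}$, so one cannot literally apply a vanishing theorem for $\CC_K$-coefficients to it; one must either work with the torsion module $O_{\CC_K}/p$, whose $H^1$ is in general only \emph{almost} zero, or carry the quadratic error terms through the induction --- either way the quantitative input is unavoidable.

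A secondary problem is the case $i\geq 2$. For $n\geq 2$ the set $H^i(L_\infty,\GL_n(\CC_K))$ is not defined by the usual nonabelian formalism, the congruence subgroups $1+p^mM_n(O_{\CC_K})$ are not normal in $\GL_n(\CC_K)$ (conjugation by a matrix outside $\GL_n(O_{\CC_K})$ does not preserve them), and the graded pieces are $M_n(O_{\CC_K}/p)$, whose cohomology is again not covered by Proposition~\ref{citetate} for the reason above. Since the paper only ever uses the $i=1$ case (in Proposition~\ref{BdRmod}), this is less serious, but your d\'evissage for $i\geq 2$ does not go through as written. By contrast, your descent from $U$ back to $\calG_{L_\infty}$ --- the modified cocycle factors through the finite quotient with values in $\GL_n(\CC_K^U)$, identified via Ax--Sen--Tate, and is killed by Hilbert~90 for a finite Galois extension --- is correct.
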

   \begin{proof}
     See the remark after the proof of Lemma 4 in~\cite{brinon03}.
   \end{proof}

   \begin{lem}\label{trivialcocyc}
    For all $k\in\{1,2,\dots,+\infty\}$, we have
    \begin{equation*}
     H^1(L_\infty,\GL_n(\BB_{\dR}^{\nabla +}\slash\fil^k\BB_{\dR}^{\nabla +}))=\{1\}.
    \end{equation*}
   \end{lem}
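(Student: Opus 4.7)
My plan is to induct on $k$, exploiting the filtration $(\fil^k)_k$ on $\BB_{\dR}^{\nabla +}$. The base case $k=1$ is immediate from Proposition~\ref{vanishingGLn}, since $\BB_{\dR}^{\nabla +}/\fil^1 \cong \CC_K$ (via the map induced by $\theta$, using that $t$ is a uniformizer).

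For the inductive step at finite $k \geq 2$, I would consider the short exact sequence of $\calH_L$-groups
\begin{equation*}
 1 \to 1 + t^{k-1}M_n(\CC_K) \to \GL_n(\BB_{\dR}^{\nabla +}/\fil^k) \to \GL_n(\BB_{\dR}^{\nabla +}/\fil^{k-1}) \to 1.
\end{equation*}
Because $2(k-1) \geq k$, the kernel is abelian and isomorphic as a $\calG_L$-module to $M_n(\CC_K(k-1))$. Since $L_\infty$ contains $\mu_{p^\infty}$, the cyclotomic character is trivial on $\calH_L$, so the Tate twist is harmless and the kernel is $M_n(\CC_K)$ as an $\calH_L$-module; Proposition~\ref{citetate} then gives $H^1(L_\infty, M_n(\CC_K)) = 0$. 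The long exact sequence of pointed sets in nonabelian cohomology, combined with the inductive hypothesis that $H^1(L_\infty, \GL_n(\BB_{\dR}^{\nabla +}/\fil^{k-1})) = \{1\}$, forces the middle term to be trivial.

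For $k = \infty$, I would pass to the inverse limit $\BB_{\dR}^{\nabla +} = \varprojlim_k \BB_{\dR}^{\nabla +}/\fil^k$. A continuous cocycle $c \colon \calH_L \to \GL_n(\BB_{\dR}^{\nabla +})$ reduces modulo $\fil^k$ to a cocycle $c_k$, which by the finite case is trivialised by some $b_k' \in \GL_n(\BB_{\dR}^{\nabla +}/\fil^k)$. Starting from $b_1$, I would inductively construct a compatible sequence $(b_k)$ with $b_{k+1}\equiv b_k \pmod{\fil^k}$, using that the obstruction to adjusting a given trivialisation lives in the $H^1$ of the kernel treated above, which has just been shown to vanish. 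The completeness of $\BB_{\dR}^{\nabla +}$ in the $\fil$-adic topology then gives a limit $b \in \GL_n(\BB_{\dR}^{\nabla +})$ trivializing $c$. The main obstacle is to make the compatibility step at $k=\infty$ precise: one must verify that an arbitrary trivialisation of $c_{k+1}$ can be modified by an $\calH_L$-invariant element of the kernel so as to become congruent to $b_k$ modulo $\fil^k$, which amounts to an unwinding of pointed-set exactness and is standard once the finite-level vanishing is in hand.
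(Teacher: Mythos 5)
Your proposal is correct and follows essentially the same route as the paper: the base case via the vanishing of $H^1(L_\infty,\GL_n(\CC_K))$, dévissage along the filtration using the short exact sequence with abelian kernel isomorphic to $M_n(\CC_K)$ as an $\calH_L$-module, and a limit argument for $k=+\infty$. Your treatment of the $k=+\infty$ case (building a compatible sequence of trivialisations via the projective limit) is in fact more careful than the paper's one-line appeal to a limit, and your remark that the Tate twist on the graded pieces is invisible over $L_\infty$ makes explicit what the paper leaves implicit.
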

   \begin{proof}
    The case $k=1$ corresponding to $\BB_{\dR}^{\nabla +}\slash\fil^1\BB_{\dR}^{\nabla +}\cong\CC_K$  is
    shown in Proposition~\ref{citetate}. The case $k=+\infty$ can be obtained from the case $k\in\NN$ by
    passing to the direct limit. On the other hand, if $k\geq 1$, then we have a short exact sequence 
    \begin{equation*}
     1\rightarrow 1+ M_n\big(\fil^k\BB_{\dR}^{\nabla +}\slash\fil^{k+1}\BB_{\dR}^{\nabla +}\big) \rightarrow
     \GL_n\big(\BB_{\dR}^{\nabla +}\slash\fil^{k+1}\big)\rightarrow \GL_n\big(\BB_{\dR}^{\nabla +}\slash\fil^k\big)
     \rightarrow 1.
    \end{equation*}
    Taking $\calH_L$-cohomology reduces the result to showing that 
    \begin{equation}\label{vanishMd}
     H^1\big(L_\infty,1+ M_n(\fil^k\BB_{\dR}^{\nabla +}\slash\fil^{k+1}\BB_{\dR}^{\nabla +})\big)=\{1\}.
    \end{equation} 
    Since we have an isomorphism of $\calH_L$-modules $1+ M_n(\fil^k\BB_{\dR}^{\nabla +}\slash\fil^{k+1}\BB_{\dR}^{\nabla +})\cong
    M_n(\CC_K)$,~\eqref{vanishMd} follows from Proposition~\ref{citetate} above.
   \end{proof}

   As a corollary, we obtain the following result, which is an analogue of Th\'eor\`eme IV.2.1 in~\cite{colmez98}. 
   
   \begin{prop}\label{BdRmod}
    If $V$ is a $p$-adic representation of $\calG_L$, then $(\BB_{\dR}^{\nabla +}\otimes V)^{\calH_L}$ is a free $(\BB_{\dR}^{\nabla +})^{\calH_L}$-module of rank $\dim_{\QQ_p}V$.
   \end{prop}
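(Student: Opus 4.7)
The plan is to apply Lemma~\ref{trivialcocyc} with $k=+\infty$ to trivialize the Galois action on a basis of $V\otimes_{\QQ_p}\BB_{\dR}^{\nabla+}$, and then read off invariants. Set $n=\dim_{\QQ_p}V$, and pick a $\calG_L$-stable $\ZZ_p$-lattice $T\subset V$ together with a $\ZZ_p$-basis $e_1,\dots,e_n$ of $T$. Using this basis, identify $V\otimes_{\QQ_p}\BB_{\dR}^{\nabla+}$ with $(\BB_{\dR}^{\nabla+})^n$ as a $\BB_{\dR}^{\nabla+}$-module. The continuous semi-linear $\calH_L$-action then corresponds to a continuous $1$-cocycle
\[
 c:\calH_L\rTo \GL_n(\BB_{\dR}^{\nabla+}),\qquad c(\sigma)_{ij} \text{ given by } \sigma(e_j)=\sum_i c(\sigma)_{ij}e_i.
\]

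By Lemma~\ref{trivialcocyc} (case $k=+\infty$, obtained from the finite-$k$ case by passing to the projective limit), the cohomology set $H^1(\calH_L,\GL_n(\BB_{\dR}^{\nabla+}))$ is trivial. Hence there exists $M\in \GL_n(\BB_{\dR}^{\nabla+})$ with $c(\sigma)=M^{-1}\cdot\sigma(M)$ for every $\sigma\in\calH_L$. Define new elements $f_i=\sum_j M_{ji}e_j \in V\otimes_{\QQ_p}\BB_{\dR}^{\nabla+}$. Since $M$ is invertible over $\BB_{\dR}^{\nabla+}$, the $f_i$ form a $\BB_{\dR}^{\nabla+}$-basis of $V\otimes_{\QQ_p}\BB_{\dR}^{\nabla+}$. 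A direct computation using $\sigma(M)=M\cdot c(\sigma)$ shows that $\sigma(f_i)=f_i$ for all $\sigma\in \calH_L$.

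It remains to identify $(V\otimes_{\QQ_p}\BB_{\dR}^{\nabla+})^{\calH_L}$ with the free $(\BB_{\dR}^{\nabla+})^{\calH_L}$-module spanned by $f_1,\dots,f_n$. Any element of $V\otimes_{\QQ_p}\BB_{\dR}^{\nabla+}$ can be written uniquely as $\sum_i a_i f_i$ with $a_i\in\BB_{\dR}^{\nabla+}$; since the $f_i$ are $\calH_L$-fixed and linearly independent, the element is $\calH_L$-invariant if and only if each $a_i\in (\BB_{\dR}^{\nabla+})^{\calH_L}$. This gives the desired freeness of rank $n=\dim_{\QQ_p}V$.

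The only genuine difficulty is Lemma~\ref{trivialcocyc} itself, which is already in hand; beyond that the main subtlety is the passage to $k=+\infty$ in the cohomology (requiring continuity of the cocycle with respect to the inverse limit topology on $\BB_{\dR}^{\nabla+}=\varprojlim_k \BB_{\dR}^{\nabla+}/\fil^k$) and the verification that the trivializing matrix $M$ is indeed $\GL_n$-valued in $\BB_{\dR}^{\nabla+}$ rather than only in its fraction field — but this is automatic from the statement of Lemma~\ref{trivialcocyc}, which is formulated for $\GL_n(\BB_{\dR}^{\nabla+})$ throughout.
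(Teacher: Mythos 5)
Your argument is correct and is essentially the paper's own proof: both reduce the statement to the vanishing of $H^1(L_\infty,\GL_n(\BB_{\dR}^{\nabla +}))$, use it to trivialise the cocycle attached to a basis of $V$, and read off the invariants from the resulting $\calH_L$-fixed basis. If anything, your citation of Lemma~\ref{trivialcocyc} with $k=+\infty$ is the more precise reference (the paper points to the $\GL_n(\CC_K)$ statement at that step), and your explicit final paragraph identifying the invariants is a welcome elaboration of what the paper leaves implicit.
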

   \begin{proof}
    By choosing a basis $e_1,\dots,e_n$ of $V$ over $\QQ_p$ we can consider the representation as a continuous $1$-cocycle $\tau\rightarrow U_\tau:H_K\rightarrow \GL_n(\QQ_p)\subset\GL_n\big(\BB_{\dR}^{\nabla +}\big)$. But this cocycle is trivial by Proposition~\ref{vanishingGLn}, which implies that the $\calH_L$-module $\BB_{\dR}^{\nabla +}\otimes V$ is isomorphic to $(\BB_{\dR}^{\nabla +})^n$.
   \end{proof}
   
   \begin{cor}\label{BdRvs}
    If $V$ is a $p$-adic representation of $\calG_L$, then $(\BB_{\dR}^{\nabla}\otimes V)^{\calH_L}$ is a $(\BB_{\dR}^{\nabla})^{\calH_L}$-vector space of dimension $\dim_{\QQ_p}V$.
   \end{cor}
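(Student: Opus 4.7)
The plan is to deduce this from Proposition~\ref{BdRmod} by inverting $t$. Recall that by construction $\BB_{\dR}^{\nabla} = \BB_{\dR}^{\nabla +}[t^{-1}]$, so it is enough to show that passing to $\calH_L$-invariants commutes with localisation at $t$, and that the resulting localisation of $(\BB_{\dR}^{\nabla +})^{\calH_L}$ is precisely $(\BB_{\dR}^{\nabla})^{\calH_L}$.

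The first step is to check that $t$ is fixed by $\calH_L$. Since $\calH_L = \Gal(\bar K / L_\infty)$ and $L_\infty \supset L^{(0)}_\infty = L(\mu_{p^\infty})$, the group $\calH_L$ acts trivially on every $p$-power root of unity, hence on the element $\varepsilon = (1,\varepsilon^{(1)},\varepsilon^{(2)},\dots) \in \tilde\EE^+$. Consequently $\calH_L$ fixes $[\varepsilon]$ and therefore also $t = \log([\varepsilon])$. This means $t$ lies in $(\BB_{\dR}^{\nabla+})^{\calH_L}$ and, being non-zero in a domain, is a non-zero-divisor on $\BB_{\dR}^{\nabla +}\otimes V$.

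Next, since taking $\calH_L$-invariants is a left-exact, filtered-colimit preserving functor, and since $t$ is $\calH_L$-fixed, localisation at $t$ commutes with invariants:
\begin{equation*}
 (\BB_{\dR}^{\nabla}\otimes V)^{\calH_L} \;=\; \bigl((\BB_{\dR}^{\nabla +}\otimes V)[t^{-1}]\bigr)^{\calH_L} \;=\; (\BB_{\dR}^{\nabla +}\otimes V)^{\calH_L}[t^{-1}].
\end{equation*}
Applied to $V = \QQ_p$ (the trivial representation) this also shows $(\BB_{\dR}^{\nabla})^{\calH_L} = (\BB_{\dR}^{\nabla +})^{\calH_L}[t^{-1}]$; in particular it is a field. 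By Proposition~\ref{BdRmod}, $(\BB_{\dR}^{\nabla +}\otimes V)^{\calH_L}$ is free of rank $n = \dim_{\QQ_p} V$ over $(\BB_{\dR}^{\nabla +})^{\calH_L}$, so inverting $t$ on both sides yields a free $(\BB_{\dR}^{\nabla})^{\calH_L}$-module of the same rank.

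There is no genuine obstacle here: the only thing that must be verified is that the element $t$ used to pass from $\BB_{\dR}^{\nabla +}$ to $\BB_{\dR}^\nabla$ is fixed by $\calH_L$, which follows directly from $\mu_{p^\infty}\subset L_\infty$. Everything else is a formal consequence of the exactness of localisation.
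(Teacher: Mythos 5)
Your proof is correct and follows the route the paper intends: the corollary is stated as an immediate consequence of Proposition~\ref{BdRmod}, obtained by inverting the $\calH_L$-fixed element $t$ (fixed because $\mu_{p^\infty}\subset L_\infty$ forces $\chi(g)=1$, so $g(t)=\chi(g)t=t$), and your verification that localisation at $t$ commutes with taking invariants (using that $t$ is a fixed non-zero-divisor, so the colimit defining $M[t^{-1}]$ has injective transition maps) is exactly the point that makes this formal. The only cosmetic remark is that $(\BB_{\dR}^{\nabla})^{\calH_L}$ being a field is most directly seen from the fact that the fixed subring of a field under a group of field automorphisms is always a field, rather than from the localisation identity.
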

   
   \begin{prop}\label{vanishingBdR}
    Let $i\geq 1$. Then
    
    (i)  $H^i\big(L_\infty,V\otimes_{\QQ_p}\BB^{\nabla +}_{\dR}\slash\fil^k\BB^{\nabla +}_{\dR}\big)=\{1\}$ 
    for all $k\in\{ 1,2,\dots,+\infty\}$. 
    
    (ii) $H^i\big(L_\infty,V\otimes_{\QQ_p}\BB^\nabla_{\dR}\slash \BB^{\nabla +}_{\dR}\big)=\{1\}$.
    
    (iii) $H^i\big(L_\infty,V\otimes_{\QQ_p}\BB^\nabla_{\dR}\big)=\{1\}$.
   \end{prop}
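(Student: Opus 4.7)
The plan is to prove the three parts in the indicated order, with each step reducing to the previous ones. The key inputs are Proposition~\ref{citetate}, Proposition~\ref{vanishingGLn} and Lemma~\ref{trivialcocyc}, together with the graded-piece description $\fil^k\BB_{\dR}^{\nabla+}/\fil^{k+1}\BB_{\dR}^{\nabla+}\cong\CC_K$ as $\calH_L$-modules (the Tate twist disappears because $\calH_L$ fixes $\mup\subset L_\infty^{(0)}$).

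First I would handle part~(i) with $k=1$. Since $V$ is a $p$-adic representation, choosing a $\QQ_p$-basis identifies the $\calH_L$-action on $V\otimes\CC_K$ with a continuous cocycle $\calH_L\to\GL_n(\QQ_p)\subset\GL_n(\CC_K)$. By Proposition~\ref{vanishingGLn} this cocycle is trivial, so $V\otimes_{\QQ_p}\CC_K\cong\CC_K^n$ as $\calH_L$-modules, and Proposition~\ref{citetate} gives $H^i(L_\infty,V\otimes\CC_K)=\{1\}$ for $i\geq 1$. The case $k\in\NN$ then follows by induction on $k$ via the short exact sequence
\begin{equation*}
 0\to V\otimes_{\QQ_p}\fil^k\BB_{\dR}^{\nabla+}/\fil^{k+1}\BB_{\dR}^{\nabla+}\to V\otimes_{\QQ_p}\BB_{\dR}^{\nabla+}/\fil^{k+1}\to V\otimes_{\QQ_p}\BB_{\dR}^{\nabla+}/\fil^{k}\to 0,
\end{equation*}
since the graded piece, tensored with $V$, has trivial higher $L_\infty$-cohomology by the $k=1$ case.

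Next, to pass to $k=+\infty$, I would use that the system $\{V\otimes\BB_{\dR}^{\nabla+}/\fil^k\}_k$ has surjective transition maps (this is precisely what the previous step, combined with the long exact sequence, yields for $H^0$), hence is Mittag-Leffler. Then continuous cohomology of $V\otimes_{\QQ_p}\BB_{\dR}^{\nabla+}=\varprojlim_k V\otimes\BB_{\dR}^{\nabla+}/\fil^k$ sits in a short exact sequence
\begin{equation*}
 0\to {\varprojlim}^1 H^{i-1}(L_\infty,V\otimes\BB_{\dR}^{\nabla+}/\fil^k)\to H^i(L_\infty,V\otimes_{\QQ_p}\BB_{\dR}^{\nabla+})\to \varprojlim H^i(L_\infty,V\otimes\BB_{\dR}^{\nabla+}/\fil^k)\to 0,
\end{equation*}
and for $i\geq 1$ both outer terms vanish: the right-hand term by the finite-$k$ case already proven, and the $\varprojlim^1$ term either by that same vanishing (for $i\geq 2$) or by the Mittag-Leffler property applied to $H^0$ (for $i=1$). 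This completes~(i).

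For part~(ii), I would write $V\otimes\BB_{\dR}^\nabla/\BB_{\dR}^{\nabla+}=\varinjlim_k V\otimes t^{-k}\BB_{\dR}^{\nabla+}/\BB_{\dR}^{\nabla+}$, observing that multiplication by $t^k$ (which is fixed by $\calH_L$) gives an $\calH_L$-equivariant isomorphism $\BB_{\dR}^{\nabla+}/\fil^k\BB_{\dR}^{\nabla+}\cong t^{-k}\BB_{\dR}^{\nabla+}/\BB_{\dR}^{\nabla+}$. Since continuous cohomology commutes with filtered colimits of discrete $p$-primary torsion modules, the vanishing in~(ii) reduces to the finite-$k$ case of~(i). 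Finally, for part~(iii), I would take $\calH_L$-cohomology of $0\to\BB_{\dR}^{\nabla+}\to\BB_{\dR}^\nabla\to\BB_{\dR}^\nabla/\BB_{\dR}^{\nabla+}\to 0$ tensored with $V$, deducing the vanishing from the $k=+\infty$ case of~(i) and from~(ii). The main obstacle will be the inverse-limit step in~(i): justifying that the Mittag-Leffler exact sequence applies to continuous cochain cohomology, and checking surjectivity of the transition maps on $H^0$ with sufficient care to avoid circularity.
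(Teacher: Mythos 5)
Your proposal is correct and follows essentially the same route as the paper: trivialise the $\calH_L$-action on $V\otimes_{\QQ_p}\CC_K$ (resp.\ $V\otimes_{\QQ_p}\BB_{\dR}^{\nabla +}$) via Proposition~\ref{vanishingGLn}, d\'evissage on the filtration for finite $k$, pass to the inverse limit for $k=+\infty$, and deduce (ii) and (iii) by twisting by $t^{-k}$ and taking limits (the paper obtains (iii) directly from $\BB_{\dR}^\nabla=\varinjlim t^{-k}\BB_{\dR}^{\nabla +}$ rather than via your short exact sequence, which is immaterial). Your handling of the $k=+\infty$ step is actually more careful than the paper's one-line remark; the only quibble is that the terms $t^{-k}\BB_{\dR}^{\nabla +}\slash\BB_{\dR}^{\nabla +}$ are finite-dimensional $\CC_K$-vector spaces rather than discrete $p$-primary torsion modules, so the commutation of continuous cochains with the filtered colimit should instead be justified by compactness of $\calH_L$ together with the direct-limit topology.
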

   \begin{proof}
    By Proposition~\ref{BdRmod}, the $\calH_L$-module $\BB_{\dR}^{\nabla +}\otimes V$ is isomorphic to $(\BB_{\dR}^{\nabla +})^n$. It is therefore sufficient to prove the results when $V=\QQ_p$. If $k$ is finite, then (i) follows from the observation that $\BB^{\nabla +}_{\dR}\slash\fil^k\BB^{\nabla +}_{\dR}\cong \CC_K^k$ as $\calH_L$-modules. The case when $k=+\infty$ follows from the finite case by taking inverse limit, observing that the natural topology on $\BB_{\dR}^\nabla$ is weaker than the $t$-adic topology. (ii) can be  deduced from (i) using the isomorphism between $\fil^{-k}(\BB_{\dR}^\nabla\slash\BB^{\nabla +}_{\dR})$ and $t^{-k}(\BB_{\dR}^{\nabla +}\slash \fil^k\BB^{\nabla +}_{\dR})$ and passing to the direct limit over $k$. Similarly, (iii) can be deduced from (i) by writing $\BB_{\dR}^\nabla=\varinjlim t^{-k}\BB_{\dR}^{\nabla +}$.
   \end{proof}

   \begin{cor}\label{isomH-inf}
    For all $i\geq 1$, the inflation map gives an isomorphism
    \begin{equation*}
     H^i(L,\BB^\nabla_{\dR}\otimes_{\QQ_p}V)\cong H^i(G_L,(V\otimes_{\QQ_p}\BB^\nabla_{\dR})^{\calH_L}).
    \end{equation*}
   \end{cor}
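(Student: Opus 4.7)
The plan is to apply the Hochschild–Serre spectral sequence to the normal subgroup $\calH_L \trianglelefteq \calG_L$ with quotient $G_L = \calG_L/\calH_L$, using the topological module $M = V \otimes_{\QQ_p} \BB^\nabla_{\dR}$ as coefficients. Since $L_\infty$ is the fixed field of $\calH_L$ inside $\bar{K}$, one has $H^q(\calH_L, M) = H^q(L_\infty, M)$ by definition, while $H^{p+q}(\calG_L, M) = H^{p+q}(L, M)$.

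The spectral sequence takes the form
\begin{equation*}
E_2^{p,q} = H^p\bigl(G_L, H^q(L_\infty, V \otimes_{\QQ_p} \BB^\nabla_{\dR})\bigr) \Longrightarrow H^{p+q}(L, V \otimes_{\QQ_p} \BB^\nabla_{\dR}).
\end{equation*}
By Proposition~\ref{vanishingBdR}(iii), one has $H^q(L_\infty, V \otimes_{\QQ_p} \BB^\nabla_{\dR}) = 0$ for every $q \geq 1$. Hence $E_2^{p,q} = 0$ whenever $q \geq 1$, and the spectral sequence collapses on the bottom row, yielding for each $i \geq 0$ an isomorphism
\begin{equation*}
H^i\bigl(G_L, (V \otimes_{\QQ_p} \BB^\nabla_{\dR})^{\calH_L}\bigr) \cong H^i(L, V \otimes_{\QQ_p} \BB^\nabla_{\dR}),
\end{equation*}
which is the inflation map by the general identification of edge homomorphisms in the Hochschild–Serre sequence.

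The only point requiring care is the continuity set-up: one has to verify that Hochschild–Serre applies to the continuous cohomology of the topological $\calG_L$-module $V \otimes_{\QQ_p} \BB^\nabla_{\dR}$ (which is not profinite, so the naive profinite version of the spectral sequence does not directly apply). The standard remedy is to run the argument level-by-level on the filtration pieces $\BB^{\nabla +}_{\dR}/\fil^k \BB^{\nabla +}_{\dR}$ and on $t^{-k} \BB^{\nabla +}_{\dR}/\BB^{\nabla +}_{\dR}$, where each graded piece is isomorphic as an $\calH_L$-module to a finite power of $\CC_K$ and the profinite Hochschild–Serre sequence applies; then pass to the inverse limit (in the $+$ direction) using the Mittag-Leffler property guaranteed by the vanishings in Proposition~\ref{vanishingBdR}(i)–(ii), and to the direct limit (in the $-$ direction) using the exactness of filtered colimits. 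This is the only real obstacle; the degeneration of the spectral sequence itself is formal once the vanishing input from Proposition~\ref{vanishingBdR} is in hand.
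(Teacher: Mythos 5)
Your argument is exactly the paper's: the cited proof is a one-line appeal to Proposition~\ref{vanishingBdR}(iii) together with the Hochschild--Serre spectral sequence for $\calH_L\trianglelefteq\calG_L$, collapsing onto the bottom row so that the edge map (inflation) is an isomorphism. Your additional care about running the continuous spectral sequence level-by-level on the filtration and passing to limits is a reasonable elaboration of a point the paper leaves implicit, not a departure from its method.
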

   \begin{proof}
    Immediate consequence of (iii) in Proposition~\ref{vanishingBdR} and the Hochschild-Serre spectral sequence. 
   \end{proof}

 
  \subsection{Galois cohomology of $\DD(V)^{\psi=0}$}\label{Koszulcohom}

   To simplify the notation, throughout this section we drop the indices of $\Gamma_L$ and $H_L$. Let $V$ be a $p$-adic representation of $\calG_K$. The main result of this section is the following proposition, which can be seen as a generalisation of Proposition I.5.1 in~\cite{cherbonniercolmez99}.
   
   \begin{prop}\label{zerocohom}
    $D(V)^{\psi=0}$ has trivial $G_L$-cohomology.
   \end{prop}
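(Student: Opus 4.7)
The proof proceeds in two main steps: first, I produce an explicit $\Lambda(G_L)$-free description of $\DD(V)^{\psi=0}$; then I conclude via the Koszul complex of Section~\ref{Koszulcohom} that the cohomology vanishes in all degrees.

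\textbf{Step 1 (Explicit decomposition).} Recall from Section~\ref{psioperator} that $\BB$ is free of rank $p^d$ over $\phi(\BB)$ with basis $\{(1+\pi)^{i_0} T_1^{i_1} \cdots T_{d-1}^{i_{d-1}} : 0 \le i_j \le p-1\}$. Tensoring this decomposition with $\DD(V)$ yields $\DD(V) = \phi(\DD(V)) \oplus \DD(V)^{\psi=0}$, with $\DD(V)^{\psi=0}$ the direct sum of the summands indexed by nonzero multi-indices. Iterating $\phi^n$ and passing to the limit (using the completeness from Proposition~\ref{projlim}) I aim to exhibit $\DD(V)^{\psi=0}$ as a topologically induced $G_L$-module; equivalently, as a (topologically) free $\Lambda(G_L)$-module on which the $G_L$-action is, up to the appropriate twists, by left multiplication. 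The essential input is that $\gamma$ acts on $(1+\pi)$ via the cyclotomic character $\chi$ and each $h_j$ acts on $T_j$ via $\eta_j$, so the orbits of the basis vectors under $G_L$ sweep out a complete $\Lambda(G_L)$-basis.

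\textbf{Step 2 (Vanishing via Koszul).} By Section~\ref{Koszulcohom}, $H^\bullet(G_L, M)$ for any $G_L$-module $M$ is computed by a Koszul-type complex built from the operators $\gamma - 1, h_1 - 1, \dots, h_{d-1} - 1$, with differentials twisted to accommodate the relation $\gamma h_j = h_j^{\chi(\gamma)} \gamma$. For a module of the induced form produced in Step 1, these operators correspond to multiplication by regular elements of $\Lambda(G_L)$, and the resulting Koszul complex is a tensor product of one-variable complexes, each of which is contractible because the missing ``$(0,\dots,0)$'' multi-index has been excised from $\DD(V)^{\psi=0}$ at the outset. Hence $H^i(G_L, \DD(V)^{\psi=0}) = 0$ for all $i \ge 0$.

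\textbf{Main obstacle.} The principal difficulty lies in Step 1: establishing the induced-module structure in a way that respects the non-commutative semi-direct product structure of $G_L$. The twist $\gamma h_j = h_j^{\chi(\gamma)} \gamma$ forces one to replace the naive tensor decomposition $\DD(V)^{\psi=0} \cong \Lambda(G_L) \widehat\otimes_{\ZZ_p} (\text{coefficients})$ by a twisted version, and to verify carefully that iterating the ``$\psi = 0$''-refinement of the Frobenius decomposition genuinely produces a complete $\Lambda(G_L)$-orbit in the weak topology. Once this structural identification is pinned down, the Koszul-complex formalism of Section~\ref{Koszulcohom} delivers the vanishing with no further work.
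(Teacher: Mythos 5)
There is a genuine gap, and it lies at the junction of your two steps: even if Step 1 succeeded, Step 2 would fail. A (topologically) free $\Lambda(G_L)$-module does \emph{not} have trivial continuous $G_L$-cohomology. Already for $G\cong\ZZ_p$ with generator $g$, the complex $\Lambda(G)\rTo^{g-1}\Lambda(G)$ computing $H^\bullet(G,\Lambda(G))$ has $H^1=\Lambda(G)\slash(g-1)\cong\ZZ_p\neq 0$, because $g-1$ is a regular \emph{non-unit} of $\Lambda(G)$; more generally the Koszul complex of $\Lambda(G_L)$ on $\gamma-1,h_1-1,\dots,h_{d-1}-1$ is a resolution of $\ZZ_p$, so $H^d(G_L,\Lambda(G_L))\cong\ZZ_p$. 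Your remark that the ``$(0,\dots,0)$ multi-index has been excised'' does not rescue this: removing the summand $\phi(\DD(V))$ from $\DD(V)$ excises nothing from the augmentation quotient of $\Lambda(G_L)$, and contractibility of a one-variable complex $M\rTo^{g-1}M$ requires $g-1$ to be \emph{bijective} on $M$, not merely regular. Note also that the proposition asserts vanishing in degree $0$ as well, which no induced/coinduced structure can deliver. Step 1 is itself problematic: the analogue of your claim in the classical case is that $(\AA_{\QQ_p}^+)^{\psi=0}$ is free of rank one over $\Lambda(\Gamma)$ on $1+\pi$, and precisely for that module $\gamma-1$ is \emph{not} invertible; the invertibility that one actually needs holds only after passing to the full ring $\AA_K\cong O_K((\pi))^\wedge$ and is an analytic estimate, not a formal consequence of the $\phi(\BB)$-basis decomposition.

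What the paper does instead is import exactly the missing analytic input: Proposition 4.44 of Andreatta--Brinon gives a $G_L$-stable decomposition $\DD(V)^{\psi=0}=\DD(V)_0\oplus\dots\oplus\DD(V)_{d-1}$ with $\gamma-1$ invertible on $\DD(V)_0$ and $h_i-1$ invertible on $\DD(V)_i$. The Koszul complex then enters not to exhibit freeness but to compute $H^\bullet(H,\DD(V)_i)$, and the real work is showing that the invertibility of $\gamma-1$ on $\DD(V)_0$ propagates to $H^j(H,\DD(V)_0)$ despite the non-commutativity $\gamma h_i=h_i^{\chi(\gamma)}\gamma$ (this is Lemma~\ref{quasi-isom} and Proposition~\ref{Gammacohom}: $\gamma$ intertwines the Koszul complexes for $h_i$ and for their conjugates $h_i'$, and $\gamma\omega-1$ remains invertible for $\omega\in\Lambda(H)^\times$), followed by Hochschild--Serre. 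If you want to salvage your outline, you should replace Step 1 by a proof (or citation) of this invertibility statement and replace Step 2's ``contractible because induced'' by ``contractible because one of the operators $\gamma-1,h_i-1$ is bijective on the relevant summand.''
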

   
   A crucial ingredient in the proof is the following result from~\cite{andreattabrinon06}.
   
   \begin{prop}\label{quoteAB}
    The module $\DD(V)^{\psi=0}$ admits a decomposition of $G_L$-modules
    \begin{equation*}
     \DD(V)^{\psi=0}=\DD(V)_0\oplus\dots\oplus\DD(V)_{d-1}
    \end{equation*}
    such that (1) $\gamma-1$ is invertible on $\DD(V)_0$, and
    
    \noindent (2) $h_i-1$ is invertible on $\DD(V)_i$ for $1\leq i\leq d-1$.
   \end{prop}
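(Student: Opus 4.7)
The goal is a $G_L$-equivariant direct-sum decomposition $\DD(V)^{\psi=0} = \bigoplus_{k=0}^{d-1} \DD(V)_k$ on which the listed generators of $G_L$ act invertibly. The plan is to work from the $\phi$-basis of $\AA$ over $\phi(\AA)$: recall that $\AA$ is free of rank $p^d$ over $\phi(\AA)$ with basis
\[
M_{\vec{i}} = (1+\pi)^{i_0}T_1^{i_1}\cdots T_{d-1}^{i_{d-1}}, \qquad \vec{i}\in\{0,\ldots,p-1\}^d,
\]
and since $\psi\circ\phi = \id$ while $\psi(M_{\vec{i}}\phi(x)) = 0$ for $\vec{i}\ne\vec{0}$, the module $\DD(V)^{\psi=0}$ is the topological $\phi(\DD(V))$-span of the $M_{\vec{i}}$ with $\vec{i}\ne\vec{0}$. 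First I would compute the action of $\gamma$ and of each $h_j$ on a monomial $M_{\vec{i}}$, reducing back to the basis via $\gamma((1+\pi)^{i_0}) = (1+\pi)^{r}\phi((1+\pi)^q)$ (where $i_0\chi(\gamma) = pq + r$, $0\le r<p$) together with $h_j(T_j) = (1+\pi)^{\eta_j(h_j)} T_j$ and $h_j(1+\pi) = 1+\pi$; this yields explicit triangular formulas for both operators.

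The summands are then built inductively by peeling off one ``charged'' piece at a time. For $\DD(V)_0$: the ``diagonal'' of $\gamma-1$ on the naive subspace spanned by $M_{\vec{i}}$ with $i_0\ne 0$ sends such an $M_{\vec{i}}$ to $M_{(r,i_1,\ldots,i_{d-1})}\phi((1+\pi)^q) - M_{\vec{i}}$; since $p$ is odd and $\chi(\gamma)\in\ZZ_p^\times$, one has $r\ne 0$ whenever $i_0\ne 0$, and this diagonal is invertible on an appropriate completion of that subspace. A Neumann series then inverts the full operator $\gamma-1$ on a $G_L$-stable enlargement, obtained by iteratively absorbing the corrections forced by the $h_j$-actions that move monomials with $i_0=0$ into the $i_0\ne 0$ locus; set $\DD(V)_0$ to be this enlargement. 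On the complementary piece, apply the same Neumann-series argument with $h_1-1$ (whose diagonal inverts on the span of $M_{\vec{i}}$ with $i_1\ne 0$, for essentially the same unit reason using $\eta_1(h_1)\in\ZZ_p^\times$) in place of $\gamma-1$ to produce $\DD(V)_1$, and iterate through $h_2,\ldots,h_{d-1}$ to exhaust $\DD(V)^{\psi=0}$.

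The main obstacle is establishing $G_L$-equivariance of each summand. Defining $\DD(V)_k$ as the image of a Neumann-series projector $P_k$ only guarantees stability under the single operator being inverted; one must verify that $P_k$ commutes with the full $G_L$-action. This reduces to topological-nilpotence estimates on the off-diagonal parts of $\gamma$ and each $h_j$ in the monomial basis, exploiting the commutation relations $\gamma h_j = h_j^{\chi(\gamma)}\gamma$, the mutual commutativity of the $h_j$, the compatibility of $\phi$ and $\psi$ with these actions, and the description of $\DD(V)$ as a projective limit of discrete $p$-groups furnished by Proposition~\ref{projlim}. Once these estimates are in hand, the invertibility conclusions (1) and (2) are immediate from the Neumann series defining the $P_k$.
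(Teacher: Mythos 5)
The paper gives no proof of this proposition at all: it is cited verbatim from Proposition~4.44 of Andreatta--Brinon. So you are supplying a genuine argument where the paper just refers out; unfortunately the argument has a real gap in the place you yourself flag as ``the main obstacle.''

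The problem is the ordering you use to peel off the summands. You build the naive $\DD(V)_0$ from the monomials $M_{\vec i}\phi(\DD(V))$ with $i_0\ne 0$, then the naive $\DD(V)_1$ from those with $i_0=0$, $i_1\ne 0$, and so on: each summand is indexed by the \emph{first} nonzero coordinate of $\vec i$. You correctly observe these naive pieces are not $G_L$-stable, and you propose to repair this by iteratively absorbing the corrections produced by the $h_j$-action. But this absorption cannot terminate in anything useful. Indeed $h_j$ carries a monomial $T_j^{i_j}\phi(m)$ with $i_0=0$ and $0<i_j<p$ to $(1+\pi)^{i_j\eta_j(h_j)}T_j^{i_j}\phi(h_jm)$, and because $\eta_j(h_j)\in\ZZ_p^\times$ and $p\nmid i_j$, the new $(1+\pi)$-exponent is nonzero modulo $p$. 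Hence $h_j$ maps every naive $\DD(V)_j$ with $j\ge 1$ into the naive $i_0\ne 0$ locus. Any $G_L$-stable enlargement of the naive $\DD(V)_0$ therefore contains all the naive $\DD(V)_j$, i.e.\ the absorption process forces $\DD(V)_0=\DD(V)^{\psi=0}$. That decomposition is degenerate and cannot satisfy condition~(1): on the line spanned by $T_1^{i_1}\phi(\DD(V))$ with $i_1\ne 0$ fixed, $\gamma$ acts as $T_1^{i_1}\phi(\gamma\,\cdot\,)$, so $\gamma-1$ fails to be surjective there whenever $\gamma-1$ is not surjective on $\DD(V)$.

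The remedy is to sort the monomials by the \emph{last} nonzero index among $i_1,\dots,i_{d-1}$ rather than by the first index of $\vec i$. For $1\le j\le d-1$, let $\DD(V)_j$ be the closed $\phi(\DD(V))$-span of the $M_{\vec i}$ with $i_j\ne 0$ and $i_k=0$ for every $k>j$, imposing \emph{no} condition on $i_0$; let $\DD(V)_0$ be the span of $(1+\pi)^{i_0}\phi(\DD(V))$ with $i_0\ne 0$ and $i_1=\cdots=i_{d-1}=0$. Using exactly the action formulas you wrote down, each of these pieces \emph{is} $G_L$-stable: $\gamma$ only alters $i_0$ (and, when $i_0\ne 0$, keeps it nonzero, since $\chi(\gamma)\in\ZZ_p^\times$); $h_\ell$ with $\ell\le j$ only alters $i_0$; and $h_\ell$ with $\ell>j$ sees the exponent $i_\ell=0$ and therefore acts purely through $\phi(h_\ell\,\cdot\,)$ on the interior. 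Once $G_L$-stability is available at the outset, the invertibility claims (1) and (2) do follow from precisely the Cherbonnier--Colmez-style Neumann-series argument you describe, with no further patching needed.
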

   \begin{proof}
    Proposition 4.44 in~\cite{andreattabrinon06}.
   \end{proof}
   
   \begin{remark}\label{invertibleoverconvergence}
    In fact, the proof of Proposition 4.46 in~\cite{andreattabrinon06} shows that there exists $r_V'> 0$ such that for all $r\leq r_V'$, we have $(\gamma-1)^{-1}$ (resp. $(h_i-1)^{-1}$ for $1\leq i<d$) gives a continuous map $\DD^{(0,r]}(V)_0\rightarrow \DD^{(0,r]}(V)_0$ (resp. $\DD^{(0,r]}(V)_i\rightarrow \DD^{(0,r]}(V)_i$).
   \end{remark}
   
   \begin{lem}\label{topologicalgroup}
    For all $0\leq i\leq d-1$, $\DD(V)_i$ is isomorphic as a topological group to the projective limit of additive discrete $p$-groups. 
   \end{lem}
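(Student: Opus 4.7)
The plan is to derive the statement from Proposition~\ref{projlim}, which tells us that $\DD(V)$ itself is a projective limit of additive discrete $p$-groups, by exhibiting each $\DD(V)_i$ as a closed subgroup of $\DD(V)$ and then invoking a general topological principle.

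First I would observe that $\psi$ is a continuous endomorphism of $\DD(V)$ for the weak topology. This is inherited from the continuity of $\psi$ on $\AA$ (equivalently on $\AA_L$), together with the fact that $\DD(V)$ is a finitely generated $\AA_L$-module with its natural topology as a quotient of a free $\AA_L$-module of finite rank. In particular, $\DD(V)^{\psi=0}=\ker(\psi)$ is a closed $\AA_L$-submodule of $\DD(V)$.

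Next, I would argue that each summand $\DD(V)_i$ in Proposition~\ref{quoteAB} is closed in $\DD(V)^{\psi=0}$. For this, one has to unpack the construction of the decomposition given in~\cite{andreattabrinon06}: it arises from continuous projectors built inductively from the operators $(\gamma-1)^{-1}$ on $\DD(V)_0$ and $(h_i-1)^{-1}$ on $\DD(V)_i$ for $i\geq 1$. Remark~\ref{invertibleoverconvergence} already establishes continuity of these inverses at the overconvergent level, and the same argument, carried out on $\DD(V)$ with the weak topology (rather than on $\DD^{(0,r]}(V)$), shows that the projectors onto the summands $\DD(V)_i$ are continuous endomorphisms of $\DD(V)^{\psi=0}$. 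Consequently each $\DD(V)_i$, being the kernel of the sum of the complementary projectors, is closed in $\DD(V)$.

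Finally, I would conclude by a general topological remark: if $M=\varprojlim_n M/U_n$ is a Hausdorff complete topological abelian group written as a projective limit of discrete $p$-groups with respect to a descending family of open subgroups $U_n$ with trivial intersection, and $N\subset M$ is a closed subgroup, then the subgroups $N\cap U_n$ form a fundamental system of open neighbourhoods of $0$ in $N$ with trivial intersection, $N$ is complete for this topology, and each $N/(N\cap U_n)$ injects into $M/U_n$ and is therefore itself a discrete $p$-group. Applying this to $N=\DD(V)_i\subset M=\DD(V)$ yields the desired description $\DD(V)_i=\varprojlim_n \DD(V)_i/\bigl(\DD(V)_i\cap U_n\bigr)$.

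The main obstacle is the middle step, namely verifying continuity of the projectors onto the $\DD(V)_i$. The decomposition in~\cite{andreattabrinon06} is formulated at the overconvergent level, and one must check that the formulae defining the projectors extend continuously to the whole of $\DD(V)$ with its weak topology. Once this is in hand, closedness of the summands is automatic and the rest of the argument is purely formal point-set topology.
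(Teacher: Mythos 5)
Your proposal is correct and follows essentially the same route as the paper: continuity of $\psi$ gives closedness of $\DD(V)^{\psi=0}$, and each $\DD(V)_i$ is realised as the kernel of a continuous endomorphism of $\DD(V)^{\psi=0}$ (the paper cites the proof of Proposition 4.44 in~\cite{andreattabrinon06} for exactly this point), whence closedness and completeness for the weak topology and the projective-limit description via Proposition~\ref{projlim}. The ``main obstacle'' you flag --- continuity of the projectors on all of $\DD(V)$ rather than just at the overconvergent level --- is precisely what the paper delegates to the cited construction, so your account is if anything slightly more explicit about the same step.
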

   \begin{proof}
    It is sufficient to show that $\DD(V)_i$ is complete in the weak topology. Since $\psi$ is continuous for the weak topology, $\DD(V)^{\psi=0}$ is a closed subgroup of $\DD(V)$ and hence by Proposition~\ref{projlim} complete in the weak topology. As shown in the proof of Proposition 4.44 in~\cite{andreattabrinon06}, $\DD(V)_i$ is defined as the kernel of a continuous map $\DD(V)^{\psi=0}\rightarrow \DD(V)^{\psi=0}$ and hence is closed and complete in the weak topology.
   \end{proof}
   
   In order to prove Proposition~\ref{zerocohom}, it is therefore sufficient to show that $\DD(V)_i$ has trivial $G_L$-cohomology for all $0\leq i\leq d-1$. We first prove the result for $i=0$. By the Hochschild-Serre spectral sequence it is sufficient to show that for all $0\leq i\leq d-1$, $H^i(H,\DD(V)_0)$ has trivial $\Gamma$-cohomology (c.f. Corollary~\ref{trivial} below), which is equivalent to the statement that the map $\gamma-1:H^i(H,\DD(V)_0)\rightarrow H^i(H,\DD(V)_0)$ is an isomorphism for all $i$. The proof of this result relies on the observation that the $H$-cohomology of $\DD(V)_0$ can be calculated using the {\it Koszul complex}.
   \vs
   
   \noindent {\bf Definition.} Let $M$ be a module equipped with a continuous action of $H$. For $1\leq i\leq d-1$, define the complex
   \begin{equation*}
    \calT^\bullet_i(M): 0\rTo M\rTo^{h_i-1} M\rTo 0.
   \end{equation*}
   Then the Koszul complex $\calK^\bullet(M)$ of $M$ with respect to $h_1-1,\dots,h_{d-1}-1$ is the tensor product in the category of $R$-complexes of the complexes $\calT^\bullet_i(M)$. Note that $\calK^\bullet(M)\cong M\otimes_{\Lambda(H)}\calK^\bullet(\Lambda(H))$. (For a detailed exposition, see Section 4.5 in~\cite{weibel94}).
   
   \begin{prop}\label{koszul}
    If $M$ is a continuous $H$-module which is isomorphic (as topological group) to the projective limit of additive discrete $p$-groups, then $H^i(\calK^\bullet(M))\cong H^i(H,M)$ for all $1\leq i\leq d-1$. 
   \end{prop}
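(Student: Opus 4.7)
The plan is to proceed by induction on the rank $d-1$ of $H$, which by the earlier lemma in Section~\ref{Iwasawa_tower} is isomorphic to $\ZZ_p^{d-1}$ with commuting topological generators $h_1, \ldots, h_{d-1}$. The key observation is that the Koszul complex $\calK^\bullet(M)$ with respect to $h_1 - 1, \ldots, h_{d-1} - 1$ factors as an iterated tensor product, while the continuous cohomology $H^\bullet(H, M)$ factors via a nested Hochschild-Serre spectral sequence for $H \cong H' \times \langle h_{d-1}\rangle$ with $H'$ topologically generated by $h_1, \ldots, h_{d-2}$.

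For the base case $d-1 = 1$, so $H \cong \ZZ_p$, the Koszul complex is simply the two-term complex $0 \to M \xrightarrow{h_1-1} M \to 0$. Writing $M = \varprojlim_n M_n$ with the $M_n$ discrete $p$-primary $H$-modules, the classical computation (cf.\ Proposition~1.7.7 of~\cite{nsw}) gives $H^0(\ZZ_p, M_n) = M_n^{h_1=1}$, $H^1(\ZZ_p, M_n) = M_n/(h_1-1)M_n$, and $H^i(\ZZ_p, M_n) = 0$ for $i \geq 2$, as $\ZZ_p$ has $p$-cohomological dimension one. Passing to the inverse limit, using that $\varprojlim^1$ vanishes on the relevant surjective systems (which is where the projective-limit-of-discrete-$p$-groups hypothesis is crucial), yields the identification of $H^i(\ZZ_p, M)$ with the cohomology of the Koszul complex.

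For the inductive step, decompose $H = H' \times \langle h_{d-1}\rangle$. The Koszul complex $\calK^\bullet(M)$ identifies with the total complex of the bicomplex $\calK^\bullet_{H'}(M) \xrightarrow{h_{d-1}-1} \calK^\bullet_{H'}(M)$, and filtering horizontally gives a spectral sequence
\begin{equation*}
E_2^{p,q} = H^p\big(\langle h_{d-1}\rangle, H^q(\calK^\bullet_{H'}(M))\big) \Rightarrow H^{p+q}(\calK^\bullet(M)).
\end{equation*}
By the inductive hypothesis applied to $H'$, we have $H^q(\calK^\bullet_{H'}(M)) \cong H^q(H', M)$, and so the $E_2$-page matches that of the Hochschild-Serre spectral sequence for $1 \to H' \to H \to \langle h_{d-1}\rangle \to 1$, which converges to $H^{p+q}(H, M)$. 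A naturality argument — comparing both spectral sequences via a map induced by the Koszul resolution of $\ZZ_p$ as a $\Lambda(H)$-module — then identifies abutments.

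The main obstacle, and the point requiring the most care, is verifying that the inductive hypothesis can be applied to $H^q(H', M)$ as an $\langle h_{d-1}\rangle$-module: we must show that this cohomology again satisfies the projective-limit-of-discrete-$p$-groups condition. This should follow from the fact that each term of $\calK^\bullet_{H'}(M)$ is a finite direct sum of copies of $M$ (hence inherits the topological structure by Proposition~\ref{projlim} and Lemma~\ref{topologicalgroup}), and the Koszul differentials are continuous maps (being finite $\ZZ_p$-linear combinations of the $h_i - 1$), so their kernels and cokernels remain complete separated topological groups. A secondary technical point is justifying the Hochschild-Serre spectral sequence for continuous cochain cohomology with non-discrete coefficients, which again reduces via the projective limit presentation to the discrete case where it is standard.
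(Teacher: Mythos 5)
Your route is genuinely different from the paper's, which is a three-line argument: since $(h_1-1,\dots,h_{d-1}-1)$ is a regular sequence in $\Lambda(H)\cong\ZZ_p[[Y_1,\dots,Y_{d-1}]]$, the Koszul complex $\calK^\bullet(\Lambda(H))$ is a free resolution of $\ZZ_p$; Lazard's theorem (equation (18) in~\cite{lazard65}) says that $\Hom_{\Lambda(H)}\big(\calK^\bullet(\Lambda(H)),M\big)$ then computes the continuous $H$-cohomology of $M$ for precisely the class of topological modules in the hypothesis; and the self-duality of the Koszul complex identifies this Hom-complex with $\calK^\bullet(M)$ up to reindexing. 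All of the analytic content is delegated to Lazard, and no induction or spectral sequence is needed.

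The induction you propose has a genuine gap at exactly the point you flag as the main obstacle. To run the inductive step you must apply the base case ($\ZZ_p$-cohomology computed by $h_{d-1}-1$) to the coefficient module $H^q(H',M)$, so you must know that this group is again a projective limit of discrete $p$-groups with a continuous $h_{d-1}$-action. Your justification --- that kernels and cokernels of the continuous Koszul differentials ``remain complete separated topological groups'' --- fails for cokernels: the image of a continuous homomorphism between projective limits of discrete $p$-groups (which are complete but in general not compact; $\AA_L$ itself, and hence $\DD(V)$, is of this kind) need not be closed, so the quotient topology on $H^q(H',M)$ need not even be separated, let alone present it again as such a projective limit. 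For the same reason your ``secondary technical point'' is not secondary: a Hochschild--Serre spectral sequence for continuous cochain cohomology with non-discrete topological coefficients is not available off the shelf, and constructing it for these coefficient modules is essentially as hard as the statement being proved. Finally, the comparison of abutments ``by naturality'' via the Koszul resolution of $\ZZ_p$ over $\Lambda(H)$ is precisely the assertion that this resolution computes continuous cohomology, i.e.\ Lazard's theorem; once that is made rigorous the induction is superfluous. I would recommend dropping the induction and arguing as the paper does.
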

   \begin{proof}
    Note that $(h_1-1,\dots,h_{d-1}-1)$ is a regular sequence in $\Lambda(H)$, so $\calK^\bullet(\Lambda(H))$  is a free resolution of $\Lambda(H)_H\cong\ZZ_p$. It follows that the complex $\calC(H,M)=\Hom_{\Lambda(H)}\big(\calK^\bullet(\Lambda(H)), M\big)$ calculates the continuous $H$-cohomology of $M$ (c.f. equation (18) in~\cite{lazard65}). But the symmetry of the Koszul complex implies that $H^i(\calC(H,M))\cong H_{d-i-1}(\calK^\bullet(M))$ for all $0\leq i\leq d-1$, which finishes the proof.
  \end{proof}
  
   Explicitly, the Koszul complex can be constructed inductively in the following way (c.f.~\cite{eisenbud95}): let $\calK^\bullet_1(M): M\rTo^{h_1-1}M$. For $i\geq 2$, define $\calK^\bullet_i(M)$ to be the mapping cone of $\calK^\bullet_{i-1}(M)\rTo^{h_i-1} \calK^\bullet_{i-1}(M)$. Then $\calK^\bullet(M)=\calK^\bullet_{d-1}(M)$. Note that by construction the complex $\calK^\bullet(M)$ looks like
   \begin{equation*}
    0\rTo M\rTo^{F_1} M^{\oplus \tbinom{d-1}{1}} \rTo^{F_2} M^{\oplus \tbinom{d-1}{2}}\rTo^{F_3}\dots\rTo^{F_d}M \rTo 0,
   \end{equation*}
   where the maps $F_j$ are defined as follows: For $0\leq i<d$, let $n_i=\tbinom{d-1}{i}$, and let $\frak{A}_i$ be the collection of all subsets of the set $\{1,\dots,d-1\}$ of cardinality $i$. Note that since $\# \frak{A}_i=n_i$, we can index $M^{\oplus n_i}$ by $\frak{A_i}$, i.e. $M^{\oplus n_i}=\bigoplus_{A\in\frak{A}_i} M$, and if $\frak{m}\in M^{\oplus n_i}$, then $\frak{m}=(m_A)_{A\in\frak{A}_i}$. 
   Fix $1\leq j\leq d$. For $C\in\frak{A}_j$, say $C=\{k_1,\dots,k_j\}$, define $f_C: M^{\oplus n_{j-1}}\rightarrow M,$ 
   \begin{equation*}
    (m_A)_{A\in\frak{A}_{j-1}}  \rTo \sum_{l=1}^j(-1)^{k_l}(h_{k_l}-1)m_{C\backslash\{k_l\}}
   \end{equation*}
   Define $F_j=\bigoplus_{C\in\frak{A}_j}f_C$. From this description of the Koszul complex, it is easy to see that the following lemma is true.
   
   \begin{lem}\label{quasi-isom}
    Let $h_1',\dots,h_{d-1}'\in\Lambda(H)$ such that $h_i$ differs from $h_i'$ by an element in $\Lambda(H)^\times$ for all $1\leq i<d$. Then the Koszul complexes $\calK^\bullet(M)$ and $\calK'^\bullet(M)$ obtained from $h_1,\dots,h_{d-1}$ and $h_1'\dots,h_{d-1}'$ are quasi-isomorphic. Moreover, for all $0\leq j\leq d$ there exists $G^{(j)}=\big(G^{(j)}_{A\in\frak{A}_j}\big)\in\big(\Lambda(H)^\times\big)^{\oplus n_j}$ such that the quasi-isomorphism is given by
    \begin{diagram}
     0& \rTo & M & \rTo^{F'_1} & M^{\oplus \tbinom{d-1}{1}} & \rTo^{F'_2} & M^{\oplus \tbinom{d-1}{2}}& \rTo^{F'_3}& \dots& \rTo^{F'_d}& M & \rTo & 0 \\
      &      & \dTo^{G^{(0)}} & & \dTo^{G^{(1)}}             &            & \dTo^{G^{(2)}}             &           &      &    & \dTo^{G^{(d)}} & & \\
     0& \rTo & M & \rTo^{F_1} & M^{\oplus \tbinom{d-1}{1}} & \rTo^{F_2} & M^{\oplus \tbinom{d-1}{2}}& \rTo^{F_3}& \dots& \rTo^{F_d}& M & \rTo & 0 
    \end{diagram}
   \end{lem}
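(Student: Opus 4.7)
The plan is to prove Lemma~\ref{quasi-isom} by induction on $d$, mirroring the inductive mapping-cone construction of the Koszul complex recalled just before the statement. I first interpret the hypothesis that $h_i$ and $h_i'$ \emph{differ by an element of} $\Lambda(H)^\times$ as asserting the existence of units $u_i \in \Lambda(H)^\times$ with $h_i' - 1 = u_i(h_i - 1)$. This is the natural reading, since $h_i - 1$ (not $h_i$ itself) is what appears in $\calK^\bullet$; moreover it holds automatically whenever $h_i, h_i'$ are two topological generators of the procyclic group $H^{(i)} \cong \ZZ_p$, for writing $h_i' = h_i^a$ with $a \in \ZZ_p^\times$ one has $h_i^a - 1 = (h_i - 1)\bigl(a + \binom{a}{2}(h_i - 1) + \cdots\bigr)$, whose second factor has reduction $a \in \ZZ_p^\times$ modulo $\frak{M}$ and is therefore a unit in $\Lambda(H)$.

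\vs

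For $d = 2$ the Koszul complex is the two-term $M \to M$ with differential $h_1 - 1$; taking $G^{(0)} = 1$ and $G^{(1)} = u_1^{-1}$ yields a quasi-isomorphism (in fact an isomorphism of complexes) by commutativity of $\Lambda(H)$. For the inductive step, assume a quasi-isomorphism $\Psi : \calK'^\bullet_{d-2}(M) \to \calK^\bullet_{d-2}(M)$ whose components are diagonal with entries in $\Lambda(H)^\times$. Since $\calK^\bullet_{d-1}(M)$ is by definition the mapping cone of $h_{d-1} - 1 : \calK^\bullet_{d-2}(M) \to \calK^\bullet_{d-2}(M)$, and similarly for $\calK'^\bullet_{d-1}(M)$, the pair of maps $(\Psi,\ u_{d-1}^{-1}\Psi)$ on (source, target) of the mapping cone produces a map of mapping cones $\Psi' : \calK'^\bullet_{d-1}(M) \to \calK^\bullet_{d-1}(M)$: the required compatibility $u_{d-1}^{-1}\Psi \circ (h_{d-1}' - 1) = (h_{d-1} - 1) \circ \Psi$ follows from $h_{d-1}' - 1 = u_{d-1}(h_{d-1} - 1)$ together with commutativity of $\Lambda(H)$ (so $\Psi$, which acts by elements of the commutative ring $\Lambda(H)$, commutes with $h_{d-1} - 1$). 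The five-lemma applied to the long exact sequences of the two mapping cones then shows that $\Psi'$ is a quasi-isomorphism.

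\vs

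To read off the explicit form of the $G^{(j)}$: each direct summand of $\calK^\bullet_{d-1}(M)$ at level $j$ is indexed by a subset $A \subset \{1,\ldots,d-1\}$ of size $j$, and these split according to whether $A$ contains $d-1$, corresponding respectively to the two copies of $\calK^\bullet_{d-2}(M)$ in the mapping cone. On the summand with $d-1 \notin A$ the map $\Psi'$ acts as the inductive entry $G^{(j)}_A$; on the summand with $d-1 \in A$ it acts as $u_{d-1}^{-1} G^{(j-1)}_{A\setminus\{d-1\}}$. Both are units in $\Lambda(H)$, as required. Conceptually the argument reduces to functoriality of the mapping cone plus commutativity of $\Lambda(H)$, so there is no serious obstacle; the main work is the combinatorial bookkeeping just sketched, which is tedious but routine, and constitutes the principal technical (though not conceptual) difficulty.
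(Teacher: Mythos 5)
Your proof is correct. Note that the paper does not actually supply an argument here --- the lemma is preceded only by ``From this description of the Koszul complex, it is easy to see that the following lemma is true'' --- so your job was to fill in an omitted verification, and your inductive mapping-cone argument does so cleanly; it is really the same computation the paper has in mind, organised along the paper's own inductive construction of $\calK^\bullet(M)$ rather than checked directly against the explicit formula for $F_j$ (unwinding your recursion gives $G^{(j)}_A=\prod_{i\in A}u_i^{-1}$, which one can confirm satisfies $G^{(j)}_C(h'_{k_l}-1)=(h_{k_l}-1)G^{(j-1)}_{C\setminus\{k_l\}}$ for each $k_l\in C$, so the two routes agree). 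Your opening clarification is also a genuine service: the hypothesis ``$h_i$ differs from $h_i'$ by an element of $\Lambda(H)^\times$'' only yields the lemma under the reading $h_i'-1=u_i(h_i-1)$ with $u_i\in\Lambda(H)^\times$, and this is exactly what holds in the application (Proposition~\ref{Gammacohom}, where $h_i'=h_i^{\chi(\gamma)}$), by the binomial expansion you give. The one point where your bookkeeping could slip --- which of the two summands of the cone in degree $j$ (those $A$ with $d-1\in A$ versus $d-1\notin A$) receives the extra factor $u_{d-1}^{-1}$ --- is immaterial for the statement, since both entries are units either way, and in fact your assignment matches the sign conventions of the paper's $F_j$.
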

   
   \begin{lem}
    Let $M$ be a module with a continuous action of $G_K$. If $\gamma-1$ is invertible on $M$, then so is $\gamma\omega-1$ for any $\omega\in\Lambda(H)^{\times}$.
   \end{lem}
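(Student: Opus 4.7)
The plan is to factor $\gamma\omega-1$ into a product of three operators on $M$---namely $(\gamma-1)$, $\omega$, and a factor of the form $1+A$ with $A$ topologically nilpotent---and to invert each one in turn. The key algebraic identity, valid in $\Lambda(G_L)$ by simple distributivity, is
\[
\gamma\omega - 1 \;=\; (\gamma-1)\,\omega \;+\; (\omega - 1),
\]
and since $\omega\in\Lambda(H)^\times$ acts invertibly and continuously on $M$ via the continuous $H$-action, this rearranges to
\[
\gamma\omega - 1 \;=\; (\gamma-1)\,\omega\,\bigl(1 + A\bigr), \qquad A \;:=\; \omega^{-1}(\gamma-1)^{-1}(\omega-1).
\]
Here $(\gamma-1)^{-1}$ is a continuous endomorphism of $M$ by hypothesis (combined, in the applications we have in mind, with the continuity noted in Remark~\ref{invertibleoverconvergence}). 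The factors $(\gamma-1)$ and $\omega$ being invertible on $M$, the problem reduces to inverting $1+A$.

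For this step I would invoke a Neumann series $(1+A)^{-1}=\sum_{n\ge 0}(-A)^n$, reducing the claim to the topological nilpotency of $A$ on $M$ in the weak topology. Writing $\omega = c + \omega'$ with $c\in\ZZ_p^\times$ the augmentation of $\omega$ and $\omega'\in\frak{M}$, we split $\omega-1 = (c-1) + \omega'$, so
\[
A \;=\; (c-1)\,\omega^{-1}(\gamma-1)^{-1} \;+\; \omega^{-1}(\gamma-1)^{-1}\omega'.
\]
The summand involving $\omega'$ is topologically nilpotent because $\frak{M}$ acts topologically nilpotently on $M$ in the weak topology (the $H$-action is continuous, $H$ is pro-$p$, and $M$ is complete in the weak topology), while the scalar summand is topologically nilpotent because continuity of the $G_L$-action on a $p$-adically complete $M$ forces $c\in 1 + p\ZZ_p$, whence $c-1\in p\ZZ_p$.

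The hardest step will be the topological nilpotency of $A$: the operator $(\gamma-1)^{-1}$ is continuous but not a priori norm-contractive, so genuine care is needed about which filtration witnesses the convergence of the Neumann series. In the intended applications $M$ is a summand of $\DD(V)^{\psi=0}$ (cf.\ Lemma~\ref{topologicalgroup}), and the continuity of $(\gamma-1)^{-1}$ on the appropriate overconvergent filtration (Remark~\ref{invertibleoverconvergence}) together with the pro-$p$ structures on $M$ and $H$ ensures that both halves of $A$ are topologically nilpotent with compatible rates. The Neumann series then converges to a continuous inverse of $1+A$, and multiplying back through the factorisation yields an explicit continuous inverse of $\gamma\omega-1$.
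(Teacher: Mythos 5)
Your factorisation $\gamma\omega-1=(\gamma-1)\,\omega\,(1+A)$ with $A=\omega^{-1}(\gamma-1)^{-1}(\omega-1)$ is exactly the identity the paper uses (and you state it more carefully with respect to non-commutativity), and the Neumann-series strategy for $1+A$ is also the paper's. The gap is in your treatment of the constant term of $\omega$. You write $\omega=c+\omega'$ with $c\in\ZZ_p^\times$ and $\omega'\in\frak{M}$, and you claim that continuity of the $G_L$-action on a $p$-adically complete $M$ forces $c\in 1+p\ZZ_p$. That is false: $c$ is simply the augmentation of a unit of $\Lambda(H)\cong\ZZ_p[[Y_1,\dots,Y_{d-1}]]$ and can be any element of $\ZZ_p^\times$ (take $\omega=2$, say); it is a scalar acting by multiplication, and no continuity hypothesis constrains it. In the actual application (Proposition~\ref{Gammacohom}) the units $G^{(j)}_A$ have augmentation a power of $\chi(\gamma)$, which need not be $\equiv 1\pmod p$. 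If $c-1\notin p\ZZ_p$, your scalar summand $(c-1)\,\omega^{-1}(\gamma-1)^{-1}$ is a unit multiple of $(\gamma-1)^{-1}$, which is only assumed continuous, not topologically nilpotent, so the Neumann series for $(1+A)^{-1}$ has no reason to converge and your argument breaks down.

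The paper sidesteps this by first reducing, ``without loss of generality,'' to the case $\omega\equiv 1\bmod\frak{M}$, so that $\omega-1\in\frak{M}$ and the whole of $A$ is controlled by the topological nilpotence of $\frak{M}$ acting on $M$ (via continuity of the pro-$p$ group $H$ and completeness of $M$ in the weak topology) -- precisely the part of your argument that does work. To repair your proof you must either reproduce and justify that normalisation of the augmentation, or give a separate argument that $c\gamma'-1$ is invertible when $\gamma'-1$ is, for an arbitrary unit $c\in\ZZ_p^\times$; neither is automatic, and the second is exactly the difficulty your decomposition was meant to avoid. Your closing remarks about which filtration witnesses the convergence of the Neumann series are a fair flag of the remaining analytic point, and are consistent with the level of detail in the paper's own proof.
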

   \begin{proof}
    Assume without loss of generality that $\omega\cong 1\mod \frak{M}$, where $\frak{M}$ denotes the unique maximal ideal in $\Lambda(H)$. Note that 
    \begin{equation*}
     \gamma\omega-1= (\gamma-1)\omega\Big[1+(\gamma-1)^{-1}\frac{\omega-1}{\omega}\Big],
    \end{equation*}
    so it is sufficient to show that $1+(\gamma-1)^{-1}(1-\omega^{-1})$ is invertible on $M$, i.e. that the formal expansion of $\big( 1+(\gamma-1)^{-1}(1-\omega^{-1})\big)^{-1}$ converges on $M$. But this is immediate since $\omega-1\in\frak{M}$ and the action of $G_K$ on $M$ is continuous. 
   \end{proof}
   
   \begin{prop}\label{Gammacohom}
    For all $0\leq j\leq d-1$, the cohomology group $H^j(H,\DD(V)_0)$ has trivial $\Gamma$-cohomology.
   \end{prop}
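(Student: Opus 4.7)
The plan is to reduce to showing that $\gamma-1$ acts bijectively on $H^j(H,\DD(V)_0)$ for each $0\leq j\leq d-1$. Since $\Gamma$ is procyclic with topological generator $\gamma$, any module on which $\gamma-1$ is bijective has trivial $\Gamma$-cohomology: $H^0(\Gamma,M)=\ker(\gamma-1)=0$, $H^1(\Gamma,M)=\coker(\gamma-1)=0$, and higher cohomology vanishes since $\Gamma\cong\ZZ_p$ has cohomological dimension $1$.

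By Lemma~\ref{topologicalgroup} and Proposition~\ref{koszul}, the group $H^j(H,\DD(V)_0)$ is computed by the Koszul complex $\calK^\bullet(\DD(V)_0)$ associated to $h_1-1,\dots,h_{d-1}-1$. First I would construct a chain map $\tilde\gamma\colon\calK^\bullet(\DD(V)_0)\to\calK^\bullet(\DD(V)_0)$ which induces the action of $\gamma$ on cohomology. Although $\gamma$ does not commute with the $h_i-1$, the relation $\gamma h_i=h_i^{\chi(\gamma)}\gamma$ gives $\gamma(h_i-1)=(h_i^{\chi(\gamma)}-1)\gamma$, so applying $\gamma$ componentwise yields a chain map from $\calK^\bullet(\DD(V)_0)$ to the Koszul complex $\calK'^\bullet(\DD(V)_0)$ built from $h_1^{\chi(\gamma)}-1,\dots,h_{d-1}^{\chi(\gamma)}-1$. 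Writing $h_i^{\chi(\gamma)}-1=(h_i-1)\omega_i$ with $\omega_i\in\Lambda(H)^\times$ and invoking Lemma~\ref{quasi-isom} provides a quasi-isomorphism back to $\calK^\bullet(\DD(V)_0)$ whose components are multiplication by units $G^{(j)}_A\in\Lambda(H)^\times$. Composing, $\tilde\gamma$ acts on the $A$-component of $K^j=\bigoplus_{A\in\frak{A}_j}\DD(V)_0$ as $G^{(j)}_A\gamma$.

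It then suffices to show that $\tilde\gamma-1$ is bijective on every term $K^j$, since then $\tilde\gamma-1$ is a chain isomorphism and induces isomorphisms on cohomology. On the $A$-component this amounts to invertibility of $G^{(j)}_A\gamma-1$ on $\DD(V)_0$. Writing $G^{(j)}_A\gamma=\gamma(\gamma^{-1}G^{(j)}_A\gamma)$ and observing that conjugation by $\gamma$ preserves $\Lambda(H)^\times$, this reduces to the immediately preceding lemma: $\gamma\omega-1$ is invertible on $\DD(V)_0$ for every $\omega\in\Lambda(H)^\times$ once $\gamma-1$ is, which holds by Proposition~\ref{quoteAB}.

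The main obstacle lies in the careful bookkeeping needed to upgrade the formal commutation $\gamma h_i=h_i^{\chi(\gamma)}\gamma$ into an honest chain map $\tilde\gamma$ on the Koszul complex; this is where the units $G^{(j)}_A$ produced by Lemma~\ref{quasi-isom} enter. Once the chain map is in place, the invertibility step is essentially mechanical, and the result follows by applying Hochschild--Serre to $G_L=\Gamma\ltimes H$.
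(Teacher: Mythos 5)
Your proposal is correct and follows essentially the same route as the paper: realise the $\gamma$-action on $H^j(H,\DD(V)_0)$ via the Koszul complex, use $\gamma h_i=h_i^{\chi(\gamma)}\gamma$ together with Lemma~\ref{quasi-isom} to express it degreewise as $\gamma$ twisted by units $G^{(j)}_A\in\Lambda(H)^\times$, and invoke the invertibility of $\gamma\omega-1$ (from Proposition~\ref{quoteAB} and the preceding lemma) to conclude that $\gamma-1$ is bijective on cohomology. The only (harmless) difference is that you replace the paper's explicit check that the termwise inverse $\Delta$ is compatible with the differentials by the general observation that a chain map which is bijective in every degree is an isomorphism of complexes.
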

   \begin{proof}
    To simplify the notation, let $M=\DD(V)_0$. The action of $\gamma$ defines a homomorphism of complexes
    \begin{diagram}
     0& \rTo & M & \rTo^{F_1} & M^{\oplus \tbinom{d-1}{1}} & \rTo^{F_2} & M^{\oplus \tbinom{d-1}{2}}& \rTo^{F_3}& \dots& \rTo^{F_d}& M & \rTo & 0 \\
      &      & \dTo^{\gamma} & & \dTo^{\gamma}             &            & \dTo^{\gamma}             &           &      &    & \dTo^{\gamma} & & \\
     0& \rTo & M & \rTo^{F'_1} & M^{\oplus \tbinom{d-1}{1}} & \rTo^{F'_2} & M^{\oplus \tbinom{d-1}{2}}& \rTo^{F'_3}& \dots& \rTo^{F'_d}& M & \rTo & 0 
    \end{diagram}
    where the lower complex is the Koszul complex with respect to elements $h_1',\dots,h_{d-1}'\in\Lambda(H)$ such that $h_i'$ differs from $h_i$ by a unit in $\Lambda(H)$ for all $1\leq i<d$. But by Lemma~\ref{quasi-isom} the lower complex is quasi-isomorphic to the original complex $\calK^\bullet(M)$ via maps $G^{(j)}=\big(G^{(j)}_A\big)_{A\in\frak{A}_j}\in\big(\Lambda(H)^\times\big)^{\oplus n_j}$. The action of $\gamma$ on $\calK^\bullet(M)$ is therefore given by 
    \begin{diagram}
     0& \rTo & M & \rTo^{F_1} & M^{\oplus \tbinom{d-1}{1}} & \rTo^{F_2} & M^{\oplus \tbinom{d-1}{2}}& \rTo^{F_3}& \dots& \rTo^{F_d}& M & \rTo & 0 \\
      &      & \dTo^{\gamma G^{(0)}} & & \dTo^{\gamma G^{(1)}}             &            & \dTo^{\gamma G^{(2)}}             &           &      &    & \dTo^{\gamma G^{(d)}} & & \\
     0& \rTo & M & \rTo^{F_1} & M^{\oplus \tbinom{d-1}{1}} & \rTo^{F_2} & M^{\oplus \tbinom{d-1}{2}}& \rTo^{F_3}& \dots& \rTo^{F_d}& M & \rTo & 0 
    \end{diagram}
    so $\gamma-1$ acts on $H^j(H,M)$ by sending an element $\frak{m}=(m_A)_{A\in\frak{A}_j}\in M^{\oplus n_j}$ which satisfies $F_j(\frak{m})=0$ to $(\gamma G^{(j)}-1)\frak{m}=\big( (\gamma G^{(j)}_A-1)m_A\big)_{A\in\frak{A}_j}$. Define a map $\Delta:M^{\oplus n_j}\rightarrow M^{\oplus n_j}$ by sending $\frak{n}=(n_A)_{A\in\frak{A}_j}$ to $\big( (\gamma G^{(j)}_A-1)^{-1}m_A\big)_{A\in\frak{A}_j}$. 
    
    \noindent {\it Claim.} $\Delta$ factors through $\image(F_{j-1})$ and is an inverse of $\gamma-1$ on $H^j(H,M)$.
    
    \noindent {\it Proof of claim.} It is clear from the definition that if $\Delta$ factors through $\image(F_{j-1})$ and restricts to a map $H^j(H,M)\rightarrow H^j(H,M)$, then it is an inverse of $\gamma-1$. Suppose that $\frak{n}=F_{j-1}(\frak{l})$ for some $\frak{l}=(l_C)_{C\in \frak{A}_{j-1}}$, so for all $A\in\frak{A}_j$, say $A=\{k_1,\dots, k_j\}$, we have $n_A=\sum_{i=1}^j(-1)^{k_i}(h_{k_i}-1)l_{A\backslash\{k_i\}}$. Explicit calculation shows that
    \begin{equation*}
     (\gamma G^{(j)}_A-1)^{-1}(h_{k_i}-1)=(h_{k_i}-1)(\gamma G^{(j)}_{A\backslash \{k_i\}}-1)^{-1},
    \end{equation*}
    which implies that $\Delta$ factors through $\image(F_{j-1})$. 
    
    Suppose now that $\frak{m}=(m_A)_{A\in\frak{A}_j}$ satisfies $F_j(\frak{m})=0$, so for all $C\in\frak{A}_{j+1}$, say $C=\{k_1,\dots,k_{j+1}\}$, we have $\sum_{i=1}^{j+1}(-1)^{k_i}(h_{k_i}-1)m_{C\backslash\{k_i\}}=0$. Again, explicit calculation shows that 
    \begin{equation*}
     (h_{k_i}-1)(\gamma G^{(j)}_{C\backslash\{k_i\}}-1)^{-1}=(\gamma G^{(j)}_C-1)^{-1}(h_{k_i}-1),
    \end{equation*}
    so $\Delta$ restricts to $H^j(H,M)\rightarrow H^j(H,M)$.
   \end{proof}
        
   \begin{cor}\label{trivial}
    $\DD(V)_0$ has trivial $G_L$-cohomology.
   \end{cor}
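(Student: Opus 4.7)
The plan is to derive the vanishing of $G_L$-cohomology directly from Proposition~\ref{Gammacohom} together with the semi-direct product decomposition $G_L \cong \Gamma \rtimes H$ established earlier.

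First I would invoke the Hochschild-Serre spectral sequence associated to the short exact sequence $1 \to H \to G_L \to \Gamma \to 1$, which reads
\begin{equation*}
E_2^{p,q} = H^p\bigl(\Gamma, H^q(H, \DD(V)_0)\bigr) \Longrightarrow H^{p+q}(G_L, \DD(V)_0).
\end{equation*}
One needs to check that the hypotheses for the spectral sequence (continuous cochain version) are met; this is where Lemma~\ref{topologicalgroup} is used, since $\DD(V)_0$ is a projective limit of discrete $p$-groups and hence its $H$-cohomology groups carry the natural topology required by the Hochschild-Serre formalism. The Koszul complex computation of Proposition~\ref{koszul} also ensures that $H^q(H, \DD(V)_0)$ is nonzero only in the range $0 \le q \le d-1$.

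Next I would apply Proposition~\ref{Gammacohom}, which states precisely that for each $0 \le q \le d-1$ the $\Gamma$-module $H^q(H, \DD(V)_0)$ has trivial $\Gamma$-cohomology. This means $E_2^{p,q} = 0$ for all $p \ge 0$ and all $q$ in the relevant range (for $q$ outside this range, $H^q(H, \DD(V)_0)$ itself vanishes, so $E_2^{p,q} = 0$ trivially). Since the whole $E_2$-page vanishes, the spectral sequence collapses and gives $H^n(G_L, \DD(V)_0) = 0$ for every $n \ge 0$.

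The only subtlety is really the topological bookkeeping: one must ensure that the Hochschild-Serre spectral sequence in continuous cohomology applies to the module $\DD(V)_0$, and that $H^q(H, \DD(V)_0)$ inherits a topology for which the resulting $\Gamma$-cohomology computation of Proposition~\ref{Gammacohom} is valid. Given Lemma~\ref{topologicalgroup}, this is not an obstacle in practice; the main work has already been done in the Koszul complex argument of Proposition~\ref{Gammacohom}, and the corollary is essentially a formal consequence.
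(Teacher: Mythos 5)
Your proposal is correct and follows essentially the same route as the paper: the Hochschild--Serre spectral sequence for $1\to H\to G_L\to\Gamma\to 1$ combined with Proposition~\ref{Gammacohom}. The only cosmetic difference is that the paper invokes $\cd_p(\Gamma)=1$ to reduce to the inflation--restriction short exact sequence, whereas you observe that the entire $E_2$-page already vanishes, which makes that step unnecessary.
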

   \begin{proof}
    Since $\cd_p(\Gamma)=1$, the Hochschild-Serre spectral sequence degenerates to give a short exact sequence
    \begin{equation*}
     0\rightarrow H^1(\Gamma,H^{i-1}(H,\DD(V)_0)) \rTo H^i(G,\DD(V)_0) \rTo H^i(H,\DD(V)_0)^{\Gamma} \rightarrow 0
    \end{equation*}
    for all $1\leq i\leq d$. The result is therefore an immediate consequence of Lemma~\ref{Gammacohom}.
   \end{proof}
   
   Assume now that $i> 0$. Let $H_i=\overline{\langle h_i\rangle}$. 
   
   \begin{prop}\label{nodividesp}
    $\DD^\dagger(V)_i$ has trivial $H$-cohomology for all $1\leq i<d$.
   \end{prop}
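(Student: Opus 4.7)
The plan is to adapt the Koszul-complex strategy of Proposition~\ref{Gammacohom} using the invertibility of $h_i - 1$ on $\DD^\dagger(V)_i$ supplied by Proposition~\ref{quoteAB} together with its continuous version in Remark~\ref{invertibleoverconvergence}. Since $H \cong \ZZ_p^{d-1}$ is abelian, its continuous cohomology should be computed by the Koszul complex $\calK^\bullet(M)$ associated with $h_1-1,\ldots,h_{d-1}-1$, as in Proposition~\ref{koszul}.

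By construction, $\calK^\bullet(M)$ is the total complex of the tensor product $\bigotimes_{j=1}^{d-1} \calT_j^\bullet(M)$, where $\calT_j^\bullet(M)$ is the two-term complex $M \rTo^{h_j-1} M$. For $M = \DD^\dagger(V)_i$ the distinguished factor $\calT_i^\bullet(M)$ is contractible: the continuous inverse $(h_i-1)^{-1}$ from Remark~\ref{invertibleoverconvergence} provides an explicit chain homotopy between the identity and the zero map. This homotopy extends termwise to the total complex (it is $\id \otimes (h_i-1)^{-1} \otimes \id$ on the appropriate tensor factor), so $\calK^\bullet(\DD^\dagger(V)_i)$ is acyclic. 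Applying the (overconvergent analogue of) Proposition~\ref{koszul} then yields $H^j(H, \DD^\dagger(V)_i) = 0$ for all $j$, which is the claim.

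The main obstacle is topological: Proposition~\ref{koszul} is formulated for modules that are projective limits of discrete $p$-groups, a property that $\DD^\dagger(V)_i$ does not share directly (it is naturally an ind-Banach object via $\DD^\dagger(V)_i = \varinjlim_r \DD^{(0,r]}(V)_i$). The fix is to run the argument at a fixed radius $\DD^{(0,r]}(V)_i$ with $r \leq r_V'$, where Remark~\ref{invertibleoverconvergence} makes $(h_i-1)^{-1}$ a genuine continuous endomorphism and the Banach structure allows the Koszul identification to be made rigorous; one then passes to the direct limit over $r$ using exactness of filtered colimits for continuous cohomology. A conceptually cleaner alternative would be to bypass Koszul entirely and use Hochschild-Serre for the normal subgroup $H_i = \overline{\langle h_i\rangle} \cong \ZZ_p$ of $H$: since $H_i$ has cohomological dimension one and the invertibility of $h_i-1$ kills both $M^{h_i=1}$ and $M/(h_i-1)M$, the entire $E_2$ page of $H^p(H/H_i, H^q(H_i, \DD^\dagger(V)_i)) \Rightarrow H^{p+q}(H, \DD^\dagger(V)_i)$ vanishes.
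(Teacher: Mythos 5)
Your proof is correct, and it is worth separating its two halves. The ``conceptually cleaner alternative'' you sketch at the end is in fact essentially the paper's own argument: the paper reduces to $i=1$, observes via the two-term complex $M\rTo^{h_1-1}M$ (a special case of Proposition~\ref{koszul}) that $M=\DD(V)_1$ has trivial $H_1$-cohomology, and then propagates this by Hochschild--Serre --- only it does so by induction on the subgroups $H_1^k=H_1\oplus\dots\oplus H_k$, adding one $\ZZ_p$-factor at a time so that each step uses only the degenerate exact sequence for a quotient of cohomological dimension one, rather than your single application with the full quotient $H/H_i\cong\ZZ_p^{d-2}$. Your primary route --- contracting the tensor factor $\calT_i^\bullet(M)$ of the Koszul complex via $(h_i-1)^{-1}$ and concluding acyclicity of the total complex --- is a genuinely different and arguably slicker argument, and it buys you all degrees at once without any induction. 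Two remarks on your topological caveat. First, the dagger in the statement is evidently a slip: the paper's proof and the corollary that follows work with $M=\DD(V)_i$, which by Lemma~\ref{topologicalgroup} is a projective limit of discrete $p$-groups, so Proposition~\ref{koszul} applies directly and no overconvergent detour is needed; the invertibility of $h_i-1$ on $\DD(V)_i$ is exactly Proposition~\ref{quoteAB}. Second, the fix you propose for the dagger version --- running the argument on each $\DD^{(0,r]}(V)_i$ and passing to the colimit --- is the one shaky point of your write-up, since continuous cochains do not in general commute with filtered colimits of topological modules; fortunately this step is not needed for the statement the paper actually uses.
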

   \begin{proof} 
    To simplify the notation, assume without loss of generality that $i=1$, and let $M=\DD(V)_1$.  A special case of Proposition~\ref{koszul} implies that the $H_1$-cohomology of $M$ is given by the complex $M\rTo^{h_1-1}M$, so by Proposition~\ref{quoteAB} $M$ has trivial $H_1$-cohomology. Note that $H=\bigoplus_{j=1}^{d-1} H_j$. For $1\leq k\leq d-1$ let $H_1^k=\bigoplus_{j=1}^k H_j$, so $H^k_1\slash H^{k-1}_1\cong H_k$. Then for all $1\leq j\leq k$ the Hochschild-Serre degenerates to the short exact sequence
    \begin{equation*}
     0\rTo H^1(H_k,H^{j-1}(H_1^{k-1},M))\rTo H^j(H_1^k,M)\rTo H^j(H^{k-1}_1,M)^{H_k}\rTo 0.
    \end{equation*}
    Induction on $k$ therefore implies that $M$ has trivial $H^k_1$-cohomology, which finishes the proof.
   \end{proof}
    
   \begin{cor}
    $\DD(V)_i$ has trivial $G$-cohomology.
   \end{cor}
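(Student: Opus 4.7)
My plan is to apply the Hochschild--Serre spectral sequence for the extension
\begin{equation*}
 1 \rTo H \rTo G \rTo \Gamma \rTo 1,
\end{equation*}
which is legitimate because the decomposition $G_L \cong \Gamma_L \ltimes H_L$ exhibits $H$ as a closed normal subgroup of $G$ with quotient $\Gamma$. The spectral sequence reads
\begin{equation*}
 E_2^{p,q} = H^p(\Gamma, H^q(H, \DD(V)_i)) \Longrightarrow H^{p+q}(G, \DD(V)_i).
\end{equation*}

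The key input is the immediately preceding Proposition~\ref{nodividesp}, which asserts that $\DD(V)_i$ has trivial $H$-cohomology for $1 \leq i < d$. As is clear from the proof of that proposition (the base case uses that $h_i - 1$ is invertible on $\DD(V)_i$ by Proposition~\ref{quoteAB}, and the induction on $H^k_1$ propagates vanishing in every degree including degree zero), ``trivial $H$-cohomology'' here means $H^q(H, \DD(V)_i) = 0$ for all $q \geq 0$, not merely $q \geq 1$. Consequently every term $E_2^{p,q}$ of the spectral sequence vanishes, so $H^n(G, \DD(V)_i) = 0$ for all $n \geq 0$, which is the desired conclusion.

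There is essentially no obstacle: the entire content has been absorbed into Proposition~\ref{nodividesp}, and Hochschild--Serre merely converts vanishing of $H$-cohomology into vanishing of $G$-cohomology once one observes that the $H^0$-term vanishes as well. The only point worth making explicit in the write-up is the distinction with the proof of Corollary~\ref{trivial} for the case $i=0$: there one only had vanishing of $H^p(\Gamma, H^q(H, \DD(V)_0))$ for $p \geq 1$ and had to combine this with the vanishing of the $\Gamma$-invariants of $H^q(H, \DD(V)_0)$, exploiting $\cd_p(\Gamma) = 1$; here the full vanishing of the $E_2$-page makes the argument one line.
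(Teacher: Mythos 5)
Your argument is correct and is essentially the paper's own: both invoke the Hochschild--Serre spectral sequence for $1\to H\to G\to\Gamma\to 1$ and feed in the vanishing of $H^q(H,\DD(V)_i)$ in all degrees from Proposition~\ref{nodividesp}. The only cosmetic difference is that the paper first degenerates the spectral sequence into a short exact sequence using $\cd_p(\Gamma)=1$, whereas you observe directly that the entire $E_2$-page vanishes; either route is fine.
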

   \begin{proof}
    Since $\cd_p(\Gamma)=1$, the Hochschild-serre spectral sequence degenerates to give a short exact sequence
    \begin{equation*}
     0\rTo H^1\big(\Gamma,H^{j-1}(H,\DD(V)_i)\big) \rTo H^j(G_K,\DD(V)_i) \rTo H^j(H,\DD(V)_i)^\Gamma \rTo 0
    \end{equation*}
    for all $1\leq j\leq d$. The result is therefore an immediate consequence of Proposition~\ref{nodividesp}.
   \end{proof}
   
   \begin{remark}\label{valuesoverconvergent}
    Let $1\leq i\leq d$, and $c: G_L^{\times i}\rightarrow \DD(V)^{\psi=0}$ be a continuous cocycle with values in $\DD^{(0,r]}(V)^{\psi=0}$ for $r\leq r_V'$. The proof of Proposition~\ref{zerocohom} and Remark~\ref{invertibleoverconvergence} then imply that $c$ is equal to a coboundary $db$, where $b:G_L^{\times i-1}\rightarrow \DD^{(0,r]}(V)^{\psi=0}$.
   \end{remark}

   
  \subsection{The explicit reciprocity law}\label{EL}
  
   Let $V$ be a $p$-adic representation of $\calG_L$. For $f=\phi$ of $\psi$, write $\calT_f^\bullet(\DD(V))$ for the absolute complex of the mapping cone associated to $f-1:\calC^\bullet(G_L,\DD(V))\rTo \calC^\bullet(G_L,\DD(V))$. Denote by $\calZ^i(G_L,\DD(V))$ (resp. by $\calZ^i_f(G_L,\DD(V))$) the group of continuous $i$-cocycles of the complex $\calC^\bullet(G_L,\DD(V))$ (resp. $\calT_f^\bullet(\DD(V))$). Define $\calB^i(G_L,\DD(V))$ and $\calB^i_f(G_L,\DD(V))$ to be the respective groups of $i$-coboundaries. Also, denote the differentials of the complex $\calC^\bullet(G_L,\DD(V))$ by $\partial^i$. 
   
   \begin{prop}\label{cohomisom}
    For all $0\leq i\leq d+1$ we have a canonical isomorphism
    \begin{equation*}
     H^i\big(\calT_\psi^\bullet(\DD(V))\big) \cong H^i\big(\calT_\phi^\bullet(\DD(V))\big) \cong H^i(L,V).
    \end{equation*}
   \end{prop}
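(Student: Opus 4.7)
\medskip

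\noindent\textbf{Proof plan.} The second isomorphism $H^i(\calT_\phi^\bullet(\DD(V))) \cong H^i(L,V)$ is Theorem 3.3 of~\cite{andreattaiovita07}, already cited in the introduction, so the real content is the first isomorphism. The plan is to compare the two mapping cones by an explicit chain map whose failure to be a quasi-isomorphism is controlled by $G_L$-cohomology of $\DD(V)^{\psi=0}$, which vanishes by Proposition~\ref{zerocohom}.

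\medskip

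\noindent First I would define a morphism of complexes
\[
 \beta^\bullet:\calT_\phi^\bullet(\DD(V))\longrightarrow \calT_\psi^\bullet(\DD(V))
\]
by the formula $\beta^i(x,y)=(x,-\psi(y))$ on $\calT_\phi^i(\DD(V))=\calC^i(G_L,\DD(V))\oplus\calC^{i-1}(G_L,\DD(V))$. Verifying that $\beta^\bullet$ is a chain map is a direct computation using only that $\psi$ commutes with the action of $G_L$ (so that it commutes with the cochain differentials $\partial^\bullet$) together with the identity $\psi\circ\phi=\id$: the cross term $-\psi\circ(\phi-1)=\psi-1$ produced by applying $d_\phi$ first is exactly the cross term produced by applying $d_\psi$ to $\beta(x,y)$.

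\medskip

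\noindent Next I would analyse $\beta^\bullet$ degree by degree. Because $\psi\phi=\id$, the operator $\psi$ is (pointwise, hence on each $\calC^{i-1}(G_L,\DD(V))$) a split surjection with kernel $\calC^{i-1}(G_L,\DD(V)^{\psi=0})$. Consequently $\beta^i$ is surjective with
\[
 \ker\beta^i \;=\; 0\oplus\calC^{i-1}(G_L,\DD(V)^{\psi=0}),
\]
and the differential inherited from $\calT_\phi^\bullet$ restricts on this kernel simply to (a sign of) the cochain differential $\partial^\bullet$ of $\calC^\bullet(G_L,\DD(V)^{\psi=0})$. Therefore we obtain a short exact sequence of complexes
\[
 0\longrightarrow \calC^\bullet(G_L,\DD(V)^{\psi=0})[-1]\longrightarrow \calT_\phi^\bullet(\DD(V))\xrightarrow{\ \beta^\bullet\ }\calT_\psi^\bullet(\DD(V))\longrightarrow 0.
\]

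\medskip

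\noindent Finally, taking cohomology yields the long exact sequence
\[
 \cdots\longrightarrow H^{i-1}(G_L,\DD(V)^{\psi=0})\longrightarrow H^i(\calT_\phi^\bullet(\DD(V)))\longrightarrow H^i(\calT_\psi^\bullet(\DD(V)))\longrightarrow H^i(G_L,\DD(V)^{\psi=0})\longrightarrow\cdots,
\]
whose outer terms vanish by Proposition~\ref{zerocohom}, giving the desired isomorphism $H^i(\calT_\phi^\bullet(\DD(V)))\cong H^i(\calT_\psi^\bullet(\DD(V)))$. The only subtle point is Proposition~\ref{zerocohom} itself, which has already been established via the Koszul-complex computation of Section~\ref{Koszulcohom}; once that is in hand, the argument here is formal, and the main thing to be careful about is tracking the sign/shift conventions in the mapping cone so that the kernel of $\beta^\bullet$ is identified with the correctly shifted cochain complex of $\DD(V)^{\psi=0}$.
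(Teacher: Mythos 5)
Your argument is correct, and it hinges on exactly the same two ingredients as the paper's proof: the map induced by $\psi$ on the degree-$(i-1)$ component of the cone, and the vanishing of $H^\bullet(G_L,\DD(V)^{\psi=0})$ from Proposition~\ref{zerocohom}. The packaging, however, is genuinely different. The paper works entirely at the level of cohomology classes: it defines $\beta_i(u,v)=(-\psi(u),v)$ (which is precisely your chain map $\beta^\bullet$) and then constructs an explicit inverse $\alpha_i(x,y)=(-\phi(x)+z,y)$ in the opposite direction, where the primitive $z$ of the $\DD(V)^{\psi=0}$-valued cocycle $(\phi\psi-1)y$ is supplied by Proposition~\ref{zerocohom}; it must then check that $\alpha_i$ is well defined on cohomology and that $\alpha_i\circ\beta_i$ and $\beta_i\circ\alpha_i$ are the identity, the latter requiring a second appeal to Proposition~\ref{zerocohom}. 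Your route promotes $\beta^\bullet$ to a surjective morphism of complexes, identifies its kernel with $\calC^\bullet(G_L,\DD(V)^{\psi=0})[-1]$ (using that $\psi\phi=\id$ makes $\psi$ a split surjection on continuous cochains, and that $\psi$ commutes with $\partial^\bullet$), and kills the outer terms of the resulting long exact sequence. This is cleaner and avoids constructing $\alpha_i$ altogether; what it does not produce is the explicit cocycle-level description of the inverse $\alpha_i$, which the paper reuses immediately afterwards (in the remark following Proposition~\ref{hom}) to write down the cocycle $c_{\alpha,\beta}=\beta+(-1)^i d(\sigma(\alpha))$ representing $\delta^{(i)}$, so for the application to Theorem~\ref{samecohomology} the explicit version still has to be extracted. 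One small point to make explicit: you need the vanishing of $H^j(G_L,\DD(V)^{\psi=0})$ in all degrees $j\geq 0$, including $j=0$; this does follow from Proposition~\ref{quoteAB}, since the invariants of $\DD(V)_i$ lie in the kernel of the invertible operator $\gamma-1$ (resp.\ $h_i-1$).
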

   \begin{proof}    
    The isomorphism $H^i\big(\calT_\phi^\bullet(\DD(V))\big) \cong H^i(L,V)$ is the content of Theorem 3.3 in~\cite{andreattaiovita07} (see also Theorem 1.1 in~\cite{morita08}. It therefore remains to construct an isomorphism $\alpha_i:H^i\big(\calT_\psi^\bullet(\DD(V))\big) \cong H^i\big(\calT_\phi^\bullet(\DD(V))\big)$. By construction, an element in $H^i\big(\calT_\psi^\bullet(\DD(V))\big)$ is given by a pair $(x,y)$ with $x\in \calB^{i-1}(G_L,\DD(V))$ and $y\in \calZ^i(G_L,\DD(V))$ such that $(\psi-1)y=\partial^{i-1}(x)$. Then $(\phi\psi-1)y\in \calZ^i(G_L,\DD(V)^{\psi=0})$. By Proposition~\ref{zerocohom}, there exists a continuous map $z:\calC^{i-1}(G_L,\DD(V)^{\psi=0})$ such that $\partial^{i-1}(z)=(\phi\psi-1)y$. Define $\alpha_i(x,y)=(-\phi(x)+z,y)$. It is easy to check that $(\phi-1)y=\partial^{i-1}\big(-\phi(x)+z\big)$. If $y=\partial^{i-1}(y')$ and $x=(\psi-1)y'+\partial^{i-2}(x')$ for some $x'\in\calC^{i-2}(G_L,\DD(V))$ and $y'\in \calC^{i-1}(G_L,\DD(V))$, then 
    \begin{align*}
     \alpha_i(x,y) & =\big(-\phi\circ(\psi-1)y'-\phi\circ\partial^{i-2}(x')+(\phi\psi-1)y',\partial^{i-1}(y')\big) \\
     & = \big((\phi-1)y'+\partial^{i-2}(\phi(x')), \partial^{i-1}(y')\big).
    \end{align*}
    It therefore follows that $\alpha_i$ gives a map $H^i\big(\calT_\psi^\bullet(\DD(V))\big) \rightarrow H^i\big(\calT_\phi^\bullet(\DD(V))\big)$. It remains to show that $\alpha_i$ is an isomorphism. Let $u\in \calB^{i-1}(G_L,\DD(V))$ and $v\in \calZ^i(G_L,\DD(V))$ satisfy $(\phi-1)v=\partial^i(u)$. Define $\beta_i(u,v)=(-\psi(u),v)$. It is easy to see that $\beta_i$ defines a map $H^i\big(\calT_\phi^\bullet(\DD(V))\big) \rightarrow H^i\big(\calT_\psi^\bullet(\DD(V))\big)$. If $(x,y)$ defines an element in $H^i\big(\calT_\psi^\bullet(\DD(V))\big)$, then $\beta_i\circ\alpha_i(x,y)-(x,y)=(-\psi(z),0)=(0,0)$ since $z$ has values in $\DD(V)^{\psi=0}$. Similarly, if $(u,v)$ defines a cohomology class in $H^i\big(\calT_\phi^\bullet(\DD(V))\big)$, then 
    \begin{equation*}
     \alpha_i\circ\beta_i(u,v)-(u,v) = \big((\phi\psi-1)(u)+z,0\big).
    \end{equation*}
    Now $w=(\phi\psi-1)(u)+z$ has values in $\DD(V)^{\psi=0}$ and satisfies $\partial^{i-1}(w)=0$, so by Proposition~\ref{zerocohom} there exists $w'\in\calC^{i-2}(G_L,\DD(V)^{\psi=0})$ such that $w=\partial^{i-2}(w')$. But then by definition $(w,0)$ is a coboundary and hence zero in $H^i\big(\calT_\phi^\bullet(\DD(V))\big)$. We deduce that $\alpha_i\circ\beta_i=\beta_i\circ\alpha_i=\id$, which finishes the proof.
   \end{proof}
   
   The isomorphism $H^i\big(\calT_\phi^\bullet(\DD(V))\big) \cong H^i(L,V)$ is explicitly given as follows: let $(\alpha,\beta)$ be an $i$-cochain of $\calT_\phi^\bullet(\DD(V))$, which is given by an element in $\calC^{i-1}(G_L,\DD(V))\times \calC^i(G_L,\DD(V))$. Define 
   \begin{equation*}
    c_{\alpha,\beta}=\beta+(-1)^id\big(\sigma(\alpha)\big),
   \end{equation*}
   where $\sigma$ is a left inverse of $\phi-1$ (c.f. Appendix II in~\cite{andreattaiovita07}). Recall that the derivation on $\calT_\phi^\bullet(\DD(V))$ is given by $d\big((\alpha,\beta)\big)=\big((-1)^i(\phi-1)\beta+d\alpha,d\beta\big)$. We deduce that $dc_{\alpha,\beta}=0$, so $c_{\alpha,\beta}$ is a cocycle in $\calC^i(\calG_L,\DD(V))$. It is easy to check that $(\phi-1)c_{\alpha,\beta}=0$, so in fact $c_{\alpha,\beta}$ has values in $V$. As shown in Section 3.4 in~\cite{andreattaiovita07}, the isomorphism in Proposition~\ref{cohomisom} is given by the map $(\alpha,\beta)\rightarrow c_{\alpha,\beta}$.
   \vs
   
   An important corollary of Proposition~\ref{cohomisom} is the following Proposition:
  
   \begin{prop}\label{hom}
    For all $1\leq i\leq d$, we have a canonical homomorphism
    \begin{equation*}
     \delta^{(i)}: H^i(G_L,D(V)^{\psi=1}) \rTo H^i(L,V).
    \end{equation*}
   \end{prop}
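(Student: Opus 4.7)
The plan is to define $\delta^{(i)}$ at the level of cochains via the inclusion $\DD(V)^{\psi=1}\hookrightarrow \DD(V)$, and then to transport the resulting class into $H^{i}(L,V)$ via the isomorphism of Proposition~\ref{cohomisom}. Concretely, given an $i$-cocycle $c\in\calZ^i(G_L,\DD(V)^{\psi=1})$, I would send it to the pair $(0,c)$ viewed as an element of $\calT_\psi^i(\DD(V))=\calC^{i-1}(G_L,\DD(V))\times\calC^i(G_L,\DD(V))$.

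The first step is to check that $(0,c)$ is actually a cocycle in the mapping cone. Using the same sign convention as at the end of Section~\ref{EL} (adapted from $\phi$ to $\psi$), the differential of $\calT_\psi^\bullet(\DD(V))$ on a degree-$i$ cochain is
\begin{equation*}
 d\bigl((\alpha,\beta)\bigr)=\bigl((-1)^i(\psi-1)\beta+d\alpha,\, d\beta\bigr).
\end{equation*}
Since $c$ takes values in $\DD(V)^{\psi=1}$ we have $(\psi-1)c=0$, and since $c$ is a cocycle $dc=0$, so $d((0,c))=0$. The second step is to verify that the construction passes to cohomology: if $c=\partial^{i-1}b$ for some $b\in\calC^{i-1}(G_L,\DD(V)^{\psi=1})$, then $(0,b)\in\calT_\psi^{i-1}(\DD(V))$ satisfies $d((0,b))=((-1)^{i-1}(\psi-1)b,\partial^{i-1}b)=(0,c)$, so $(0,c)$ is a coboundary in the mapping cone. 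Consequently the assignment $c\mapsto(0,c)$ induces a well-defined homomorphism $H^i(G_L,\DD(V)^{\psi=1})\rightarrow H^i(\calT_\psi^\bullet(\DD(V)))$.

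The final step is to compose this homomorphism with the canonical isomorphism $H^i(\calT_\psi^\bullet(\DD(V)))\cong H^i(L,V)$ supplied by Proposition~\ref{cohomisom}; the resulting map is $\delta^{(i)}$. I do not anticipate a real obstacle here: the heavy lifting has already been done in proving that the $\psi$-mapping cone computes $H^\bullet(L,V)$ (which itself relied on the vanishing result of Proposition~\ref{zerocohom}), so this statement is essentially a corollary. The only point that deserves a brief comment is that the map is manifestly canonical, i.e.\ independent of the choice of $\sigma$ used in the $\phi$-version of the isomorphism, because the cochain $(0,c)$ carries no $\phi$-piece to begin with and is intrinsically defined once one regards $\DD(V)^{\psi=1}$ as a closed $G_L$-submodule of $\DD(V)$.
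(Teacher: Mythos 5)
Your proposal is correct and follows exactly the paper's argument: the paper's own proof is precisely the observation that a cocycle $\tilde\beta$ with values in $\DD(V)^{\psi=1}$ yields the pair $(0,\tilde\beta)$ defining a class in $H^i(\calT_\psi^\bullet(\DD(V)))$, which is then identified with $H^i(L,V)$ via Proposition~\ref{cohomisom}. Your verification of the cocycle and coboundary conditions simply makes explicit what the paper leaves implicit.
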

   \begin{proof}
    If $\tilde{\beta}$ is a cocycle representing a cohomology class in $H^i(G_L,\DD(V)^{\psi=1})$, then the pair $(0,\tilde{\beta})$ defines an element in the $i$th cohomology group of the complex $\calT^\bullet_\psi(\DD(V))$.
   \end{proof}

   \noindent {\bf Remark.} Explicitly, a cocycle representing $\delta^{(i)}(\tilde{\beta})$ is constructed as follows: the map $\beta=(\phi-1)\tilde{\beta}$ is a cocycle with values in $\DD(V)^{\psi=0}$ and defines a cohomology class in $ H^i(G_L,\DD(V)^{\psi=0})$. But  $\DD(V)^{\psi=0}$ has trivial $G_L$-cohomology by Proposition~\ref{zerocohom}, so $\beta$ is equal to a coboundary $d(\alpha)$, where $\alpha$ is a continuous map $G_L^{\times (i-1)}\rightarrow \DD(V)^{\psi=0}$. Then $(\alpha,\beta)$ is an $i$-cochain of $\calT_\phi^\bullet(\DD(V))$
   and hence defines an element in $H^i(L,V)$ by Proposition~\ref{cohomisom}. Explicitly, if $\sigma$ is a left inverse of $\phi-1$, then $\delta^{(i)}(\frak{c})$ is given by the cocycle
    \begin{equation*}
     c_{\alpha,\beta}=\beta+(-1)^i d\big(\sigma(\alpha)\big).
    \end{equation*}
   
   \begin{lem}
    Let $r\leq r_V$, and choose $\alpha$ such that $\image(\alpha)\subset \DD^{(0,r]}(V)$ (c.f. Remark~\ref{valuesoverconvergent}). Then $\image(\sigma(\alpha))\subset V\otimes_{\QQ_p}\tilde{\AA}^{(0,pr]}$.
   \end{lem}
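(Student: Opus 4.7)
The plan is to unwind the explicit construction of $\sigma$ from Appendix~II of~\cite{andreattaiovita07}: on the subspace $\DD(V)^{\psi=0}$, $\sigma$ is realized as a series whose successive terms are built from iterated application of the Frobenius-inverse operator inside $V\otimes_{\QQ_p}\tilde{\AA}$ (where $\phi$ is an automorphism, even though it fails to be surjective on $\AA$). The crucial feature of this expansion is that each individual term strictly improves the radius of overconvergence by a factor of $p$.

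The essential input is the estimate $\phi^{-1}\big(\tilde{\AA}^{(0,r]}\big)\subset\tilde{\AA}^{(0,pr]}$, which is the counterpart inside $\tilde{\AA}$ of the inclusion $\phi\big(\AA^{(0,r]}\big)\subset\AA^{(0,pr]}$ recorded in Section~\ref{construction_ov}. Iterating, for $y\in\DD^{(0,r]}(V)^{\psi=0}\subset V\otimes_{\QQ_p}\tilde{\AA}^{(0,r]}$ and every $n\geq 1$ the summand $\phi^{-n}(y)$ lies in $V\otimes_{\QQ_p}\tilde{\AA}^{(0,p^n r]} \subset V\otimes_{\QQ_p}\tilde{\AA}^{(0,pr]}$. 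Termwise, the series defining $\sigma(y)$ thus already lies in the refined ring $V\otimes_{\QQ_p}\tilde{\AA}^{(0,pr]}$.

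It remains to justify convergence of the series in the topology of $V\otimes_{\QQ_p}\tilde{\AA}^{(0,pr]}$; this is achieved by a direct transposition of the $\omega_n$-function estimates of Lemma~\ref{convergence2} and Proposition~\ref{subset}, combined with the compatibility of the decomposition of Proposition~\ref{quoteAB} with the overconvergent lattice recorded in Remark~\ref{invertibleoverconvergence} (which guarantees that the operators used to invert $\gamma-1$ and $h_i-1$ on each piece of $\DD(V)^{\psi=0}$ preserve overconvergence). Applying the resulting bounds pointwise to the continuous cochain $\alpha$ yields the desired inclusion $\image(\sigma(\alpha))\subset V\otimes_{\QQ_p}\tilde{\AA}^{(0,pr]}$.

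The main obstacle is this last step, i.e.\ controlling the $\omega_n$-functions of the partial sums of the Frobenius-inverse series uniformly in the number of terms. Termwise control is immediate from the displayed estimate, but combining the bounds into a uniform estimate on the sum requires the same careful bookkeeping with $\omega_n$ as in the Cherbonnier--Colmez argument underlying Proposition~\ref{subset}; the rest of the argument is routine.
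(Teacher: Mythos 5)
Your argument hinges on the claim that $\sigma$ is realized on $\DD(V)^{\psi=0}$ by the series $\sum_{n\geq 1}\phi^{-n}$, whose terms lie in $V\otimes_{\QQ_p}\tilde{\AA}^{(0,p^nr]}$. The termwise radius estimate $\phi^{-1}\big(\tilde{\AA}^{(0,r]}\big)\subset\tilde{\AA}^{(0,pr]}$ is correct (for $z=\sum p^k[z_k]$ one has $v_{\EE}(z_k^{1/p})=p^{-1}v_{\EE}(z_k)$, so the condition $\lim rv_{\EE}(z_k)+k=+\infty$ becomes the condition defining $\tilde{\AA}^{(0,pr]}$), but the series itself does not converge: the Witt components of $\phi^{-n}(y)$ are $y_k^{1/p^n}$, whose valuations $p^{-n}v_{\EE}(y_k)$ tend to $0$ rather than to $+\infty$, so $\phi^{-n}(y)\not\to 0$ in the weak topology of $\tilde{\AA}$ and the partial sums have no limit for any nonzero $y$ (the condition $\psi(y)=0$ does not help). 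This is not a bookkeeping issue that sharper $\omega_n$-estimates can repair; the proposed formula for $\sigma$ is not the operator in question. The left inverse of $\phi-1$ from Appendix~II of~\cite{andreattaiovita07} is built by Artin--Schreier theory: one solves $(\phi-1)x=y$ by successive approximation modulo powers of $p$, at each stage extracting a root of $X^p-X=\bar{y}$ in $\tilde{\EE}$. The improvement of the radius from $r$ to $pr$ comes from the fact that such a root has valuation $v_{\EE}(\bar{y})/p$ when $v_{\EE}(\bar{y})<0$, and the quantitative form of this is precisely Proposition~8.1 of~\cite{andreattaiovita07}, which is all the paper itself invokes (its proof is a bare citation).

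Two secondary points. First, your appeal to Proposition~\ref{quoteAB} and Remark~\ref{invertibleoverconvergence} is misplaced: those results concern the invertibility of $\gamma-1$ and $h_i-1$ on the pieces of $\DD(V)^{\psi=0}$ and are what guarantee the \emph{hypothesis} of the lemma, namely that $\alpha$ may be chosen with values in $\DD^{(0,r]}(V)^{\psi=0}$ (Remark~\ref{valuesoverconvergent}); they play no role in controlling $\sigma$, which inverts $\phi-1$, not $\gamma-1$. Second, describing your estimate as the counterpart of $\phi\big(\AA^{(0,r]}\big)\subset\AA^{(0,pr]}$ is internally inconsistent: if that inclusion held as stated, then $\phi^{-1}$ would map $\tilde{\AA}^{(0,r]}$ into the larger ring $\tilde{\AA}^{(0,r/p]}$, the opposite of what you need. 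With the paper's definition of $\tilde{\AA}^{(0,r]}$ it is $\phi^{-1}$, not $\phi$, that improves the radius by a factor of $p$.
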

   \begin{proof}
    See the proof of Proposition 8.1 in~\cite{andreattaiovita07}.
   \end{proof}

   \begin{thm}\label{samecohomology}
    Let $V$ be a de Rham representation of $\calG_L$, and let $n\in\NN$ such that $p^{-n}<\max\{r_V,r_V'\}$. Then for all $1\leq i\leq d$, we have a commutative diagram
    \begin{diagram}
     H^i(G_L,D(V)^{\psi=1}) & \rTo^{\phi^{-n}} & H^i(G_L,(V\otimes \BB_{\dR}^\nabla)^{\calH_L}) \\
     \dTo^{\delta^{(i)}} & & \dTo^{\inf} \\
     H^i(L,V) & \rTo^{V\rightarrow V\otimes\BB_{\dR}^\nabla} &  H^i(L,V\otimes \BB_{\dR}^\nabla) 
    \end{diagram}
   \end{thm}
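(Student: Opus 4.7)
The plan is to construct explicit cocycle representatives for both compositions in the diagram and exhibit an explicit cochain whose coboundary is their difference, after identifying the target via Corollary~\ref{isomH-inf}.

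Starting with a continuous cocycle $\tilde{\beta}\in\calC^i(G_L,\DD(V)^{\psi=1})$, Proposition~\ref{subset} gives $\tilde{\beta}\in\DD^{(0,r_V']}(V)$, and the Remark following Proposition~\ref{hom} produces an explicit representative $c_{\alpha,\beta}\in\calC^i(\calG_L,V)$ of $\delta^{(i)}(\tilde{\beta})$. This $c_{\alpha,\beta}$ is built from $\tilde{\beta}$, from a primitive $\alpha:G_L^{\times(i-1)}\to\DD(V)^{\psi=0}$ of $(\phi-1)\tilde{\beta}$ furnished by Proposition~\ref{zerocohom}, and from a left inverse $\sigma$ of $\phi-1$. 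By Remark~\ref{valuesoverconvergent} we may take $\alpha$ with values in $\DD^{(0,r]}(V)^{\psi=0}$ for some $r\le r_V'$, and by the Lemma immediately preceding the theorem, $\sigma(\alpha)$ then lies in $V\otimes_{\QQ_p}\tilde{\AA}^{(0,pr]}$. The hypothesis $p^{-n}<\max\{r_V,r_V'\}$ then ensures that $\phi^{-n}$ is defined on each of these overconvergent modules and carries them into $V\otimes\BB_{\dR}^{\nabla+}$.

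To prove commutativity, we use Corollary~\ref{isomH-inf} to identify the upper right corner with $H^i(L,V\otimes\BB_{\dR}^{\nabla})$, so it suffices to show that the inflated cocycle $\phi^{-n}\tilde{\beta}$ and the pushed-forward cocycle $c_{\alpha,\beta}$ are cohomologous in $\calC^\bullet(\calG_L,V\otimes\BB_{\dR}^{\nabla})$. The decisive observation is that $c_{\alpha,\beta}$ takes values in $V\subset V\otimes\BB_{\dR}^{\nabla}$, on which Frobenius acts trivially, so $\phi^{-n}c_{\alpha,\beta}=c_{\alpha,\beta}$ as cochains in $\calC^i(\calG_L,V\otimes\BB_{\dR}^{\nabla})$. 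Applying $\phi^{-n}$ term by term to the defining formula for $c_{\alpha,\beta}$ and using this fixed-point identity, together with the relation between $\alpha$ and $(\phi-1)\tilde{\beta}$, expresses
\[
c_{\alpha,\beta}=\phi^{-n}\tilde{\beta}+d\bigl(\pm\phi^{-n}\sigma(\alpha)\bigr)
\]
as an equality in $\calC^i(\calG_L,V\otimes\BB_{\dR}^{\nabla})$, exhibiting the desired cohomology between them; both $\phi^{-n}\tilde{\beta}$ and $\phi^{-n}\sigma(\alpha)$ are valid $V\otimes\BB_{\dR}^{\nabla+}$-valued cochains precisely because of the overconvergence set up in the previous paragraph.

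The main obstacle is not the high-level structure but the bookkeeping of signs and overconvergence. One must verify that the overall sign in the formula for $c_{\alpha,\beta}$ (as given in the Remark after Proposition~\ref{hom}) is consistent with the mapping-cone differential convention so that the identity displayed above really holds on the nose, and one must check that each occurrence of $\phi^{-n}$ is defined: this is where the choice of $n$ with $p^{-n}<\max\{r_V,r_V'\}$ and Remark~\ref{valuesoverconvergent} are essential, guaranteeing that $\alpha$ (hence $\sigma(\alpha)$) lives in an overconvergent module on which $\phi^{-n}$ lands in $\BB_{\dR}^{\nabla+}$. Once this is settled, the coboundary $\pm d(\phi^{-n}\sigma(\alpha))$ is genuine and the proof is complete.
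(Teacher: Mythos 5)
Your proposal follows essentially the same route as the paper's own proof: represent $\delta^{(i)}(\tilde\beta)$ by the explicit cocycle $c_{\alpha,\beta}$ from the remark after Proposition~\ref{hom}, use Proposition~\ref{subset}, Remark~\ref{valuesoverconvergent} and the lemma on $\sigma(\alpha)$ to see that $\beta$ and $\sigma(\alpha)$ are overconvergent so that $\phi^{-n}$ lands in $V\otimes\BB_{\dR}^{\nabla}$, and then exploit that $c_{\alpha,\beta}$ is $V$-valued (hence $\phi$-fixed) to exhibit $c_{\alpha,\beta}-\phi^{-n}\tilde\beta$ as the explicit coboundary $\pm d(\phi^{-n}\sigma(\alpha))$ in $\calC^\bullet(\calG_L,V\otimes\BB_{\dR}^\nabla)$ after identifying the target via Corollary~\ref{isomH-inf}. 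This is exactly the argument in the text, and your flagged caveats (sign conventions in the mapping cone, where each $\phi^{-n}$ is defined) are the right ones.
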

   \begin{proof}
    Let $\frak{c}$ be a cocycle representing an element in $H^i(G_L,\DD(V)^{\psi=1})$. In the notation of the remark following Proposition~\ref{hom}, the image of $\frak{c}$ under $\delta^{(i)}$ is given by the cocycle $c_{\alpha,\beta}=\beta+(-1)^i d\big(\sigma(\alpha)\big)$. Let $r=\max\{r_V,r_V'\}$. Then both $\beta$ and $\sigma(\alpha)$ have values in $V\otimes\AA^{(0,r]}$, so both $\phi^{-n}(\beta)$ and $\phi^{-n}(\sigma(\alpha))$ can be seen as cocycles with values in $V\otimes\BB_{\dR}^\nabla$, and $c'_{\alpha,\beta}=\phi^{-n}(\alpha)$ as a cocycle with values in $V\otimes\BB_{\dR}^\nabla$ which differs from the cocycle
    \begin{equation*}
     (g_1,\dots,g_i)\rightarrow \phi^{-n}\big(\beta(g_1,\dots,g_i)\big)
    \end{equation*}
    by the coboundary $(g_1,\dots,g_i)\rightarrow \phi^{-n}\big(d\sigma\alpha(g_1,\dots,g_i)\big)$. 
   \end{proof}
   
   By Corollary~\ref{isomH-inf} the inflation map gives an isomorphism $H^i(G_L,(V\otimes \BB_{\dR}^\nabla)^{\calH_L})\cong H^i(L,V\otimes \BB_{\dR}^\nabla)$. In terms of the relative dual exponential maps, Theorem~\ref{samecohomology} can be restated as follows.
   
   \begin{thm}
    Let $V$ be a de Rham representation of $\calG_K$, and let $n\in\NN$ such that $p^{-n}>\max\{r_V,r_V'\}$. Then for all $1\leq i\leq d$, we have a commutative diagram
    \begin{diagram}
     H^i(G_L,D(V)^{\psi=1}) & \rTo^{\phi^{-n}} & H^i(L,V\otimes \BB_{\dR}^\nabla) \\
     \dTo^{\delta^{(i)}} & & \dTo^{\cong} \\
     H^i(L,V) & \rTo^{\exp^*_{(d-i),K,V^*(d)}} &  H^i_{\dR}(V)\oplus H^{i-1}_{\dR}(V)
    \end{diagram}
   \end{thm}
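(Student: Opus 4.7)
The plan is to derive this final theorem as a formal consequence of Theorem~\ref{samecohomology}, the inflation isomorphism of Corollary~\ref{isomH-inf}, and the definition of the higher dual exponential map given in Section~\ref{construction}. No new Galois-cohomological input is needed; the work is entirely one of unpacking definitions and composing known isomorphisms.

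First, I would combine Theorem~\ref{samecohomology} with Corollary~\ref{isomH-inf}: the right vertical arrow of the diagram in Theorem~\ref{samecohomology} is the inflation map, which is an isomorphism for $i\geq 1$ by Corollary~\ref{isomH-inf}. Consequently, the composition of the top row $\phi^{-n}$ with this inflation isomorphism can be viewed as a single map $H^i(G_L,\DD(V)^{\psi=1})\rightarrow H^i(L,V\otimes_{\QQ_p}\BB_{\dR}^\nabla)$, and the commutativity of the diagram in Theorem~\ref{samecohomology} may be rewritten as the identity
\begin{equation*}
 \bigl(V\rightarrow V\otimes_{\QQ_p}\BB_{\dR}^\nabla\bigr)_{*}\circ\delta^{(i)}=\inf^{-1}\circ\phi^{-n},
\end{equation*}
where the left-hand side is the composition of $\delta^{(i)}$ with the map induced by $V\hookrightarrow V\otimes_{\QQ_p}\BB_{\dR}^\nabla$.

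Next, I would substitute $V^*(d)$ for $V$ into the definition of $\exp^*_{(i),L,V}$ given in Section~\ref{construction}; using the canonical identification $(V^*(d))^*(d)\cong V$, the higher dual exponential map $\exp^*_{(d-i),L,V^*(d)}$ is by \emph{definition} the composition of the natural map $H^i(L,V)\rightarrow H^i(L,V\otimes_{\QQ_p}\BB_{\dR}^\nabla)$ with the inverse of the decomposition isomorphism $\nu^i_V: H^i_{\dR}(V)\oplus H^{i-1}_{\dR}(V)\cong H^i(L,V\otimes_{\QQ_p}\BB_{\dR}^\nabla)$. The right-hand vertical arrow of the new diagram is exactly $(\nu^i_V)^{-1}$, being the canonical isomorphism discussed in the corollary following Lemma~\ref{quotient}.

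Finally, I would assemble the two diagrams. The bottom leg of the new diagram is $\exp^*_{(d-i),L,V^*(d)}\circ\delta^{(i)}=(\nu^i_V)^{-1}\circ(V\rightarrow V\otimes_{\QQ_p}\BB_{\dR}^\nabla)_{*}\circ\delta^{(i)}$, which by Theorem~\ref{samecohomology} (in the reformulated version above) equals $(\nu^i_V)^{-1}\circ\inf^{-1}\circ\phi^{-n}$; this is precisely the top-right composition in the new diagram. There is essentially no obstacle here: the content is a bookkeeping exercise, and the only point that requires care is keeping straight the duality indices and checking that the isomorphism labelled $\cong$ on the right of the new diagram is the same $(\nu^i_V)^{-1}$ that enters the definition of $\exp^*_{(d-i),L,V^*(d)}$; both are the canonical decomposition isomorphism, so this is automatic.
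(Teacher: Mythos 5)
Your proposal is correct and matches the paper's intent exactly: the paper offers no separate proof of this theorem, presenting it as a direct restatement of Theorem~\ref{samecohomology} obtained by composing with the inflation isomorphism of Corollary~\ref{isomH-inf} and unwinding the definition of $\exp^*_{(d-i),L,V^*(d)}$ as $(\nu^i_V)^{-1}$ composed with the map induced by $V\rightarrow V\otimes_{\QQ_p}\BB_{\dR}^\nabla$. Your bookkeeping of the duality indices and the identification of the right-hand isomorphism with $(\nu^i_V)^{-1}$ is exactly the argument the paper leaves implicit.
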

   

\bibliography{references}

\vspace{7ex}

 Sarah Livia Zerbes
 
 Department of Mathematics
 
 Imperial College London
 
 London SW7 2AZ
 
 United Kingdom
 
 {\it email:} s.zerbes@imperial.ac.uk

\end{document}